\documentclass[11pt,reqno]{amsart}

\usepackage{amsmath}
\usepackage{amssymb}
\usepackage{mathtools}
\usepackage{xcolor}
\usepackage{amsthm}
\usepackage[letterpaper,textwidth=6.25in,textheight=8.75in]{geometry}
\usepackage[hidelinks]{hyperref}

\numberwithin{equation}{section}
\allowdisplaybreaks

\newcommand{\R}{\mathbb{R}}
\newcommand{\C}{\mathbb{C}}
\newcommand{\Z}{\mathbb{Z}}
\newcommand{\E}{\mathbb{E}}
\newcommand{\Prob}{\mathbb{P}}
\newcommand{\one}{\mathbf{1}}
\newcommand{\ind}{\mathbf{1}}
\newcommand{\norm}[1]{\lVert #1\rVert}
\newcommand{\inner}[2]{\langle #1,#2\rangle}

\makeatletter
\def\author@andify{%
  \nxandlist
    {\unskip,\penalty-1\space\ignorespaces}%
    {\unskip{} \@@and\space}%
    {\unskip\penalty-2\space\@@and\space}%
}
\makeatother

\DeclareMathOperator{\Tr}{Tr}
\DeclareMathOperator{\diag}{diag}
\DeclareMathOperator{\supp}{supp}
\DeclareMathOperator{\range}{range}
\DeclareMathOperator{\arsinh}{arsinh}

\theoremstyle{plain}
\newtheorem{thm}{Theorem}[section]
\newtheorem{prop}[thm]{Proposition}
\newtheorem{lem}[thm]{Lemma}
\newtheorem{cor}[thm]{Corollary}

\theoremstyle{definition}
\newtheorem{defn}[thm]{Definition}
\newtheorem{setup}[thm]{Setup}
\newtheorem{notation}[thm]{Notation}

\theoremstyle{remark}

\title[Sharp theta asymptotics]{Sharp Asymptotics for the Lov\'asz Number of Random Circulant Graphs}

\author{Xin Chen, Hongwei Hu, Yanqing Hu, Wei Wang and Luzhen Yang}

\thanks{Emails: Xin Chen, \texttt{chenx8@sustech.edu.cn};
Hongwei Hu, \texttt{12531170@mail.sustech.edu.cn};
Yanqing Hu, \texttt{huyq@sustech.edu.cn};
Wei Wang, \texttt{12632272@mail.sustech.edu.cn};
Luzhen Yang, \texttt{12631208@mail.sustech.edu.cn}.}

\thanks{All authors are with the Department of Statistics and Data Science,
Southern University of Science and Technology, Shenzhen, China.}

\subjclass[2020]{05C80, 05C50, 60B20, 42A38, 90C22}
\keywords{random circulant graph, Lov\'asz number, discrete Fourier transform, semidefinite optimization, random Fourier matrix}
\date{}

\newcommand{\1}{\mathbf{1}}
\newcommand{\ezero}{\mathbf{e}_0}
\newcommand{\abs}[1]{\lvert #1\rvert}

\newcommand{\symdiff}{\mathbin{\triangle}}

\newcommand{\aut}{\operatorname{aut}}
\newcommand{\Var}{\operatorname{Var}}
\newcommand{\Cov}{\operatorname{Cov}}
\newcommand{\rank}{\operatorname{rank}}
\newcommand{\im}{\operatorname{im}}

\newcommand{\cG}{\mathcal{G}}
\newcommand{\cT}{\mathcal{T}}
\newcommand{\He}{\operatorname{He}}
\newcommand{\coef}{\operatorname{coef}}
\newcommand{\Id}{\mathrm{I}}
\newcommand{\tree}{\mathrm{tree}}

\begin{document}

\begin{abstract}
We study the Lov\'asz number of dense random circulant graphs generated by independent Bernoulli bits indexed by cyclic distances. Although only $O(n)$ independent random inputs determine $O(n^2)$ globally dependent edge indicators, we prove sharp expectation asymptotics along all odd orders, including composite orders, with leading constant one. This resolves a conjecture posed for random circulant graphs in 2025. The proof combines an exact Fourier reduction and complementary duality with a positive interior smoothing that reveals a nonlinear random fixed-point equation. To analyze this equation under the strong dependence of the distance-bit model, we construct canonical tree fields, establish growing-degree Gaussian approximations through polynomial comparison and Wick/Hermite expansions, and transfer the resulting approximation back to the original sign model. A deterministic residual estimate together with an explicit exceptional-event integration yields the sharp expectation asymptotics.
\end{abstract}

\maketitle

\section{Introduction}\label{sec:introduction}

The Lov\'asz theta number is a fundamental semidefinite relaxation of graph parameters such as the independence and chromatic numbers. Its asymptotic behavior on random graphs connects probabilistic combinatorics with harmonic analysis and semidefinite optimization. A particularly revealing setting is provided by random circulant graphs, where strong algebraic structure creates a dependence pattern that is very different from the independent-edge models in which random graph asymptotics are often studied.

\medskip
\noindent\textbf{Random circulant graphs and strongly dependent randomness.}
A circulant graph is invariant under cyclic translations. In the random model considered here, randomness is assigned independently to cyclic distances rather than to individual edges: all pairs of vertices at the same cyclic distance share the same edge indicator. Thus, for odd order \(n\), the graph is generated by only
\[
 \frac{n-1}{2}
\]
independent Bernoulli variables, although there are \(n(n-1)/2\) possible edges. In other words, only \(O(n)\) independent random inputs generate \(O(n^2)\) globally dependent edges. This creates a dependence structure fundamentally different from
that of the Erd\H{o}s--R\'enyi model and is the main probabilistic feature of the problem. More precisely, for odd $n$, let $V_n:=\mathbb Z_n$ be the vertex set, and let $E_n$ be the set of unordered pairs $\{i,j\}$ of distinct vertices satisfying
\[
 \{i,j\}\in E_n
 \quad\Longleftrightarrow\quad
 \xi_{d_n(i,j)}=1,
 \qquad
 d_n(i,j):=\min\{|i-j|,n-|i-j|\},
\]
where $\xi_1,\ldots,\xi_{(n-1)/2}$ are independent $\operatorname{Bernoulli}(1/2)$ random variables. We then write $G_n:=(V_n,E_n)$ for this dense distance-bit random circulant graph throughout the paper.

\medskip
\noindent\textbf{The sharp open problem.}
The asymptotic behavior of the Lov\'asz number in this distance-bit model was recently investigated by Bandeira, B\l{}asiok, Dmitriev, Faure, Kireeva, and Kunisky \cite{BBDDFKK25}. Exploiting the Fourier diagonalization of circulant matrices, they reduced the theta semidefinite program to a random Fourier optimization problem and proved
\[
 \sqrt n
 \leq
 \E\theta(G_n)
 \leq
 C\sqrt{n\log\log n}.
\]
They conjectured that the logarithmic loss is not intrinsic and that the sharp asymptotic is
\[
 \E\theta(G_n)=(1+o(1))\sqrt n.
\]
This conjecture was subsequently recorded as Conjecture~18 in the 2025 open-problem collection of Randomstrasse101 \cite{BDKNR26}. Thus the central unresolved question was not merely to improve an upper bound, but to determine whether the \(\sqrt n\) law is the intrinsic scale of the distance-bit model or an artifact of the available Fourier estimates. The present paper resolves this conjecture. In particular, we remove the \(\sqrt{\log\log n}\) loss and identify the sharp leading constant.

\medskip
\noindent\textbf{Why the dependent model is genuinely different.}
The same leading-order scale has very recently been established in the classical independent-edge model. Potechin and Xu \cite{PX26} proved that
\[
 \vartheta(G(n,1/2))=(1+o_n(1))\sqrt n
\]
with high probability. This result makes the distance-bit problem especially natural: the two models have the same predicted leading order, but their sources of randomness are radically different. In \(G(n,1/2)\), the edges are independent; in the distance-bit circulant model, a single random bit controls all edges at a given distance. Our theorem establishes the same sharp leading-order asymptotic for a strongly dependent, algebraically structured random graph model. More broadly, it shows that the leading $\sqrt n$ scale persists under a substantial change in the underlying dependence structure, while the proof requires a different probabilistic analysis.

\medskip
\noindent\textbf{Main result.}
We prove the conjectured sharp asymptotic on the original distance-bit probability space, without restricting the order to primes. In particular, all odd composite orders are included:
\begin{thm}[Sharp asymptotics]\label{thm:main}
For the random graph defined in Definition~\ref{defn:model},
\begin{equation}\label{eq:main-limit}
 \lim_{\substack{n\to\infty\\ n\ {\rm odd}}}
 \frac{\E\theta(G_n)}{\sqrt n}=1.
\end{equation}
Equivalently,
\[
 \E\theta(G_n)=(1+o(1))\sqrt n
\]
along all odd integers.
\end{thm}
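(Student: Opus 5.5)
The lower bound $\E\theta(G_n)\ge\sqrt n$ is already known from \cite{BBDDFKK25}. It also follows from $\theta(G)\theta(\bar G)\ge n$: $G_n$ and $\bar G_n$ have the same law, since $\xi\mapsto 1-\xi$ preserves the distribution, so AM--GM gives $\E\theta(G_n)=\tfrac12\E(\theta(G_n)+\theta(\bar G_n))\ge\E\sqrt{\theta(G_n)\theta(\bar G_n)}\ge\sqrt n$. The whole task is therefore the matching upper bound $\E\theta(G_n)\le(1+o(1))\sqrt n$. I plan to obtain it by exhibiting an explicit feasible point of the dual theta program and bounding its value.

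\textbf{Fourier reduction.} Write $\varepsilon_d=2\xi_d-1$ and let $\lambda_k=\sum_{d=1}^{(n-1)/2}2\varepsilon_d\cos(2\pi kd/n)$, $k\in\Z_n$, be the Fourier eigenvalues of the signed adjacency matrix. Since the program is circulant-invariant, averaging over the group shows $\theta(G_n)$ equals the value of a linear program over circulant feasible matrices. One form is
\[
\theta(G_n)=\min\Bigl\{\max_k \hat w_k \;:\; \hat w_0=n,\ \ w\text{ supported on }\{0\}\cup\{\text{non-edge distances}\}\Bigr\},
\]
or a dual description in terms of nonnegative Fourier weights. Complementation exchanges the roles of edges and non-edges. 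I would pass to a symmetric formulation in which one seeks a nonnegative vector $p\in\R^{\Z_n}$, with $p_0\approx\sqrt n$, whose Fourier transform has the required sign pattern on edge distances. Concretely, I will look for $p_k=\sqrt n\,\phi(\lambda_k/\sqrt n)$ with $\phi\ge0$.

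\textbf{Smoothing ansatz.} A linear $\phi$ fails at the extreme eigenvalues; this is exactly where the $\sqrt{\log\log n}$ loss of \cite{BBDDFKK25} came from. Instead I take a positive interior smoothing $\phi_\eta(x)=\tfrac12(x+\sqrt{x^2+\eta})$-type function and impose the constraint that the inverse transform vanishes on non-edges. This leads to a random fixed-point equation
\[
u=\Pi\bigl(\phi(\lambda+Fu)\bigr),
\]
where $\Pi$ projects onto the edge/non-edge coordinates. The value of the construction is governed by an average of $\phi$ over the empirical distribution of $\lambda_k/\sqrt n$. That distribution is close to $N(0,2)$, or semicircle-like after normalization, and gives $1+O(\eta)$.

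\textbf{Solving the fixed point.} I will solve the equation by Picard iteration, a contraction for $\eta$ fixed and small. The iterates are polynomials in the $\varepsilon_d$, naturally indexed by trees (the canonical tree fields). To control them I compare each iterate with the same polynomial in i.i.d.\ Gaussians, using Lindeberg/invariance for polynomials of growing degree. I then compute the Gaussian side through Wick/Hermite expansions. Two checks follow: the residual of the truncated iteration is $o(\sqrt n)$ in $\ell^\infty$ of the Fourier side, and positivity of the candidate holds off an exceptional event.

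\textbf{Main obstacle and conclusion.} The hardest step is the dependence. The eigenvalues $\lambda_k$ for $k$ and $k'$ related by multiplicative structure mod composite $n$ (divisors, units) are highly correlated. The Gaussian comparison therefore has to handle frequencies sharing a common factor with $n$, and needs a uniform bound over all $n$ frequencies that survives a union bound. To get it I would use hypercontractivity for the degree-$D$ polynomials, with $D\sim\log n/\log\log n$. A deterministic residual lemma then repairs the approximate solution into an exactly feasible dual point at cost $o(\sqrt n)$. On the exceptional event, whose probability is superpolynomially small, I use the trivial bound $\theta\le n$. Integrating gives $\E\theta(G_n)\le(1+O(\eta)+o(1))\sqrt n$, and letting $\eta\to0$ proves \eqref{eq:main-limit}.
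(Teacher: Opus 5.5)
Your lower bound and overall architecture match the paper: the smoothing $\tfrac12(x+\sqrt{x^2+\eta})$, a random fixed point, tree fields, growing-degree Gaussian comparison, and Hermite/Wick expansion. The gap is in the step that turns an approximate fixed point into a bound on $\theta$, which does not work as you set it up.

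Repairing the approximant into an exactly feasible dual point, with positivity and an $o(\sqrt n)$ error on the Fourier side, is an $\ell^\infty$ problem over all $n$ frequencies. That is the same max-over-frequencies difficulty behind the earlier $\sqrt{\log\log n}$ loss, and hypercontractivity with a union bound does not close it. At Walsh degree $D$ the union bound over $n$ modes costs a factor of order $(C\log n/D)^{D/2}$. Raising the degree buys accuracy only slowly: the Hermite error of $\sqrt{x^2+4\tau}$ decays like $e^{-c\sqrt M}$. So the losses outrun the gains.

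The paper never needs uniform control. It uses three ingredients.
\begin{itemize}
\item The exact smoothed pair $p_\tau,q_\tau$ exists deterministically (strictly convex barrier), with a distinguished root weight $w_{\tau,0}=1$ versus $w_{\tau,k}=\tau/n$. This gives $p_{\tau,0}q_{\tau,0}=1$ and $\sum_kp_{\tau,k}=\sum_kq_{\tau,k}=\sqrt{nW_\tau}$. Combined with $\theta(G_D)\theta(G_N)=n$, it yields $p_{\tau,0}/\sqrt{W_\tau}\le\theta(G_D)/\sqrt n\le\sqrt{W_\tau}\,p_{\tau,0}$. The constant $1$ therefore comes from the root coordinate $f_{\mathrm{int},\tau,0}$ being $o(1)$, not from averaging $\phi$ against the law of $\lambda_k/\sqrt n$. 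Your ansatz $p_k=\sqrt n\,\phi(\lambda_k/\sqrt n)$ has no distinguished zero mode, and for $X\sim N(0,2)$ one gets $\tfrac12\E(X+\sqrt{X^2+\eta})\to\pi^{-1/2}$, not $1$.
\item An approximant $v$ is compared with the exact solution via $D_{\mathrm{rel}}(v)\le(\norm{v}_2+\tfrac12\norm{f^*}_2)\norm{v-K_0\Phi_\tau(v)}_2$ and $|\log(\theta/\sqrt n)-\arsinh(v_0/2)|\le\tfrac12\log W_\tau+\sqrt{D_{\mathrm{rel}}(v)}$. This needs only an $\ell_2$ residual.
\item The comparison also needs an a priori bound $\norm{f^*}_2=O((\log n)^{3/2})$. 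It comes from a restricted-isometry kernel estimate for the random paired Fourier rows: $\norm{x}_2\le C(\log n)^{3/2}n^{-1/2}\norm{x}_1$ on $\ker F_D$ and $\ker F_N$, off an event of probability $e^{-c(\log n)^2}$. Your plan has no counterpart to this.
\end{itemize}

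Two further steps fail as stated.

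First, Picard iteration with Hermite truncation cannot stay within your degree budget. The Gaussian contraction factor is only $(1+2\tau/(3\pi))^{-1/2}$, so certifying polylogarithmic accuracy from the contraction needs $t\gtrsim\log\log n$ steps. Each step multiplies the maximal tree size by about $M\to\infty$, so the iterates reach size $M^t$, larger than any fixed power of $\log n$. Moreover, $D\sim\log n/\log\log n$ is itself outside the range $D\log D=o(\log n)$ where the comparison bounds $(4D)^{80D}/\sqrt n$ are useful. The paper instead takes the exact fixed point in the Gaussian tree space. It identifies the tree-size law with the total progeny of a subcritical branching process, with offspring mean $\E[Q_\tau Z^2/(Q_\tau Z^2+4\tau)]<1$. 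The resulting tail $e^{-cD^{1/3}}$ is what allows truncation at $D_n=O((\log\log n)^3)$.

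Second, nothing makes your exceptional event superpolynomially small. Apart from the kernel event, the bad event (large residual or large root coordinate) is controlled only by Markov from mean-square errors of polylogarithmic size. So the bound $\theta\le n$, which would need $\Prob(A_n)=o(n^{-1/2})$, is insufficient. The paper uses $\theta(G_n)\le\max_kg_k$ with the sub-Gaussian tails of $g=Fb$ to get $\E[\theta(G_n)\ind_{A}]\lesssim\Prob(A)\sqrt{n\log(n/\Prob(A))}$. Then $\Prob(A_n)\sqrt{\log n}\to0$ suffices.

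Finally, composite $n$ is not the obstacle you anticipate. The normalized fields $\lambda_k/\sqrt n$ at distinct nonconjugate nonzero modes have covariance exactly $-2/n$ for every odd $n$. The modular counts only need $\#\{x\in\Z_n:ax\equiv b\}\le\gcd(a,n)\le|a|$.
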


The constant one is forced by an exact deterministic complementary identity. If \(G_D\) is a circulant graph with difference set \(D\), and \(G_N\) is its complement, then
\[
 \theta(G_D)\,\theta(G_N)=n.
\]
Since the fair distance-bit model is invariant in law under complementation, this gives the lower bound \(\E\theta(G_n)\geq\sqrt n\). The upper bound must therefore match a complementary lower bound to first order. This formulation is useful because it makes clear what has to be proved: one must construct a sharp upper certificate despite the strong dependence among the edge indicators.

The result has two complementary aspects. First, it settles the sharp asymptotic for the random circulant model and removes the logarithmic loss in the previously known upper bound. Second, the proof develops a probabilistic approximation scheme for the strongly dependent Fourier fields generated by the distance-bit structure. The latter is specific to the structured dependence encountered here: the dependence is organized through canonical tree fields, which can be compared at growing degree to Gaussian polynomial fields and ultimately linked to a nonlinear Gaussian fixed-point problem. We do not claim a general universality theorem for other dependent random graph models; rather, the point is that this mechanism provides a way to recover sharp asymptotics in a model where the usual independent-edge arguments are unavailable.

\medskip
\noindent\textbf{Proof strategy and main ideas.}
The proof combines three main ideas. The first gives an exact Fourier reduction of the theta problem; the second uses positive smoothing to expose a nonlinear fixed-point structure; and the third develops a tree-based Gaussian approximation that captures the dependence induced by the distance-bit representation.

\medskip
\noindent\textbf{Idea I: exact Fourier duality.}
Translation averaging reduces the theta semidefinite program to a finite-dimensional Fourier optimization with prescribed Fourier zeros. This reduction is exact and deterministic. The complementary Fourier formulations expose the product identity above and provide the sharp normalization against which the probabilistic upper bound is measured.

\medskip
\noindent\textbf{Idea II: smoothing reveals a nonlinear fixed point.}
The previous upper bound is based on uniform control of random Fourier quantities. To remove its logarithmic loss, we need a more precise description of the random Fourier field. We introduce a strictly positive interior smoothing of the complementary Fourier certificates. Prescribing positive coordinate products yields a unique smoothed pair and exposes a nonlinear random fixed-point equation for the Fourier variables. A deterministic residual estimate converts an approximate solution of this equation into an explicit logarithmic certificate for the original theta value. Thus the problem becomes the approximation of a nonlinear random Fourier field, rather than merely the control of its largest coordinate.

\medskip
\noindent\textbf{Idea III: tree structure and Gaussianization under strong dependence.}
The remaining problem is probabilistic. Because only \(O(n)\) independent signs generate the field, its coordinates are strongly dependent and a direct independent-coordinate Gaussian approximation is unavailable. The dependence can nevertheless be organized through canonical tree fields arising from the Fourier kernel. At fixed degree, these tree fields have a Gaussian limit; the growing-degree argument then uses polynomial comparison, Wick grafting, and Hermite approximation to make this Gaussianization uniform over the range needed by the nonlinear fixed point. In this sense, the tree structure is not an auxiliary surrogate imposed on the model: it captures the leading combinatorial organization of the dependence created by shared distance bits.

The probabilistic argument can be summarized by the chain
\[
 \text{distance-bit Fourier field}
 \;\longrightarrow\;
 \text{canonical tree field}
 \;\longrightarrow\;
 \text{Gaussian nonlinear fixed point}.
\]
The growing-degree comparison is essential: fixed-degree convergence alone would not control the nonlinear fixed-point construction.

\medskip
\noindent\textbf{Transfer back to the original probability space.}
The Gaussian construction is used only as an intermediate description of the dependent Fourier field. The final approximants are evaluated on the original distance-bit signs, and the deterministic residual estimate turns their error into a theta certificate. Exceptional events are then integrated directly on the original graph probability space. Consequently, the proof does not require independence between the graph, its approximant, and the exceptional event. The resulting convergence is for the expectation itself, not merely for a Gaussian surrogate or a fixed-degree statistic.

\medskip
\noindent\textbf{Scope and organization.}
The theorem concerns the expectation and does not assert a limiting distribution or a finite-size rate for \(\E\theta(G_n)\). The limit is taken through all odd integers, including composite orders, and the smoothing parameter is fixed before \(n\to\infty\) and sent to zero afterward. Section~\ref{sec:notation} introduces the distance-bit model and Fourier normalization. Sections~\ref{sec:fourier}--\ref{sec:interior} develop the exact Fourier, complementary, and smoothing framework; the subsequent sections establish the Fourier-kernel, tree-field, Gaussian, and transfer estimates; and Section~\ref{sec:expectation} completes the exceptional-event integration and the expectation bound.

\section{Model and Fourier setup}\label{sec:notation}

We now specify the distance-bit model and the theta convention, and then fix the deterministic Fourier normalization used throughout the proof.

\begin{defn}[Distance-bit model and theta normalization]\label{defn:model}
Let \(n\geq3\) be odd, and identify
\[
 V_n=\{0,1,\ldots,n-1\}
\]
with \(\Z_n\). For distinct \(i,j\in V_n\), set
\[
 d_n(i,j)=\min\{|i-j|,n-|i-j|\}.
\]
Let \(\xi_1,\ldots,\xi_{(n-1)/2}\) be independent Bernoulli random variables with parameter \(1/2\). Define \(G_n=(V_n,E_n)\) by
\[
 \{i,j\}\in E_n
 \quad\Longleftrightarrow\quad
 \xi_{d_n(i,j)}=1.
\]
For a graph \(G=(V,E)\), its edge-zero Lov\'asz number is
\begin{equation}\label{eq:theta-sdp}
 \theta(G)=
 \max\left\{
 \sum_{i,j\in V}X_{ij}:\
 \begin{array}{l}
 X\in\R^{V\times V},\quad X=X^{\mathsf T},\quad X\succeq0,\\
 \Tr X=1,\quad X_{ij}=0\ \text{for every }\{i,j\}\in E
 \end{array}
 \right\}.
\end{equation}
\end{defn}

We use the distance-bit model of Definition~\ref{defn:model}, as in \cite{BBDDFKK25}, and fix the deterministic Fourier normalization needed throughout the proof.

\begin{notation}[Vectors and limits]\label{notation:conventions}
All limits in \(n\) are through odd integers. All logarithms are natural. We write \(\norm{\cdot}_2\) for the Euclidean norm and \(\norm{\cdot}_1\) for the sum of absolute coordinate values. Inner products of real vectors are Euclidean. Vector inequalities are coordinatewise. The vectors \(\one\) and \(\ezero\) denote the all-ones vector and the vector equal to one at residue zero and zero elsewhere. A vector has spatial mean zero when its coordinate sum is zero. The function \(\arsinh\) is the inverse hyperbolic sine.
\end{notation}

\begin{setup}[Circulant Fourier normalization]\label{setup:fourier}
Let \(n\geq3\) be odd. Let
\[
 D=-D\subseteq\Z_n\setminus\{0\},
 \qquad
 N=(\Z_n\setminus\{0\})\setminus D.
\]
The graph \(G_D\) has an edge \(\{i,j\}\) precisely when \(j-i\in D\); its complement is \(G_N\). Define the unnormalized Fourier matrix by
\[
 F_{jk}=\exp\left(-\frac{2\pi i jk}{n}\right),
 \qquad j,k\in\Z_n.
\]
The notation \(F^*\) means conjugate transpose. A vector \(v\) is even if \(v_d=v_{-d}\) for every residue \(d\).

Define
\begin{equation}\label{eq:projections}
 J=\frac{\one\one^{\mathsf T}}n,
 \qquad
 H_D=\frac1nF\diag(\ind_D)F^*,
 \qquad
 H_N=\frac1nF\diag(\ind_N)F^*,
 \qquad
 K_0=H_N-H_D.
\end{equation}
For the random graph, take
\[
 D=\{d\neq0:\xi_{\min(d,n-d)}=1\},
 \qquad
 \epsilon_j=1-2\xi_j,
 \qquad
 a_0=0,\quad a_j=a_{n-j}=\epsilon_j.
\]
Thus the \(\epsilon_j\) are independent fair signs, and
\begin{equation}\label{transfer:eq:fourier-operators}
 K_0=\frac1nF\diag(a)F^*.
\end{equation}
The multiplier is negative on edge differences and positive on nonedge differences. Every canonical field and every occurrence of \(K_0\) uses these same signs.
\end{setup}

\section{Fourier optimization}\label{sec:fourier}

In this section, we prove the exact Fourier certificates and the tail estimate used in Section~\ref{sec:expectation}.

\begin{prop}[Fourier formulations of the theta program]\label{prop:fourier}
Let \(n,D,F\) be as in Setup~\ref{setup:fourier}, and put
\[
 b_d=1-2\ind_D(d),
 \qquad
 g=Fb.
\]
The following maxima are attained and equal to \(\theta(G_D)\):
\begin{align}
 \max\biggl\{&
 \sum_d x_d:
 x\in\R^n\ \text{even},\
 x_0=1,\
 x_d=0\ (d\in D),\
 Fx\geq0
 \biggr\},                                                   \label{eq:primal-x}\\
 \max\biggl\{&
 ny_0:
 y\in\R^n\ \text{even},\
 y\geq0,\
 \sum_k y_k=1,\
 (Fy)_d=0\ (d\in D)
 \biggr\},                                                   \label{eq:primal-y}\\
 \max\biggl\{&
 \sum_k y_kg_k:
 y\ \text{satisfies the constraints in \eqref{eq:primal-y}}
 \biggr\}.                                                   \label{eq:primal-g}
\end{align}
The exact changes of variables are
\[
 y=\frac{Fx}{n},
 \qquad
 x=Fy.
\]
Every feasible \(x\) defines a feasible matrix
\[
 X_{ij}=\frac{x_{j-i}}n
\]
in \eqref{eq:theta-sdp}, with eigenvalues \(y_k\). Moreover,
\[
 1\leq\theta(G_D)\leq n.
\]
These assertions include empty and complete graphs.
\end{prop}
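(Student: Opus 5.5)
The proof will go through translation averaging (symmetrization) of the semidefinite program \eqref{eq:theta-sdp}, followed by diagonalization of circulant matrices via $F$.

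\textbf{Step 1: reduction to circulant feasible points.} The program \eqref{eq:theta-sdp} for $G_D$ is invariant under the cyclic shifts $P_s$, since $j-i\in D$ is shift-invariant. It is also invariant under the reflection $i\mapsto -i$, because $D=-D$. If $X$ is feasible, then $\bar X=\frac1{2n}\sum_s\bigl(P_sXP_s^{\mathsf T}+RP_sXP_s^{\mathsf T}R^{\mathsf T}\bigr)$ is feasible with the same objective value, by linearity, convexity of the PSD cone, and invariance of trace, objective and edge constraints. The averaged matrix $\bar X$ is circulant and symmetric, so $\bar X_{ij}=x_{j-i}/n$ for an even vector $x$. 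Feasibility of $\bar X$ translates as follows:
\begin{itemize}
\item[] $\Tr\bar X=x_0=1$;
\item[] the edge-zero constraints become $x_d=0$ for $d\in D$;
\item[] the objective becomes $\sum_{i,j}\bar X_{ij}=\sum_d x_d$;
\item[] positive semidefiniteness holds iff the eigenvalues $(Fx)_k/n$ are nonnegative.
\end{itemize}
For the last point, a circulant matrix with first row $x/n$ is diagonalized by the characters, with eigenvalues $(Fx)_k/n$; these are real because $x$ is even. Conversely, each feasible $x$ in \eqref{eq:primal-x} gives a feasible $X$ with the stated eigenvalues. Hence \eqref{eq:primal-x} equals $\theta(G_D)$. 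The feasible set is compact, since $x_0=1$ and $Fx\ge0$ with $\Tr=1$ force $|x_d|\le x_0=1$ ($|X_{ij}|\le\sqrt{X_{ii}X_{jj}}$). It is nonempty, since $x=\ezero$ works. The maximum is therefore attained.

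\textbf{Step 2: the change of variables.} Put $y=Fx/n$, so that $x=Fy$ using $F^2x=n\,x(-\cdot)$ and the evenness of $x$. Then $y$ is even and real, and:
\begin{itemize}
\item[] $Fx\ge0$ iff $y\ge0$;
\item[] $x_0=1$ iff $\sum_k y_k=1$;
\item[] $x_d=0$ iff $(Fy)_d=0$;
\item[] $\sum_d x_d=(F^*x)_0=(Fx)_0=ny_0$.
\end{itemize}
This gives \eqref{eq:primal-y} with the same value and attainment.

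For \eqref{eq:primal-g}, compute $\sum_k y_kg_k=\langle y,Fb\rangle=\langle Fy,b\rangle=\sum_d x_d b_d$, using the symmetry of $F$ and the reality of $x$ and $b$. Since $b_d=1$ off $D$ and $x_d=0$ on $D$, this equals $\sum_d x_d$. So \eqref{eq:primal-g} agrees with \eqref{eq:primal-x} on the common feasible set.

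\textbf{Step 3: the bounds.} The point $x=\ezero$ gives $\theta\ge1$. For the upper bound, $ny_0\le n\sum_k y_k=n$ because $y\ge0$. Empty and complete graphs ($D=\emptyset$ or $D=\Z_n\setminus\{0\}$) need no separate argument, since nothing in Steps 1--3 used $D\neq\emptyset$ or $N\neq\emptyset$.

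The main care point is the Fourier bookkeeping: the sign convention of $F$ versus $F^*$, the identity $F^2=nR$, and the realness and evenness arguments that make $x=Fy$ an exact inverse. The symmetrization itself is routine.
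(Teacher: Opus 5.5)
Your proposal is correct and follows essentially the same route as the paper: translation averaging to a circulant feasible point, diagonalization by the Fourier characters, the change of variables $y=Fx/n$, and the pairing identity $\sum_k y_kg_k=\sum_d x_db_d$. The differences are cosmetic. Your extra reflection average is unnecessary, since symmetry of $\overline X$ together with translation invariance already forces $x$ to be even. You also obtain attainment from the bound $|x_d|\le1$, whereas the paper uses closedness of the $y$-feasible set inside the probability simplex.
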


\begin{proof}
Set \(\omega=\exp(-2\pi i/n)\). Summing a geometric series gives
\[
 \sum_{j=0}^{n-1}\omega^{jr}
 =
 \begin{cases}
 n,&r=0\pmod n,\\
 0,&r\neq0\pmod n.
 \end{cases}
\]
Consequently,
\begin{equation}\label{eq:Fourier-identities}
 F^*F=nI,
 \qquad
 (F^2v)_d=nv_{-d}.
\end{equation}
Pairing the terms at \(d\) and \(-d\) shows that \(F\) sends real even vectors to real even vectors. On this subspace, \(F^2=nI\).

Let \(X\) be feasible in \eqref{eq:theta-sdp}. Each translate
\[
 X^{(s)}_{ij}=X_{i+s,j+s}
\]
is a permutation conjugate of \(X\). It preserves positive semidefiniteness, trace, edge zeros, and the objective. Hence
\[
 \overline X=\frac1n\sum_{s\in\Z_n}X^{(s)}
\]
is feasible with the same objective and is invariant under simultaneous translation of its indices. With \(x_d=n\overline X_{0d}\), this invariance gives
\[
 \overline X_{ij}=\frac{x_{j-i}}n.
\]
Symmetry gives \(x_d=x_{-d}\), the trace condition gives \(x_0=1\), and the edge conditions give \(x_D=0\).

The Fourier vector with coordinates \(\omega^{ki}\) is an eigenvector of \(\overline X\), with eigenvalue
\[
 \frac1n\sum_d x_d\omega^{kd}=\frac{(Fx)_k}{n}.
\]
The geometric-series identity makes these vectors a basis. Thus positive semidefiniteness is equivalent to \(Fx\geq0\). Each difference occurs in exactly \(n\) ordered pairs, so
\[
 \sum_{i,j}\overline X_{ij}=\sum_d x_d.
\]
Conversely, the matrix defined by any feasible \(x\) in \eqref{eq:primal-x} has all the required properties. This proves \eqref{eq:primal-x}.

For \(y=Fx/n\), \eqref{eq:Fourier-identities} gives
\[
 Fy=x,
 \qquad
 \sum_k y_k=x_0=1,
 \qquad
 \sum_d x_d=ny_0.
\]
These identities give the equivalence with \eqref{eq:primal-y}. Its feasible set is closed in the probability simplex. It contains \(y=\one/n\), whose transform is \(\ezero\). Compactness therefore gives attainment, which transfers to the other formulations and to the semidefinite program. The uniform vector has objective \(1\), and \(y_0\leq1\) gives the upper bound \(n\).

Finally, \(b\) and \(g\) are real and even. Since \(F^{\mathsf T}=F\),
\[
 \sum_k y_kg_k
 =\sum_d(Fy)_db_d
 =\sum_d x_d-2\sum_{d\in D}x_d
 =\sum_d x_d
 =ny_0.
\]
This proves \eqref{eq:primal-g} pointwise on the feasible set. None of the constructions requires an edge or a nonedge to exist.
\end{proof}

\begin{prop}[Exact Fourier duals]\label{prop:dual}
Under Setup~\ref{setup:fourier}, each of the following minima is attained and equals \(\theta(G_D)\):
\begin{align}
 \min\bigl\{&
 \lambda:
 \lambda\in\R,\
 a\in\R^n\ \text{even},\
 \supp(a)\subseteq D,\
 \lambda\one+Fa\geq n\ezero
 \bigr\},                                                    \label{eq:dual-a}\\
 \min\biggl\{&
 1+\sum_k z_k:
 z\in\R^n\ \text{even},\
 z\geq0,\
 (Fz)_d=-1\ (d\in N)
 \biggr\},                                                   \label{eq:dual-z}\\
 \min\bigl\{&
 1+nt_0:
 t\in\R^n\ \text{even},\
 Ft\geq0,\
 t_d=-1/n\ (d\in N)
 \bigr\},                                                    \label{eq:dual-t}\\
 \min\bigl\{&
 \max_k(Fm)_k:
 m\in\R^n\ \text{even},\
 m_0=1,\
 m_d=1\ (d\in N)
 \bigr\}.                                                    \label{eq:dual-m}
\end{align}
The substitutions between \eqref{eq:dual-z} and \eqref{eq:dual-t} are
\[
 t=\frac{Fz}{n},
 \qquad
 z=Ft.
\]
For \(b=1-2\ind_D\) and \(g=Fb\), \eqref{eq:dual-m} also equals
\[
 \min_{\substack{h\in\R^n\ {\rm even}\\ \supp(h)\subseteq D}}
 \max_k(g+Fh)_k.
\]
Thus \(g+Fh\leq L\one\) certifies \(\theta(G_D)\leq L\) for every such \(h\). The constraints in \eqref{eq:dual-z} and \eqref{eq:dual-t} are on nonedges, whereas the primal Fourier zeros are on edges. All formulas include empty and complete graphs.
\end{prop}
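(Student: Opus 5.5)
The plan is to obtain \eqref{eq:dual-a} as the linear-programming dual of \eqref{eq:primal-x}, and then to derive the other three forms by exact changes of variables. Proposition~\ref{prop:fourier} shows that \eqref{eq:primal-x} is a finite-dimensional LP on the real even subspace, its optimum is attained, and its value is \(\theta(G_D)\). Weak duality is immediate. Take \(\lambda,a\) feasible in \eqref{eq:dual-a} and \(x\) feasible in \eqref{eq:primal-x}. Since \(F\) is symmetric, \(\langle Fa,x\rangle=\langle a,Fx\rangle\). Since \(\supp a\subseteq D\) and \(x_D=0\), the value \(\langle a,x\rangle\) vanishes, and \(F^2=n I\) on even vectors gives
\[
 \langle Fa,x\rangle=\tfrac1n\langle Fa,F(Fx)\rangle\cdot\tfrac{1}{1}\ \text{(up to the factor } n),
\]
so in the end \(\langle Fa, x\rangle = \langle a, Fx\rangle\). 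It is cleaner to pair against \(y=Fx/n\ge0\). We have \(\sum_k y_k(\lambda+(Fa)_k)\ge n y_0\), and the left side equals \(\lambda\sum_k y_k+\langle a,Fy\rangle=\lambda+\langle a,x\rangle=\lambda\). Hence \(\lambda\ge ny_0=\sum_d x_d\).

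For strong duality I would write \eqref{eq:primal-y} as an LP in \(y\ge0\) on even vectors, with equality constraints \(\sum y_k=1\) and \((Fy)_d=0\) for \(d\in D\). The multipliers of these constraints are \(\lambda\) and \(a\) supported on \(D\), and the dual constraint reads \(\lambda\one+Fa\ge n\ezero\). LP strong duality applies because the primal is feasible and bounded. Attainment follows because the dual of a feasible bounded LP attains its optimum. One technical point needs care: the evenness restriction. I would handle it by symmetrizing the multipliers under \(d\mapsto-d\). Alternatively, the LP can be parametrized over orbit representatives, and then the dual variables are automatically even.

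The remaining forms come from substitutions. From \eqref{eq:dual-a} to \eqref{eq:dual-m}, set \(m=(\lambda\ezero\cdot n\ldots)\). More precisely, since \(F\one=n\ezero\) and \(F\ezero=\one\), the constraint \(\lambda\one+Fa\ge n\ezero\) becomes \(F(\lambda\ezero+a-\one)\ge -\one\), which is not yet of the right shape. Instead I use the \(g\)-form \eqref{eq:primal-g}. Its LP dual, with multiplier \(\mu\) for \(\sum y=1\) and \(h\) on \(D\), is \(\min\{\mu:\mu\one\ge g+Fh\}=\min_h\max_k(g+Fh)_k\). Writing \(m=b+h\), the constraints become \(m_0=b_0=1\) and \(m_N=b_N=1\), with \(m_D\) free, which is \eqref{eq:dual-m}. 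For \eqref{eq:dual-z}, set \(z=\mu\one/n\cdot n-\ldots\). Concretely, put \(z:=(L\one-Fm)/n\cdot c\) and choose the normalization so that \(z\ge0\). Then \(Fz=(nL\ezero-nm)/n\cdot c\) after using \(F\one=n\ezero\) and \(F^2m=nm\). Choosing \(c=1\) gives \((Fz)_d=-m_d=-1\) on \(N\), and \(1+\sum_k z_k=1+(nL-nm_0)/n=L\), since \(\sum_k z_k=(Fz)_0\). This identifies \eqref{eq:dual-z} with \eqref{eq:dual-m}; the map is invertible through \(m=L\ezero\cdot(\ldots)\), and I will verify the inverse carefully. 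Finally, \eqref{eq:dual-t} follows from \(t=Fz/n\) and \(z=Ft\), using \(F^2=nI\). Each correspondence preserves feasibility and objective in both directions, so all the minima coincide and are attained. None of the steps requires \(D\) or \(N\) to be nonempty.

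The main obstacle is bookkeeping. The \(z\leftrightarrow m\) substitution has to have its constant exactly right, with \(z=(L\one-Fm)/n\) and \(L\) equal to the objective. The evenness restriction in the LP duality also has to be handled correctly; everything else is linear algebra based on \eqref{eq:Fourier-identities}.
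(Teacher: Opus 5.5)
Your proposal is correct. It differs from the paper's proof in two respects.

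\emph{Strong duality.} You cite LP strong duality and handle evenness by orbit representatives. The paper instead proves attainment for \eqref{eq:dual-a} from first principles. It drops evenness of $y$ but keeps only the cosine constraints $r_d$, $d\in D_+$, and shows that the finitely generated cone $\{C^{\mathsf T}u-s\}$ is closed by a Carath\'eodory-type reduction. It then combines nearest-point separation with optimality of a maximizer $y^*$ to write $n\ezero=C^{\mathsf T}u-s$ with $s\geq0$ and $s^{\mathsf T}y^*=0$, and finally sets $\lambda=u_0$ and $a_d=a_{-d}=u_d/2$.

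\emph{Reaching \eqref{eq:dual-m}.} You dualize \eqref{eq:primal-g} a second time, whereas the paper derives all remaining forms from \eqref{eq:dual-a} by substitution. Your abandoned direct attempt in fact works once a slip is corrected: $F(\lambda\ezero+a-\one)=\lambda\one+Fa-n\ezero$, so the constraint reads $\geq0$, not $\geq-\one$. Dividing by $n$ gives exactly the paper's $z$. Similarly, $m=\one-a$ gives $Fm=n\ezero-Fa$, so $\lambda\one+Fa\geq n\ezero$ is literally $\lambda\geq\max_k(Fm)_k$.

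The paper's route is self-contained and makes the dictionary among $(\lambda,a)$, $z$, $t$, $m$ explicit. Yours is shorter given standard LP theory and lands directly on the certificate form $\min_h\max_k(g+Fh)_k$.

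Two loose ends remain. First, discard the garbled display in your weak-duality paragraph; the pairing against $y=Fx/n$ that follows it is the right argument, and it is the paper's. Second, for the inverse you left open, take $L=1+\sum_kz_k$ and $m=L\ezero-Fz$. Then $m_0=m_N=1$ and $Fm=L\one-nz\leq L\one$. This direction gives objective at most $L$ rather than equal to $L$, with equality exactly when $\min_kz_k=0$. So the map does not preserve objectives in both directions as you claim. The two inequalities still give equal minima, and attainment of \eqref{eq:dual-z} follows from that of \eqref{eq:dual-m} via your forward map $z=(L\one-Fm)/n$.
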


\begin{proof}
We first prove dual attainment for the real linear program supplied by Proposition~\ref{prop:fourier}. Let
\[
 D_+=D\cap\{1,\ldots,(n-1)/2\},
 \qquad
 r_d(k)=\cos(2\pi dk/n).
\]
The maximum of \(ny_0\) over
\[
 y\geq0,\qquad \sum_k y_k=1,\qquad r_dy=0\quad(d\in D_+)
\]
is unchanged if evenness is imposed. To see this, replace \(y_k\) by \((y_k+y_{-k})/2\). This preserves each constraint and the objective. For the resulting even vector, the sine terms cancel and \((Fy)_d=r_dy\).

Let \(C\) have first row \(\one^{\mathsf T}\) and remaining rows \(r_d\), and put
\[
 v=(1,0,\ldots,0)^{\mathsf T},
 \qquad c=n\ezero.
\]
The feasible set \(Cy=v,\ y\geq0\) is nonempty and compact. Choose a maximizer \(y^*\), and consider
\[
 \mathcal K=
 \left\{
 C^{\mathsf T}u-s:
 s\geq0,\quad s_k=0\ \text{whenever }y_k^*>0
 \right\},
\]
where \(u\) is unrestricted.

The cone \(\mathcal K\) is generated by the positive and negative rows of \(C\), together with \(-e_k\) at the zero coordinates of \(y^*\). We check its closedness. Any conic combination of finitely many generators can be reduced to a combination of a linearly independent subset. If the generators with positive coefficients are dependent, choose a nonzero relation with at least one positive coefficient. Subtract the largest multiple of that relation that keeps all coefficients nonnegative. At least one coefficient becomes zero. Repeating this operation gives the asserted reduction. A convergent sequence of cone points therefore has a subsequence using the same independent subset. The inverse linear map on its span is continuous, so the coefficients converge to nonnegative limits. The limit belongs to the cone.

Suppose that \(c\notin\mathcal K\). Closedness gives a closest point \(p\in\mathcal K\), by restricting the distance minimization to a sufficiently large closed ball. Set \(w=c-p\). Variation along the segment from \(p\) to \(k\in\mathcal K\) gives
\[
 w^{\mathsf T}(k-p)\leq0.
\]
Taking \(k=0\) and \(k=2p\) gives \(w^{\mathsf T}p=0\). Hence
\[
 w^{\mathsf T}k\leq0\quad(k\in\mathcal K),
 \qquad
 w^{\mathsf T}c=\norm w_2^2>0.
\]
The cone contains \(\range(C^{\mathsf T})\) with both signs, so \(Cw=0\). It contains \(-e_k\) whenever \(y_k^*=0\), so \(w_k\geq0\) at those coordinates. Therefore \(y^*+\epsilon w\) is feasible for every sufficiently small \(\epsilon>0\). Its objective is strictly larger, a contradiction.

It follows that
\[
 c=C^{\mathsf T}u-s,
 \qquad
 s\geq0,\qquad s^{\mathsf T}y^*=0.
\]
Write \(\lambda=u_0\), and define
\[
 a_d=a_{-d}=u_d/2\quad(d\in D_+),
 \qquad
 a_k=0\quad(k\notin D).
\]
Then
\[
 C^{\mathsf T}u=\lambda\one+Fa\geq n\ezero,
 \qquad
 \lambda=u^{\mathsf T}v
 =(C^{\mathsf T}u)^{\mathsf T}y^*
 =c^{\mathsf T}y^*.
\]
Conversely, a feasible pair \((\lambda,a)\) in \eqref{eq:dual-a} satisfies, for every primal feasible \(y\),
\[
 ny_0
 \leq\lambda\sum_k y_k+\sum_k y_k(Fa)_k
 =\lambda+\sum_d a_d(Fy)_d
 =\lambda.
\]
This proves \eqref{eq:dual-a}, including attainment.

For a feasible pair in \eqref{eq:dual-a}, set
\[
 z=\frac{\lambda\one+Fa-n\ezero}{n}.
\]
The vector \(z\) is real, even, and nonnegative. The identities
\[
 F\one=n\ezero,\qquad F\ezero=\one,\qquad F^2a=na
\]
give
\[
 Fz=\lambda\ezero+a-\one,
 \qquad
 (Fz)_d=-1\quad(d\in N),
 \qquad
 \sum_k z_k=\lambda-1.
\]
Conversely, for \(z\) feasible in \eqref{eq:dual-z}, define
\[
 \lambda=1+\sum_k z_k,
 \qquad
 a=Fz+\one-\lambda\ezero.
\]
Then \(a\) vanishes at zero and on \(N\), and
\[
 \lambda\one+Fa-n\ezero=nz\geq0.
\]
The objectives agree in both directions. This proves \eqref{eq:dual-z} and its attainment.

The inverse substitutions \(t=Fz/n\) and \(z=Ft\) give
\[
 Ft=z\geq0,\qquad
 t_d=-1/n\quad(d\in N),\qquad
 \sum_k z_k=nt_0.
\]
They prove \eqref{eq:dual-t}.

Finally, put \(m=\one-a\) in \eqref{eq:dual-a}. Its prescribed coordinates are \(m_0=m_N=1\), and
\[
 Fm=n\ezero-Fa.
\]
Thus the dual inequality is equivalent to \(\lambda\geq\max_k(Fm)_k\). Every feasible \(m\) in \eqref{eq:dual-m} arises in this way. Since \(b\) also equals one at zero and on \(N\), these \(m\) are precisely \(b+h\) with \(h\) real, even, and supported on \(D\). This proves the remaining formulas and the certificate.
\end{proof}

\begin{prop}[Complementary product identity]\label{prop:complement}
Under Setup~\ref{setup:fourier},
\begin{equation}\label{eq:theta-product}
 \theta(G_D)\theta(G_N)=n,
\end{equation}
including empty and complete graphs. In the fair distance-bit model,
\begin{align}
 \E\theta(G_n)&\geq\sqrt n,                                  \label{eq:expectation-lower}\\
 \E\log\frac{\theta(G_n)}{\sqrt n}&=0,                        \label{eq:log-symmetry}\\
 \E\theta(G_n)-\sqrt n
 &=
 \frac12\E\left[
 \frac{(\theta(G_n)-\sqrt n)^2}{\theta(G_n)}
 \right].                                                   \label{eq:gap-identity}
\end{align}
\end{prop}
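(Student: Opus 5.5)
The plan is to prove \eqref{eq:theta-product} by combining two Fourier certificates from Propositions~\ref{prop:fourier} and~\ref{prop:dual}: one shows \(\theta(G_D)\theta(G_N)\geq n\), the other \(\theta(G_D)\theta(G_N)\leq n\). After that, the three expectation statements follow from symmetry in law under complementation.

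For \(\theta(G_D)\theta(G_N)\leq n\), let \(x\) be optimal in \eqref{eq:primal-x} for \(G_D\) and \(x'\) optimal in \eqref{eq:primal-x} for \(G_N\). Then \(x_0=x'_0=1\), \(x\) vanishes on \(D\), and \(x'\) vanishes on \(N\). So the coordinatewise product \(x\circ x'\) equals \(\ezero\). Write \(y=Fx/n\geq0\) and \(y'=Fx'/n\geq0\). Taking transforms of the product gives a convolution: \(F(x\circ x')=\frac1n(Fx)*(Fx')\), hence
\[
 \one=F\ezero=n\,(y*y').
\]
Evaluating at frequency \(0\) and keeping only the \(k=0\) term, which is allowed since all terms are nonnegative,
\[
 1=n\sum_k y_ky'_{-k}\geq n\,y_0y'_0=\frac{\theta(G_D)\theta(G_N)}{n},
\]
because \(ny_0=\sum_d x_d\) and \(ny'_0=\sum_d x'_d\). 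This gives the upper bound on the product.

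For \(\theta(G_D)\theta(G_N)\geq n\), I would build a dual certificate for \(G_D\) out of a primal solution for \(G_N\). Let \(x'\) be optimal for \(G_N\), so \(x'_0=1\), \(x'_N=0\), and \(Fx'\geq0\). Put \(m=\frac{n}{\theta(G_N)}\,y'\)-type rescaling; more precisely, \(z=Fx'\cdot c\) is a natural candidate in \eqref{eq:dual-z}. Since \(x'\) vanishes on \(N\), the vector \(x'-\ezero\) is supported on \(D\), and so \(m:=\frac{n}{\theta(G_N)}\bigl(\ezero+\ldots\bigr)\) should be adjusted to have \(m_0=1\) and \(m_N=1\). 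I expect the cleaner route is \eqref{eq:dual-t}. Take \(t=\alpha x'-\frac1n(\one-\ezero)\)-style combinations and check \(Ft\geq0\), using \(F\one=n\ezero\) and \(F\ezero=\one\). Concretely, let \(t=\frac1n\bigl(s\,x'-(\one-\ezero)\cdot\bigr)\) restricted suitably, and choose the scalar \(s\) so that \(t_N=-1/n\) while \(1+nt_0=n/\theta(G_N)\). Then \(Ft\geq0\) should reduce to \(Fx'\geq0\) together with an inequality at frequency \(0\) saturated by \(\sum_d x'_d=\theta(G_N)\). Getting the exact affine combination right is the main obstacle. If it resists, a fallback is to invoke duality for \(G_N\) as in \eqref{eq:dual-a} and read off a primal point for \(G_D\) by the same product and convolution trick in reverse. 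The empty and complete graphs are covered because every formula used holds when \(D\) or \(N\) is empty, where the two thetas are \(n\) and \(1\).

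For the probabilistic claims, note that flipping every \(\xi_j\) maps \(G_n\) to its complement and preserves the law. So \(\theta(G_n)\) and \(n/\theta(G_n)\) have the same distribution. Set \(R=\theta(G_n)/\sqrt n\); then \(R\overset{d}{=}R^{-1}\).
\begin{itemize}
 \item \eqref{eq:log-symmetry}: \(\log R\) is symmetric in law and bounded, because \(1\leq\theta\leq n\) by Proposition~\ref{prop:fourier}, so \(\E\log R=0\).
 \item \eqref{eq:gap-identity}: equality in law gives \(\E\theta=\E\,n/\theta\). Hence
 \[
 2(\E\theta-\sqrt n)=\E\bigl(\theta+n/\theta-2\sqrt n\bigr)=\E\,\frac{(\theta-\sqrt n)^2}{\theta}.
 \]
 \item \eqref{eq:expectation-lower}: this is immediate from \eqref{eq:gap-identity}, since its right-hand side is nonnegative. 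It also follows from AM–GM.
\end{itemize}
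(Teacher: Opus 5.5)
Your upper bound $\theta(G_D)\theta(G_N)\le n$ is correct; it is the paper's argument in convolution language. Your derivations of \eqref{eq:log-symmetry}, \eqref{eq:gap-identity} and \eqref{eq:expectation-lower} from $\theta(G_n)\overset{d}{=}n/\theta(G_n)$ also match the paper.

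The gap is the reverse inequality $\theta(G_D)\theta(G_N)\ge n$, and your main plan for it points the wrong way. A point feasible in the minimization problem \eqref{eq:dual-t} for $G_D$ only certifies an upper bound on $\theta(G_D)$. So building one from a primal optimum $x'$ for $G_N$ can at best give $\theta(G_D)\le n/\theta(G_N)$, which is the inequality you already proved. In fact the combination you were looking for exists: $t=x'/\theta(G_N)-\one/n$. It has $t_N=-1/n$, $Ft=Fx'/\theta(G_N)-\ezero\ge0$ (the zero coordinate vanishes because $(Fx')_0=\theta(G_N)$), and objective $1+nt_0=n/\theta(G_N)$. That is a second proof of the upper bound, not the lower one. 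To bound $\theta(G_N)$ from below in terms of $\theta(G_D)$, you need an attained dual optimum for $G_D$. This is exactly where the strong-duality part of Proposition~\ref{prop:dual} must enter, and your plan never uses it.

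The paper goes dual-to-primal across complementation. Take $z$ optimal in \eqref{eq:dual-z} for $G_D$, so $L:=1+\sum_kz_k=\theta(G_D)$, and set $v'=(Fz+\one)/L$. Then:
\begin{itemize}
\item $v'$ is real and even, with $v'_0=1$;
\item $v'_N=0$, because $(Fz)_N=-1$;
\item $Fv'=n(z+\ezero)/L\ge0$.
\end{itemize}
So $v'$ is feasible in \eqref{eq:primal-x} for $G_N$, with objective $n(z_0+1)/L\ge n/\theta(G_D)$. Your ``fallback'' gestures in this direction but gives no construction, and the mechanism is not a reversed convolution trick.

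The omission matters downstream. The identity $\theta(G_n)\overset{d}{=}n/\theta(G_n)$ needs equality in \eqref{eq:theta-product}. The lower bound \eqref{eq:expectation-lower} rests on exactly the missing inequality, since it amounts to $\E\theta(G_n)=\tfrac12\E[\theta(G_D)+\theta(G_N)]\ge\E\sqrt{\theta(G_D)\theta(G_N)}\ge\sqrt n$. With only $\theta(G_D)\theta(G_N)\le n$, you get no lower bound on $\E\theta(G_n)$ at all.
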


\begin{proof}
Choose attained maximizers \(x\) and \(v\) in \eqref{eq:primal-x} for \(G_D\) and \(G_N\). They satisfy
\[
 x_0=v_0=1,\qquad x_D=0,\qquad v_N=0,\qquad Fx,Fv\geq0.
\]
Put \(y=Fx/n\) and \(q=Fv/n\). These are nonnegative probability vectors. The partition of the nonzero residues into \(D\) and \(N\) gives
\[
 \sum_d x_dv_d=1.
\]
Using \eqref{eq:Fourier-identities} and \(F^{\mathsf T}=F\), we obtain
\[
 1=n\sum_k y_kq_k\geq ny_0q_0.
\]
Since \(\theta(G_D)=ny_0\) and \(\theta(G_N)=nq_0\), their product is at most \(n\).

For the reverse inequality, choose an attained optimizer \(z\) in \eqref{eq:dual-z} for \(G_D\), and set
\[
 L=1+\sum_k z_k=\theta(G_D),
 \qquad
 v'=\frac{Fz+\one}{L}.
\]
Then
\[
 v'_0=1,\qquad v'_N=0,\qquad
 Fv'=\frac{nz+n\ezero}{L}\geq0.
\]
The vector \(v'\) is real and even, so it is feasible in \eqref{eq:primal-x} for \(G_N\). Its objective is
\[
 \sum_d v'_d=\frac{nz_0+n}{L}\geq\frac nL.
\]
This proves the reverse inequality and hence \eqref{eq:theta-product}. The argument also covers either empty difference set.

Complementing every fair bit preserves the joint law. Therefore, for \(T=\theta(G_n)\), the variables \(T\) and \(n/T\) have the same distribution. The bounds \(1\leq T\leq n\) ensure integrability. The identity
\[
 T+\frac nT-2\sqrt n
 =
 \left(\sqrt T-\sqrt{\frac nT}\right)^2
 \geq0
\]
gives \eqref{eq:expectation-lower} after taking expectations. Equality in distribution also gives
\[
 \E\log T=\E\log(n/T)=\log n-\E\log T,
\]
which proves \eqref{eq:log-symmetry}. Finally,
\[
 \frac{(T-\sqrt n)^2}{T}=T+\frac nT-2\sqrt n,
\]
and taking expectations proves \eqref{eq:gap-identity}.
\end{proof}

\begin{prop}[Fourier tail and exceptional-event estimate]\label{prop:tail}
For every odd \(n\geq3\) and every \(s\geq0\),
\begin{equation}\label{eq:theta-tail-simple}
 \Prob\{\theta(G_n)>1+s\}
 \leq
 \min\left\{1,n\exp\left(-\frac{s^2}{4(n-1)}\right)\right\}.
\end{equation}
More precisely,
\begin{equation}\label{eq:theta-tail-precise}
 \Prob\{\theta(G_n)>1+s\}
 \leq
 \exp\left(-\frac{s^2}{4(n-1)}\right)
 +(n-1)\exp\left(-\frac{s^2}{2(n-2)}\right).
\end{equation}
Let \(A\) be any event on the distance-bit probability space, and put \(\delta=\Prob(A)\). For \(0<\delta\leq1\), set
\[
 U=2\sqrt{(n-1)\log(n/\delta)}.
\]
Then
\begin{equation}\label{eq:event-tail}
 \E[\theta(G_n)\ind_A]
 \leq
 \min\left\{
 n\delta,\
 \delta\left(1+U+\frac{2(n-1)}U\right)
 \right\}.
\end{equation}
The expectation is zero if \(\delta=0\).

For any sequence of events \(A_n\),
\[
 \Prob(A_n)\sqrt{\log n}\longrightarrow0
 \quad\Longrightarrow\quad
 \E[\theta(G_n)\ind_{A_n}]=o(\sqrt n).
\]
Consequently, if \(\epsilon_n\geq0\), \(\epsilon_n\to0\), and
\[
 \theta(G_n)\leq(1+\epsilon_n)\sqrt n
 \quad\text{on }A_n^c,
 \qquad
 \Prob(A_n)=o((\log n)^{-1/2}),
\]
then \(\E\theta(G_n)\leq(1+o(1))\sqrt n\). Independently, the deterministic bound \(\theta(G_n)\leq n\) makes \(\Prob(A_n)=o(n^{-1/2})\) sufficient.
\end{prop}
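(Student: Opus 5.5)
The plan is to bound \(\theta(G_n)\) by the trivial Fourier certificate from Proposition~\ref{prop:dual} with \(h=0\). Then \(\theta(G_D)\leq\max_k g_k\), where \(g=Fb\) and \(b_0=1\), \(b_d=\epsilon_{\min(d,n-d)}\) for \(d\neq0\). Since \(b\) is real and even,
\[
 g_k=1+2\sum_{j=1}^{(n-1)/2}\epsilon_j\cos\!\left(\frac{2\pi jk}{n}\right).
\]
So \(\{\theta(G_n)>1+s\}\subseteq\bigcup_k\{g_k-1>s\}\). Each \(g_k-1\) is a Rademacher sum \(\sum_j c_{jk}\epsilon_j\) with \(c_{jk}=2\cos(2\pi jk/n)\). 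Hoeffding's inequality gives \(\Prob\{g_k-1>s\}\leq\exp(-s^2/(2\sum_j c_{jk}^2))\).

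Next I would compute the variances. For \(k=0\) we have \(\sum_j c_{j0}^2=4\cdot\frac{n-1}2=2(n-1)\), which gives the term \(\exp(-s^2/(4(n-1)))\). For \(k\neq0\),
\[
 \sum_j c_{jk}^2=\sum_{j=1}^{(n-1)/2}\bigl(2+2\cos(4\pi jk/n)\bigr)=(n-1)+2\sum_{j=1}^{(n-1)/2}\cos(4\pi jk/n).
\]
Since \(n\) is odd, \(2k\not\equiv0\pmod n\). Summing the full geometric series over all nonzero \(j\) and pairing \(j\) with \(-j\) shows that the last cosine sum equals \(-\tfrac12\). Hence the variance is \(n-2\). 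A union bound over the \(n-1\) nonzero frequencies then yields \eqref{eq:theta-tail-precise}. Because \(2(n-2)\leq4(n-1)\), every term is at most \(\exp(-s^2/(4(n-1)))\), and this gives \eqref{eq:theta-tail-simple}; the bound by \(1\) is trivial.

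For \eqref{eq:event-tail} I would use the layer-cake formula together with \(0\leq\theta\leq n\):
\[
 \E[\theta\ind_A]=\int_0^\infty\Prob\{\theta\ind_A>t\}\,dt\leq\int_0^\infty\min\{\delta,\Prob\{\theta>t\}\}\,dt.
\]
The bound \(n\delta\) follows from \(\theta\leq n\). For the other bound, integrate \(\delta\) over \([0,1+U]\), which contributes \(\delta(1+U)\). Beyond \(1+U\), substitute \(t=1+s\) and use \eqref{eq:theta-tail-simple} with the Gaussian Mills-type bound \(\int_U^\infty e^{-s^2/c}\,ds\leq\frac{c}{2U}e^{-U^2/c}\), where \(c=4(n-1)\). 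The choice of \(U\) makes \(ne^{-U^2/(4(n-1))}=\delta\), so the tail contributes \(\delta\cdot 2(n-1)/U\). If \(\delta=0\), then \(\theta\ind_A=0\) almost surely.

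For the consequences, write \(\delta_n=\Prob(A_n)\). Then
\[
 \delta_nU\leq2\sqrt{n}\,\bigl(\delta_n\sqrt{\log n}+\delta_n\sqrt{\log(1/\delta_n)}\bigr).
\]
Both terms are \(o(\sqrt n)\): the first by hypothesis, and the second because \(\delta\sqrt{\log(1/\delta)}\to0\) as \(\delta\to0\), and \(\delta_n\to0\) is implied by the hypothesis. Moreover \(\delta_n\cdot2(n-1)/U\leq\delta_n\sqrt{n-1}/\sqrt{\log n}=o(\sqrt n)\), and \(\delta_n=o(\sqrt n)\) trivially. Finally,
\[
 \E\theta\leq(1+\epsilon_n)\sqrt n+\E[\theta\ind_{A_n}]=(1+o(1))\sqrt n.
\]
The alternative criterion follows from \(\E[\theta\ind_{A_n}]\leq n\Prob(A_n)\).

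The only genuinely delicate step is the exact variance computation for \(k\neq0\), in particular the cosine-sum identity that uses the oddness of \(n\). Everything else is routine Hoeffding, union bound, and integration.
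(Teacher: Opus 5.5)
Your proposal is correct and follows essentially the same route as the paper: the trivial certificate \(\theta(G_n)\leq\max_k g_k\), sub-Gaussian Rademacher tails with the exact variance proxies \(2(n-1)\) and \(n-2\), a union bound, and a split of the expectation at \(1+U\) with the same Mills-type tail integral. The paper reads the certificate off the primal form \eqref{eq:primal-g} rather than the dual with \(h=0\), which is immaterial, and it derives the asymptotic consequences the same way you do, via \(\sqrt{\log(n/\delta)}\leq\sqrt{\log n}+\sqrt{\log(1/\delta)}\) and \(\delta\sqrt{\log(1/\delta)}\to0\).
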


\begin{proof}
For the random edge set, put \(b_0=1\) and \(b_d=b_{-d}=\epsilon_d=1-2\xi_d\) for \(1\leq d\leq(n-1)/2\). Proposition~\ref{prop:fourier} gives
\[
 \theta(G_n)\leq\max_k g_k,\qquad \theta(G_n)\leq n,
 \qquad g=Fb,
\]
because every feasible \(y\) in \eqref{eq:primal-g} is a probability vector. Pairing reflected coordinates gives
\begin{equation}\label{eq:gk-sign-sum}
 g_k=1+2\sum_{d=1}^{(n-1)/2}\epsilon_d\cos(2\pi kd/n).
\end{equation}
Coefficient comparison, using \((2j)!\geq2^jj!\), proves
\[
 \cosh u\leq e^{u^2/2}\qquad(u\in\R).
\]
Define
\[
 S_k=4\sum_{d=1}^{(n-1)/2}\cos^2(2\pi kd/n).
\]
Independence in \eqref{eq:gk-sign-sum} gives
\[
 \E e^{t(g_k-1)}\leq e^{t^2S_k/2}\qquad(t\geq0).
\]
Applying Markov's inequality and taking \(t=s/S_k\), we obtain
\begin{equation}\label{eq:gk-tail}
 \Prob(g_k-1>s)\leq\exp\left(-\frac{s^2}{2S_k}\right).
\end{equation}

At zero, \(S_0=2(n-1)\). For \(k\neq0\), oddness implies \(2k\neq0\) modulo \(n\). Thus the geometric-series identity gives
\[
 \sum_{d=0}^{n-1}\cos(4\pi kd/n)=0,
 \qquad
 \sum_{d=0}^{n-1}\cos^2(2\pi kd/n)=\frac n2.
\]
Removing the term at zero and pairing reflected indices yields
\[
 \sum_{d=1}^{(n-1)/2}\cos^2(2\pi kd/n)=\frac{n-2}{4}.
\]
Hence \(S_k=n-2\) for nonzero \(k\). A union bound in \eqref{eq:gk-tail} proves \eqref{eq:theta-tail-precise}. Bounding every \(S_k\) by \(2(n-1)\), and probabilities by one, proves \eqref{eq:theta-tail-simple}.

Let \(Z=(\theta(G_n)-1)_+\). Since \(\theta(G_n)\geq1\), we have \(\theta(G_n)=1+Z\). For \(U>0\),
\[
 Z\ind_A\leq U\ind_A+(Z-U)_+.
\]
Integrating the lengths of the intervals \(\{s:U<s<Z\}\) on the finite probability space gives
\[
 \E[(Z-U)_+]=\int_U^\infty\Prob(Z>s)\,ds.
\]
Therefore
\[
 \E[\theta(G_n)\ind_A]
 \leq\delta(1+U)
 +n\int_U^\infty e^{-s^2/(4(n-1))}\,ds.
\]
The Gaussian tail satisfies
\[
 \int_U^\infty e^{-s^2/(4(n-1))}\,ds
 \leq\frac1U\int_U^\infty s e^{-s^2/(4(n-1))}\,ds
 =\frac{2(n-1)}Ue^{-U^2/(4(n-1))}.
\]
For the stated choice of \(U\), the exponential equals \(\delta/n\). This gives the second bound in \eqref{eq:event-tail}. The first follows from \(\theta(G_n)\leq n\). A null event contributes zero.

Dividing \eqref{eq:event-tail} by \(\sqrt n\) gives the convenient bound
\begin{equation}\label{eq:normalized-event-bound}
 \E\left[\frac{\theta(G_n)}{\sqrt n}\ind_A\right]
 \leq
 \frac{\delta}{\sqrt n}
 +2\delta\sqrt{\log(n/\delta)}
 +\frac{\delta}{\sqrt{\log(n/\delta)}}
\end{equation}
for \(\delta>0\). The inequality
\[
 \sqrt{\log(n/\delta)}
 \leq\sqrt{\log n}+\sqrt{\log(1/\delta)}
\]
shows that the middle term tends to zero when \(\delta\sqrt{\log n}\to0\). This condition implies \(\delta\to0\), and
\[
 \delta\sqrt{\log(1/\delta)}\longrightarrow0.
\]
For the latter limit, put \(v=\log(1/\delta)\) and use \(e^v\geq v^2/2\) to obtain
\[
 e^{-v}\sqrt v\leq2v^{-3/2}\longrightarrow0.
\]
The last term in \eqref{eq:normalized-event-bound} is at most \(\delta/\sqrt{\log n}\). This proves the exceptional-event assertion, with zero-probability events treated separately.

Integrating the assumed theta bound on \(A_n^c\) and adding this exceptional contribution proves the stated expectation consequence. The alternative condition follows directly from
\[
 \E[\theta(G_n)\ind_{A_n}]\leq n\Prob(A_n)=o(\sqrt n).
\]
\end{proof}

\begin{cor}[Elementary expectation bounds]\label{cor:elementary-expectation}
In the fair distance-bit model,
\[
 \sqrt n\leq\E\theta(G_n)
 \leq\bigl(\sqrt2+o(1)\bigr)\sqrt{n\log n}.
\]
\end{cor}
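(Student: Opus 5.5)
The lower bound $\sqrt n\leq\E\theta(G_n)$ is exactly \eqref{eq:expectation-lower} from Proposition~\ref{prop:complement}, so only the upper bound needs an argument. For that I will integrate the tail estimate of Proposition~\ref{prop:tail}. Alternatively, I can apply the exceptional-event bound \eqref{eq:event-tail} with $A$ the whole space. Both routes rest on the same sub-Gaussian control of $\max_k g_k$.

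Concretely, write $\theta(G_n)=1+Z$ with $Z=(\theta(G_n)-1)_+$ as in the proof of Proposition~\ref{prop:tail}. For any $U>0$,
\[
 \E Z\leq U+\int_U^\infty \Prob(Z>s)\,ds
 \leq U+n\int_U^\infty e^{-s^2/(4(n-1))}\,ds
 \leq U+\frac{2n(n-1)}{U}e^{-U^2/(4(n-1))},
\]
using \eqref{eq:theta-tail-simple} and the Gaussian-tail inequality from that proof. Choose $U=2\sqrt{(n-1)\log n}$, which is the choice $\delta=1$ in \eqref{eq:event-tail}. Then the exponential equals $1/n$, and
\[
 \E\theta(G_n)\leq 1+2\sqrt{(n-1)\log n}+\frac{2(n-1)}{2\sqrt{(n-1)\log n}}
 =(2+o(1))\sqrt{n\log n}.
\]

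This gives constant $2$, not $\sqrt2$. The loss comes from the crude bound $S_k\leq 2(n-1)$ in \eqref{eq:theta-tail-simple}. To recover $\sqrt2$, I will use the precise tail \eqref{eq:theta-tail-precise} instead. The $k=0$ term decays like $e^{-s^2/(4(n-1))}$ and appears only once, while the $n-1$ nonzero frequencies have variance proxy $S_k=n-2$. So take
\[
 U=\sqrt{2(n-2)\log n}\,(1+\eta_n),
\]
with $\eta_n\to0$ slowly, for instance $\eta_n=(\log n)^{-1/4}$. Then:
\begin{itemize}
\item The integral of $(n-1)e^{-s^2/(2(n-2))}$ beyond $U$ is at most $\frac{(n-1)(n-2)}{U}\,n^{-(1+\eta_n)^2}$, which is $o(\sqrt{n})$ since $n^{-2\eta_n}\to0$ faster than any power of $\log n$ matters here.
\item The single $k=0$ term integrates to at most $\frac{2(n-1)}{U}e^{-U^2/(4(n-1))}=O(\sqrt{n/\log n})\cdot n^{-1/2+o(1)}=o(\sqrt n)$.
\end{itemize}
Hence $\E\theta(G_n)\leq 1+U+o(\sqrt n)=(\sqrt2+o(1))\sqrt{n\log n}$.

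The only delicate step is this bookkeeping. One must check that the $k=0$ term is not multiplied by $n$, and that the union-bound prefactor $n-1$ is absorbed by the extra factor $n^{-2\eta_n-\eta_n^2}$ divided by $U$. Both are routine.
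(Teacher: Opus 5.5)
Your proof is correct and is essentially the paper's argument: integrate the precise tail bound \eqref{eq:theta-tail-precise} above a threshold of order $\sqrt{2(n-2)\log n}$, and bound the single zero-frequency term and the $(n-1)$-fold union term separately by the Gaussian tail integral. The slack factor $(1+\eta_n)$ is not needed, because the paper takes $t_n=\sqrt{2(n-2)\log(n-1)}$ exactly; the union term is then exactly $(n-2)/t_n=O(\sqrt{n/\log n})$, already negligible against $\sqrt{n\log n}$, and your preliminary pass through \eqref{eq:theta-tail-simple} can be dropped.
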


\begin{proof}
The lower bound is \eqref{eq:expectation-lower}. For the upper bound, let
\[
 t_n=\sqrt{2(n-2)\log(n-1)}.
\]
Integrating \eqref{eq:theta-tail-precise} above \(t_n\), and using the Gaussian tail integration from Proposition~\ref{prop:tail}, gives
\begin{align*}
 \E\theta(G_n)
 &\leq
 1+t_n
 +\frac{2(n-1)}{t_n}e^{-t_n^2/(4(n-1))}
 +\frac{(n-1)(n-2)}{t_n}e^{-t_n^2/(2(n-2))}\\
 &=
 1+t_n
 +\frac{2(n-1)}{t_n}e^{-t_n^2/(4(n-1))}
 +\frac{n-2}{t_n}\\
 &\leq1+t_n+\frac{3n-4}{t_n}.
\end{align*}
Here \(t_n/\sqrt{n\log n}\to\sqrt2\), and the other terms are \(o(\sqrt{n\log n})\).
\end{proof}

\section{Interior smoothing and residual certificates}\label{sec:interior}

The goal of this section is to construct positive complementary certificates and prove the deterministic estimate that connects their roots to approximate fixed points.

\begin{defn}[Smoothing weights]\label{defn:smoothing}
Under Setup~\ref{setup:fourier}, let \(\tau>0\), and define
\begin{equation}\label{eq:smoothing-weights}
 w_{\tau,0}=1,\qquad
 w_{\tau,k}=\frac{\tau}{n}\quad(k\neq0),\qquad
 W_\tau=\sum_k w_{\tau,k}=1+\tau\left(1-\frac1n\right).
\end{equation}
For a real vector \(v\), set
\begin{equation}\label{eq:Phi}
 \Phi_\tau(v)_k=\sqrt{v_k^2+4w_{\tau,k}}.
\end{equation}
The corresponding scalar bulk map is
\[
 \phi_\tau(t)=\sqrt{t^2+4\tau}.
\]
Thus \(\Phi_\tau(v)_0=\sqrt{v_0^2+4}\), and
\[
 \Phi_\tau(v)_k=n^{-1/2}\phi_\tau(\sqrt n\,v_k)
 \qquad(k\neq0).
\]
\end{defn}

\begin{prop}[Interior complementary smoothing]\label{prop:smoothing}\label{prop:interior}
Let \(n,D,N\) satisfy Setup~\ref{setup:fourier}, and let \(\tau>0\). There is a unique pair of strictly positive real even vectors \(p_\tau,q_\tau\) satisfying
\begin{equation}\label{eq:interior-conditions}
 H_Dp_\tau=0,\qquad
 H_Nq_\tau=0,\qquad
 \sum_kp_{\tau,k}=\sum_kq_{\tau,k},\qquad
 p_{\tau,k}q_{\tau,k}=w_{\tau,k}.
\end{equation}
Their common sum is
\begin{equation}\label{eq:interior-sum}
 \sum_kp_{\tau,k}=\sum_kq_{\tau,k}=\sqrt{nW_\tau}.
\end{equation}
Equivalently, \(f_{\mathrm{int},\tau}=p_\tau-q_\tau\) is the unique real even mean-zero solution of
\begin{equation}\label{eq:interior-fixed-point}
 f_{\mathrm{int},\tau}=K_0\Phi_\tau(f_{\mathrm{int},\tau}),
\end{equation}
with
\[
 p_\tau=\frac{\Phi_\tau(f_{\mathrm{int},\tau})+f_{\mathrm{int},\tau}}2,
 \qquad
 q_\tau=\frac{\Phi_\tau(f_{\mathrm{int},\tau})-f_{\mathrm{int},\tau}}2.
\]
The vector \(p_\tau\) uniquely minimizes
\begin{equation}\label{eq:barrier}
 B_\tau(p)=\frac{(\sum_kp_k)^2}{2n}
 -\sum_kw_{\tau,k}\log p_k
\end{equation}
over strictly positive real even \(p\) satisfying \(H_Dp=0\).

For \(X=\theta(G_D)/\sqrt n\),
\begin{align}
 \frac{p_{\tau,0}}{\sqrt{W_\tau}}
 &\leq X\leq\sqrt{W_\tau}\,p_{\tau,0},                       \label{eq:interior-theta-bound}\\
 \left|\log X-\arsinh\left(\frac{f_{\mathrm{int},\tau,0}}2\right)\right|
 &\leq\frac12\log W_\tau.                                   \label{eq:interior-log-bound}
\end{align}
In the fair distance-bit model, complementation swaps \(p_\tau\) and \(q_\tau\), and
\begin{equation}\label{eq:interior-expectation}
 \E p_{\tau,0}
 =\frac12\E\sqrt{f_{\mathrm{int},\tau,0}^2+4},
 \qquad
 1\leq\E X
 \leq\sqrt{W_\tau}\left(1+\frac12\E|f_{\mathrm{int},\tau,0}|\right).
\end{equation}
Consequently,
\begin{equation}\label{eq:root-sufficiency}
 \lim_{\tau\downarrow0}
 \limsup_{\substack{n\to\infty\\n\ {\rm odd}}}
 \E|f_{\mathrm{int},\tau,0}|=0
\end{equation}
is sufficient for Theorem~\ref{thm:main}. The proposition does not assert \eqref{eq:root-sufficiency}.
\end{prop}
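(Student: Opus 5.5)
The plan is to obtain the pair $(p_\tau,q_\tau)$ from the barrier problem \eqref{eq:barrier}, then read off the fixed-point form and the theta bounds from Propositions~\ref{prop:fourier} and~\ref{prop:complement}. Two structural facts drive everything. First, \eqref{eq:Fourier-identities} gives the orthogonal decomposition $I=J+H_D+H_N$ into commuting real projections. Second, $H_D\one=H_N\one=0$, because $F\one=n\ezero$ and $0\notin D\cup N$.

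\textbf{Existence.} Let $\mathcal V=\{p \text{ real even}: H_Dp=0\}$. On the open cone $\mathcal V\cap\{p>0\}$, the function $B_\tau$ is strictly convex, since $-\log$ is strictly convex and every weight $w_{\tau,k}$ is positive. The cone contains $\one$. It is coercive: for $p>0$ we have $\sum_kp_k=\norm p_1$, so the quadratic term dominates the logarithms at infinity, and $-w_{\tau,k}\log p_k\to+\infty$ as any coordinate tends to $0$ while the others stay bounded. Hence a unique minimizer $p_\tau$ exists.

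\textbf{Construction of $q_\tau$ and the sums.} Write $S=\sum_kp_{\tau,k}$. The first-order condition says that $(S/n)\one-w_\tau/p_\tau$ is orthogonal to $\mathcal V$, so it lies in $\range H_D$. Define $q_\tau=w_\tau/p_\tau$, which is positive and even. Then $q_\tau=(S/n)\one-r$ with $r\in\range H_D$. Applying $H_N$ and using $H_NH_D=0$ and $H_N\one=0$ gives $H_Nq_\tau=0$. Pairing with $\one$, which lies in $\ker H_D$ and is therefore orthogonal to $r$, gives $\sum_k q_{\tau,k}=S$. Pairing with $p_\tau$ gives
\[
W_\tau=\sum_k p_{\tau,k}q_{\tau,k}=S^2/n,
\]
which is \eqref{eq:interior-sum}.

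\textbf{Uniqueness of the pair.} Suppose a pair satisfies \eqref{eq:interior-conditions}. Then $q=(J+H_D)q$, and $Jq=(S'/n)\one$ where $S'$ is the common sum. So $p$ satisfies the KKT condition of the convex program, and strict convexity forces $p=p_\tau$, hence $q=w_\tau/p=q_\tau$.

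\textbf{Fixed-point form.} For such a pair set $f=p-q$. The equal sums give $Jf=0$. We have $p+q=\Phi_\tau(f)$, because $(p+q)^2-(p-q)^2=4pq=4w_\tau$ and $p+q>0$. Using $p=(J+H_N)p$ and $q=(J+H_D)q$,
\[
f=H_Np-H_Dq=(H_N-H_D)(p+q)=K_0\Phi_\tau(f).
\]
Conversely, let $f$ be a real even mean-zero solution of \eqref{eq:interior-fixed-point}, and set $p,q=(\Phi_\tau(f)\pm f)/2$. These are positive with $pq=w_\tau$. The fixed-point equation gives $H_Df=-H_D\Phi_\tau(f)$ and $H_Nf=H_N\Phi_\tau(f)$, i.e. $H_Dp=0$ and $H_Nq=0$. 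Mean zero gives equal sums. So uniqueness transfers from the pair to the fixed point.

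\textbf{Theta bounds.} The condition $H_Dp_\tau=0$ means $(Fp_\tau)_d=0$ for $d\in D$. Hence $p_\tau/S$ is feasible in \eqref{eq:primal-y}, giving
\[
\theta(G_D)\geq np_{\tau,0}/S=\sqrt n\,p_{\tau,0}/\sqrt{W_\tau}.
\]
The same argument for $G_N$ with $q_\tau$ gives $\theta(G_N)\geq \sqrt n\, q_{\tau,0}/\sqrt{W_\tau}$. By \eqref{eq:theta-product} and $p_{\tau,0}q_{\tau,0}=w_{\tau,0}=1$, this yields $\theta(G_D)\leq\sqrt n\sqrt{W_\tau}\,p_{\tau,0}$, which is \eqref{eq:interior-theta-bound}. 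Since
\[
p_{\tau,0}=\tfrac12\Bigl(\sqrt{f_0^2+4}+f_0\Bigr)=\exp\bigl(\arsinh(f_0/2)\bigr),
\]
taking logarithms gives \eqref{eq:interior-log-bound}.

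\textbf{Expectation and sufficiency.} Complementation exchanges $D$ and $N$, so $H_D\leftrightarrow H_N$. By uniqueness, $p_\tau\leftrightarrow q_\tau$. By invariance of the law, $\E p_{\tau,0}=\E q_{\tau,0}=\tfrac12\E\Phi_\tau(f)_0$. Combining $\sqrt{f_0^2+4}\le 2+|f_0|$ with \eqref{eq:interior-theta-bound} and \eqref{eq:expectation-lower} gives \eqref{eq:interior-expectation}. Finally, $W_\tau\le 1+\tau$, so \eqref{eq:root-sufficiency} yields $\limsup_n\E X\le\sqrt{1+\tau}$ for every $\tau>0$; letting $\tau\downarrow0$ gives Theorem~\ref{thm:main}.

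\textbf{Main obstacle.} The only delicate point is existence of the minimizer, i.e. coercivity and attainment on an open cone intersected with a subspace. The logarithmic barrier keeps minimizing sequences away from the boundary, and the quadratic term controls $\norm p_1$; I would check this by restricting to a compact sublevel set. Everything else is linear algebra with the projections $J,H_D,H_N$.
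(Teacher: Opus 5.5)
Your proposal is correct and follows essentially the same route as the paper. The steps match one for one: a strictly convex barrier minimized on a compact sublevel set (the paper makes your coercivity sketch precise via $B_\tau(p)\geq S^2/(2n)-W_\tau\log S$), stationarity giving $q_\tau=w_\tau/p_\tau$ with $Jp_\tau-q_\tau\in\range H_D$, the decomposition $I=J+H_D+H_N$ for the fixed-point equivalence, feasibility of $p_\tau/S$ and $q_\tau/S$ combined with $\theta(G_D)\theta(G_N)=n$ for the theta bounds, and complementation symmetry for the expectation. The steps you leave implicit are one-line checks that the paper writes out: $\inner{p_\tau}{r}=0$ for $r\in\range H_D$, and $H_Dz=0\Leftrightarrow(Fz)_d=0$ for $d\in D$ when $z$ is real and even.
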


\begin{proof}
We work in the real Euclidean subspace of even vectors. The identity \(F^*F=nI\) shows that the operators in \eqref{eq:projections} are mutually orthogonal projections, because their Fourier multipliers are the indicators of the disjoint sets \(\{0\},D,N\). Thus
\begin{equation}\label{eq:projection-decomposition}
 J+H_D+H_N=I,\qquad
 K_0^2=I-J,\qquad K_0J=0.
\end{equation}
Write \(L_D=\ker H_D\), \(w=w_\tau\), and \(W=W_\tau\).

\smallskip
\noindent\textbf{Step 1. The barrier has a unique interior minimizer.}
The positive part of \(L_D\) contains \(\one\). On the positive orthant, the Hessian of \eqref{eq:barrier} is
\[
 J+\diag\left(\frac{w_k}{p_k^2}\right).
\]
Every \(w_k\) is positive, so this Hessian is positive definite. Hence the restriction of \(B_\tau\) is strictly convex.

For attainment, put \(S=\sum_kp_k\). Since \(0<p_k\leq S\),
\[
 B_\tau(p)\geq\frac{S^2}{2n}-W\log S.
\]
The right-hand side tends to infinity as \(S\to0\) or \(S\to\infty\). Thus \(S\) is bounded above and away from zero in any fixed nonempty sublevel set. If all coordinates are at most \(M\) and \(p_i\to0\), then
\[
 -w_i\log p_i\to\infty,
 \qquad
 -\sum_{k\neq i}w_k\log p_k\geq-(W-w_i)\log M.
\]
The quadratic term is nonnegative, so a coordinate cannot approach zero in that sublevel set. Its closure in \(L_D\) is therefore compact and lies inside the domain. A minimizing sequence has a convergent subsequence there. Strict convexity makes the minimizer \(p\) unique.

Define \(q_k=w_k/p_k\). Differentiating in each direction \(h\in L_D\) gives
\[
 \inner{Jp-q}{h}=0.
\]
Therefore \(q=Jp+r\) for some \(r\in\range H_D\). The vector \(q\) is positive, real, and even, and the projection decomposition gives
\[
 H_Nq=0,\qquad Jq=Jp.
\]
Both sums equal \(S\). Since \(p\in\range J+\range H_N\) and \(q\in\range J+\range H_D\),
\[
 W=\sum_kp_kq_k=\inner p q=\inner{Jp}{Jq}=\frac{S^2}{n}.
\]
Positivity gives \(S=\sqrt{nW}\), proving \eqref{eq:interior-sum}.

Conversely, any positive even pair satisfying \eqref{eq:interior-conditions} has \(q=w/p\) and \(Jp-q\in\range H_D\). It satisfies the stationarity equation for the strictly convex barrier. Its first component is therefore the unique minimizer, and its second is \(w/p\). This proves uniqueness of the pair. Exchanging \(D\) and \(N\) reverses the pair, so complementation swaps \(p\) and \(q\).

\smallskip
\noindent\textbf{Step 2. The pair is equivalent to the fixed point.}
Set \(f=p-q\) and \(u=p+q\). Equal sums give \(Jf=0\), while the support conditions and \eqref{eq:projection-decomposition} give
\[
 K_0u=p-q=f.
\]
Coordinatewise positivity and \(p_kq_k=w_k\) imply
\[
 u_k=\sqrt{f_k^2+4w_k}.
\]
This proves \eqref{eq:interior-fixed-point}.

Conversely, let \(f\) be real, even, and mean zero, with \(f=K_0u\) and \(u=\Phi_\tau(f)\). Put
\[
 p=\frac{u+f}{2},\qquad q=\frac{u-f}{2}.
\]
The inequality \(u_k>|f_k|\) gives positivity, and direct multiplication gives \(p_kq_k=w_k\). Equal sums follow from \(Jf=0\). Projecting the equation onto the two nonconstant ranges gives
\[
 H_Df=-H_Du,\qquad H_Nf=H_Nu.
\]
Hence \(H_Dp=0\) and \(H_Nq=0\). Uniqueness of the pair proves uniqueness of the fixed point.

\smallskip
\noindent\textbf{Step 3. The root bounds theta.}
For a real even vector \(z\), reflected pairing gives \(F^*z=Fz\). Multiplying the formula for \(H_Dz\) by \(F^*\) shows
\begin{equation}\label{eq:kernel-feasibility}
 H_Dz=0
 \quad\Longleftrightarrow\quad
 (Fz)_d=0\quad(d\in D).
\end{equation}
The corresponding equivalence holds for \(N\). Therefore \(p/S\) and \(q/S\), where \(S=\sqrt{nW}\), are feasible in \eqref{eq:primal-y} for \(G_D\) and \(G_N\). Proposition~\ref{prop:fourier} gives
\[
 \theta(G_D)\geq\frac{np_0}{S},
 \qquad
 \theta(G_N)\geq\frac{nq_0}{S}.
\]
The complementary identity \eqref{eq:theta-product} then gives
\[
 \theta(G_D)\leq\frac S{q_0}.
\]
Since \(p_0q_0=w_0=1\), these inequalities prove \eqref{eq:interior-theta-bound}.

The positive solution of \(p_0-p_0^{-1}=f_0\) is
\[
 p_0=\frac{\sqrt{f_0^2+4}+f_0}{2}
 =\exp\left(\arsinh\left(\frac{f_0}{2}\right)\right).
\]
Taking logarithms proves \eqref{eq:interior-log-bound}.

\smallskip
\noindent\textbf{Step 4. Complementation gives the expectation estimate.}
In the fair model, \(p_0\) and \(q_0=p_0^{-1}\) have the same distribution. Their expectations exist because the sign space is finite. Hence
\[
 \E p_0=\frac12\E(p_0+q_0)
 =\frac12\E\sqrt{f_0^2+4}
 \leq1+\frac12\E|f_0|.
\]
The last inequality uses \(\sqrt{t^2+4}\leq2+|t|\). Averaging \eqref{eq:interior-theta-bound} and using \eqref{eq:expectation-lower} proves \eqref{eq:interior-expectation}. Since \(W_\tau\leq1+\tau\),
\[
 \limsup_n\E X
 \leq\sqrt{1+\tau}
 \left(1+\frac12\limsup_n\E|f_{\mathrm{int},\tau,0}|\right).
\]
Under \eqref{eq:root-sufficiency}, letting \(\tau\downarrow0\) gives an upper limit of at most one. The lower bound is one for every \(n\). This proves the stated sufficient condition without asserting its probabilistic hypothesis.
\end{proof}

\begin{prop}[Deterministic residual certificate]\label{prop:residual}
Fix \(n,D,N,\tau\) as in Proposition~\ref{prop:smoothing}, and write
\[
 f^*=f_{\mathrm{int},\tau},\qquad
 p^*=p_\tau,\qquad
 w=w_\tau,\qquad W=W_\tau.
\]
For a real even mean-zero vector \(v\), define
\[
 u=\Phi_\tau(v),\qquad
 r_v=v-K_0u,\qquad
 a=\frac{u+v}{2},\qquad
 R_k=\frac{a_k}{p_k^*}.
\]
Every \(R_k\) is positive. The discrepancy
\[
 D_{\mathrm{rel}}(v)=\sum_kw_k(R_k+R_k^{-1}-2)
\]
satisfies
\begin{align}
 D_{\mathrm{rel}}(v)
 &=\sqrt{\frac Wn}\sum_ku_k-2W-\frac12\inner{r_v}{f^*},
                                                               \label{eq:relative-identity}\\
 0\leq D_{\mathrm{rel}}(v)
 &\leq
 \left(\norm v_2+\frac12\norm{f^*}_2\right)\norm{r_v}_2.
                                                               \label{eq:relative-bound}
\end{align}
Moreover,
\begin{equation}\label{eq:residual-certificate}
 \left|
 \log\frac{\theta(G_D)}{\sqrt n}
 -\arsinh\left(\frac{v_0}{2}\right)
 \right|
 \leq\frac12\log W+\sqrt{D_{\mathrm{rel}}(v)}.
\end{equation}
\end{prop}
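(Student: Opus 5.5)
The proof rests on one algebraic observation. Since \(u_k=\sqrt{v_k^2+4w_k}>|v_k|\), both \(a=(u+v)/2\) and \(b=(u-v)/2\) are strictly positive and satisfy \(a_kb_k=w_k\). With \(p^*q^*=w\) and \(q^*=p_\tau\)'s partner from Proposition~\ref{prop:smoothing}, this gives \(R_k>0\) and
\[
 w_kR_k=a_kq^*_k,\qquad w_kR_k^{-1}=b_kp^*_k.
\]
Hence \(D_{\mathrm{rel}}(v)=\inner a{q^*}+\inner b{p^*}-2W\). Expanding \(a,b\) in \(u,v\) gives
\[
 D_{\mathrm{rel}}(v)=\tfrac12\bigl(\inner u{p^*}+\inner u{q^*}\bigr)-\tfrac12\inner v{f^*}-2W.
\]

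\textbf{Step 1: the identity \eqref{eq:relative-identity}.} I use the decomposition \eqref{eq:projection-decomposition} together with \(H_Dp^*=0\) and \(H_Nq^*=0\). From Step~1 of Proposition~\ref{prop:smoothing}, \(Jp^*=Jq^*=(S/n)\one\) with \(S=\sqrt{nW}\). Moreover \(H_Np^*=H_Nf^*\) and \(H_Dq^*=-H_Df^*\). Therefore
\[
 \inner u{p^*}+\inner u{q^*}
 =2\sqrt{W/n}\,\textstyle\sum_ku_k+\inner{(H_N-H_D)u}{f^*}.
\]
Here \((H_N-H_D)u=K_0u=v-r_v\). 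Substituting, the \(\inner v{f^*}\) terms cancel, which yields \eqref{eq:relative-identity}.

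\textbf{Step 2: the bounds \eqref{eq:relative-bound}.} Nonnegativity is immediate from \(R+R^{-1}-2\geq0\). For the upper bound, the term \(-\tfrac12\inner{r_v}{f^*}\) contributes at most \(\tfrac12\norm{f^*}_2\norm{r_v}_2\). It then remains to show
\[
 \sqrt{W/n}\textstyle\sum_ku_k-2W\leq\norm v_2\norm{r_v}_2 .
\]
I decompose \(W=\inner ab\) along \(J,H_D,H_N\). Since \(v\) has mean zero, \(Ja=Jb\). Setting \(s=\sum_ku_k/(2\sqrt n)>0\) gives \(\inner{Ja}{Jb}=s^2\).

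The residual satisfies
\[
 H_Dr_v=H_D(v+u)=2H_Da,\qquad H_Nr_v=H_N(v-u)=-2H_Nb.
\]
These also give \(H_Db=\tfrac12H_Dr_v-H_Dv\) and \(H_Na=H_Nv-\tfrac12H_Nr_v\). Hence
\[
 W=s^2+\rho,\qquad
 \rho=\tfrac14\norm{H_Dr_v}_2^2+\tfrac14\norm{H_Nr_v}_2^2-\tfrac12\inner{r_v}{(H_D+H_N)v}.
\]
In particular, \(-\rho\leq\tfrac12\norm{r_v}_2\norm v_2\). The left side of the target inequality equals \(2\sqrt W(s-\sqrt W)=-2\sqrt W\rho/(s+\sqrt W)\). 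Since \(0<2\sqrt W/(s+\sqrt W)\leq2\), this is at most \(2\max(-\rho,0)\leq\norm v_2\norm{r_v}_2\).

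\textbf{Step 3: the certificate \eqref{eq:residual-certificate}.} I take \eqref{eq:interior-log-bound} as given. Because \(w_0=1\), we have \(a_0-a_0^{-1}=v_0\), so \(\log a_0=\arsinh(v_0/2)\). Likewise \(\log p^*_0=\arsinh(f^*_0/2)\). The difference of the two arsinh values is therefore \(\log R_0\). Finally,
\[
 R_0+R_0^{-1}-2=4\sinh^2\!\bigl(\tfrac12\log R_0\bigr)\geq(\log R_0)^2,
\]
and this quantity is at most \(D_{\mathrm{rel}}(v)\) because every summand is nonnegative and \(w_0=1\). The triangle inequality then gives \eqref{eq:residual-certificate}.

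The main obstacle is Step~2: finding the decomposition of \(\inner ab=W\) in which the residual enters only through \(H_Da\) and \(H_Nb\). Once that is in place, it is also essential that the quadratic terms in \(\rho\) have the favorable sign, and that \(s>0\) keeps the factor \(2\sqrt W/(s+\sqrt W)\) bounded by \(2\).
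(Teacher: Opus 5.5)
Your proposal is correct and follows the paper's proof in all essentials. It uses the pairing $a_kb_k=w_k$ to write $D_{\mathrm{rel}}(v)=\inner a{q^*}+\inner b{p^*}-2W$, and isolates the constant Fourier mode to produce $\sqrt{W/n}\sum_ku_k$. It then expresses $s^2-W$ through $\inner v{r_v}$ and $\norm{r_v}_2^2$ and rationalizes $s-\sqrt W$. At the root it uses $e^z+e^{-z}-2\geq z^2$. The only difference is bookkeeping: you derive the identities from $J+H_D+H_N=I$ applied to $\inner u{p^*}+\inner u{q^*}$ and to $W=\inner ab$. The paper instead uses $K_0^2=I-J$ together with $\norm u_2^2=\norm v_2^2+4W$ and $\norm{K_0u}_2^2=\norm u_2^2-s^2$, where its $s$ is twice yours.
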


\begin{proof}
Let \(q^*=q_\tau\) and \(u^*=p^*+q^*\). Proposition~\ref{prop:smoothing} gives
\[
 f^*=p^*-q^*=K_0u^*,\qquad
 \sum_ku_k^*=2\sqrt{nW},\qquad
 p_k^*q_k^*=w_k.
\]
Set \(b=(u-v)/2\). Since \(u_k=\sqrt{v_k^2+4w_k}>|v_k|\), both \(a\) and \(b\) are positive and \(a_kb_k=w_k\). Thus
\[
 a_kq_k^*=w_kR_k,\qquad
 b_kp_k^*=\frac{w_k}{R_k}.
\]
Expanding the two inner products gives
\begin{equation}\label{eq:relative-expand}
 \sum_kw_k(R_k+R_k^{-1})
 =\inner a{q^*}+\inner b{p^*}
 =\frac12\bigl(\inner u{u^*}-\inner v{f^*}\bigr).
\end{equation}
The operator \(K_0\) is symmetric, and \(K_0^2=I-J\). Consequently,
\begin{align*}
 \inner u{u^*}
 &=\inner{K_0u}{K_0u^*}+\inner{Ju}{Ju^*}\\
 &=\inner{v-r_v}{f^*}
   +\frac{(\sum_ku_k)(2\sqrt{nW})}{n}.
\end{align*}
Substitution into \eqref{eq:relative-expand} proves \eqref{eq:relative-identity}. Each summand of the discrepancy is nonnegative because
\[
 R+R^{-1}-2=\left(\sqrt R-\frac1{\sqrt R}\right)^2.
\]

To estimate the discrepancy, put \(s=(\sum_ku_k)/\sqrt n>0\). We have
\[
 \norm u_2^2=\norm v_2^2+4W,\qquad
 \norm{K_0u}_2^2=\norm u_2^2-s^2.
\]
Expanding \(r_v=v-K_0u\) therefore gives
\begin{equation}\label{eq:s-square}
 s^2-4W=2\inner v{r_v}-\norm{r_v}_2^2.
\end{equation}
It follows that
\begin{equation}\label{eq:s-linear}
 \sqrt W(s-2\sqrt W)
 =
 \frac{\sqrt W}{s+2\sqrt W}
 \left(2\inner v{r_v}-\norm{r_v}_2^2\right).
\end{equation}
The prefactor lies in \((0,1/2]\). If the parenthesis is nonpositive, the left-hand side is at most zero. If it is positive, the left-hand side is at most
\[
 \inner v{r_v}\leq\norm v_2\norm{r_v}_2.
\]
In either case,
\[
 \sqrt W(s-2\sqrt W)\leq\norm v_2\norm{r_v}_2.
\]
Combining this bound with \eqref{eq:relative-identity}, and applying Cauchy--Schwarz to its remaining inner product, proves \eqref{eq:relative-bound}.

At the root, \(w_0=1\), so
\[
 a_0=\frac{\sqrt{v_0^2+4}+v_0}{2}
 =\exp\left(\arsinh\left(\frac{v_0}{2}\right)\right).
\]
Set \(z=\log R_0\). The power series of \(\cosh z\) gives
\[
 e^z+e^{-z}-2\geq z^2.
\]
The root summand of \(D_{\mathrm{rel}}(v)\) is the left-hand side. Hence
\[
 \left|\log p_0^*-\arsinh\left(\frac{v_0}{2}\right)\right|
 \leq\sqrt{D_{\mathrm{rel}}(v)}.
\]
Together with \eqref{eq:interior-log-bound}, this proves \eqref{eq:residual-certificate}.
\end{proof}

\section{Paired Fourier sampling}
\label{sec:tech-fourier}
\label{fourier:sec:paired-fourier}

In this section, we prove the simultaneous kernel bounds in
Proposition~\ref{fourier:prop:paired-kernel}.
The argument combines a direct concentration estimate, a multiscale
approximation for independent Fourier rows, and a transfer to paired
distance bits.

\begin{notation}\label{fourier:notation:sampling}
We use the unnormalized Fourier matrix \(F\), the sets \(D,N\),
the multiplier \(a\), and the operators \(K_0,J\) from
Setup~\ref{setup:fourier}.
Let \(n\geq3\) be odd, and identify the row and column indices with
\([n]=\{0,\ldots,n-1\}\), with arithmetic modulo \(n\).
For \(T\subseteq[n]\), let \(F_T\) consist of the rows indexed by \(T\).
For \(x\in\C^n\), write
\[
 \norm{x}_1=\sum_{j=0}^{n-1}|x_j|,
 \qquad
 \norm{x}_2=\left(\sum_{j=0}^{n-1}|x_j|^2\right)^{1/2},
\]
and let \(x_T\) agree with \(x\) on \(T\) and vanish elsewhere.

Write
\[
 \sigma_j=\epsilon_j=1-2\xi_j
 \quad\left(1\leq j\leq\frac{n-1}{2}\right).
\]
These are the independent fair signs of Setup~\ref{setup:fourier};
equivalently, \(\xi_j=(1-\sigma_j)/2\) are the original independent
Bernoulli variables with parameter \(1/2\).
For \(r\neq0\), membership in \(D\) is equivalent to
\(\sigma_{\min(r,n-r)}=-1\), or to \(\xi_{\min(r,n-r)}=1\).
In this section, write
\[
 D^{\mathrm c}=N=([n]\setminus\{0\})\setminus D.
\]
The zero row belongs to neither set, and the multiplier retains
\(a_0=0\).
All logarithms are natural unless the base is displayed.
Averages over a multiset count multiplicities.
\end{notation}

\begin{lem}[Mixed relative and additive concentration]
\label{fourier:lem:mixed-concentration}
Let \(q\geq1\) be an integer and \(a>0\).
Let \(X_1,\ldots,X_q\) be independent real random variables with
\(0\leq X_i\leq a\) almost surely and common mean \(\E X_i=\mu\).
Set \(\overline X=q^{-1}\sum_{i=1}^qX_i\).
For every \(0<\epsilon\leq1/2\) and \(\alpha>0\),
\begin{equation}\label{fourier:eq:mixed-concentration}
 \Prob\{|\overline X-\mu|>\epsilon\mu+\alpha\}
 \leq2\exp\left(-\frac{q\epsilon\alpha}{2a}\right).
\end{equation}
Each one-sided tail is at most
\begin{equation}\label{fourier:eq:mixed-one-sided}
 \exp\left(
 -\frac{q\mu\epsilon^2}{4a}
 -\frac{q\epsilon\alpha}{2a}
 \right).
\end{equation}
The estimates include \(\mu=0\) and impose no restriction on
\(\alpha/\mu\).
\end{lem}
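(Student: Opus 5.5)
We bound each one-sided tail by \eqref{fourier:eq:mixed-one-sided} via the exponential Chernoff method, then combine the two tails. By scaling we may take \(a=1\): replacing \(X_i\) by \(X_i/a\) replaces \(\mu,\alpha\) by \(\mu/a,\alpha/a\), which reproduces the stated exponents.

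For the upper tail, fix \(\lambda>0\). Convexity of \(x\mapsto e^{\lambda x}\) on \([0,1]\) gives \(e^{\lambda x}\leq1+x(e^\lambda-1)\). Hence
\[
 \E e^{\lambda X_i}\leq1+\mu(e^\lambda-1)\leq\exp\bigl(\mu(e^\lambda-1)\bigr).
\]
Markov's inequality then yields
\[
 \Prob\{\overline X-\mu>\epsilon\mu+\alpha\}
 \leq\exp\Bigl(q\bigl[\mu(e^\lambda-1-\lambda)-\lambda\epsilon\mu-\lambda\alpha\bigr]\Bigr).
\]
Choose \(\lambda=\epsilon\leq1/2\). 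On \([0,1/2]\) we have \(e^\lambda-1-\lambda\leq\lambda^2/2\cdot e^{1/2}\leq 0.83\lambda^2\), so the bracket is at most \(-0.17\,\mu\epsilon^2-\epsilon\alpha\). This is at most \(-\mu\epsilon^2/4\)? No: \(0.17<1/4\), so this exact choice fails, and we must be more careful.

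The fix is to use \(\lambda=\epsilon/2\) and a sharper constant. With \(\lambda=\epsilon/2\), \(e^\lambda-1-\lambda\leq \lambda^2(1/2+\lambda/6\cdot e^{\lambda})\leq0.55\lambda^2\). The bracket becomes
\[
 \mu\epsilon^2(0.1375-0.5)-\tfrac{\epsilon}{2}\alpha
 \leq-\tfrac{\mu\epsilon^2}{4}-\tfrac{\epsilon\alpha}{2},
\]
which is exactly the one-sided bound.

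For the lower tail, apply the same method to \(-X_i\) with \(\lambda=\epsilon/2\). Using \(e^{-\lambda x}\leq1-x(1-e^{-\lambda})\), we get \(\log\E e^{-\lambda X_i}\leq-\mu(1-e^{-\lambda})\). The exponent is then \(q\mu(\lambda-1+e^{-\lambda}-\lambda\epsilon)-q\lambda\alpha\), and since \(e^{-\lambda}-1+\lambda\leq\lambda^2/2\), the bracket is at most \(\mu\epsilon^2(1/8-1/2)\leq-\mu\epsilon^2/4\). Summing the two one-sided bounds and dropping the nonnegative \(\mu\epsilon^2\) term gives \eqref{fourier:eq:mixed-concentration}.

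When \(\mu=0\), every \(X_i\) vanishes almost surely, so both bounds hold trivially; they are also consistent with the formulas above. The only real obstacle is the constant bookkeeping in the upper tail. There, \(\lambda\) must be proportional to \(\epsilon\), not equal to it, so that the quadratic remainder \(e^\lambda-1-\lambda\) stays dominated by the linear gain \(\lambda\epsilon\mu\) while the \(\alpha\) coefficient stays exactly \(\epsilon/2\). The choice \(\lambda=\epsilon/2\) with \(\epsilon\leq1/2\) handles both.
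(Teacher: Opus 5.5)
Your proposal is correct and follows essentially the same route as the paper: normalize to $a=1$, bound the moment generating function by convexity on $[0,1]$, apply Markov's inequality with parameter $\epsilon/2$ to each one-sided tail, and add the two tails. The only difference is that the paper uses the cruder bounds $e^{t}-1-t\leq t^2$ and $e^{-t}-1+t\leq t^2$, which already suffice because any constant at most one works with $\lambda=\epsilon/2$. Your sharper constant is therefore unnecessary, which is just as well: your stated justification $\lambda^2(1/2+\lambda e^{\lambda}/6)$ gives about $0.5535\lambda^2$ at $\lambda=1/4$, slightly above the claimed $0.55\lambda^2$, although that inequality itself is true.
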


\begin{proof}
Normalize the variables by
\[
 Y_i=X_i/a,\qquad \nu=\mu/a,\qquad A=\alpha/a.
\]
Then \(0\leq Y_i\leq1\) and \(\E Y_i=\nu\).
For \(0\leq t\leq1\), convexity gives, for \(0\leq y\leq1\),
\[
 e^{ty}\leq1+y(e^t-1),
 \qquad
 e^{-ty}\leq1+y(e^{-t}-1).
\]
Taking expectations and using \(1+u\leq e^u\) yields
\[
 \begin{aligned}
 \E e^{t(Y_i-\nu)}
 &\leq \exp\bigl(\nu(e^t-1-t)\bigr),\\
 \E e^{-t(Y_i-\nu)}
 &\leq \exp\bigl(\nu(e^{-t}-1+t)\bigr).
 \end{aligned}
\]
The factor \(1+\nu(e^{-t}-1)\) in the second estimate is positive,
since \(0\leq\nu\leq1\) and \(e^{-t}>0\).

The terms of order at least two in either exponential series have
absolute sum bounded by
\[
 t^2\sum_{j\geq2}\frac1{j!}
 \leq t^2\sum_{j\geq2}2^{-(j-1)}
 =t^2.
\]
Here \(j!\geq2^{j-1}\) for \(j\geq2\).
Consequently,
\(e^t-1-t\leq t^2\) and \(e^{-t}-1+t\leq t^2\).
Independence therefore gives
\begin{equation}\label{fourier:eq:centered-mgf}
 \E\exp\left(\pm t\sum_{i=1}^q(Y_i-\nu)\right)
 \leq \exp(q\nu t^2).
\end{equation}

Take \(t=\epsilon/2\), so \(0<t\leq1/4\).
Markov's inequality applied to \eqref{fourier:eq:centered-mgf} gives
\[
 \begin{aligned}
 \Prob\left\{\sum_{i=1}^q(Y_i-\nu)>q(\epsilon\nu+A)\right\}
 &\leq \exp\bigl(-tq(\epsilon\nu+A)+q\nu t^2\bigr)\\
 &=\exp\left(-\frac{q\nu\epsilon^2}{4}
             -\frac{q\epsilon A}{2}\right).
 \end{aligned}
\]
Applying Markov's inequality to the negative exponential gives this
same bound for the event
\(\sum_i(Y_i-\nu)<-q(\epsilon\nu+A)\).
Substituting \(\nu=\mu/a\) and \(A=\alpha/a\) proves
\eqref{fourier:eq:mixed-one-sided}.
Adding the two bounds and discarding the nonpositive term involving
\(\mu\) proves \eqref{fourier:eq:mixed-concentration}.
No step divides by \(\mu\).
\end{proof}

\begin{lem}[Paired-distance transfer]
\label{fourier:lem:paired-transfer}
Let \(n\geq3\) be odd and \(q=\lceil n/4\rceil\).
Fix an integer \(1\leq k\leq n\), \(0<\epsilon<1\), and
\(0\leq\delta\leq1\).
Let \(R_1,\ldots,R_q\) be independent uniform elements of \([n]\),
and let \(A_q\) have row \(\ell\) equal to \(F_{R_\ell,:}/\sqrt q\).
Assume that, with probability at least \(1-\delta\),
\begin{equation}\label{fourier:eq:rip-assumption}
 (1-\epsilon)\norm{v}_2^2
 \leq\norm{A_qv}_2^2
 \leq(1+\epsilon)\norm{v}_2^2
\end{equation}
simultaneously for every complex vector \(v\) supported on at most
\(k\) coordinates.

Define
\[
 \lambda=\frac{\log2}{2},
 \qquad
 c_*=\lambda-\frac14-\frac14\log(4\lambda),
 \qquad
 D_\epsilon=1+\sqrt{\frac{1+\epsilon}{1-\epsilon}}.
\]
Then \(c_*>0\).
For the paired set \(D\) of Notation~\ref{fourier:notation:sampling},
with probability at least
\[
 1-\sqrt2\bigl(\delta+e^{-c_*n}\bigr),
\]
every \(x\in\C^n\) with \(F_Dx=0\) satisfies
\begin{equation}\label{fourier:eq:paired-transfer-conclusion}
 \norm{x}_2\leq D_\epsilon\frac{\norm{x}_1}{\sqrt k}.
\end{equation}
The probability lower bound may be negative, in which case it is
vacuous.
\end{lem}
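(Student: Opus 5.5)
The proof has two parts. The first is a deterministic null-space argument on the event $\{R_1,\ldots,R_q\}\subseteq D$. The second is a change of measure that moves the hypothesis \eqref{fourier:eq:rip-assumption}, which concerns independent uniform rows, over to the paired random set $D$. The constant $D_\epsilon$ should come out of the first part and the constants $\sqrt2$ and $c_*$ out of the second.

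\emph{Deterministic step.} Suppose that \eqref{fourier:eq:rip-assumption} holds and that every sampled row index lies in $D$. Then $F_Dx=0$ forces $A_qx=0$. I will sort the coordinates of $x$ by decreasing modulus and split them into consecutive blocks $T_0,T_1,\ldots$ of size $k$. The standard block inequality gives $\norm{x_{T_j}}_2\leq\norm{x_{T_{j-1}}}_1/\sqrt k$ for $j\geq1$, and hence $\norm{x-x_{T_0}}_2\leq\sum_{j\geq1}\norm{x_{T_j}}_2\leq\norm x_1/\sqrt k$. Since $A_qx_{T_0}=-\sum_{j\geq1}A_qx_{T_j}$, the two sides of \eqref{fourier:eq:rip-assumption} give
\[
 \sqrt{1-\epsilon}\,\norm{x_{T_0}}_2
 \leq\sqrt{1+\epsilon}\sum_{j\geq1}\norm{x_{T_j}}_2
 \leq\sqrt{1+\epsilon}\,\frac{\norm x_1}{\sqrt k}.
\]
Adding the bound for $x_{T_0}$ to the bound for $x-x_{T_0}$ gives exactly \eqref{fourier:eq:paired-transfer-conclusion} with the constant $D_\epsilon$.

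\emph{Change of measure.} Next I will build a coupled set $D'$. First draw $R_1,\ldots,R_q$. Then force every distance bit $\xi_{\min(r,n-r)}$ with $r\in\{R_\ell\}\setminus\{0\}$ to equal $1$, so that $r\in D'$. All remaining bits are left as independent fair bits. Call $D$ bad if the conclusion fails for $D$. By the deterministic step,
\[
 \Prob(D'\text{ bad})\leq\delta+(\text{contribution of the zero row}).
\]
The likelihood ratio of the law of $D'$ with respect to the law of $D$ at a set $S$ is
\[
 \rho(S)=\E_R\Bigl[2^{M(R)}\ind\{\text{pairs hit by }R\subseteq S\}\Bigr],
\]
where $M(R)$ is the number of distinct nonzero pairs hit by $R$. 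I will bound $\Prob(D\text{ bad})$ by splitting on the event $\{\rho(D)\geq1/2\}$, or alternatively by a Cauchy--Schwarz or $\chi^2$ comparison between the two laws; the factor $\sqrt2$ should come from this step. For the lower tail of $\rho(D)$ I will apply a Chernoff bound with exponent $\lambda=\tfrac12\log2$. It controls the number of distinct pairs hit by $q=\lceil n/4\rceil$ uniform draws, together with the binomial count of pairs placed in $D$. Optimizing the exponent should produce $e^{-c_*n}$ with $c_*=\lambda-\tfrac14-\tfrac14\log(4\lambda)$. Positivity of $c_*$ is a one-line numerical check, since $4\lambda=2\log2\approx1.386$.

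\emph{Expected obstacle.} The main difficulty is the zero row and, more generally, making the change of measure lose only a constant factor. The lossless estimate $\Prob(R\subseteq D\mid D)=(|D|/n)^q$ costs about $2^{-q}$, so the forcing coupling above is needed. The zero row is a separate problem, because it is never in $D$ and is drawn with probability about $1/4$ per sample. Removing a zero row only lowers $\norm{A_qv}_2^2$, so the upper bound in \eqref{fourier:eq:rip-assumption} survives deletion; the lower bound does not. My first attempt will be to resample each zero draw uniformly among the nonzero residues before forcing its pair into $D'$, and to absorb the resulting distortion into the same likelihood-ratio bound. If that does not work, I will check whether the problematic direction $\sum_jx_j$ is already handled by the block argument, since the lower RIP is needed only for the $k$-sparse part $x_{T_0}$.
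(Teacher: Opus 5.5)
Your deterministic null-space step is correct and is exactly the paper's argument, but the transfer step has a genuine gap. With the forcing coupling $D'=D\cup\{\text{pairs hit by }R\}$, a fixed nonzero pair escapes all $q=\lceil n/4\rceil$ draws with probability $(1-2/n)^q\to e^{-1/2}$. So $D'$ contains about $(1-\tfrac12e^{-1/2})m\approx0.70\,m$ of the $m=(n-1)/2$ pairs, with exponentially concentrated fluctuations, while $D$ contains $m/2+O(\sqrt m)$ of them. The two laws are therefore asymptotically singular. Under the law of $D$, the likelihood ratio $\rho(D)$ is exponentially small with probability tending to one, so your planned bound $\Prob(\rho(D)<1/2)\leq e^{-c_*n}$ is false. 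No threshold, Cauchy--Schwarz, or $\chi^2$ comparison between these two laws can lose only a constant.

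There is also a structural problem. The good event is increasing in $D$, because enlarging $D$ shrinks $\ker F_D$. A coupling with $D'\supseteq D$ therefore gives only $\Prob(D'\text{ bad})\leq\Prob(D\text{ bad})$, which is the wrong direction. Your zero-row remedies fail as well. Resampling zero draws changes the row law, so the hypothesis \eqref{fourier:eq:rip-assumption}, which concerns uniform rows, no longer applies. Deleting a zero row can lower the lower isometry constant by up to $k/q$, which is not absorbed for general $k\leq n$.

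The missing idea is a coupling in which the sampled rows lie inside a set that has \emph{exactly} the law of $D$. The paper does this by Poissonization. Take an infinite i.i.d.\ uniform sequence $R_1,R_2,\ldots$ and an independent $L\sim\mathrm{Poisson}(\lambda n)$, and set $N_r=\#\{\ell\leq L:R_\ell=r\}$. The counts $N_r$ are independent Poisson$(\lambda)$ variables. The choice $\lambda=\tfrac12\log2$ gives $\Prob(N_r+N_{n-r}=0)=e^{-2\lambda}=\tfrac12$. Hence, conditionally on $H=\{N_0=0\}$, the set $D'=\{r\neq0:N_r+N_{n-r}>0\}$ has the fair paired law.

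Let $B$ be the failure of \eqref{fourier:eq:rip-assumption} for the first $q$ draws. On $H\cap\{L\geq q\}\cap B^{c}$, those draws are nonzero and lie in $D'$, so $\ker F_{D'}\subseteq\ker A_q$ and your block argument applies. It follows that $\Prob(B\cup\{L<q\}\mid H)\leq(\delta+\Prob(L<q))/\Prob(H)$. Here $1/\Prob(H)=e^{\lambda}=\sqrt2$ is the source of the factor $\sqrt2$, and this conditioning also disposes of the zero row. The Poisson Chernoff bound with $t=\log(4\lambda)$ gives $\Prob(L<q)\leq e^{-c_*n}$, which is exactly where $c_*$ comes from.
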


\begin{proof}
First fix a matrix satisfying \eqref{fourier:eq:rip-assumption}.
For \(x\in\ker A_q\), arrange the coordinates in decreasing order of
absolute value and partition them into consecutive sets
\(T_0,T_1,\ldots\) of size \(k\), except possibly the last.
Whenever \(j\geq1\), the preceding block \(T_{j-1}\) has \(k\)
coordinates, each at least as large in absolute value as any
coordinate in \(T_j\).
Thus
\[
 \norm{x_{T_j}}_2
 \leq\frac{\norm{x_{T_{j-1}}}_1}{\sqrt k},
 \qquad
 \sum_{j\geq1}\norm{x_{T_j}}_2
 \leq\frac{\norm{x}_1}{\sqrt k}.
\]
If \(T_0\) is the only block, the lower inequality in
\eqref{fourier:eq:rip-assumption} gives \(x=0\).
Otherwise,
\[
 A_qx_{T_0}=-\sum_{j\geq1}A_qx_{T_j}.
\]
Applying the lower bound to \(x_{T_0}\), the upper bound to the other
blocks, and the triangle inequality gives
\[
 \sqrt{1-\epsilon}\,\norm{x_{T_0}}_2
 \leq\sqrt{1+\epsilon}\sum_{j\geq1}\norm{x_{T_j}}_2.
\]
A second triangle inequality proves the deterministic kernel bound
\begin{equation}\label{fourier:eq:rip-kernel}
 \begin{aligned}
 \norm{x}_2
 &\leq\norm{x_{T_0}}_2+\sum_{j\geq1}\norm{x_{T_j}}_2\\
 &\leq
 \left(1+\sqrt{\frac{1+\epsilon}{1-\epsilon}}\right)
 \frac{\norm{x}_1}{\sqrt k}.
 \end{aligned}
\end{equation}

To transfer this estimate, take an infinite sequence
\(R_1,R_2,\ldots\) of independent uniform residues.
Independently, let \(L\) be Poisson with mean \(\lambda n\), and put
\[
 N_r=\#\{\ell\leq L:R_\ell=r\}.
\]
For nonnegative integers \(a_0,\ldots,a_{n-1}\), write
\(a=\sum_ra_r\).
The joint mass function is
\begin{equation}\label{fourier:eq:poisson-splitting}
 \begin{aligned}
 \Prob\{N_r=a_r\text{ for every }r\}
 &=e^{-\lambda n}\frac{(\lambda n)^a}{a!}
   \frac{a!}{\prod_ra_r!}n^{-a}\\
 &=\prod_{r=0}^{n-1}e^{-\lambda}\frac{\lambda^{a_r}}{a_r!}.
 \end{aligned}
\end{equation}
The factorization in \eqref{fourier:eq:poisson-splitting} shows that
the counts \(N_r\) are independent Poisson variables of mean
\(\lambda\).

Let \(H=\{N_0=0\}\), so \(\Prob(H)=e^{-\lambda}=1/\sqrt2\).
Conditional on \(H\), the nonzero counts remain independent with
their original laws.
The reflected pairs are disjoint because \(n\) is odd, and
\[
 \Prob\{N_r+N_{n-r}=0\mid H\}=e^{-2\lambda}=\frac12,
 \qquad 1\leq r\leq(n-1)/2.
\]
It follows that
\[
 D'=\{r\neq0:N_r+N_{n-r}>0\},
\]
under the conditional law given \(H\), has the distribution of \(D\).

Let \(B\) be the event that the matrix formed from the first \(q\)
indices fails \eqref{fourier:eq:rip-assumption}.
Its unconditional probability is at most \(\delta\).
On \(H\cap\{L\geq q\}\cap B^{\mathrm c}\), these first \(q\) indices
are nonzero and belong to \(D'\).
Even when some indices repeat, this implies
\[
 \ker F_{D'}\subseteq\ker A_q.
\]
Hence \eqref{fourier:eq:rip-kernel} holds simultaneously on
\(\ker F_{D'}\).
This inclusion uses no independence between \(B\), \(H\), and \(L\).

Since \(L\) is integer valued,
\(L<\lceil n/4\rceil\) implies \(L\leq n/4\).
Set \(t=\log(4\lambda)>0\).
The Poisson moment formula and Markov's inequality give
\begin{equation}\label{fourier:eq:poisson-lower-tail}
 \begin{aligned}
 \Prob\{L<q\}
 &\leq e^{tn/4}\E e^{-tL}\\
 &=\exp\left(\frac{tn}{4}+\lambda n(e^{-t}-1)\right)
 =e^{-c_*n}.
 \end{aligned}
\end{equation}
For positivity, put \(u=4\lambda=2\log2\).
Integrating \(1/s>1/2\) over \(1\leq s<2\) gives \(u>1\).
The function \(u-1-\log u\) vanishes at \(1\) and has derivative
\(1-1/u>0\) for \(u>1\).
Therefore
\[
 4c_*=u-1-\log u>0.
\]

By \eqref{fourier:eq:poisson-lower-tail},
\[
 \begin{aligned}
 \Prob\bigl(B\cup\{L<q\}\mid H\bigr)
 &\leq\frac{\Prob(B)+\Prob(L<q)}{\Prob(H)}\\
 &\leq\sqrt2\bigl(\delta+e^{-c_*n}\bigr).
 \end{aligned}
\]
The conditional law of \(D'\) is the required paired-distance law,
so \eqref{fourier:eq:rip-kernel} proves
\eqref{fourier:eq:paired-transfer-conclusion}.
\end{proof}

\begin{prop}[Paired Fourier kernel estimate]
\label{fourier:prop:paired-kernel}
There are absolute constants \(C,c>0\) and an integer \(n_0\) such
that, for every odd \(n\geq n_0\), with probability at least
\[
 1-\exp\bigl(-c(\log n)^2\bigr),
\]
the following assertions hold simultaneously:
\begin{enumerate}
\item Every \(x\in\C^n\) satisfying \(F_Dx=0\) obeys
\[
 \norm{x}_2
 \leq C\frac{(\log n)^{3/2}}{\sqrt n}\norm{x}_1.
\]
\item Every \(x\in\C^n\) satisfying \(F_{D^{\mathrm c}}x=0\) obeys
\[
 \norm{x}_2
 \leq C\frac{(\log n)^{3/2}}{\sqrt n}\norm{x}_1.
\]
\end{enumerate}
The constants are uniform over all odd integers, including composite
orders.
\end{prop}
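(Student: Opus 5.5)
The proof reduces both assertions to Lemma~\ref{fourier:lem:paired-transfer} with sparsity level \(k\asymp n/(\log n)^3\) and a fixed \(\epsilon\), say \(\epsilon=1/2\). With this choice \(D_\epsilon\) is an absolute constant. The conclusion \eqref{fourier:eq:paired-transfer-conclusion} then reads \(\norm{x}_2\leq D_\epsilon\norm{x}_1/\sqrt k\leq C(\log n)^{3/2}\norm{x}_1/\sqrt n\), which is exactly the bound in (1). Assertion (2) follows from the same argument applied to \(D^{\mathrm c}\). In the fair model the law of \(D^{\mathrm c}\) equals the law of \(D\), since flipping every sign preserves the joint law, so the identical probability bound holds. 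A union bound over the two assertions doubles the failure probability, which is absorbed by shrinking \(c\).

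The required input is the restricted isometry hypothesis \eqref{fourier:eq:rip-assumption} for \(q=\lceil n/4\rceil\) independent uniform rows of the unnormalized Fourier matrix, rescaled by \(1/\sqrt q\). It must hold with failure probability \(\delta\leq\exp(-c(\log n)^2)\). Each row has entries of modulus one, so \(\E\norm{A_qv}_2^2=\norm{v}_2^2\) because \(F^*F=nI\); this is a bounded orthonormal system. I would import the standard subsampled-Fourier RIP bound. This is the Rudelson--Vershynin / Bourgain / Haviv--Regev type estimate, which gives RIP of order \(k\) once \(q\gtrsim k(\log k)^2\log n\). That estimate would allow \(k\asymp n/(\log n)^3\) when \(q\asymp n\). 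The section's announced multiscale approximation for independent Fourier rows is presumably a self-contained version of this step, with tail control from Lemma~\ref{fourier:lem:mixed-concentration}. I would follow that route. First, I would discretize the set of \(k\)-sparse unit vectors at dyadic scales using Maurey-type empirical approximations; these are multisets, which is why the paper averages over multisets. Second, at each scale I would apply the mixed relative/additive concentration to the bounded nonnegative variables \(|\langle F_{R_\ell,:},v\rangle|^2\le k\norm{v}_2^2\). Third, I would union bound over the nets and sum the errors along the chaining ladder. The relative term of the concentration controls the large-mean pieces and the additive term controls the small ones, and this combination is what keeps only a \((\log n)^3\) loss.

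The main obstacle is obtaining the strong probability \(1-\exp(-c(\log n)^2)\) rather than merely a constant or polynomial bound. Standard RIP statements give failure probability \(n^{-c\log n}\) or similar only after tuning \(k\) down by log factors. I would choose \(k=\lfloor c_0n/(\log n)^3\rfloor\) and set the concentration parameters so that each scale's union bound leaves slack \(\exp(-c(\log n)^2)\). This is plausible because \(q\epsilon\alpha/a\) is of order \(n/k\cdot(\text{scale factor})\gtrsim(\log n)^3\) divided by the log-cardinality of the net, which is about \((\log n)\) per level.

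Once \(\delta\) is bounded, Lemma~\ref{fourier:lem:paired-transfer} gives failure probability at most \(\sqrt2(\delta+e^{-c_*n})\). This is at most \(\exp(-c(\log n)^2)\) for \(n\geq n_0\). Every constant is independent of the arithmetic of \(n\), since the only structural input is \(|F_{jk}|=1\) together with oddness, which is used in the pairing. The estimate is therefore uniform over composite orders.
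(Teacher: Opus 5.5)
Your proposal is correct and follows essentially the same route as the paper. The paper uses a Haviv--Regev-style multiscale Maurey approximation, with nets of averaged signed Fourier columns and truncated scale pieces controlled by Lemma~\ref{fourier:lem:mixed-concentration}, to get independent-row restricted isometry at $q=\lceil n/4\rceil$ and $k\asymp n/(\log n)^3$; it then finishes with Lemma~\ref{fourier:lem:paired-transfer}, sign negation for $D^{\mathrm c}$, and a union bound. The only refinements are that the paper approximates the whole $\ell_1$ sphere and passes to $k$-sparse vectors via $\norm{x}_1^2\leq k\norm{x}_2^2$, and the failure probability comes out directly as $\exp(-c\rho^{-2}\log n\log(1/\eta))$ with $\eta\asymp1/k$ and $k\geq\sqrt n$, so the strong probability $\exp(-c(\log n)^2)$ needs no separate tuning.
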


\begin{proof}
We establish an independent-row restricted isometry estimate at the
sample size \(q=\lceil n/4\rceil\), then apply
Lemma~\ref{fourier:lem:paired-transfer}.
The finite approximation construction follows
\cite[Section~3, Claims~3.2--3.3, Lemmas~3.4--3.6, and
Theorems~3.1 and~3.7]{HR15v2}.
We supply the concentration estimates and the complete argument for
the parameters used here.
All constants denoted \(C_i,c_i,c_0>0\) below are absolute.

Write \(B=F\).
For distinct columns, their inner product is the geometric sum of a
nontrivial \(n\)-th root of unity; for a column paired with itself,
the inner product is \(n\).
Thus, for every \(n\),
\begin{equation}\label{fourier:eq:fourier-orthogonality}
 B^*B=n\Id,
 \qquad
 \frac1n\sum_{u=0}^{n-1}|(Bx)_u|^2=\norm{x}_2^2
 \quad(x\in\C^n).
\end{equation}

\medskip
\noindent\textbf{Step 1.}
We derive the signed and complex concentration estimates used in
the approximation.

Let \(Y_1,\ldots,Y_s\) be independent identically distributed real
variables in \([-a,a]\), where \(a>0\).
Write
\(\mu=\E Y_\ell\), \(\mu_{\mathrm{abs}}=\E|Y_\ell|\), and
\(\overline Y=s^{-1}\sum_\ell Y_\ell\).
Apply Lemma~\ref{fourier:lem:mixed-concentration} to the positive
parts and to the negative parts, with relative parameter
\(0<\rho\leq1/2\) and additive parameter \(b/2\), where \(b>0\).
The union bound gives
\begin{equation}\label{fourier:eq:signed-concentration}
 \Prob\{|\overline Y-\mu|>\rho\mu_{\mathrm{abs}}+b\}
 \leq4\exp\left(-\frac{s\rho b}{4a}\right).
\end{equation}
The positive and negative parts of a single sample need not be
independent.

For \(0<d\leq2a\), take \(\rho=d/(4a)\) and \(b=3d/4\).
Since \(\mu_{\mathrm{abs}}\leq a\), the threshold in
\eqref{fourier:eq:signed-concentration} is at most \(d\), and
\begin{equation}\label{fourier:eq:bounded-real-concentration}
 \Prob\{|\overline Y-\mu|>d\}
 \leq4\exp\left(-\frac{3sd^2}{64a^2}\right).
\end{equation}
For \(d>2a\), the event is empty.
If instead the \(Y_\ell\) are complex with \(|Y_\ell|\leq a\),
a complex deviation exceeding \(d\) requires a real or imaginary
deviation exceeding \(d/\sqrt2\).
Applying \eqref{fourier:eq:bounded-real-concentration} to these two
parts gives
\begin{equation}\label{fourier:eq:bounded-complex-concentration}
 \Prob\{|\overline Y-\E Y_\ell|>d\}
 \leq8\exp\left(-c_0\frac{sd^2}{a^2}\right)
 \qquad(d>0).
\end{equation}
The reverse triangle inequality gives the same bound for
\(\bigl||\overline Y|-|\E Y_\ell|\bigr|>d\).

\medskip
\noindent\textbf{Step 2.}
We construct finite approximants at each scale and count the
associated nonnegative vectors.

Fix \(0<\rho\leq1/100\) and \(0<\eta<1/4\).
Set
\begin{equation}\label{fourier:eq:net-parameters}
 t=\left\lceil\log_2\frac1\eta\right\rceil,
 \qquad
 r=\left\lceil\log_2\frac1{\rho^2}\right\rceil,
 \qquad
 \gamma=\frac{\eta}{2t}.
\end{equation}
For the moment, \(Q\) is any multiset of \(q\) row indices.
For \(1\leq\ell\leq t+r\), put
\[
 s_\ell=\left\lceil C_1\,2^\ell\log\frac1\gamma\right\rceil.
\]
Here \(C_1\) is a sufficiently large absolute constant.
Let \(\cG_\ell\) contain all vectors
\[
 g=\frac{\sqrt2}{s_\ell}
       \sum_{a=1}^{s_\ell}\mathrm{i}^{v_a}B_{:,j_a},
 \qquad
 j_a\in[n],\quad v_a\in\{0,1,2,3\}.
\]
Their coordinates have modulus at most \(\sqrt2\).
Counting ordered choices of the pairs \((j_a,v_a)\) gives
\(|\cG_\ell|\leq(4n)^{s_\ell}\).
For \(n\geq3\) and \(\gamma<1/2\), this implies
\begin{equation}\label{fourier:eq:net-cardinality}
 \log|\cG_\ell|
 \leq C_2\,2^\ell\log n\log\frac1\gamma.
\end{equation}

Fix \(x\in\C^n\) with \(\norm{x}_1=1\).
We can choose nonnegative weights \(p_{j,v}\) such that
\[
 \sum_{v=0}^3p_{j,v}=|x_j|,
 \qquad
 \sqrt2\sum_{v=0}^3p_{j,v}\mathrm{i}^v=x_j.
\]
For this construction, assign the positive and negative real and
imaginary parts of \(x_j/\sqrt2\) to their corresponding phases.
The assigned mass is at most \(|x_j|\), because
\[
 |\operatorname{Re}x_j|+|\operatorname{Im}x_j|
 \leq\sqrt2|x_j|.
\]
Split the remaining mass equally between two opposite phases.
Their contributions cancel, giving the required weights.

Since \(\sum_{j,v}p_{j,v}=1\), the random vector
\(\sqrt2\,\mathrm{i}^vB_{:,j}\), sampled with these probabilities,
has expectation \(Bx\).
Average \(s_\ell\) independent copies to obtain \(g\in\cG_\ell\).
For each fixed row \(u\), apply
\eqref{fourier:eq:bounded-complex-concentration} with
\(a=\sqrt2\) and \(d=2^{-\ell/2}\).
Once \(C_1\) is large enough, the probability of failure of
\begin{equation}\label{fourier:eq:coordinate-approximation}
 \bigl||(Bx)_u|-|g_u|\bigr|\leq2^{-\ell/2}
\end{equation}
is at most \(\gamma/4\).

The expected failed fraction is at most \(\gamma/4\) both over all
rows and over \(Q\), counting repetitions.
For each fraction, Markov's inequality bounds by \(1/4\) the
probability that it exceeds \(\gamma\).
The union of these two events has probability at most \(1/2\).
Hence some \(g\in\cG_\ell\) satisfies
\eqref{fourier:eq:coordinate-approximation} outside a
\(\gamma\)-fraction of each row multiset.
This auxiliary sampling proves existence for every fixed \(x,Q,\ell\);
it imposes no independence condition on \(x\) and \(Q\).

For each tuple
\[
 (g_{1+r},\ldots,g_{t+r})
 \in\cG_{1+r}\times\cdots\times\cG_{t+r},
\]
assign \(u\) to \(\mathcal B_j\) if \(j\) is the first index in
\(\{1,\ldots,t\}\) for which
\[
 |g_{j+r,u}|\geq2\cdot2^{-j/2}.
\]
Rows with no such index remain unassigned.
The sets \(\mathcal B_j\) are disjoint.
Define
\[
 h_{j,u}
 =\min\left\{
 |g_{j+r,u}|^2\ind_{\{u\in\mathcal B_j\}},
 9\cdot2^{-j}
 \right\},
\]
and let \(\mathcal H_j\) be the set of all resulting vectors \(h_j\).
Only the first \(j\) members of the tuple determine \(h_j\).
Using \eqref{fourier:eq:net-cardinality}, the geometric sum over
these members, and \(2^r\leq2\rho^{-2}\), we obtain
\begin{equation}\label{fourier:eq:h-cardinality}
 \log|\mathcal H_j|
 \leq C_3\rho^{-2}2^j\log n\log\frac1\gamma.
\end{equation}
The constant \(C_3\) is independent of \(t,\rho,\eta,n\).

\medskip
\noindent\textbf{Step 3.}
We control all the finite approximants on one event for independent
row sampling.

Now take \(Q=(J_1,\ldots,J_q)\), where the \(J_a\) are independent
uniform row indices.
For fixed \(h\in\mathcal H_j\), the variables \(h_{J_a}\) are
independent, belong to \([0,9\cdot2^{-j}]\), and have common mean
\[
 \mu_h=\frac1n\sum_{u=0}^{n-1}h_u.
\]
Let \(\widetilde\eta=\eta/(2t)\).
Lemma~\ref{fourier:lem:mixed-concentration}, with relative error
\(\rho\) and additive error \(\widetilde\eta\), gives
\begin{equation}\label{fourier:eq:h-concentration}
 \Prob\left\{
 \left|\frac1q\sum_{a=1}^qh_{J_a}-\mu_h\right|
 >\rho\mu_h+\widetilde\eta
 \right\}
 \leq
 2\exp\left(-\frac{2^jq\rho\widetilde\eta}{18}\right).
\end{equation}
This bound applies when \(\mu_h=0\) and for every ratio
\(\widetilde\eta/\mu_h\).

Combining \eqref{fourier:eq:h-cardinality} with
\eqref{fourier:eq:h-concentration}, a union bound shows that
\begin{equation}\label{fourier:eq:finite-net-sample-threshold}
 q\geq
 C_4\rho^{-3}\widetilde\eta^{-1}
 \log n\log\frac1\gamma
\end{equation}
makes the probability of failure for any \(h\in\mathcal H_j\) and
any \(1\leq j\leq t\) at most
\begin{equation}\label{fourier:eq:finite-net-failure}
 \begin{aligned}
 &\sum_{j=1}^t
 2\exp\left(-c_1\rho^{-2}2^j\log n\log\frac1\gamma\right)\\
 &\hspace{2em}\leq
 \exp\left(-c_2\rho^{-2}\log n\log\frac1\gamma\right)
 \end{aligned}
\end{equation}
for all sufficiently large \(n\).
For the last inequality, extend the sum to \(j\geq1\) and use
\(2^j\geq j+1\); reducing \(c_2\) absorbs the prefactor of the
resulting geometric series.
Thus \eqref{fourier:eq:finite-net-sample-threshold} permits every
integer \(q\) above the displayed threshold.

Write \(\operatorname{avg}_{[n]}\) for the average over all rows and
\(\operatorname{avg}_{Q}\) for the sample average.
On the complementary event in
\eqref{fourier:eq:finite-net-failure}, summing the individual
deviation bounds over \(j\) gives, for every allowable tuple,
\begin{equation}\label{fourier:eq:h-sum-comparison}
 \left|
 \operatorname{avg}_{Q}\sum_{j=1}^th_j
 -
 \operatorname{avg}_{[n]}\sum_{j=1}^th_j
 \right|
 \leq
 \rho\operatorname{avg}_{[n]}\sum_{j=1}^th_j+\frac{\eta}{2}.
\end{equation}
Here \(\sum_{j=1}^t\widetilde\eta=\eta/2\), and the supports of the
\(h_j\) are disjoint.

\medskip
\noindent\textbf{Step 4.}
We pass from this finite event to a quadratic estimate for every
complex vector.

For any \(x\) with \(\norm{x}_1=1\), choose each \(g_{j+r}\) using
\eqref{fourier:eq:coordinate-approximation}.
Call a row good if all \(t\) approximation inequalities hold there.
The bad fraction is at most \(t\gamma\), both in \([n]\) and in \(Q\).

At a good row assigned to \(\mathcal B_j\), the defining threshold
gives \(|g_{j+r,u}|\geq2\cdot2^{-j/2}\).
We also have
\[
 |g_{j+r,u}|\leq3\cdot2^{-j/2}.
\]
For \(j=1\), this follows from \(|g_{1+r,u}|\leq\sqrt2\).
For \(j\geq2\), the threshold at \(j-1\) failed.
The approximation inequalities at \(j-1\) and \(j\), together with
\(2^{-r/2}\leq\rho\), give
\[
 |g_{j+r,u}|
 <
 \bigl[2\sqrt2+(\sqrt2+1)\rho\bigr]2^{-j/2}
 \leq3\cdot2^{-j/2},
\]
where the last inequality uses \(\rho\leq1/100\).
Thus the truncation in \(h_{j,u}\) has no effect at a good assigned
row, and every other \(h_{\ell,u}\) vanishes there.
Moreover,
\[
 \bigl||(Bx)_u|-|g_{j+r,u}|\bigr|
 \leq\rho2^{-j/2}
 \leq\frac{\rho}{2}|g_{j+r,u}|.
\]
Squaring these bounds yields
Writing $a=|(Bx)_u|$ and $b=|g_{j+r,u}|$, the preceding bound gives
$a\leq(1+\rho/2)b$ and $|a-b|\leq(\rho/2)b$. Hence
\[
 |a^2-b^2|=|a-b|(a+b)
 \leq \frac{\rho}{2}b\,(2+\rho/2)b
 \leq 3\rho b^2,
\]
where we used $0<\rho\leq1/100$. At a good assigned row,
$h_{j,u}=b^2$ and all other $h_{\ell,u}$ vanish. \begin{equation}\label{fourier:eq:good-assigned-row}
 \left|| (Bx)_u|^2-\sum_{j=1}^th_{j,u}\right|
 \leq3\rho\sum_{j=1}^th_{j,u}.
\end{equation}

At a good unassigned row, the last threshold fails.
Consequently,
\[
 |(Bx)_u|<3\cdot2^{-t/2}\leq3\sqrt\eta,
 \qquad
 \sum_{j=1}^th_{j,u}=0.
\]
At a bad row, \(|(Bx)_u|^2\leq1\), since \(\norm{x}_1=1\) and
\(|B_{uj}|=1\).
At most one \(h_{j,u}\) is nonzero, and \(|g_{j+r,u}|^2\leq2\);
hence \(\sum_jh_{j,u}\leq2\).
The absolute difference between these two nonnegative quantities is
therefore at most \(2\).

For \(P=Q\) or \(P=[n]\), write
\[
 A_P=\operatorname{avg}_{P}|Bx|^2,
 \qquad
 H_P=\operatorname{avg}_{P}\sum_{j=1}^th_j.
\]
Averaging \eqref{fourier:eq:good-assigned-row} and the bounds for
unassigned and bad rows gives
\begin{equation}\label{fourier:eq:deterministic-approximation}
 |A_P-H_P|
 \leq3\rho H_P+9\eta+2t\gamma
 =3\rho H_P+10\eta.
\end{equation}
Together with \eqref{fourier:eq:h-sum-comparison}, this implies
\begin{equation}\label{fourier:eq:uniform-quadratic-approximation}
 |A_Q-A_{[n]}|\leq100\rho A_{[n]}+100\eta.
\end{equation}
To check the constants, first use the two preceding estimates to
obtain
\[
 H_{[n]}\leq\frac{A_{[n]}+10\eta}{1-3\rho},
 \qquad
 H_Q\leq(1+\rho)H_{[n]}+\frac{\eta}{2}.
\]
The triangle inequality then bounds \(|A_Q-A_{[n]}|\) by
\[
 (7\rho+3\rho^2)H_{[n]}+(20.5+1.5\rho)\eta.
\]
Substituting the bound for \(H_{[n]}\) makes this no larger than the
right side of \eqref{fourier:eq:uniform-quadratic-approximation}
for \(0<\rho\leq1/100\).

The event is simultaneous over all \(x\) with \(\norm{x}_1=1\).
Although the selected tuple can depend on both \(x\) and \(Q\),
\eqref{fourier:eq:h-sum-comparison} holds for every tuple from the
finite sets.
Scaling \(x\), and treating \(x=0\) directly, extends the estimate
to all complex vectors, with additive term
\(100\eta\norm{x}_1^2\).

\medskip
\noindent\textbf{Step 5.}
We choose the parameters to obtain restricted isometry at
\(q=\lceil n/4\rceil\).

From \eqref{fourier:eq:net-parameters},
\[
 t=O\bigl(\log(1/\eta)\bigr),
 \qquad
 \log(1/\gamma)=O\bigl(\log(1/\eta)\bigr)
 \quad(0<\eta<1/4).
\]
Since \(\widetilde\eta=\eta/(2t)\),
condition \eqref{fourier:eq:finite-net-sample-threshold} follows from
\begin{equation}\label{fourier:eq:simplified-sample-threshold}
 q\geq
 C_5\rho^{-3}\eta^{-1}\log n
 \left[\log\frac1\eta\right]^2.
\end{equation}
Also \(\log(1/\gamma)\geq\log(1/\eta)\), so the failure bound in
\eqref{fourier:eq:finite-net-failure} is at most
\[
 \exp\left(-c_2\rho^{-2}\log n\log\frac1\eta\right).
\]

Fix a sufficiently small absolute \(\theta\in(0,1/3)\), and set
\[
 \rho=\frac{\theta}{200},
 \qquad
 \eta=\frac{\theta}{200k}.
\]
For any complex vector supported on at most \(k\) coordinates,
Cauchy--Schwarz gives \(\norm{x}_1^2\leq k\norm{x}_2^2\).
Using \eqref{fourier:eq:fourier-orthogonality} in the homogeneous
form of \eqref{fourier:eq:uniform-quadratic-approximation} therefore
gives, simultaneously for these vectors,
\begin{equation}\label{fourier:eq:independent-row-rip}
 (1-\theta)\norm{x}_2^2
 \leq\frac1q\sum_{a=1}^q|(Fx)_{J_a}|^2
 \leq(1+\theta)\norm{x}_2^2.
\end{equation}
The failure probability is at most
\begin{equation}\label{fourier:eq:rip-failure}
 \exp\left(-c_3\theta^{-2}\log n\log\frac{k}{\theta}\right),
\end{equation}
provided
\begin{equation}\label{fourier:eq:rip-sample-threshold}
 q\geq
 C_6\theta^{-4}k
 \left[\log\frac{k}{\theta}\right]^2\log n.
\end{equation}
Indeed, substitution into
\eqref{fourier:eq:simplified-sample-threshold} gives this condition,
after absorbing fixed numerical factors into \(C_6\).
The same substitution in the preceding failure estimate gives
\eqref{fourier:eq:rip-failure}.

Thus the rows \(F_{J_a,:}/\sqrt q\), sampled independently with
replacement, satisfy \eqref{fourier:eq:independent-row-rip} for
complex \(k\)-sparse vectors.
Equivalently, these are rows of the unitary matrix \(F/\sqrt n\)
multiplied by \(\sqrt{n/q}\).
This is the normalization required by
Lemma~\ref{fourier:lem:paired-transfer}.

Choose
\begin{equation}\label{fourier:eq:k-choice}
 q=\left\lceil\frac n4\right\rceil,
 \qquad
 k=\left\lfloor\frac{\alpha n}{(\log n)^3}\right\rfloor,
\end{equation}
where \(\alpha>0\) is fixed small enough that
\(C_6\theta^{-4}\alpha\leq1/8\).
For all sufficiently large \(n\),
\[
 k\geq\frac{\alpha n}{2(\log n)^3},
 \qquad
 n^{1/2}\leq\frac{k}{\theta}\leq n.
\]
In particular, \(1\leq k\leq n\).
The sufficient sample size in
\eqref{fourier:eq:rip-sample-threshold} satisfies
\[
 C_6\theta^{-4}k
 \left[\log\frac{k}{\theta}\right]^2\log n
 \leq C_6\theta^{-4}\alpha n
 \leq\frac n8
 \leq q.
\]
Since \(\log(k/\theta)\geq\frac12\log n\), the probability in
\eqref{fourier:eq:rip-failure} is at most
\[
 \exp\bigl(-c_4(\log n)^2\bigr)
\]
for an absolute \(c_4>0\).

\medskip
\noindent\textbf{Step 6.}
We transfer the simultaneous estimate to both paired sets.

Apply Lemma~\ref{fourier:lem:paired-transfer} with restricted
isometry constant \(\theta\) and failure probability
\(\exp(-c_4(\log n)^2)\).
Outside an event of probability at most
\[
 \sqrt2\left[e^{-c_4(\log n)^2}+e^{-c_*n}\right],
\]
every \(x\in\ker F_D\) satisfies
\[
 \norm{x}_2
 \leq D_\theta\frac{\norm{x}_1}{\sqrt k},
 \qquad
 D_\theta=1+\sqrt{\frac{1+\theta}{1-\theta}}.
\]
The transfer includes repeated sampled rows and excludes the zero
row through its conditioning event.
Because its kernel conclusion is simultaneous, \(x\) may depend
on the sampled graph.

The lower bound for \(k\) in \eqref{fourier:eq:k-choice} gives
\[
 \frac1{\sqrt k}
 \leq\sqrt{\frac2\alpha}\,
       \frac{(\log n)^{3/2}}{\sqrt n}.
\]
Thus the first assertion holds with
\(C=D_\theta\sqrt{2/\alpha}\).
After decreasing an absolute constant \(c_5>0\) and increasing the
lower threshold for \(n\), its exceptional probability is at most
\(\exp(-c_5(\log n)^2)\).

The signs \(-\sigma_1,\ldots,-\sigma_{(n-1)/2}\) are also
independent fair signs, and their negative set is \(D^{\mathrm c}\).
Applying the proved bound to this sign sequence gives the second
assertion with the same constants.
A union bound makes the two conclusions simultaneous.
Its factor of two is absorbed by decreasing the exponent constant
and increasing \(n_0\).

The orthogonality calculation and the independent-row argument
hold for every integer \(n\).
Only the reflected-pair transfer requires \(n\) to be odd.
One fixed \(n_0\) absorbs all the preceding thresholds, proving the
proposition uniformly over all sufficiently large odd orders.
\end{proof}

\noindent

\medskip
\noindent

\medskip
\noindent

\section{Sign-cube tools}\label{sec:sign-cube-tools}

In this section, we prove the moment and sign-flip estimates used to compare polynomial observables with Gaussian variables, together with the moment bounds for one polynomial Fourier-feedback layer.

\subsection{Walsh moments and sign flips}

\begin{notation}\label{trees:notation:sign-cube}
Let \(\epsilon_1,\ldots,\epsilon_m\) be independent fair signs, where \(m\geq1\). For \(B\subseteq\{1,\ldots,m\}\), write
\[
 \chi_B=\prod_{v\in B}\epsilon_v,
 \qquad \chi_\varnothing=1.
\]
The Walsh expansion of a real polynomial on the sign cube is
\(U=\sum_Ba_B\chi_B\). Its degree is the largest \(\abs B\) with \(a_B\neq0\), and
\[
 D_vU=\sum_{B\ni v}a_B\chi_{B\setminus\{v\}}.
\]
For a random variable \(X\), write
\(\norm{X}_{L^r}=(\E\abs X^r)^{1/r}\).
For deterministic vectors, \(\norm{\cdot}_2\) denotes the Euclidean norm.
Independence of the signs gives
\[
 \E[\chi_B\chi_C]=\ind_{\{B=C\}},
 \qquad
 \E U^2=\sum_Ba_B^2.
\]
\end{notation}

\begin{lem}[Sign hypercontractivity]\label{trees:lem:sign-hypercontractivity}
Let \(U\) be a real Walsh polynomial of degree at most \(D\). For every integer \(p\geq1\),
\begin{equation}
 \norm{U}_{L^{2p}}
 \leq(2p-1)^{D/2}\norm{U}_{L^2}.
 \label{trees:eq:sign-hypercontractivity}
\end{equation}
\end{lem}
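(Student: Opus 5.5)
The plan is the classical Bonami--Beckner route: prove the two-point hypercontractive inequality for a single sign, then tensorize over the coordinates. The basic estimate is the operator $T_\rho$, defined by $T_\rho\chi_B=\rho^{\abs B}\chi_B$, with $\rho=(2p-1)^{-1/2}$. We aim to show
\[
 \norm{T_\rho U}_{L^{2p}}\leq\norm{U}_{L^2}
\]
for every Walsh polynomial $U$. Given this, apply it to $V=\sum_Ba_B\rho^{-\abs B}\chi_B$, so that $T_\rho V=U$. Since $\abs B\leq D$ on the support, $\E V^2=\sum_Ba_B^2\rho^{-2\abs B}\leq\rho^{-2D}\E U^2$, which gives $\norm U_{L^{2p}}\leq(2p-1)^{D/2}\norm U_{L^2}$. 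The case $p=1$ is trivial, so assume $p\geq2$.

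\textbf{Step 1 (one sign).} For $m=1$, write $U=a+b\epsilon$. We must show $\E(a+\rho b\epsilon)^{2p}\leq(a^2+b^2)^p$. By homogeneity and symmetry take $a=1$ and $x=b$. Expanding binomially, the left side is $\sum_j\binom{2p}{2j}\rho^{2j}x^{2j}$, while the right side is $\sum_j\binom pjx^{2j}$. It therefore suffices to compare coefficients: we need
\[
 \binom{2p}{2j}(2p-1)^{-j}\leq\binom pj .
\]
This follows from the ratio
\[
 \frac{\binom{2p}{2j}}{\binom pj}=\prod_{i=0}^{j-1}\frac{(2p-2i)(2p-2i-1)}{(2j-2i)(2j-2i-1)}\cdot\frac{1}{\,\cdot\,}
\]
after simplification. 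In detail, $\binom{2p}{2j}/\binom pj=\prod_{i=0}^{j-1}\frac{2p-2i-1}{2j-2i-1}$, and each factor is at most $2p-1$ because $2j-2i-1\geq1$ and $2p-2i-1\leq2p-1$. This is the elementary coefficient check and should be routine.

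\textbf{Step 2 (tensorization, the main obstacle).} Proceed by induction on $m$. Write $U=U_0+\epsilon_mU_1$, where $U_0,U_1$ depend only on the first $m-1$ signs, so that $T_\rho U=T'_\rho U_0+\rho\epsilon_mT'_\rho U_1$. Conditioning on the first $m-1$ signs and applying Step~1 pointwise with $a=T'_\rho U_0$ and $b=T'_\rho U_1$ gives
\[
 \E\abs{T_\rho U}^{2p}\leq\E\bigl[\bigl((T'_\rho U_0)^2+(T'_\rho U_1)^2\bigr)^p\bigr]=\norm{(T'_\rho U_0)^2+(T'_\rho U_1)^2}_{L^p}^p .
\]
Minkowski's inequality in $L^p$ bounds this norm by $\norm{T'_\rho U_0}_{L^{2p}}^2+\norm{T'_\rho U_1}_{L^{2p}}^2$. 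The induction hypothesis then bounds that by $\norm{U_0}_{L^2}^2+\norm{U_1}_{L^2}^2=\norm U_{L^2}^2$, using orthogonality of Walsh characters. The delicate point is making the one-variable inequality apply pointwise with random $a,b$ and then combining correctly via Minkowski. This is the step to write carefully; since Step~1 holds for all real $a,b$, the argument closes.
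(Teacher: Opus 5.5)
Your proposal is correct and follows the paper's proof essentially step for step. It uses the same $\rho=(2p-1)^{-1/2}$, the same coefficient comparison via $\binom{2p}{2j}/\binom pj=\prod_{i=0}^{j-1}\frac{2p-2i-1}{2j-2i-1}\leq(2p-1)^j$, the same conditional one-sign bound followed by Minkowski in $L^p$ for the induction, and the same inversion $V=\sum_B\rho^{-\abs B}a_B\chi_B$.

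Two small cleanups: delete the garbled first ratio display, and compare the homogeneous expansions $\sum_j\binom{2p}{2j}\rho^{2j}a^{2p-2j}b^{2j}$ and $\sum_j\binom pj a^{2p-2j}b^{2j}$ termwise rather than normalizing to $a=1$, since that normalization silently excludes $a=0$.
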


\begin{proof}
The case \(p=1\) is equality. Assume that \(p\geq2\), and put
\(\rho=(2p-1)^{-1/2}\). Let \(T_\rho\) multiply the coefficient of \(\chi_B\) by \(\rho^{\abs B}\).

For \(0\leq j\leq p\),
\[
 \frac{\binom{2p}{2j}}{\binom pj}
 =
 \prod_{\nu=0}^{j-1}
 \frac{2p-2\nu-1}{2\nu+1}
 \leq(2p-1)^j,
\]
where the empty product is one. Comparing the even terms in the binomial expansions therefore gives
\[
 \E_\epsilon(a+\rho\epsilon c)^{2p}
 \leq
 \bigl(a^2+(2p-1)\rho^2c^2\bigr)^p
 \qquad(a,c\in\R).
\]

We prove \(\norm{T_\rho U}_{L^{2p}}\leq\norm{U}_{L^2}\) by induction on the number of signs. The assertion for a constant is equality. Write
\(U=G+\epsilon_mH\), where \(G,H\) depend on the first \(m-1\) signs. Apply the scalar inequality conditionally on these signs. Since
\((2p-1)\rho^2=1\), Minkowski's inequality and the induction hypothesis give
\begin{align*}
 \norm{T_\rho U}_{L^{2p}}^2
 &\leq
 \norm{(T_\rho G)^2+(T_\rho H)^2}_{L^p}\\
 &\leq
 \norm{T_\rho G}_{L^{2p}}^2+
 \norm{T_\rho H}_{L^{2p}}^2\\
 &\leq
 \norm{G}_{L^2}^2+\norm{H}_{L^2}^2
 =\norm{U}_{L^2}^2.
\end{align*}
Finally, set \(V=\sum_B\rho^{-\abs B}a_B\chi_B\). Then
\(T_\rho V=U\), while Walsh orthogonality gives
\[
 \norm{V}_{L^2}^2
 =\sum_B\rho^{-2\abs B}a_B^2
 \leq\rho^{-2D}\norm{U}_{L^2}^2.
\]
Applying the preceding inequality to \(V\) proves
\eqref{trees:eq:sign-hypercontractivity}.
\end{proof}

\begin{lem}[Weighted fourth moments and jump bounds]
\label{trees:lem:weighted-jumps}
Let \(P=\sum_Ba_B\chi_B\) be a real Walsh polynomial of degree at most \(D\). Then
\begin{equation}
 \E P^4
 \leq
 \left(\sum_B3^{\abs B}a_B^2\right)^2
 \leq3^{2D}(\E P^2)^2.
 \label{trees:eq:weighted-fourth-moment}
\end{equation}
Let \(Y_1,\ldots,Y_q\) be homogeneous real Walsh polynomials of positive degrees \(d_1,\ldots,d_q\), and put
\(I_{i,v}=\E(D_vY_i)^2\). Then
\begin{equation}
 \E\abs{D_vY_i}^3
 \leq3^{d_i-1}I_{i,v}^{3/2},
 \qquad
 \E(D_vY_i)^4
 \leq3^{2d_i-2}I_{i,v}^2.
 \label{trees:eq:additive-derivative-moments}
\end{equation}
Choose \(J\) uniformly from \(\{1,\ldots,m\}\), independently of the signs, and flip \(\epsilon_J\). If
\[
 \Delta_i=Y_i'-Y_i=-2\epsilon_JD_JY_i,
 \qquad
 \lambda_i=\frac{2d_i}{m},
\]
then
\begin{equation}
 \begin{aligned}
 \lambda_i^{-1}\E[\abs{\Delta_i}\Delta_j^2]
 &=\frac4{d_i}\sum_v
   \E[\abs{D_vY_i}(D_vY_j)^2]\\
 &\leq
 \frac4{d_i}3^{d_j-1}
 \sqrt{\max_v I_{i,v}}\sum_v I_{j,v}.
 \end{aligned}
 \label{trees:eq:mixed-degree-jump}
\end{equation}
\end{lem}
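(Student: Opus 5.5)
The plan is to prove the three displays in order, each from the previous one together with Lemma~\ref{trees:lem:sign-hypercontractivity}.

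\emph{The weighted fourth moment.} For \eqref{trees:eq:weighted-fourth-moment} I would repeat the tensorization in the proof of Lemma~\ref{trees:lem:sign-hypercontractivity}, now with \(p=2\) and without the operator \(T_\rho\). The scalar inequality needed is
\[
 \E_\epsilon(a+\epsilon c)^4=a^4+6a^2c^2+c^4\leq(a^2+3c^2)^2 .
\]
The induction hypothesis is \(\norm{P}_{L^4}^2\leq\sum_B3^{\abs B}a_B^2\). Write \(P=G+\epsilon_mH\) and apply the scalar inequality conditionally on the first \(m-1\) signs. This gives \(\E P^4\leq\E(G^2+3H^2)^2\). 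By Minkowski in \(L^2\), \(\norm{G^2+3H^2}_{L^2}\leq\norm G_{L^4}^2+3\norm H_{L^4}^2\). The induction hypothesis bounds this by
\[
 \sum_{B\not\ni m}3^{\abs B}a_B^2+3\sum_{B\ni m}3^{\abs B-1}a_B^2 ,
\]
since each coefficient of \(H\) has one fewer sign. This is exactly the weighted sum. The second inequality follows from \(3^{\abs B}\leq3^D\) and \(\E P^2=\sum_Ba_B^2\).

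\emph{The derivative moments.} Since \(Y_i\) is homogeneous of degree \(d_i\), the derivative \(D_vY_i\) is homogeneous of degree \(d_i-1\). The first part then gives \(\E(D_vY_i)^4\leq3^{2(d_i-1)}I_{i,v}^2\). For the third moment I would use log-convexity of \(L^r\) norms:
\[
 \norm X_{L^3}^3\leq\norm X_{L^2}\,\norm X_{L^4}^2 .
\]
With \(X=D_vY_i\) this gives \(\E\abs{D_vY_i}^3\leq I_{i,v}^{1/2}\cdot3^{d_i-1}I_{i,v}=3^{d_i-1}I_{i,v}^{3/2}\), which is \eqref{trees:eq:additive-derivative-moments}.

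\emph{The jump identity.} Flipping \(\epsilon_J\) changes exactly the monomials containing \(J\). Hence \(\Delta_i=-2\epsilon_JD_JY_i\), and \(\abs{\Delta_i}\Delta_j^2=8\abs{D_JY_i}(D_JY_j)^2\). Averaging over \(J\), which is uniform with weight \(1/m\), gives \((8/m)\sum_v\E[\abs{D_vY_i}(D_vY_j)^2]\). Dividing by \(\lambda_i=2d_i/m\) yields the equality in \eqref{trees:eq:mixed-degree-jump}.

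\emph{The mixed bound.} For each \(v\) I would apply Cauchy--Schwarz:
\[
 \E[\abs{D_vY_i}(D_vY_j)^2]\leq I_{i,v}^{1/2}\bigl(\E(D_vY_j)^4\bigr)^{1/2}\leq I_{i,v}^{1/2}\,3^{d_j-1}I_{j,v} .
\]
The second step uses the fourth-moment bound just proved. Bounding \(I_{i,v}^{1/2}\) by \(\sqrt{\max_vI_{i,v}}\) and summing over \(v\) completes the estimate.

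The main point needing care is the induction for the weighted fourth moment. One must apply Minkowski correctly for \(L^2\) norms of squares and track the weight \(3^{\abs B}\) through the factor \(3\) attached to \(H\). Everything else is routine bookkeeping.
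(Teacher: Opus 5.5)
Your proposal is correct and matches the paper's proof essentially step for step. Both use the same induction on the number of signs for the weighted fourth moment. You bound the cross term through the scalar inequality and Minkowski in \(L^2\), while the paper expands \(\E P^4=\E A^4+6\E A^2B^2+\E B^4\) and applies Cauchy--Schwarz, which gives the same estimate. The rest is also the paper's argument: Cauchy--Schwarz for the third moment, conditioning on \(J\) for the jump identity, and Cauchy--Schwarz with the fourth-moment bound for the mixed estimate.
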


\begin{proof}
For the first inequality in \eqref{trees:eq:weighted-fourth-moment}, induct on the number of signs. The constant case is equality. Write
\(P=A+\epsilon_mB\), and denote the weighted coefficient sums of \(A,B\) by
\[
 \mathcal W(A)=\sum_S3^{\abs S}a_S^2,
 \qquad
 \mathcal W(B)=\sum_S3^{\abs S}b_S^2.
\]
The induction hypothesis and Cauchy--Schwarz imply
\[
 \E A^4\leq\mathcal W(A)^2,\qquad
 \E B^4\leq\mathcal W(B)^2,\qquad
 \E A^2B^2\leq\mathcal W(A)\mathcal W(B).
\]
Averaging first over \(\epsilon_m\) gives
\begin{align*}
 \E P^4
 &=\E A^4+6\E A^2B^2+\E B^4\\
 &\leq
 \mathcal W(A)^2+
 6\mathcal W(A)\mathcal W(B)+\mathcal W(B)^2\\
 &\leq\bigl(\mathcal W(A)+3\mathcal W(B)\bigr)^2.
\end{align*}
The last weighted sum is the one for \(P\). The second inequality in
\eqref{trees:eq:weighted-fourth-moment} follows from \(\abs B\leq D\) and Walsh orthogonality.

The polynomial \(D_vY_i\) has degree \(d_i-1\). Applying
\eqref{trees:eq:weighted-fourth-moment} proves its fourth-moment bound. Cauchy--Schwarz then gives
\[
 \E\abs{D_vY_i}^3
 \leq
 \sqrt{\E(D_vY_i)^2\,\E(D_vY_i)^4},
\]
which proves the first inequality in
\eqref{trees:eq:additive-derivative-moments}.

Conditioning on \(J\) proves the equality in
\eqref{trees:eq:mixed-degree-jump}. For each \(v\), Cauchy--Schwarz gives
\[
 \E[\abs{D_vY_i}(D_vY_j)^2]
 \leq
 \sqrt{I_{i,v}\,\E(D_vY_j)^4}
 \leq3^{d_j-1}\sqrt{I_{i,v}}\,I_{j,v}.
\]
Summing and taking the maximum of the first influence factor proves the jump bound.
\end{proof}

\subsection{Fourier integration and mixed-degree comparison}

We first pass from characteristic functions to absolute moments, and then prove the sign-flip comparison with its degree dependence explicit.

\begin{lem}[Absolute moments from characteristic functions]
\label{trees:lem:absolute-moment-fourier}
For every integrable real random variable \(X\),
\begin{equation}
 \E\abs X
 =
 \frac2\pi\int_0^\infty
 \frac{1-\E\cos(tX)}{t^2}\,dt.
 \label{trees:eq:absolute-moment-fourier}
\end{equation}
Let \(X'\) be another integrable real random variable. Assume that \(a,b\geq0\) satisfy
\[
 \abs{\E e^{\mathrm{i}tX}-\E e^{\mathrm{i}tX'}}
 \leq at^2+b\abs t^3
 \qquad(t\in\R).
\]
Then, for every \(T>0\),
\begin{equation}
 \abs{\E\abs X-\E\abs{X'}}
 \leq
 \frac2\pi\left(aT+\frac b2T^2+\frac2T\right).
 \label{trees:eq:absolute-moment-truncation}
\end{equation}
For a standard real normal variable \(N\) and \(\sigma\geq0\),
\[
 \E e^{\mathrm{i}t\sigma N}=e^{-\sigma^2t^2/2},
 \qquad
 \E\abs{\sigma N}=\sqrt{\frac2\pi}\,\sigma.
\]
\end{lem}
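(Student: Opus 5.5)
We prove the three assertions of Lemma~\ref{trees:lem:absolute-moment-fourier} in order: the integral identity, then the comparison bound, then the Gaussian facts.

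\textbf{The identity \eqref{trees:eq:absolute-moment-fourier}.} The starting point is the scalar formula
\[
 \int_0^\infty\frac{1-\cos(tx)}{t^2}\,dt=\frac\pi2\abs x
 \qquad(x\in\R).
\]
It follows from the substitution $s=t\abs x$ for $x\neq0$ (the case $x=0$ is trivial), together with the classical value $\int_0^\infty(1-\cos s)s^{-2}\,ds=\pi/2$. That value is obtained by integrating by parts to reach $\int_0^\infty\sin s/s\,ds$. The integrand $(1-\cos(tX))/t^2$ is nonnegative, so Tonelli's theorem lets us exchange the expectation and the $t$-integral. This gives \eqref{trees:eq:absolute-moment-fourier}, and no integrability issue arises since $\E\abs X<\infty$.

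\textbf{The comparison \eqref{trees:eq:absolute-moment-truncation}.} Subtract the two instances of \eqref{trees:eq:absolute-moment-fourier}:
\[
 \E\abs X-\E\abs{X'}=\frac2\pi\int_0^\infty\frac{\E\cos(tX')-\E\cos(tX)}{t^2}\,dt .
\]
Both integrals are finite, so the difference is legitimate. The numerator is the real part of $\E e^{\mathrm{i}tX'}-\E e^{\mathrm{i}tX}$, hence its absolute value is at most $\min\{at^2+bt^3,\,2\}$. We split the integral at $T$:
\begin{itemize}
\item on $[0,T]$, the bound $a+bt$ integrates to $aT+bT^2/2$;
\item on $[T,\infty)$, the bound $2/t^2$ integrates to $2/T$.
\end{itemize}
Multiplying the sum by $2/\pi$ gives exactly \eqref{trees:eq:absolute-moment-truncation}.

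\textbf{The Gaussian facts.} The characteristic function $e^{-\sigma^2t^2/2}$ is standard; for instance, it follows by completing the square in the Gaussian density. For the absolute moment,
\[
 \E\abs{\sigma N}=2\sigma\int_0^\infty x\,\frac{e^{-x^2/2}}{\sqrt{2\pi}}\,dx=\sigma\sqrt{2/\pi}.
\]
The case $\sigma=0$ is trivial.

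The only mildly delicate point is justifying the interchange and the subtraction. Both are handled by nonnegativity and Tonelli's theorem, so we expect no real obstacle.
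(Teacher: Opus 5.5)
Your proposal is correct and follows the paper's proof essentially step for step: the scalar identity via integration by parts to the Dirichlet integral plus Tonelli, the split of the difference integral at $T$ using the bound $\min\{at^2+bt^3,2\}$, and direct Gaussian computations. The only differences are minor. The paper derives $\int_0^\infty \sin u/u\,du=\pi/2$ with an Abel damping factor $e^{-\eta u}$ rather than quoting it, and it obtains the Gaussian characteristic function from the differential equation $\phi'(t)=-\sigma^2 t\,\phi(t)$ rather than by completing the square, which with an imaginary shift tacitly requires a contour or analytic-continuation argument.
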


\begin{proof}
Integration by parts gives
\[
 \int_0^\infty\frac{1-\cos u}{u^2}\,du
 =
 \int_0^\infty\frac{\sin u}{u}\,du.
\]
The boundary terms vanish because \(1-\cos u=O(u^2)\) at zero and is bounded at infinity. To evaluate the last improper integral, let \(\eta>0\). The identity
\(\sin u/u=\int_0^1\cos(su)\,ds\) and Fubini's theorem give
\[
 \int_0^\infty e^{-\eta u}\frac{\sin u}{u}\,du
 =
 \int_0^1\frac{\eta}{\eta^2+s^2}\,ds
 =
 \arctan(1/\eta).
\]
Integration by parts bounds the tail from \(R\) to infinity by \(C/R\), uniformly for \(\eta\geq0\), since \(e^{-\eta u}/u\) decreases to zero. We may therefore let \(\eta\) decrease to zero, obtaining \(\pi/2\). Scaling by \(\abs x\) proves
\[
 \int_0^\infty\frac{1-\cos(tx)}{t^2}\,dt
 =\frac{\pi}{2}\abs x
 \qquad(x\in\R).
\]
Tonelli's theorem now proves
\eqref{trees:eq:absolute-moment-fourier}.

Subtract the two absolute-moment identities. On \([0,T]\), the assumed bound contributes at most
\(aT+bT^2/2\) before multiplication by \(2/\pi\). On \([T,\infty)\), use
\(\abs{\E\cos(tX)-\E\cos(tX')}\leq2\). Its contribution is at most \(2/T\), which proves
\eqref{trees:eq:absolute-moment-truncation}.

For \(\sigma>0\), Gaussian integration by parts gives the differential equation
\[
 \frac{d}{dt}\E e^{\mathrm{i}t\sigma N}
 =
 -\sigma^2t\,\E e^{\mathrm{i}t\sigma N}.
\]
Its value at zero is one, so its solution is \(e^{-\sigma^2t^2/2}\). Direct integration of the normal density gives
\[
 \E\abs{\sigma N}
 =
 \frac{2\sigma}{\sqrt{2\pi}}
 \int_0^\infty x e^{-x^2/2}\,dx
 =
 \sqrt{\frac2\pi}\,\sigma.
\]
Both identities also hold when \(\sigma=0\).
\end{proof}

\begin{thm}[Mixed-degree Gaussian comparison]
\label{trees:thm:sign-cube-comparison}
\label{trees:thm:additive-gaussian-comparison}
Let \(Y_1,\ldots,Y_q\) be homogeneous real Walsh polynomials in \(m\) independent fair signs, of degrees \(d_i\geq1\). Define
\begin{equation}
 G_{ij}=\frac1{d_i}\sum_v(D_vY_i)(D_vY_j),
 \qquad
 I_{i,v}=\E(D_vY_i)^2.
 \label{trees:eq:additive-Gij}
\end{equation}
Put
\[
 a_i=\max_vI_{i,v},\qquad
 b_i=\sum_vI_{i,v},\qquad
 c_{ij}=\sqrt{\Var(G_{ij})}.
\]
For \(\alpha=(\alpha_1,\ldots,\alpha_q)\in\R^q\), set
\[
 Y_\alpha=\sum_i\alpha_iY_i,\qquad
 \sigma_\alpha^2=\E Y_\alpha^2,
\]
and
\[
 \mathcal A_\alpha
 =\sum_{i,j}\abs{\alpha_i\alpha_j}c_{ij},
 \qquad
 \mathcal B_\alpha
 =q\sum_{i,j}
 \frac{\abs{\alpha_i}\alpha_j^2}{d_i}
 3^{d_j-1}\sqrt{a_i}\,b_j.
\]
Then \(\E G_{ij}=\E[Y_iY_j]\), and
\begin{equation}
 \abs{\E e^{\mathrm{i}tY_\alpha}
       -e^{-\sigma_\alpha^2t^2/2}}
 \leq
 \frac{\mathcal A_\alpha}{2}t^2
 +\frac{\mathcal B_\alpha}{3}\abs t^3
 \qquad(t\in\R).
 \label{trees:eq:mixed-degree-cf}
\end{equation}
For every \(T>0\),
\begin{equation}
 \abs{\E\abs{Y_\alpha}
       -\sqrt{\frac2\pi}\sigma_\alpha}
 \leq
 \frac{\mathcal A_\alpha}{\pi}T
 +\frac{\mathcal B_\alpha}{3\pi}T^2
 +\frac4{\pi T}.
 \label{trees:eq:mixed-degree-absolute}
\end{equation}
These estimates require no positive lower bound on \(\sigma_\alpha\).

For the following additive coefficient class, the estimates hold with constants uniform in the odd modulus. Fix \(q\geq1\), \(M>0\), and degrees \(d_1,\ldots,d_q\), each equal to \(1\) or at least \(3\). Let \(n\geq3\) be odd and \(m=(n-1)/2\). For \(d\geq3\), let \(\mathcal H_{n,d}\) consist of the \(d\)-element subsets \(e\subseteq\{1,\ldots,m\}\) admitting signs \(s_v\in\{-1,1\}\) such that
\[
 \sum_{v\in e}s_vv=0\pmod n.
\]
Assume that the deterministic real coefficients satisfy
\[
 \begin{array}{lll}
 Y_i=\displaystyle\sum_{e\in\mathcal H_{n,d_i}}w_{i,e}\chi_e,
 &\abs{w_{i,e}}\leq Mn^{-(d_i-1)/2},
 &d_i\geq3,\\[6pt]
 Y_i=\displaystyle\sum_vw_{i,v}\epsilon_v,
 &\abs{w_{i,v}}\leq Mn^{-1/2},
 &d_i=1.
 \end{array}
\]
There is a constant \(C\), depending only on \(q,M\) and the fixed degrees, such that
\begin{equation}
 \E G_{ij}=\E[Y_iY_j],
 \qquad
 \Var(G_{ij})\leq\frac Cn.
 \label{trees:eq:additive-Gij-bound}
\end{equation}
For every fixed \(\alpha\in\R^q\), there is a constant \(C_\alpha\), depending only on these parameters and \(\alpha\), such that
\begin{align}
 \abs{\E e^{\mathrm{i}tY_\alpha}
       -e^{-\sigma_\alpha^2t^2/2}}
 &\leq C_\alpha\frac{t^2+\abs t^3}{\sqrt n},
 \label{trees:eq:additive-general-cf}\\
 \abs{\E\abs{Y_\alpha}
       -\sqrt{\frac2\pi}\sigma_\alpha}
 &\leq C_\alpha n^{-1/6}.
 \label{trees:eq:additive-general-absolute}
\end{align}
The constants are uniform in \(n\) and in all permitted deterministic weights. These additive estimates concern fixed degrees; they assert neither a constant uniform in growing degree nor an approximation theorem for adaptive nonlinear iterations.
\end{thm}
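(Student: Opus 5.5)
The plan is an exchangeable-pair (Stein) argument for the characteristic function, driven by the random sign flip of Lemma~\ref{trees:lem:weighted-jumps}. After that comes a Gr\"onwall-type integration of the resulting differential inequality, and Lemma~\ref{trees:lem:absolute-moment-fourier} to pass to absolute moments. Let \(\epsilon'\) be \(\epsilon\) with \(\epsilon_J\) flipped. Then \((\epsilon,\epsilon')\) is exchangeable.

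\textbf{Linear regression.} Homogeneity gives \(\sum_v\epsilon_vD_vY_i=d_iY_i\), so
\[
 \E[\Delta_i\mid\epsilon]=-\frac2m\sum_v\epsilon_vD_vY_i=-\lambda_iY_i .
\]
The regression is linear, but the rate \(\lambda_i\) depends on the degree, which is why \(G_{ij}\) carries the factor \(1/d_i\). The conditional second moment is
\[
 \lambda_i^{-1}\E[\Delta_i\Delta_j\mid\epsilon]=\frac m{2d_i}\cdot\frac4m\sum_vD_vY_iD_vY_j=2G_{ij}.
\]
Taking expectations and using \(\E[\Delta_i\Delta_j]=-2\E[Y_i\Delta_j]=2\lambda_j\E[Y_iY_j]\) gives \(\E G_{ij}=\E[Y_iY_j]\). (This is symmetric because \(\E[Y_iY_j]=0\) unless \(d_i=d_j\).)

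\textbf{Differential inequality for \(\psi(t)=\E e^{\mathrm itY_\alpha}\).} First write
\[
 \psi'(t)=\mathrm i\sum_i\alpha_i\E[Y_ie^{\mathrm itY_\alpha}]=-\mathrm i\sum_i\alpha_i\lambda_i^{-1}\E[\Delta_ie^{\mathrm itY_\alpha}].
\]
Antisymmetry of \(\Delta_i\) under exchange gives
\[
 \E[\Delta_ie^{\mathrm itY_\alpha}]=-\tfrac12\E[\Delta_i(e^{\mathrm itY_\alpha'}-e^{\mathrm itY_\alpha})].
\]
Expand \(e^{\mathrm itY'_\alpha}-e^{\mathrm itY_\alpha}=\mathrm it\Delta_\alpha e^{\mathrm itY_\alpha}+R\), with \(|R|\leq t^2\Delta_\alpha^2/2\) and \(\Delta_\alpha=\sum_j\alpha_j\Delta_j\). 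The main term becomes \(-t\sum_{i,j}\alpha_i\alpha_j\E[G_{ij}e^{\mathrm itY_\alpha}]\). Replacing \(G_{ij}\) by its mean costs at most \(|t|\sum|\alpha_i\alpha_j|c_{ij}=|t|\mathcal A_\alpha\), by Cauchy--Schwarz, and leaves \(-\sigma_\alpha^2t\psi(t)\).

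For the remainder, use \(\Delta_\alpha^2\leq q\sum_j\alpha_j^2\Delta_j^2\) together with \eqref{trees:eq:mixed-degree-jump}. This bounds it by \(\tfrac{t^2}{4}\cdot q\sum_{i,j}|\alpha_i|\alpha_j^2\lambda_i^{-1}\E[|\Delta_i|\Delta_j^2]\leq t^2\mathcal B_\alpha\). Hence
\[
 |\psi'(t)+\sigma_\alpha^2t\psi(t)|\leq\mathcal A_\alpha|t|+\mathcal B_\alpha t^2 .
\]
Multiplying by \(e^{\sigma_\alpha^2t^2/2}\), integrating from \(0\), and using \(e^{-\sigma^2(t^2-s^2)/2}\leq1\) gives \eqref{trees:eq:mixed-degree-cf}. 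No lower bound on \(\sigma_\alpha\) is needed. Lemma~\ref{trees:lem:absolute-moment-fourier} with \(a=\mathcal A_\alpha/2\) and \(b=\mathcal B_\alpha/3\), together with \(\E|\sigma N|=\sqrt{2/\pi}\,\sigma\), then gives \eqref{trees:eq:mixed-degree-absolute}.

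\textbf{Additive class.} It remains to show three bounds, uniformly in \(n\):
\[
 a_i\leq C/n,\qquad b_i\leq C,\qquad \Var(G_{ij})\leq C/n .
\]
The first two give \(\mathcal B_\alpha\leq C_\alpha n^{-1/2}\), and the third gives \(\mathcal A_\alpha\leq C_\alpha n^{-1/2}\). This yields \eqref{trees:eq:additive-general-cf}. Choosing \(T=n^{1/6}\) in \eqref{trees:eq:mixed-degree-absolute} gives \eqref{trees:eq:additive-general-absolute}, since all three terms are then \(O(n^{-1/6})\).

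For \(d=1\) the three bounds are immediate. For \(d\geq3\) the key counting fact is this. A \(d\)-set with signed sum \(0\pmod n\) is determined, up to \(2^d\) sign choices, by any \(d-1\) of its elements. Hence hyperedges through a fixed vertex number \(O(n^{d-2})\), and hyperedges through two fixed vertices number \(O(n^{d-3})\). This gives \(I_{i,v}\leq M^2n^{-(d-1)}O(n^{d-2})\), so \(a_i=O(1/n)\) and \(b_i=O(1)\).

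\textbf{Main obstacle: \(\Var(G_{ij})\).} Expand
\[
 G_{ij}=\frac1{d_i}\sum_v\sum_{e\ni v,\,f\ni v}w_{i,e}w_{j,f}\chi_{e\symdiff f}.
\]
The variance is a sum over quadruples \((v,e,f)\), \((v',e',f')\) with \(e\symdiff f=e'\symdiff f'\neq\varnothing\). I would organise this as a case count on the overlap pattern of the four hyperedges, using the modular constraint to remove one free vertex per hyperedge. The goal is to show that the number of free vertices is at most \(\deg-1\), where \(\deg\) is the total power of \(n^{-1/2}\) contributed by the weights. The delicate cases are those where \(e\cap f\) is large or \(e\symdiff f\) is small, for example \(|e\symdiff f|=2\). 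There I expect to need the hypothesis \(d\neq2\): a pair \(\{v,w\}\) with \(v\pm w\equiv0\) forces \(v=w\) on \(\{1,\dots,m\}\) when \(n\) is odd. Combined with the fact that linear constraints among two hyperedges sharing \(\geq2\) vertices cost an extra factor of \(n\), this should produce the \(1/n\) gain.
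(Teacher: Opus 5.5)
\textbf{Verdict: genuine gap.} The general comparison inequality in your proposal is correct and matches the paper, but the variance bound for the additive class is missing, and the mechanism you sketch for it is false.

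\textbf{What is correct.} Your treatment of the general comparison is essentially the paper's Step~1:
the sign-flip exchangeable pair, the regression $\E[\Delta_i\mid\epsilon]=-\lambda_iY_i$, the identity $\lambda_i^{-1}\E[\Delta_i\Delta_j\mid\epsilon]=2G_{ij}$, the Taylor remainder bounded through \eqref{trees:eq:mixed-degree-jump}, the integrating factor, and Lemma~\ref{trees:lem:absolute-moment-fourier}. Your influence bounds $a_i=O(1/n)$, $b_i=O(1)$ and the choice $T=n^{1/6}$ also match the paper.

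\textbf{The gap.} The estimate $\Var(G_{ij})\le C/n$ is the only substantive input of the second half, and you leave it as a plan. The mechanism you propose, that each zero-sum congruence removes one free label, is false. After squaring and applying Walsh orthogonality, you must count quadruples $(e_1,e_2,e_3,e_4)$ of sizes $(r,s,r,s)$ such that $e_1\neq e_2$, $e_3\neq e_4$, $e_1\cap e_2\neq\varnothing$, $e_3\cap e_4\neq\varnothing$, and every label occurs exactly twice or four times. The four congruences of such a quadruple can be linearly dependent. Take $r=s=4$ and
$e_1=\{a,b,c,d\}$, $e_2=\{a,b,f,g\}$, $e_3=\{c,d,h,k\}$, $e_4=\{f,g,h,k\}$, all with orientation $+1$. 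The combination $e_1-e_2-e_3+e_4$ of the four equations vanishes identically, so the system has rank three. A positive proportion of its $n^{5}$ solutions modulo $n$ consist of distinct labels in $\{1,\ldots,m\}$. There are therefore order $n^{r+s-3}$ such quadruples, not $n^{r+s-4}$. This configuration is exactly what makes $1/n$ the true order, so an argument removing four labels cannot be right.

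\textbf{What is needed.} Let $c$ be the number of labels occurring four times, so there are $r+s-c$ distinct labels. You need a rank lower bound of $3-c$, realised by a minor of determinant $\pm1$ so that the count is valid for composite $n$.
\begin{itemize}
\item For $c=0$, regard each label as an edge between the two equations containing it. The resulting multigraph on the four rows is connected. Indeed, $e_1\cap e_2\neq\varnothing$ joins rows $1,2$, and $e_3\cap e_4\neq\varnothing$ joins rows $3,4$. If no label crossed between $\{1,2\}$ and $\{3,4\}$, every label of $e_1$ would have its second occurrence in $e_2$ and conversely, forcing $e_1=e_2$. Three columns forming a spanning tree, with one row deleted, give a $3\times3$ minor of determinant $\pm1$ (peel off leaves). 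This leaves at most $n^{r+s-3}$ solutions.
\item For $c=1$, the four-support column and a two-support column give a unimodular $2\times2$ minor, hence at most $n^{(r+s-1)-2}$ solutions.
\item For $c\ge2$, a single equation suffices.
\end{itemize}
Against the weight bound $M^4n^{-(r+s-2)}$ this yields $C/n$.

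\textbf{Two smaller points.}
\begin{itemize}
\item The mixed case $d_i=1$, $d_j\ge3$ is not ``immediate'' as you state. There the covariance polynomial is $\sum_v w_{i,v}D_vY_j$, and you must use that an $(s-1)$-set has at most $2^s$ completions to a member of $\mathcal H_{n,s}$.
\item The hypothesis $d\neq2$ is not the source of the gain. By your own observation, $\mathcal H_{n,2}$ is empty for odd $n$, so that hypothesis is vacuous. The gain comes from $e_1\neq e_2$ together with $e_1\cap e_2\neq\varnothing$.
\end{itemize}
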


\begin{proof}
We first prove the comparison in terms of the influences and covariance fluctuations. We then establish the required bounds for additive coefficients by counting solutions of congruences with determinant-one minors.

\noindent\textbf{Step 1. The sign flip gives the comparison with explicit parameters.}
Walsh orthogonality gives
\[
 \sum_v I_{i,v}=d_i\E Y_i^2,
 \qquad
 \E\sum_v(D_vY_i)(D_vY_j)
 =d_i\E[Y_iY_j].
\]
For the second identity, both sides vanish when \(d_i\neq d_j\). When the degrees agree, each coincident monomial is counted once for each member of its support. Hence \(\E G_{ij}=\E[Y_iY_j]\).

Flip a uniformly selected sign as in
Lemma~\ref{trees:lem:weighted-jumps}, and write
\[
 \Delta_i=-2\epsilon_JD_JY_i,\qquad
 \lambda_i=\frac{2d_i}{m},\qquad
 \Delta_\alpha=\sum_i\alpha_i\Delta_i.
\]
Every degree-\(d_i\) monomial changes sign under exactly \(d_i\) choices of \(J\). Thus
\[
 \E[\Delta_i\mid\epsilon]=-\lambda_iY_i.
\]
The original and flipped sign samples are exchangeable. For any bounded complex function \(f\), exchanging the samples gives
\[
 \E[\Delta_i f(Y_\alpha+\Delta_\alpha)]
 =-\E[\Delta_i f(Y_\alpha)].
\]
Combining this identity with the regression formula proves
\begin{equation}
 \E[Y_if(Y_\alpha)]
 =
 \frac1{2\lambda_i}
 \E\!\left[
 \Delta_i\{f(Y_\alpha+\Delta_\alpha)-f(Y_\alpha)\}
 \right].
 \label{trees:eq:additive-exchange}
\end{equation}

For real \(y,h,t\), integrating the second derivative of
\(u\mapsto e^{\mathrm{i}tu}\) twice gives
\[
 \abs{
 e^{\mathrm{i}t(y+h)}-e^{\mathrm{i}ty}
 -\mathrm{i}th\,e^{\mathrm{i}ty}}
 \leq\frac{t^2h^2}{2}.
\]
Also,
\[
 \E[\Delta_i\Delta_j\mid\epsilon]
 =\frac4m\sum_v(D_vY_i)(D_vY_j).
\]
Apply \eqref{trees:eq:additive-exchange} with
\(f(y)=e^{\mathrm{i}ty}\), multiply by \(\alpha_i\), and sum over \(i\). The linear terms yield
\begin{equation}
 \E[Y_\alpha e^{\mathrm{i}tY_\alpha}]
 =
 \mathrm{i}t\,\E[V_\alpha e^{\mathrm{i}tY_\alpha}]
 +\rho_\alpha(t),
 \qquad
 V_\alpha=\sum_{i,j}\alpha_i\alpha_jG_{ij}.
 \label{trees:eq:additive-Stein}
\end{equation}
Indeed, the factor \(1/(2\lambda_i)\) times \(4/m\) equals \(1/d_i\). The remainder satisfies
\[
 \abs{\rho_\alpha(t)}
 \leq
 \frac{t^2}{4}\sum_i
 \abs{\alpha_i}\lambda_i^{-1}
 \E[\abs{\Delta_i}\Delta_\alpha^2].
\]
Since
\(\Delta_\alpha^2\leq q\sum_j\alpha_j^2\Delta_j^2\),
\eqref{trees:eq:mixed-degree-jump} gives
\[
 \abs{\rho_\alpha(t)}\leq\mathcal B_\alpha t^2.
\]
The mean identities and Cauchy--Schwarz give
\[
 \E V_\alpha=\sigma_\alpha^2,\qquad
 \E\abs{V_\alpha-\sigma_\alpha^2}
 \leq\mathcal A_\alpha.
\]

Let \(\phi(t)=\E e^{\mathrm{i}tY_\alpha}\). Differentiation is valid on the finite sign space. It follows from
\eqref{trees:eq:additive-Stein} that
\[
 \abs{\phi'(t)+\sigma_\alpha^2t\phi(t)}
 \leq\mathcal A_\alpha\abs t+\mathcal B_\alpha t^2.
\]
For \(t\geq0\), multiply by \(e^{\sigma_\alpha^2t^2/2}\) and integrate from zero. The resulting kernel satisfies
\[
 \exp\!\left[-\frac{\sigma_\alpha^2}{2}(t^2-s^2)\right]
 \leq1
 \qquad(0\leq s\leq t).
\]
Consequently,
\[
 \abs{\phi(t)-e^{-\sigma_\alpha^2t^2/2}}
 \leq
 \int_0^t(\mathcal A_\alpha s+\mathcal B_\alpha s^2)\,ds
 =
 \frac{\mathcal A_\alpha}{2}t^2+
 \frac{\mathcal B_\alpha}{3}t^3.
\]
Complex conjugation handles negative \(t\). This proves
\eqref{trees:eq:mixed-degree-cf}, including \(\sigma_\alpha=0\).
Lemma~\ref{trees:lem:absolute-moment-fourier}, with
\(a=\mathcal A_\alpha/2\) and \(b=\mathcal B_\alpha/3\), proves
\eqref{trees:eq:mixed-degree-absolute}.

\medskip
\noindent\textbf{Step 2. Additive coefficients have bounded total influence and small individual influences.}
Ordering the vertices of an additive edge and choosing their orientations gives
\begin{equation}
 \abs{\mathcal H_{n,d}}\leq A_dn^{d-1},
 \qquad
 \#\{e\in\mathcal H_{n,d}:v\in e\}
 \leq A_dn^{d-2}.
 \label{trees:eq:additive-hyperedge-counts}
\end{equation}
For the first bound, choose \(d-1\) residues and all orientation signs. The final residue is determined by a coefficient \(1\) or \(-1\) in the zero-sum equation. For the second bound, fix the prescribed vertex and choose \(d-2\) further residues. Allowing arbitrary residues, repetitions, and all orderings enlarges both counts.

The weight bounds and Walsh orthogonality now give
\begin{equation}
 \max_v I_{i,v}\leq\frac An,
 \qquad
 \sum_v I_{i,v}=d_i\E Y_i^2\leq A,
 \label{trees:eq:additive-influences}
\end{equation}
where \(A\) depends only on \(M\) and the fixed degrees. If \(d_i=1\), the first estimate is \(w_{i,v}^2\leq M^2/n\), and summing over \(m\leq n/2\) proves the second.

\medskip
\noindent\textbf{Step 3. Modular counts bound the covariance fluctuations.}
First assume that \(r=d_i\geq3\) and \(s=d_j\geq3\). The unscaled covariance polynomial is
\begin{equation}
 \sum_v(D_vY_i)(D_vY_j)
 =
 \sum_{e,f}\abs{e\cap f}\,
 w_{i,e}w_{j,f}\chi_{e\symdiff f}.
 \label{trees:eq:additive-covariance-polynomial}
\end{equation}
Its constant terms have \(e=f\), which is possible only when \(r=s\). After removing those terms and squaring, Walsh orthogonality leaves quadruples
\((e_1,e_2,e_3,e_4)\) of sizes \((r,s,r,s)\) such that every label occurs an even number of times. In addition,
\[
 e_1\neq e_2,\quad e_1\cap e_2\neq\varnothing,
 \qquad
 e_3\neq e_4,\quad e_3\cap e_4\neq\varnothing.
\]
We prove the bound
\begin{equation}
 \#\{\text{such additive quadruples}\}
 \leq A_{r,s}n^{r+s-3}.
 \label{trees:eq:additive-quadruple-count}
\end{equation}

There are finitely many identification patterns among the \(2r+2s\) positions, with the number depending only on \(r,s\). Choose an orientation for each of the four edges; at most \(2^{2r+2s}\) choices cover all possibilities. Fix a pattern and these orientations. Let \(c\) be the number of labels occurring four times. Every other label occurs twice, so the number of distinct labels is
\[
 v=r+s-c.
\]
The four additive equations form a homogeneous congruence system on these \(v\) labels. Each column has either two or four nonzero entries, all equal to \(1\) or \(-1\).

If \(c=0\), regard each column as an edge joining its two supporting equation rows. The resulting multigraph on four rows is connected. It contains connections between rows \(1,2\) and between rows \(3,4\). If it were disconnected, its components would be exactly \(\{1,2\}\) and \(\{3,4\}\). Every label in \(e_1\) would then belong to \(e_2\), and conversely, contradicting \(e_1\neq e_2\). Choose three columns forming a spanning tree and delete one row. Removing a nonroot leaf successively shows that the resulting \(3\)-by-\(3\) minor has determinant \(1\) or \(-1\). Thus assigning the other \(v-3\) labels determines these three modulo \(n\). There are at most \(n^{v-3}=n^{r+s-3}\) solutions.

If \(c=1\), choose the four-support column and a two-support column. Such a two-support column exists because each row has at least three entries. Select one row in the support of the latter column and one outside its support. These rows and columns form a \(2\)-by-\(2\) minor with determinant \(1\) or \(-1\). The number of solutions is therefore at most
\[
 n^{v-2}=n^{r+s-3}.
\]
If \(c\geq2\), any nonzero entry determines one variable after the others are fixed. This bounds the number of solutions by
\[
 n^{v-1}\leq n^{r+s-3}.
\]
All the determining minors are units for every modulus. Summing over the patterns and orientations proves
\eqref{trees:eq:additive-quadruple-count}; restricting the labels to distinct representatives in \(\{1,\ldots,m\}\) can only decrease the count.

Each product of four weights in the variance expansion is bounded by
\[
 M^4n^{-(r-1)-(s-1)}
 =M^4n^{-(r+s-2)}.
\]
The two intersection factors are bounded in terms of \(r,s\). Hence
\eqref{trees:eq:additive-quadruple-count} gives
\[
 \Var\!\left(\sum_vD_vY_iD_vY_j\right)\leq\frac Cn.
\]
Dividing by \(d_i^2\) proves the required estimate for \(G_{ij}\).

If one degree is one and the other is \(s\geq3\), write the linear coefficients as \(u_v\). The unscaled covariance polynomial is
\[
 \sum_vu_vD_vY_j.
\]
For a fixed \((s-1)\)-element support \(B\), its coefficient is the sum over completions \(v\) for which \(B\cup\{v\}\) is additive. Each orientation determines \(v\), so there are at most \(2^s\) completions. The coefficient has magnitude at most
\[
 2^sM^2n^{-s/2}.
\]
There are at most \(n^{s-1}\) supports \(B\). Walsh orthogonality therefore bounds the variance by \(C/n\). Reversing the order of the two degrees changes only the fixed divisor \(d_i\). If both degrees are one, the covariance polynomial is deterministic. This proves
\eqref{trees:eq:additive-Gij-bound} in all cases.

By \eqref{trees:eq:additive-influences} and
\eqref{trees:eq:additive-Gij-bound},
\[
 \mathcal A_\alpha\leq C_\alpha n^{-1/2},
 \qquad
 \mathcal B_\alpha\leq C_\alpha n^{-1/2}.
\]
The factors \(q\), \(d_i^{-1}\), and \(3^{d_j-1}\) are retained explicitly in the definition of \(\mathcal B_\alpha\). Substitution into
\eqref{trees:eq:mixed-degree-cf} proves
\eqref{trees:eq:additive-general-cf}. Taking \(T=n^{1/6}\) in
\eqref{trees:eq:mixed-degree-absolute} gives
\[
 C_\alpha\frac{T+T^2}{\sqrt n}+\frac C T
 \leq C_\alpha n^{-1/6},
\]
which proves \eqref{trees:eq:additive-general-absolute}.
\end{proof}

\subsection{Polynomial Fourier feedback}

The polynomial replacement estimate below treats the one-, two-, and four-coordinate calculations in a common form.

\begin{lem}[Polynomial replacement]\label{trees:lem:polynomial-replacement}
Fix a positive integer \(s\) and a real polynomial \(A\) on \(\R^s\). Let \(n\geq3\), let \(u_1,\ldots,u_m\in\R^s\), and let \(\eta\in\R^s\). Assume that
\[
 \abs{\eta_j}\leq1,\qquad
 \abs{(u_a)_j}\leq\frac2{\sqrt n},\qquad
 \sum_a (u_a)_j^2\leq2
 \quad(1\leq j\leq s).
\]
Let \(\epsilon_a\) be independent fair signs and \(g_a\) independent standard real normal variables. Set
\[
 X=\eta+\sum_a u_a\epsilon_a,
 \qquad
 G=\eta+\sum_a u_ag_a.
\]
There are constants \(C_A,C_A'\), depending only on \(A\) and \(s\), such that
\begin{equation}
 \abs{\E A(X)-\E A(G)}
 \leq C_A\sum_a\norm{u_a}_2^4
 \leq\frac{C_A'}n.
 \label{trees:eq:empirical-replacement}
\end{equation}
Every fixed coordinate moment remains uniformly bounded throughout successive replacements of signs by normal variables.
\end{lem}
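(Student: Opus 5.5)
This is a Lindeberg replacement argument. Replace the signs by normals one index at a time and Taylor-expand \(A\) to fourth order around the partial sum that omits the current index.

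\textbf{Setup.} For \(0\leq b\leq m\), put
\[
 Z_b=\eta+\sum_{a\leq b}u_ag_a+\sum_{a>b}u_a\epsilon_a ,
\]
so that \(Z_0=X\) and \(Z_m=G\). Write \(W_b=Z_b-u_bg_b=Z_{b-1}-u_b\epsilon_b\), which is independent of both \(\epsilon_b\) and \(g_b\). Then
\[
 \E A(X)-\E A(G)=\sum_b\bigl(\E A(W_b+u_b\epsilon_b)-\E A(W_b+u_bg_b)\bigr).
\]

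\textbf{Taylor step.} Expand \(A(W_b+h)\) in \(h\). Since \(A\) is a polynomial, the expansion is exact and finite: the remainder beyond order three is a finite sum of terms \(\partial^\beta A(W_b)h^\beta/\beta!\) with \(\abs\beta\geq4\). Condition on \(W_b\) and use independence. The contributions of orders one, two and three cancel, because \(\epsilon_b\) and \(g_b\) share their first three moments \(0,1,0\). Each term of order \(k\geq4\) is bounded by
\[
 C\,\E\abs{\partial^\beta A(W_b)}\,\norm{u_b}_2^k\,\bigl(\E\abs{g}^k+1\bigr).
\]
Because \(\abs{(u_b)_j}\leq2/\sqrt n\leq2\), we have \(\norm{u_b}_2^k\leq C_s\norm{u_b}_2^4\) for \(k\geq4\). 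The degree of \(A\) bounds \(k\), so summing over \(b\) gives \(C_A\sum_a\norm{u_a}_2^4\), provided the moments \(\E\abs{\partial^\beta A(W_b)}\) are bounded uniformly in \(b\), \(m\) and \(n\).

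\textbf{Uniform moments (the only real point).} Every \(\partial^\beta A\) is a polynomial, so it suffices to bound \(\E\abs{(W_b)_j}^{2p}\) for each coordinate \(j\) and each fixed \(p\) up to the degree of \(A\). Each coordinate equals \(\eta_j\) plus a sum of independent, symmetric, mixed sign/normal terms with coefficients \((u_a)_j\). Its moment generating function satisfies
\[
 \E e^{\lambda((W_b)_j-\eta_j)}\leq\exp\Bigl(\tfrac{\lambda^2}2\sum_a(u_a)_j^2\Bigr)\leq e^{\lambda^2},
\]
using \(\cosh x\leq e^{x^2/2}\) as in Proposition~\ref{prop:tail} and the hypothesis \(\sum_a(u_a)_j^2\leq2\). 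Together with \(\abs{\eta_j}\leq1\), this gives moment bounds depending only on \(p\). Alternatively, one can expand the even moment combinatorially: the sign and normal moments are dominated by the normal ones, and the normal sum has variance at most \(2\). Hölder's inequality then controls \(\E\abs{\partial^\beta A(W_b)}\) uniformly. The same bounds apply to every hybrid \(Z_b\), which proves the final sentence of the statement.

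\textbf{Final bound.} We have
\[
 \sum_a\norm{u_a}_2^4\leq\max_a\norm{u_a}_2^2\sum_a\norm{u_a}_2^2\leq\frac{4s}{n}\cdot2s,
\]
using \(\norm{u_a}_2^2\leq4s/n\) and \(\sum_a\norm{u_a}_2^2\leq2s\). This gives \(C_A'=8s^2C_A\).

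I expect no serious obstacle. The step that needs care is keeping the moment bound uniform in \(m\) and \(n\), which is exactly what the \(\ell^2\) hypothesis \(\sum_a(u_a)_j^2\leq2\) provides.
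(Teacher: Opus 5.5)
Your proposal is correct and follows essentially the same route as the paper. Both arguments replace one index at a time (Lindeberg), let the Taylor terms of orders one through three cancel because signs and normals share their first three moments, get uniform coordinate moments from the sub-Gaussian bound $\E e^{\lambda H}\le e^{\lambda^2}$ via $\cosh x\le e^{x^2/2}$ and $\sum_a(u_a)_j^2\le2$, and finish with $\sum_a\norm{u_a}_2^4\le\max_a\norm{u_a}_2^2\sum_a\norm{u_a}_2^2\le 8s^2/n$. The one cosmetic difference is that you use the exact finite Taylor expansion of the polynomial $A$ instead of a fourth-order remainder bound with polynomial growth in the base point, which is if anything slightly cleaner.
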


\begin{proof}
The inequality \(\cosh x\leq e^{x^2/2}\) follows by integrating
\(\tanh x\leq x\) for \(x\geq0\) and using evenness. Thus, at every replacement stage, a centered coordinate \(H\) satisfies
\[
 \E e^{tH}
 \leq
 \exp\!\left(\frac{t^2}{2}
             \sum_a(u_a)_j^2\right)
 \leq e^{t^2}.
\]
The Gaussian summands satisfy the first bound with equality in their individual factors. Chernoff's inequality gives
\(\Prob(\abs H>x)\leq2e^{-x^2/4}\). Integrating this tail bounds every fixed absolute moment. Adding a shift of magnitude at most one preserves these bounds. The bounds also hold after omitting any one summand.

For the replacement of the \(a\)-th sign, let \(x\) be the sum of all other terms, including the deterministic shift. This base point is independent of the increment. Taylor's formula for \(t\mapsto A(x+tu_a)\), through order three, has remainder bounded by
\[
 C_A\norm{u_a}_2^4\abs t^4
 \left(1+\norm{x}_2^M+\norm{u_a}_2^M\abs t^M\right)
\]
for some nonnegative integer \(M\) depending only on \(A\). The sign and normal increments have matching moments of orders zero through three. The corresponding Taylor terms therefore cancel in expectation. The moment bounds just proved, together with the fixed Gaussian increment moments, bound the expected remainder by
\(C_A\norm{u_a}_2^4\).

Summing the replacement errors proves the first inequality in
\eqref{trees:eq:empirical-replacement}. The coefficient assumptions give
\[
 \max_a\norm{u_a}_2^2\leq\frac{4s}{n},
 \qquad
 \sum_a\norm{u_a}_2^2\leq2s,
 \qquad
 \sum_a\norm{u_a}_2^4\leq\frac{8s^2}{n}.
\]
This proves the second inequality.
\end{proof}

\begin{prop}[One polynomial feedback layer]
\label{trees:prop:feedback-layer}
Let \(n\geq3\) be odd, let \(m=(n-1)/2\), and let
\(\epsilon_1,\ldots,\epsilon_m\) be independent fair signs. Index vectors by \(\Z_n\), and define
\[
 b_0=1,\qquad b_j=b_{-j}=\epsilon_j
 \quad(1\leq j\leq m),\qquad
 F_{kl}=\exp\left(-\frac{2\pi\mathrm{i}kl}{n}\right).
\]
Let \(\1\) be the all-ones vector and \(\ezero\) the coordinate vector at zero. Put
\[
 h=Fb-\1,\qquad Z=\frac h{\sqrt n},\qquad
 R_0=\frac1nF\diag(b-\ezero)F^*,
\]
where \(F^*\) is the conjugate transpose of \(F\). Let \(P\) be a real polynomial of degree \(d\geq0\), applied coordinatewise, and define
\[
 \mu_P=\E[NP(N)],\qquad
 W=R_0P(Z),\qquad
 \widetilde W=W-\sqrt n\,\mu_P\ezero,
\]
where \(N\) is a standard real normal variable. There is a constant \(C_P\), depending only on \(P\), such that
\begin{equation}
 \sup_{\substack{n\geq3\ {\rm odd}\\ k\in\Z_n}}
 \E\abs{\widetilde W_k}^2\leq C_P.
 \label{trees:eq:feedback-second-moment}
\end{equation}
For every integer \(p\geq1\),
\begin{equation}
 \norm{\widetilde W_k}_{L^{2p}}
 \leq(2p-1)^{(d+1)/2}\sqrt{C_P},
 \label{trees:eq:feedback-high-moment}
\end{equation}
uniformly in \(n,k\). Consequently,
\begin{equation}
 \E\max_{k\in\Z_n}\abs{\widetilde W_k}^2
 \leq C_P'\bigl(\log(2n)\bigr)^{d+1},
 \label{trees:eq:feedback-maximum}
\end{equation}
and every fixed absolute moment of \(\widetilde W_k\) is uniformly bounded. If \(\E[NP(N)]=0\), as for every even polynomial \(P\), the macroscopic zero-mode term vanishes. These conclusions concern a single polynomial layer; they assert neither universality for a full adaptive iteration nor control of an adaptive optimizer.
\end{prop}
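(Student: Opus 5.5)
Since $b-\ezero$ is real and even with zero entry at the origin, I would first write $\widetilde W_k$ explicitly in the signs. Two identities are needed: $R_0 v = \frac1n F\diag(b-\ezero)F^*v$, and $F^*v=Fv$ for real even $v$. Also $Z$ is real and even, with $Z_l=\frac{2}{\sqrt n}\sum_{j=1}^m\epsilon_j\cos(2\pi jl/n)$. Then
\[
 W_k=\frac1n\sum_{j\neq0}b_j\,\omega^{kj}\sum_l\omega^{-jl}P(Z_l)
 =\frac2n\sum_{j=1}^m\epsilon_j\cos(2\pi kj/n)\,\widehat{P}_j,
 \qquad \widehat P_j=\sum_l\cos(2\pi jl/n)P(Z_l).
\]
This makes $W_k$ a Walsh polynomial of degree at most $d+1$ in the signs. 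Lemma~\ref{trees:lem:sign-hypercontractivity} then turns \eqref{trees:eq:feedback-second-moment} into \eqref{trees:eq:feedback-high-moment} directly, because $\widetilde W_k$ differs from $W_k$ by a constant and so keeps that degree. The maximum bound \eqref{trees:eq:feedback-maximum} follows from a union bound with $p\asymp\log(2n)$: use $\E\max_k|\widetilde W_k|^2\leq(\sum_k\E|\widetilde W_k|^{2p})^{1/p}\leq n^{1/p}(2p-1)^{d+1}C_P$. Fixed absolute moments are uniformly bounded by the same hypercontractive estimate. So everything reduces to the uniform second moment.

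For the second moment I expand $P$ in monomials and, by linearity together with $(x+y)^2\leq2x^2+2y^2$, reduce to $P(x)=x^r$. I then compute $\E W_k^2$ by expanding $Z_l^r$ into products of signs. The result is
\[
 W_k=\frac{2}{n^{1+r/2}}\sum_{l}\sum_{j}\sum_{j_1,\dots,j_r}c\,\epsilon_j\epsilon_{j_1}\cdots\epsilon_{j_r},
\]
where the coefficient $c$ is a product of cosines. Summing over $l$ first imposes an additive constraint on $\pm j\pm j_1\cdots\pm j_r\equiv0\pmod n$, because $\sum_l\prod\cos$ is $n2^{-r-1}$ times a count of signed solutions. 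The sum over $l$ thus gains a factor $n$ exactly on the additive set $\mathcal H$ of Theorem~\ref{trees:thm:sign-cube-comparison} type, and vanishes elsewhere.

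I then split the resulting Walsh polynomial by pairing pattern. In the fully distinct part, $j,j_1,\dots,j_r$ form an additive $(r+1)$-set, with coefficients $O(n^{-(r+1)/2}\cdot n\cdot n^{-1})=O(n^{-r/2})$ up to constants. By \eqref{trees:eq:additive-hyperedge-counts} there are at most $A n^{r}$ such sets, so the squared $L^2$ norm is $O(1)$. In the partially paired parts, coincidences $j_a=j_b$ collapse $\epsilon^2=1$ and give lower-degree polynomials. These are handled identically, with Wick-type constants: each pairing loses one free index but also one factor $n^{-1}$ from the normalization, so the count balances.

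The one term that does not balance is the one where $j$ itself pairs with some $j_a$. This leaves $\sum_j\epsilon_j^2\cos(\cdot)\cdot(\dots)$, which concentrates at a constant of order $\sqrt n$ only at $k=0$. Its mean, computed by summing over $j$, equals $\sqrt n\,r\,\E N^{r-1}=\sqrt n\,\E[N\cdot N^r]$ plus $O(1)$. For $k\neq0$ the cosine sum cancels it via the geometric series. This is exactly why the subtraction of $\sqrt n\mu_P\ezero$ is needed. The remaining fluctuations of this term are lower-degree additive polynomials bounded as before.

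The main obstacle is this bookkeeping: the combinatorial counting over all pairing patterns, uniformly for composite $n$, and verifying the exact constant $\mu_P$ in the zero-mode. As a cross-check on the $k=0$ term, I would first try the Gaussian computation suggested by Lemma~\ref{trees:lem:polynomial-replacement}. Note $W_0=\frac1n\sum_l Z_l'\,P(Z_l)\cdot$const, where $Z'_l$ is essentially $Z_l$ minus its root correction. Indeed $(R_0P(Z))_0=\frac1n\sum_l h_l P(Z_l)=\frac1{\sqrt n}\sum_l Z_lP(Z_l)$. Since $\E[Z_lP(Z_l)]\to\E[NP(N)]$ with $O(1/n)$ error by Lemma~\ref{trees:lem:polynomial-replacement}, the mean of $W_0$ is $\sqrt n\mu_P+O(n^{-1/2})$. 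Its variance is $O(1)$ by the near-independence of distinct Fourier coordinates, which comes from the same additive counting. When $\E[NP(N)]=0$ the zero-mode term vanishes trivially.
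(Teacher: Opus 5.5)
Your proposal is correct, but it reaches the uniform second-moment bound by a different route than the paper.

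\textbf{The paper's route.} The paper never expands \(P(Z_l)\) into Walsh monomials. For \(k\neq0\) it writes \(W_k=n^{-1/2}\sum_l Z_{k-l}P(Z_l)\) and discards the \(O(n)\) exceptional pairs \((l,r)\). For each remaining pair it replaces the four coordinates \(Z_{k-l},Z_l,Z_{k-r},Z_r\) by Gaussians through Lemma~\ref{trees:lem:polynomial-replacement}. Their covariance \(I_4-\frac2n\1\1^{\mathsf T}\) is \(O(n^{-1})\)-close to the identity, and \(\E[x_1P(x_2)x_3P(x_4)]=0\) under independence, so each such summand is \(O(n^{-1})\). The zero mode is a separate bias/variance estimate for \(\frac1n\sum_l Q(Z_l)\). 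That is essentially your cross-check; the paper detours through \(Y_l=Z_l+n^{-1/2}\) and then removes the shift.

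\textbf{What your route buys.} Your exact Walsh-coefficient count treats every \(k\) in one framework and gives constants valid for all odd \(n\geq3\). It also exhibits \(\sqrt n\,\mu_P\) concretely. For \(P(x)=x^r\) with \(r\) odd, it is the contribution of perfect matchings of the \(r+1\) positions with cancelling signs. This equals \(r!!\sqrt n+O(n^{-1/2})\), with \(r!!=\E N^{r+1}\). It survives only at \(k=0\), because otherwise the free label carries \(\sum_j\cos(2\pi kj/n)=O(1)\).

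\textbf{What it costs.} The price is the bookkeeping. Your heuristic that each pairing ``loses one free index but also one factor \(n^{-1}\)'' should be replaced by an exact exponent count:
\begin{itemize}
\item Partition the \(r+1\) positions into blocks of equal labels. Fix the sign vector in the signed-solution count, and let \(c_i\) be the resulting signed block coefficients. Let \(O\) and \(E\) be the odd and even blocks.
\item If some even block has \(c_i\neq0\), the piece has squared \(L^2\) norm at most \(C_rn^{|O|+2|E|-2-r}\).
\item If all even blocks have \(c_i=0\) and \(O\neq\varnothing\), the squared \(L^2\) norm is at most \(C_rn^{|O|+2|E|-1-r}\).
\item Both bounds are \(O(1)\), since \(|O|+2|E|\leq r+1\). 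The only remaining case, \(O=\varnothing\) with all \(c_i=0\), is exactly the deterministic term.
\end{itemize}
Merged labels carry coefficients such as \(\pm2\) or \(\pm3\), so \eqref{trees:eq:additive-hyperedge-counts} does not apply verbatim. The gcd bound \eqref{trees:eq:composite-congruence} supplies the needed counts uniformly over composite \(n\).
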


\begin{proof}
We estimate nonzero coordinates by a four-coordinate replacement. At zero, we retain the deterministic Fourier shift and bound the bias and variance of the empirical polynomial average. Sign hypercontractivity then supplies all higher moments.

\noindent\textbf{Step 1. Fourier coordinates have uniform moments.}
Pairing the terms at \(a\) and \(-a\) gives the real expansion
\begin{equation}
 Z_k=\sum_{a=1}^m
 \frac{2\cos(2\pi ka/n)}{\sqrt n}\epsilon_a.
 \label{trees:eq:linear-fourier-field}
\end{equation}
For every nonzero \(j\in\Z_n\), the finite geometric-series identity gives
\[
 1+2\sum_{a=1}^m\cos(2\pi ja/n)=0.
\]
Applying this identity after the cosine product formula yields
\[
 \E Z_k^2=
 \begin{cases}
 2-2/n,&k=0,\\
 1-2/n,&k\neq0,
 \end{cases}
 \qquad
 \E[Z_kZ_l]=-\frac2n
 \quad(k,l\neq0,\ l\notin\{k,-k\}).
\]
The nonzero variance formula uses that \(2k\neq0\) when \(n\) is odd. No frequency is required to be a unit modulo \(n\).

The sign generating function satisfies
\[
 \E e^{tZ_k}
 \leq
 \exp\!\left(
 \frac{t^2}{2}
 \sum_{a=1}^m\frac{4\cos^2(2\pi ka/n)}n
 \right)
 \leq e^{t^2}.
\]
The tail integration in
Lemma~\ref{trees:lem:polynomial-replacement} therefore bounds every fixed absolute moment of \(Z_k\), uniformly in \(n,k\).

The Fourier formula for \(R_0\) gives
\((R_0)_{kl}=h_{k-l}/n\). Hence
\begin{equation}
 W_k=\frac1{\sqrt n}\sum_{l\in\Z_n}Z_{k-l}P(Z_l).
 \label{trees:eq:feedback-coordinate}
\end{equation}

\medskip
\noindent\textbf{Step 2. Nonzero feedback coordinates have bounded second moments.}
Fix \(k\neq0\). Expanding the square in
\eqref{trees:eq:feedback-coordinate} gives
\begin{equation}
 \E W_k^2
 =
 \frac1n\sum_{l,r\in\Z_n}
 \E\!\left[Z_{k-l}P(Z_l)Z_{k-r}P(Z_r)\right].
 \label{trees:eq:feedback-square}
\end{equation}
Call \((l,r)\) exceptional if one of
\(k-l,l,k-r,r\) is zero or two of these frequencies agree up to sign. There are \(O(n)\) exceptional pairs. A zero condition fixes \(l\) or \(r\). An equality between frequencies from different pairs determines one of \(l,r\) from the other. Within the first pair, equality gives \(2l=k\), with a unique solution because \(n\) is odd; conjugacy would give \(k=0\). The second pair has the same two alternatives with \(r\) in place of \(l\). H\"older's inequality and the uniform coordinate moments bound every exceptional summand by a constant depending only on \(P\).

For a nonexceptional pair, collect the four coordinates into a vector. Its coefficient vectors \(v_a\in\R^4\) satisfy
\[
 \norm{v_a}_2\leq\frac4{\sqrt n},
 \qquad
 \sum_a\norm{v_a}_2^2\leq8.
\]
Each coordinate also satisfies the assumptions of
Lemma~\ref{trees:lem:polynomial-replacement}. Thus replacing the signs by independent standard normals changes the expectation of any fixed polynomial \(A\) of these four coordinates by \(O_A(n^{-1})\).

The resulting centered Gaussian vector has covariance
\[
 \Sigma=I_4-\frac2n\1\1^{\mathsf T}.
\]
For \(n\geq9\), its square root is
\[
 \Sigma^{1/2}
 =
 I_4+
 \frac{\sqrt{1-8/n}-1}{4}\1\1^{\mathsf T}.
\]
Its distance from \(I_4\) in operator norm is \(O(n^{-1})\).
If \(N_4\) has four independent standard normal coordinates, the polynomial mean-value formula and Gaussian moments therefore give
\[
 \abs{\E A(\Sigma^{1/2}N_4)-\E A(N_4)}
 \leq\frac{C_A}{n}.
\]
Take
\[
 A(x_1,x_2,x_3,x_4)=x_1P(x_2)x_3P(x_4).
\]
Its expectation under \(N_4\) is zero, because the first and third coordinates are centered and independent of the other factors. Each nonexceptional summand in
\eqref{trees:eq:feedback-square} is therefore \(O_P(n^{-1})\). It follows that
\[
 \E W_k^2
 \leq
 \frac1n\left(O_P(n)+O_P(n^2)n^{-1}\right)
 \leq C_P.
\]
The finitely many smaller odd values of \(n\) are absorbed into \(C_P\); their sign sample spaces are finite.

\medskip
\noindent\textbf{Step 3. The shifted empirical polynomial average has squared error of order \(n^{-1}\).}
Evenness gives \(Z_{-l}=Z_l\), so
\begin{equation}
 W_0=\sqrt n\,\frac1n\sum_l Z_lP(Z_l).
 \label{trees:eq:feedback-zero-mode}
\end{equation}
Let
\[
 Q(z)=zP(z),\qquad
 \mu_n=n^{-1/2},\qquad
 Y_l=\frac{(Fb)_l}{\sqrt n}=Z_l+\mu_n,
 \qquad
 m_Q=\E Q(N).
\]
Write
\[
 a_{la}=\frac{2\cos(2\pi la/n)}{\sqrt n},
 \qquad
 Y_l=\mu_n+\sum_{a=1}^m a_{la}\epsilon_a.
\]
For \(l\neq0\),
\[
 \sum_a a_{la}^2=v_n:=1-\frac2n.
\]
If \(l,r\neq0\) and \(r\notin\{l,-l\}\), the product-to-sum identity gives
\begin{align*}
 \sum_a a_{la}a_{ra}
 &=
 \frac2n\sum_{a=1}^m
 \left[
 \cos\left(\frac{2\pi(l-r)a}{n}\right)
 +
 \cos\left(\frac{2\pi(l+r)a}{n}\right)
 \right]\\
 &=-\frac2n=:c_n.
\end{align*}
Both half sums are \(-1/2\), since \(l-r\) and \(l+r\) are nonzero modulo \(n\). This calculation applies to composite moduli as well.

For \(s\in\{1,2\}\), the coefficient vectors \(u_a\) of \(s\) selected modes satisfy
\[
 \max_a\norm{u_a}_2^2\leq\frac{4s}{n},
 \qquad
 \sum_a\norm{u_a}_2^2\leq2s.
\]
The common deterministic shift is \(\mu_n\leq1\).
Lemma~\ref{trees:lem:polynomial-replacement} consequently applies to every polynomial of one or two selected coordinates, with matching Gaussian mean and covariance.

For one nonzero mode, the matching Gaussian variable is
\[
 G_l=\mu_n+\sqrt{v_n}\,N.
\]
The polynomial mean-value formula gives
\[
 \abs{\E Q(G_l)-m_Q}
 \leq
 C_Q\left(\mu_n+\abs{\sqrt{v_n}-1}\right)
 \leq\frac{C_Q}{\sqrt n}.
\]
Together with \eqref{trees:eq:empirical-replacement}, this proves
\begin{equation}
 \abs{\E Q(Y_l)-m_Q}
 \leq\frac{C_Q}{\sqrt n}
 \qquad(l\neq0).
 \label{trees:eq:empirical-bias-mode}
\end{equation}
The zero mode has uniformly bounded polynomial moments by Step 1 and the bound on \(\mu_n\). Thus, for
\[
 A_n=\frac1n\sum_lQ(Y_l),
\]
we obtain
\begin{equation}
 \abs{\E A_n-m_Q}\leq\frac{C_Q}{\sqrt n}.
 \label{trees:eq:empirical-bias}
\end{equation}

We next estimate covariances. Assume that \(n\geq5\), \(l,r\neq0\), and \(r\notin\{l,-l\}\). A matching Gaussian pair has the representation
\[
 G_1=\mu_n+\sqrt{v_n}\,N_1,
 \qquad
 G_2=\mu_n+\frac{c_n}{\sqrt{v_n}}N_1
       +\sqrt{v_n-\frac{c_n^2}{v_n}}\,N_2,
\]
where \(N_1,N_2\) are independent standard normal variables. Set
\[
 G_2'=\mu_n+\sqrt{v_n}\,N_2.
\]
Then \(G_1,G_2'\) are independent, and \(G_2,G_2'\) have the same marginal distribution. Since \(v_n\geq3/5\),
\[
 \abs{\frac{c_n}{\sqrt{v_n}}}\leq\frac Cn,
 \qquad
 \abs{
 \sqrt{v_n-\frac{c_n^2}{v_n}}-\sqrt{v_n}}
 \leq\frac C{n^2}.
\]
The second estimate follows by rationalizing the difference of square roots. Every fixed \(L^p\)-norm of \(G_2-G_2'\) is therefore \(O(n^{-1})\). Polynomial derivative bounds and H\"older's inequality imply
\[
 \abs{\E[Q(G_1)(Q(G_2)-Q(G_2'))]}
 \leq\frac{C_Q}{n}.
\]
Independence and equality of the indicated marginals identify the expression inside the absolute value as
\(\Cov(Q(G_1),Q(G_2))\). Hence
\[
 \abs{\Cov(Q(G_1),Q(G_2))}\leq\frac{C_Q}{n}.
\]

Apply \eqref{trees:eq:empirical-replacement} to
\(A(x_1,x_2)=Q(x_1)Q(x_2)\) and to the two one-coordinate means. The means are uniformly bounded, so replacing their product also costs \(O_Q(n^{-1})\). We conclude that
\begin{equation}
 \abs{\Cov(Q(Y_l),Q(Y_r))}
 \leq\frac{C_Q}{n}
 \quad(l,r\neq0,\ r\notin\{l,-l\}).
 \label{trees:eq:empirical-generic-covariance}
\end{equation}

At most \(4n\) ordered pairs \((l,r)\) have a zero coordinate or satisfy \(r=l\) or \(r=-l\). Cauchy--Schwarz and the uniform polynomial second moments bound their covariances by \(C_Q\). For all remaining pairs, use
\eqref{trees:eq:empirical-generic-covariance}. It follows that
\[
 \Var(A_n)
 =
 \frac1{n^2}\sum_{l,r}\Cov(Q(Y_l),Q(Y_r))
 \leq\frac{C_Q}{n}.
\]
Combining this with the squared bias in
\eqref{trees:eq:empirical-bias} proves
\begin{equation}
 \E\abs{\frac1n\sum_lQ(Y_l)-\E Q(N)}^2
 \leq\frac{C_Q}{n}.
 \label{trees:eq:empirical-polynomial-fourier}
\end{equation}
Increasing \(C_Q\) includes \(n=3\).

\medskip
\noindent\textbf{Step 4. Removing the deterministic shift gives the zero-mode second moment.}
The polynomial mean-value formula and uniform coordinate moments yield
\[
 \E\abs{Q(Y_l-n^{-1/2})-Q(Y_l)}^2
 \leq\frac{C_P}{n}
\]
uniformly in \(l\). Jensen's inequality gives
\[
 \E\abs{
 \frac1n\sum_l\bigl(Q(Z_l)-Q(Y_l)\bigr)}^2
 \leq\frac{C_P}{n}.
\]
Since \(m_Q=\mu_P\), the squared triangle inequality and
\eqref{trees:eq:empirical-polynomial-fourier} imply
\[
 \E\abs{\frac1n\sum_lQ(Z_l)-\mu_P}^2
 \leq\frac{C_P}{n}.
\]
Multiplying by \(n\) in
\eqref{trees:eq:feedback-zero-mode} proves
\[
 \E\abs{W_0-\sqrt n\,\mu_P}^2\leq C_P.
\]
Together with Step 2, this establishes
\eqref{trees:eq:feedback-second-moment}.

\medskip
\noindent\textbf{Step 5. Hypercontractivity gives the higher moments and the maximum.}
The polynomial \(\widetilde W_k\) has Walsh degree at most \(d+1\).
In \eqref{trees:eq:feedback-coordinate}, the first factor has degree one and \(P(Z_l)\) has ordinary degree at most \(d\). Reducing powers by \(\epsilon_a^2=1\) can only decrease the degree. Subtracting the deterministic zero-mode term does not increase it. Applying
\eqref{trees:eq:sign-hypercontractivity} to
\eqref{trees:eq:feedback-second-moment} proves
\eqref{trees:eq:feedback-high-moment}. Every fixed absolute moment is then bounded by an even moment of at least that order.

Take \(p=\lceil\log(2n)\rceil\). Pointwise,
\[
 \max_k\abs{\widetilde W_k}^2
 \leq
 \left(\sum_k\abs{\widetilde W_k}^{2p}\right)^{1/p}.
\]
Jensen's inequality for the concave function \(x^{1/p}\), followed by
\eqref{trees:eq:feedback-high-moment}, gives
\[
 \E\max_k\abs{\widetilde W_k}^2
 \leq
 \left(\sum_k\E\abs{\widetilde W_k}^{2p}\right)^{1/p}
 \leq n^{1/p}(2p-1)^{d+1}C_P.
\]
Here \(n^{1/p}\leq e\) and \(2p-1\leq C\log(2n)\). This proves
\eqref{trees:eq:feedback-maximum}.

If \(P\) is even, then \(zP(z)\) is odd, so its expectation under the centered normal law is zero. More generally, whenever \(\E[NP(N)]=0\), the subtracted zero-mode term vanishes by its definition.
\end{proof}

\section{Canonical tree flows and Wick grafting}
\label{sec:tech-trees}\label{trees:sec:canonical-fourier-trees}

Let \(T\) be a finite rooted tree in which each internal node has at least two children; the one-node leaf tree is allowed.  Write \(L(T)\) for its number of leaves and \(d(T)\) for its number of nodes.  If \(x=(x_\ell)_{\ell\ {\rm leaf}}\in\Z_n^{L(T)}\), define the oriented flow at a node \(v\) by
\begin{equation}
 r_v(x)=\sum_{\ell\ {\rm descendant\ of}\ v}x_\ell.
 \label{trees:eq:tree-flow}
\end{equation}
The assignment is admissible when every \(r_v(x)\) is nonzero and the unsigned labels \([r_v(x)]\) are pairwise distinct.

\begin{thm}[Tree-flow Gaussian comparison and canonical covariance]
\label{trees:thm:tree-flow-comparison}
Fix finitely many rooted trees \(T_i\) of the preceding kind.  Put
\(L_i=L(T_i)\) and \(d_i=d(T_i)\).  Let \(\mathcal H_{n,T_i}\) consist of the \(d_i\)-element unsigned label sets obtained from admissible flows on \(T_i\).  For deterministic weights satisfying
\[
 \abs{w_{i,e}}\leq Mn^{-L_i/2},
\]
define
\[
 Y_i=\sum_{e\in\mathcal H_{n,T_i}}w_{i,e}\chi_e,
 \qquad
 G_{ij}=\frac1{d_i}\sum_v(D_vY_i)(D_vY_j).
\]
Then
\begin{equation}
 \E G_{ij}=\E[Y_iY_j],
 \qquad
 \Var(G_{ij})\leq\frac{C}{n},
 \label{trees:eq:tree-Gij-bound}
\end{equation}
where \(C\) depends only on the fixed trees and \(M\).  For fixed real coefficients \(\alpha_i\), if
\[
 Y_\alpha=\sum_i\alpha_iY_i,\qquad
 \sigma_\alpha^2=\E Y_\alpha^2,
\]
then
\begin{align}
 \abs{\E e^{\mathrm{i}tY_\alpha}
       -e^{-\sigma_\alpha^2t^2/2}}
 &\leq C_\alpha\frac{t^2+\abs t^3}{\sqrt n},
 \label{trees:eq:tree-cf}\\
 \abs{\E\abs{Y_\alpha}
       -\sqrt{\frac2\pi}\sigma_\alpha}
 &\leq C_\alpha n^{-1/6}.
 \label{trees:eq:tree-absolute}
\end{align}

For a single tree \(T\), define its canonical field by
\begin{equation}
 Z_{T,k}
 =
 n^{-L(T)/2}
 \sum_{\substack{x\in\Z_n^{L(T)}\\ x\ {\rm admissible}}}
 \cos\!\left(\frac{2\pi k r_{\rm root}(x)}{n}\right)
 \prod_{v\in V(T)}\epsilon_{[r_v(x)]}.
 \label{trees:eq:canonical-tree-field}
\end{equation}
Let \(A_T\) be the integer matrix whose row at \(v\) is the indicator of the descendant leaves of \(v\).  Let \(\Gamma_T\) be the finite group of signed permutation matrices \(P\) on the node coordinates satisfying
\[
 P(\im A_T)=\im A_T.
\]
Write \(g_T=\abs{\Gamma_T}\), and let \(h_T\) be the number of elements whose underlying permutation fixes the root coordinate.  Uniformly over odd \(n\) and \(k,l\in\Z_n\),
\begin{equation}
 \E[Z_{T,k}Z_{T,l}]
 =
 V_T(k,l)+O_T(n^{-1}),
 \label{trees:eq:canonical-tree-covariance}
\end{equation}
where
\[
 V_T(0,0)=g_T,
\]
and, for nonzero \(k,l\),
\[
 V_T(k,l)=
 \begin{cases}
 h_T/2,&l=k\ \text{or}\ l=-k,\\
 0,&l\neq\pm k,
 \end{cases}
\]
while \(V_T(k,l)=0\) if exactly one frequency is zero.  Moreover \(h_T\geq2\).  Every fixed collection of distinct nonzero nonconjugate modes converges jointly, uniformly over the permitted mode choices, to independent centered Gaussian coordinates of variance \(h_T/2\).  These conclusions apply to the homogeneous distinct-label fields only.
\end{thm}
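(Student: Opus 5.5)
The plan is to reduce the first half of the statement to the general comparison in Theorem~\ref{trees:thm:sign-cube-comparison}, Step~1, and then carry out the tree-specific counting that replaces Steps~2--3 there. Once influences are bounded by $a_i\le A/n$ and $b_i\le A$, and $\Var(G_{ij})\le C/n$ holds, we get $\mathcal A_\alpha,\mathcal B_\alpha\le C_\alpha n^{-1/2}$. Then \eqref{trees:eq:mixed-degree-cf} gives \eqref{trees:eq:tree-cf}, and \eqref{trees:eq:mixed-degree-absolute} with $T=n^{1/6}$ gives \eqref{trees:eq:tree-absolute}. The identity $\E G_{ij}=\E[Y_iY_j]$ is already in Step~1.

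\textbf{Influence counting.} An admissible flow on $T$ is determined by its $L(T)$ leaf values. Hence $\abs{\mathcal H_{n,T}}\le n^{L}$. Label sets containing a prescribed label $v$ number at most $C_Tn^{L-1}$: some node carries $\pm v$, and the node flow is a primitive $\pm1$ linear form in the leaves, so it fixes one leaf coordinate. With $\abs{w}\le Mn^{-L/2}$ this gives $I_{i,v}\le C/n$ and $\sum_vI_{i,v}=d_i\E Y_i^2\le C$.

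\textbf{Variance of $G_{ij}$.} Expand as in \eqref{trees:eq:additive-covariance-polynomial}. After removing constant terms, the variance is a sum over quadruples of label sets $(e_1,e_2,e_3,e_4)$ with even total multiplicity, $e_1\cap e_2\ne\varnothing$, $e_1\neq e_2$, and similarly for $e_3,e_4$. Each set is parametrized by its leaf vector, so there are $L_i+L_j+L_i+L_j$ free leaf coordinates modulo $n$, and the weight product is $n^{-(L_i+L_j)}$. It therefore suffices to show that the coincidence constraints cut at least $L_i+L_j+1$ dimensions, with unimodular minors so that the count is uniform in composite $n$. The argument mirrors the modular counting with determinant-one minors in Theorem~\ref{trees:thm:sign-cube-comparison}. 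Every label is paired, and each pairing is a $\pm1$ relation between two flow forms, so the constraint matrix has $\pm1$ entries. The linear count comes from the pairings that make each $e_{2t}$ carry the same mass as $e_{2t-1}$. The extra dimension comes from the connectivity argument: $e_1\ne e_2$ forces the two blocks to be linked. The key difference from the additive case is that the flow matrix $A_T$ has rows forming a laminar family. Leaf-elimination along the laminar structure then gives triangular $\pm1$ minors, which are units modulo every $n$. \emph{This is the main obstacle.} It requires a careful case analysis of which node flows coincide: leaf versus internal nodes, and whether the roots are matched. I would organize it by choosing a spanning forest of the pairing graph among the four trees and peeling leaves of the laminar family.

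\textbf{Canonical covariance.} Write $Z_{T,k}Z_{T,l}$ as a double sum over admissible $x,x'$. Walsh orthogonality forces the unsigned label sets of $x$ and $x'$ to be equal. Generically the labels are distinct, so there is a bijection between nodes preserving labels up to sign. This is a signed permutation $P$ with $r(x')=Pr(x)$ lying in $\im A_T$, that is, an element of $\Gamma_T$. For fixed $P$, summing over $x$ ($n^{L}$ terms, minus $O(n^{L-1})$ non-admissible ones) with the factor $n^{-L}$ gives $\frac1{n^{L}}\sum_x\cos(2\pi k r_{\rm root})\cos(2\pi l\,(Pr)_{\rm root})$. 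If $P$ maps the root to a non-root node, the two root flows are independent forms, and the average vanishes unless $k=l=0$. If $P$ fixes the root up to sign, the product-to-sum formula gives $\tfrac12\ind_{l=\pm k}$ for nonzero frequencies, and $1$ when $k=l=0$. Summing over $\Gamma_T$ yields $V_T$ up to $O(n^{-1})$, with value $g_T$ at $(0,0)$. The bound $h_T\ge2$ holds because $\pm I$ lie in $\Gamma_T$. Joint Gaussian convergence for fixed distinct nonconjugate modes follows by applying \eqref{trees:eq:tree-cf} to $Y_\alpha=\sum\alpha_iZ_{T,k_i}$, whose weights $n^{-L/2}\cos(\cdot)$ satisfy the hypothesis, together with the covariance limit and Cramér--Wold. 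Uniformity over the mode choices comes from the uniform constants.
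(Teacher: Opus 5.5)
Your reduction of the characteristic-function and absolute-moment bounds to the exchangeable-pair estimate of Theorem~\ref{trees:thm:sign-cube-comparison} matches the paper, and so do the influence counts and the outline of the covariance computation and the joint Gaussian limit.

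\textbf{The main gap: the variance bound.} The genuine gap is $\Var(G_{ij})\le C/n$, which carries the whole first half. You reduce it to showing that the coincidence constraints on quadruples cut $L_i+L_j+1$ dimensions, call this the main obstacle, and stop. Your heuristics (each $e_{2t}$ ``carrying the same mass'' as $e_{2t-1}$, connectivity of a pairing graph) are borrowed from the additive case. There every label is a free variable and each hyperedge imposes one equation. For tree flows, a label set has $d>L$ labels tied by parent--children sums, and cross-tree coincidences can be dependent on those relations. Nothing in your sketch produces the required rank.

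The missing ingredient is a rank inequality for the laminar rows of $A_T$. Let $\rho_T(S)$ be the rank of the rows indexed by a node set $S$. Then $\rho_T(S)+\rho_T(S^c)\ge L+1$ for every nonempty proper $S$. The sum is at least $L$ because all rows span $\R^L$. Equality would make the two spans meet only in $0$, so no circuit $a_{\rm parent}=\sum_{\rm child}a_{\rm child}$ could have nodes on both sides. Connectivity of the tree would then put every node on one side. The connectivity that matters is that of a single tree, not of a pairing graph across the quadruple.

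With this inequality, the paper never counts quadruples. Fix an overlap pattern for a pair $(e,f)$. Since $e\neq f$ and $e\cap f\neq\varnothing$, one tree $A$ has a nonempty proper overlap-role set $S$; call the other tree $B$.
\begin{itemize}
\item Choosing a flow on $B$ and completing $A$ from its values on $S$ bounds the number of pairs by $Cn^{L_A+L_B-\rho_A(S)}$.
\item For a fixed Walsh support, the outside labels fix $A$ on $S^c$ and then determine $B$. So at most $Cn^{L_A-\rho_A(S^c)}$ pairs produce that support.
\item Identity minors taken from the laminar residual sets make both counts uniform over composite $n$.
\end{itemize}
Bound the sum of squared Walsh coefficients of the pattern polynomial by its largest coefficient times its absolute coefficient sum. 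This gives $Cn^{L_A-\rho_A(S)-\rho_A(S^c)}\le C/n$.

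\textbf{A false step in the covariance computation.} A signed permutation $P$ with $r(x')=Pr(x)$ modulo $n$ for some admissible pair need not preserve the real subspace $\im A_T$. So ``that is, an element of $\Gamma_T$'' does not follow. Non-preserving $P$ do give admissible matches. For example, take the root with two cherries and leaf values $x=(a,b,c,d)$. Then $x'=(a+b,-b,c,d)$ has the same unsigned label set whenever $b\equiv-2(a+c+d)$.

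You must show these matches are negligible. If $P\notin\Gamma_T$, let $B_T$ be the matrix of relations $r_{\rm parent}-\sum_{\rm child}r_{\rm child}$. Then $B_TPA_T$ is a nonzero integer matrix with entries bounded in terms of $T$. The matching $x$ therefore satisfy a nontrivial congruence, and there are $O_T(n^{L-1})$ of them by $\#\{y\in\Z_n:cy\equiv b\}\le\gcd(c,n)\le\abs c$. If $P\in\Gamma_T$, then $B_TPA_T=0$ identically, and that is what makes $x\mapsto x'$ a bijection of $\Z_n^L$.

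Similarly, discarding non-admissible $x$ at cost $O(n^{L-1})$ needs an argument. A zero node value, or a coincidence $r_u=\pm r_v$, is a nonzero equation with a coefficient $\pm1$. This holds because two laminar sets have a nonempty symmetric difference or a nonempty proper nested difference.
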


\begin{proof}
For a node \(v\), let \(a_v\) denote its row in \(A_T\).  These rows form a laminar family: two descendant-leaf sets are nested or disjoint.  The leaf rows are the standard basis, so \(A_T\) has column rank \(L=L(T)\).

We first establish two rank properties.  Given a set \(S\) of selected node rows, subtract from each selected row the rows of its immediate selected descendants.  With a tree-compatible ordering, this is a triangular integer row operation with diagonal entries one.  The resulting nonempty residual leaf sets are pairwise disjoint; the remaining residual rows are zero.  Hence
\begin{equation}
 \rho_T(S):=\rank\{a_v:v\in S\}
 =
 \#\{\text{nonempty residual sets}\}.
 \label{trees:eq:laminar-rank}
\end{equation}
Choosing one leaf from each nonempty residual set gives an identity minor.  Therefore prescribed modular values of the forms in \(S\) admit at most
\begin{equation}
 n^{L-\rho_T(S)}
 \label{trees:eq:tree-completion-count}
\end{equation}
leaf assignments for every modulus \(n\).

For every nonempty proper subset \(S\) of the node rows,
\begin{equation}
 \rho_T(S)+\rho_T(S^c)\geq L+1.
 \label{trees:eq:laminar-rank-inequality}
\end{equation}
The sum is at least \(L\), because all rows together span \(\R^L\).
If equality held, the two row spans would have zero intersection.  Each internal node and its children satisfy the minimally supported relation
\[
 a_{\rm parent}=\sum_{\rm child}a_{\rm child}.
\]
The child rows have disjoint nonempty supports and are independent, and every displayed coefficient is nonzero.  If this circuit met both \(S\) and \(S^c\), it would give a nonzero vector in the intersection of the two spans.  Thus every parent-child circuit would have to lie entirely on one side.  Connectivity of the tree would put every node on the same side, contradicting that \(S\) is nonempty and proper.  This proves \eqref{trees:eq:laminar-rank-inequality}.  For the one-node tree there is no such partition.

\medskip
\noindent\textbf{Composite-modulus congruence lemma.}
We will repeatedly use the following elementary fact, valid for every positive
integer modulus.  If $a\in\mathbb Z\setminus\{0\}$, then for fixed $b\in\mathbb Z$
the congruence
\[
 ax=b\pmod n
\]
has either no solutions or exactly $\gcd(a,n)$ solutions modulo $n$.  In
particular,
\begin{equation}
 \#\{x\in\mathbb Z_n:ax=b\}
 \leq \gcd(a,n)\leq |a|.
 \label{trees:eq:composite-congruence}
\end{equation}
After fixing all but one coordinate in a system containing such a nonzero
coefficient, the number of completions is therefore at most $|a|n^{L-1}$.
Thus all subsequent modular counting bounds are uniform over odd composite
$n$ as well as prime $n$.

A fixed admissible label set has at most \(d!2^d\) realizations by node positions and orientations.  Allowing all these realizations, the total number of tree hyperedges is \(O_T(n^L)\).  If one unsigned label is prescribed, choose its node role and orientation and use
\eqref{trees:eq:tree-completion-count} with a rank-one row.  There are
\(O_T(n^{L-1})\) completions.  More generally, prescribed values on a node-role set \(S\) leave
\(O_T(n^{L-\rho_T(S)})\) completions.

These counts imply
\begin{equation}
 \E Y_i^2\leq C,\qquad
 I_{i,v}:=\E(D_vY_i)^2,\qquad
 \max_vI_{i,v}\leq\frac{C}{n},\qquad
 \sum_vI_{i,v}=d_i\E Y_i^2\leq C.
 \label{trees:eq:tree-influences}
\end{equation}
Walsh orthogonality also gives the expectation identity in
\eqref{trees:eq:tree-Gij-bound}.

Before division by \(d_i\), the covariance polynomial is
\[
 \sum_{e,f}\abs{e\cap f}\,w_{i,e}w_{j,f}\chi_{e\symdiff f}.
\]
After its constant terms are removed, every contributing pair is distinct and has nonempty intersection.  Fix selected realizations of the two edges and an overlap pattern, including the node-role subsets, their bijection, orientations, and the placement of outside labels.  Only a tree-dependent number of patterns is possible.

For each nonconstant pair, one of the two trees, say \(A\), has a nonempty proper overlap-role set \(S\).  Let the other tree be \(B\), with leaf counts \(L_A,L_B\).  Choose a full flow on \(B\), which fixes the shared values in \(A\), and then use
\eqref{trees:eq:tree-completion-count}.  The number of edge pairs in this pattern is at most
\begin{equation}
 Cn^{L_A+L_B-\rho_A(S)}.
 \label{trees:eq:tree-pair-count}
\end{equation}
For a fixed resulting Walsh support, its labels and orientations have only finitely many placements in the outside node positions.  Those positions in \(A\) are \(S^c\).  Once they are fixed, the shared labels have at most
\begin{equation}
 Cn^{L_A-\rho_A(S^c)}
 \label{trees:eq:tree-fixed-support-count}
\end{equation}
possible values.

Each pair weight is bounded by
\(Cn^{-(L_A+L_B)/2}\).  For the polynomial belonging to this pattern, the maximum absolute Walsh coefficient is therefore at most
\[
 Cn^{L_A-\rho_A(S^c)-(L_A+L_B)/2},
\]
while the sum of the absolute coefficients is at most
\[
 Cn^{(L_A+L_B)/2-\rho_A(S)}.
\]
The product bounds the sum of coefficient squares.  By
\eqref{trees:eq:laminar-rank-inequality}, it is at most \(C/n\).  Orthogonality converts this into the squared \(L^2\)-norm of the pattern polynomial.  Adding the finitely many patterns proves
\[
 \Var\!\left(\sum_vD_vY_iD_vY_j\right)\leq\frac{C}{n}.
\]
This also covers containment of one edge in another, because the larger tree supplies the nonempty proper overlap set.  If both trees have one node, the covariance polynomial is constant.  This proves
\eqref{trees:eq:tree-Gij-bound}.

The Gaussian comparison now follows from the same finite exchangeable-pair calculation as in Theorem~\ref{trees:thm:additive-gaussian-comparison}.  For completeness, \eqref{trees:eq:weighted-fourth-moment} and
\eqref{trees:eq:tree-influences} give
\[
 \E(D_vY_i)^4\leq3^{2d_i-2}I_{i,v}^2,
 \qquad
 \E\abs{D_vY_i}^3\leq3^{d_i-1}I_{i,v}^{3/2}.
\]
After a uniformly selected sign is flipped, put
\[
 \Delta_i=-2\epsilon_JD_JY_i,\qquad
 \lambda_i=\frac{2d_i}{m}.
\]
Then
\[
 \E[\Delta_i\mid\epsilon]=-\lambda_iY_i
\]
and
\[
 \E[Y_if(Y_\alpha)]
 =
 \frac1{2\lambda_i}
 \E[\Delta_i\{f(Y_\alpha+\Delta_\alpha)-f(Y_\alpha)\}].
\]
Taylor expansion of the exponential yields
\[
 \E[Y_\alpha e^{\mathrm{i}tY_\alpha}]
 =
 \mathrm{i}t\,\E[V_\alpha e^{\mathrm{i}tY_\alpha}]
 +R_\alpha(t),
 \qquad
 V_\alpha=\sum_{i,j}\alpha_i\alpha_jG_{ij}.
\]
The variance estimate gives
\[
 \E V_\alpha=\sigma_\alpha^2,\qquad
 \E\abs{V_\alpha-\sigma_\alpha^2}\leq C_\alpha n^{-1/2}.
\]
Furthermore,
\begin{align*}
 \lambda_i^{-1}\E[\abs{\Delta_i}\Delta_j^2]
 &=
 \frac4{d_i}\sum_v
 \E[\abs{D_vY_i}(D_vY_j)^2]\\
 &\leq
 C\sum_v\sqrt{I_{i,v}}\,I_{j,v}
 \leq
 C\sqrt{\max_vI_{i,v}}\sum_vI_{j,v}
 \leq Cn^{-1/2}.
\end{align*}
Thus \(\abs{R_\alpha(t)}\leq C_\alpha t^2n^{-1/2}\).
The integrating-factor argument proves \eqref{trees:eq:tree-cf}, and
\eqref{trees:eq:absolute-moment-fourier} with the cutoff \(T=n^{1/6}\)
proves \eqref{trees:eq:tree-absolute}.

It remains to calculate the covariance of the canonical field.  Its Walsh coefficient at any label set has magnitude at most
\[
 d!2^d n^{-L/2}.
\]
Consider admissible leaf assignments \(x,y\) whose node flows have the same unsigned label set.  There is a unique signed node permutation \(P\) such that
\begin{equation}
 A_Ty=PA_Tx.
 \label{trees:eq:signed-flow-matching}
\end{equation}
Uniqueness uses distinct unsigned labels and the fact that a nonzero residue cannot equal its own negative in odd order.

Let \(B_T\) have one row
\(r_{\rm parent}-\sum r_{\rm child}\) for each internal node.  Its real nullspace is \(\im A_T\), and its modular nullspace is exactly the set of flows \(A_Tx\), since the leaf values determine all internal values.  If \(P\) does not preserve \(\im A_T\), then
\(B_TPA_T\) is a nonzero integer matrix with tree-dependent bounded entries.  Choose a nonzero coefficient \(c\).  After all other leaf coordinates are fixed, the remaining congruence has at most
By the composite-modulus congruence bound \eqref{trees:eq:composite-congruence}, after all other leaf coordinates are fixed, the remaining congruence has at most
\(\gcd(c,n)\leq|c|\) solutions.  Since the entries of \(B_TPA_T\) are bounded by a constant depending only on \(T\), this gives a uniform \(O_T(n^{L-1})\) bound, with no primality assumption on \(n\).
Hence nonpreserving \(P\) account for
\(O_T(n^{L-1})\) pairs and contribute \(O_T(n^{-1})\) to the normalized covariance.

If \(P\in\Gamma_T\), then \(B_TPA_T=0\) as an integer identity.
Every \(x\) therefore determines a flow \(y\), and the inverse signed permutation gives a bijection.  Removing the nonzero and distinct-label restrictions costs \(O_T(n^{L-1})\) assignments.  A node value equal to zero imposes a primitive indicator-row equation.  Equality or opposition of two distinct node values imposes a nonzero equation having a coefficient \(1\) or \(-1\): for two laminar sets, their symmetric difference or their proper nested difference is nonempty.  Each equation has at most \(n^{L-1}\) solutions, and only finitely many equations occur.

We may therefore evaluate the unrestricted cosine sum for each
\(P\in\Gamma_T\).  The root form is the sum of all leaf coordinates.
If the underlying permutation fixes the root, the two observed forms agree up to sign.  Character orthogonality gives normalized cosine-product average
\[
 1\quad\text{at }(k,l)=(0,0),
 \qquad
 \frac12\quad\text{when }k,l\neq0,\ l=\pm k,
\]
and zero otherwise.  If the root is sent to a proper subtree, choose one leaf inside and one outside that subtree.  The whole-tree sum and subtree sum then have a determinant-one \(2\)-by-\(2\) minor, so they range independently over \(\Z_n^2\).  Their cosine-product average is one only at \((0,0)\), and zero otherwise.  Summation over \(\Gamma_T\) proves
\eqref{trees:eq:canonical-tree-covariance}.  The signed identity and its negative belong to \(\Gamma_T\) and preserve the root, so \(h_T\geq2\).

Finally, apply \eqref{trees:eq:tree-cf} to any fixed collection of canonical fields evaluated at distinct nonzero nonconjugate modes.  Their covariance matrix converges uniformly to \((h_T/2)I\), which proves the asserted independent Gaussian characteristic-function limit.
\end{proof}

The next theorem supplies a moment interface that remains quantitative when the degree parameter is exposed.

\begin{thm}[Polynomial moment comparison]
\label{trees:thm:polynomial-moment-comparison}
Let \(\epsilon_1,\ldots,\epsilon_m\) be independent fair signs, and let
\(Y_i\), \(1\leq i\leq q\), be real homogeneous Walsh polynomials of positive degrees \(d_i\leq D\).  Suppose \(B>0\), \(u\geq0\), and
\(v_{\mathrm{cov}}\geq0\) satisfy
\[
 \E Y_i^2\leq B^2,\qquad
 \max_{i,v}\E(D_vY_i)^2\leq u,
\]
and
\[
 \Var(G_{ij})\leq v_{\mathrm{cov}}^2,
 \qquad
 G_{ij}=\frac1{d_i}\sum_v(D_vY_i)(D_vY_j).
\]
For real \(a_i\), put
\[
 A=\sum_i\abs{a_i},\qquad
 Y=\sum_ia_iY_i,
\]
and let \(G\) be centered Gaussian with variance
\(\sigma^2=\E Y^2\).  For every integer \(r\geq2\),
\begin{equation}
 \abs{\E Y^r-\E G^r}
 \leq
 C_{D,r}A^r
 \left(
 B^{r-2}v_{\mathrm{cov}}
 +B^{r-1}\sqrt u
 \right),
 \qquad
 C_{D,r}=(32r)^{(D+6)r}.
 \label{trees:eq:polynomial-moment-comparison}
\end{equation}
The bound is independent of \(m,q\), and of the Walsh supports.
\end{thm}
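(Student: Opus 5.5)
We compare moments by Stein's method on the sign cube, using the same exchangeable sign-flip pair as in Theorem~\ref{trees:thm:sign-cube-comparison}. The moment recursion is made quantitative in \(r\) and in the maximal degree \(D\).

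\textbf{Step 1: exchange identity.} Flip a uniformly chosen sign \(\epsilon_J\) and write \(\Delta_i=-2\epsilon_JD_JY_i\), \(\Delta=\sum_ia_i\Delta_i\). Identity \eqref{trees:eq:additive-exchange} with the polynomial test function \(f(y)=y^{r-1}\) is exact. The increment \(f(Y+\Delta)-f(Y)\) has a finite binomial expansion, with no Taylor remainder needed. Since \(\E[\Delta_i\Delta_j\mid\epsilon]=(4/m)\sum_vD_vY_iD_vY_j\), this gives
\[
 \E Y^r=(r-1)\E\bigl[V\,Y^{r-2}\bigr]+\mathcal R_r,
 \qquad V=\sum_{i,j}a_ia_jG_{ij}.
\]
Here \(\mathcal R_r\) collects the binomial terms of order \(\ge2\) in \(\Delta\). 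Each such term involves \(\frac1{2\lambda_i}\E[\Delta_i\Delta^{s}Y^{r-1-s}]\) with \(s\ge2\).

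\textbf{Step 2: bound the two error sources.} Write \(\E[VY^{r-2}]=\sigma^2\E Y^{r-2}+\E[(V-\sigma^2)Y^{r-2}]\). By \(\E G_{ij}=\E[Y_iY_j]\) we have \(\E V=\sigma^2\). Cauchy--Schwarz, together with hypercontractivity (Lemma~\ref{trees:lem:sign-hypercontractivity}) on \(Y\) of degree \(\le D\), then gives
\[
 \bigl|\E[(V-\sigma^2)Y^{r-2}]\bigr|\le A^2v_{\mathrm{cov}}\bigl(2r\bigr)^{D(r-2)/2}(AB)^{r-2}.
\]
For \(\mathcal R_r\), use \(\lambda_i^{-1}=m/(2d_i)\) and \(\Delta_i=-2\epsilon_JD_JY_i\). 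Averaging over \(J\) turns each remainder into
\[
 \frac{C^s}{d_i}\sum_v\E\bigl[|D_vY_i|\,|D_vY|^s|Y|^{r-1-s}\bigr].
\]
Apply H\"older with \(|D_vY|^{s}\le|D_vY|^2\max_v|D_vY|^{s-2}\), or more simply estimate by hypercontractive \(L^{2r}\) norms of the derivatives. This bounds the sum by
\[
 \sqrt{u}\,\sum_v\norm{D_vY}_{L^{2r}}^{2}\cdot(\text{hypercontractive factors}).
\]
The sum \(\sum_v\E(D_vY)^2\le D\,A^2B^2\), by Walsh orthogonality as in Step 2 of the additive theorem. Extra powers \(|D_vY|^{s-2}\) are controlled by \(\norm{D_vY}_{L^{p}}\le(p-1)^{D/2}\sqrt u\,A\le (p-1)^{D/2}AB\). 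Altogether \(|\mathcal R_r|\le (Cr)^{C(D+1) r}A^rB^{r-1}\sqrt u\), using \(u\le B^2\cdot\)const to homogenize.

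\textbf{Step 3: induction.} Gaussian moments satisfy \(\E G^r=(r-1)\sigma^2\E G^{r-2}\). Subtracting gives
\[
 |\E Y^r-\E G^r|\le (r-1)\sigma^2|\E Y^{r-2}-\E G^{r-2}|+\text{(Step 2 errors)}.
\]
Here \(\sigma^2\le A^2B^2\). Base cases are \(r=0,1\) (exact, since \(\E Y=0\)); \(r=2\) is exact too. Unrolling over \(\le r/2\) steps multiplies the error by at most \(r^{r}\). The claimed form \(C_{D,r}A^r(B^{r-2}v_{\mathrm{cov}}+B^{r-1}\sqrt u)\) follows after tracking constants. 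The generous \((32r)^{(D+6)r}\) absorbs the binomial coefficients, the \(2^s\) from \(\Delta_i\), the hypercontractive factors \((2r)^{Dr/2}\), and the unrolling.

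\textbf{Main obstacle.} The main difficulty is Step 2's remainder. One must extract exactly one factor \(\sqrt u\) while keeping the rest bounded by \(B\)-powers with only \((Cr)^{O(Dr)}\) losses, independently of \(m\) and \(q\). The factor \(m/d_i\) from \(\lambda_i^{-1}\) has to be cancelled precisely by the average over \(J\). The summed influence \(\sum_v\E(D_vY)^2\) must also be bounded via orthogonality by \(D A^2B^2\), rather than termwise over \(v\). I would handle mixed products \(|D_vY_i|\,|D_vY|^s|Y|^{r-1-s}\) by generalized H\"older with exponents \((2r)\), applied to each degree-\(\le D\) polynomial factor.
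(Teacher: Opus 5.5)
Your proposal is correct and follows the paper's proof essentially step for step. Both use the exact sign-flip exchange identity with $f(y)=y^{r-1}$, Cauchy--Schwarz plus hypercontractivity for the $V-\sigma^2$ term, and H\"older with hypercontractivity on the order-$\geq2$ binomial remainder, extracting one factor $\sqrt u$ and bounding the summed influence by $DA^2B^2$, followed by unrolling of the Gaussian recurrence. The only differences are cosmetic. You work with $\norm{D_vY}_{L^2}$ directly, where the paper uses $s_v=\sum_i\abs{a_i}\sqrt{I_{i,v}}$. Your ``$u\leq B^2$'' is not a hypothesis, so it should be stated as the harmless reduction $u\mapsto\min(u,B^2)$, justified by $\E(D_vY_i)^2\leq\E Y_i^2\leq B^2$.
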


\begin{proof}
If \(A=0\), both sides vanish.  Assume \(A>0\).  Every \(Y_i\) is centered.  Since \(D_vY_i\) retains only coefficients of monomials containing \(v\),
\[
 \E(D_vY_i)^2\leq\E Y_i^2\leq B^2.
\]
We may replace \(u\) by \(\min(u,B^2)\).  Put
\(I_{i,v}=\E(D_vY_i)^2\).  Homogeneity gives
\begin{equation}
 \sum_vI_{i,v}=d_i\E Y_i^2\leq DB^2.
 \label{trees:eq:moment-interface-influences}
\end{equation}
Walsh orthogonality gives
\begin{equation}
 \E G_{ij}=\E[Y_iY_j].
 \label{trees:eq:moment-interface-mean}
\end{equation}

Flip a uniformly selected sign \(J\), and set
\[
 \Delta_i=-2\epsilon_JD_JY_i,\qquad
 \Delta=\sum_ia_i\Delta_i,\qquad
 \lambda_i=\frac{2d_i}{m}.
\]
The exact exchangeability identity is
\begin{equation}
 \E[Y_if(Y)]
 =
 \frac1{2\lambda_i}
 \E[\Delta_i\{f(Y+\Delta)-f(Y)\}]
 \label{trees:eq:moment-interface-exchange}
\end{equation}
for every polynomial \(f\).  Apply it with \(f(y)=y^{r-1}\), multiply by \(a_i\), and sum over \(i\).  The linear Taylor term gives
\begin{equation}
 \E Y^r=(r-1)\E[V_aY^{r-2}]+R_r,
 \qquad
 V_a=\sum_{i,j}a_ia_jG_{ij},
 \label{trees:eq:moment-recurrence-start}
\end{equation}
where
\begin{equation}
 \abs{R_r}
 \leq
 \sum_{j=2}^{r-1}\binom{r-1}{j}
 \sum_i\frac{\abs{a_i}}{2\lambda_i}
 \E\!\left[
 \abs{\Delta_i}\abs{\Delta}^j
 \abs Y^{r-1-j}
 \right].
 \label{trees:eq:moment-Taylor-remainder}
\end{equation}
For \(r=2\), this sum is empty.  The covariance hypothesis and the
\(L^2\) triangle inequality give
\begin{equation}
 \norm{V_a-\sigma^2}_2\leq A^2v_{\mathrm{cov}}.
 \label{trees:eq:moment-random-variance}
\end{equation}

By \eqref{trees:eq:sign-hypercontractivity}, every Walsh polynomial \(U\) of degree at most \(D\) satisfies, for \(p\leq2r\),
\begin{equation}
 \norm U_p\leq(2r)^{D/2}\norm U_2.
 \label{trees:eq:moment-interface-hypercontractivity}
\end{equation}
In particular, \(\norm Y_2\leq AB\).  Cauchy--Schwarz and
\eqref{trees:eq:moment-random-variance} yield
\begin{equation}
 (r-1)\abs{\E[(V_a-\sigma^2)Y^{r-2}]}
 \leq
 (r-1)A^rv_{\mathrm{cov}}B^{r-2}
 (2r)^{D(r-2)/2}.
 \label{trees:eq:moment-random-variance-error}
\end{equation}

For the Taylor remainder, put
\[
 s_v=\sum_i\abs{a_i}\sqrt{I_{i,v}}.
\]
Then
\begin{equation}
 \max_vs_v\leq A\sqrt u,
 \qquad
 \sum_vs_v^2
 \leq
 A\sum_i\abs{a_i}\sum_vI_{i,v}
 \leq A^2DB^2.
 \label{trees:eq:sv-bounds}
\end{equation}
Condition on \(J=v\).  The factor \(m/(4d_i)\) from
\(1/(2\lambda_i)\) cancels the probability \(1/m\), while the increments contribute \(2^{j-1}/d_i\leq2^{j-1}\).
Hölder's inequality with total exponent \(r\), followed by
\eqref{trees:eq:moment-interface-hypercontractivity}, gives
\begin{align}
 &\sum_i\abs{a_i}
 \E\!\left[
 \abs{D_vY_i}
 \abs{\textstyle\sum_\ell a_\ell D_vY_\ell}^{\,j}
 \abs Y^{r-1-j}
 \right]\notag\\
 &\hspace{35mm}\leq
 (2r)^{Dr/2}s_v^{j+1}(AB)^{r-1-j}.
 \label{trees:eq:moment-holder}
\end{align}
The last factor is omitted when its exponent is zero.  From
\eqref{trees:eq:sv-bounds},
\[
 \sum_vs_v^{j+1}
 \leq(A\sqrt u)^{j-1}A^2DB^2.
\]
Since \(u\leq B^2\) and \(j\geq2\),
\[
 u^{(j-1)/2}B^{r+1-j}
 \leq\sqrt u\,B^{r-1}.
\]
Each term in \eqref{trees:eq:moment-Taylor-remainder} is therefore at most
\[
 2^{j-1}\binom{r-1}{j}
 (2r)^{Dr/2}A^rD\sqrt u\,B^{r-1}.
\]
The sum of the binomial factors is at most \(3^r\), so
\begin{equation}
 \abs{R_r}
 \leq
 3^r(2r)^{Dr/2}A^rD\sqrt u\,B^{r-1}.
 \label{trees:eq:moment-remainder-final}
\end{equation}

Combining \eqref{trees:eq:moment-recurrence-start},
\eqref{trees:eq:moment-random-variance-error}, and
\eqref{trees:eq:moment-remainder-final} gives
\begin{align}
 \abs{\E Y^r-(r-1)\sigma^2\E Y^{r-2}}
 \leq
 K_{D,r}A^r
 \left(B^{r-2}v_{\mathrm{cov}}+B^{r-1}\sqrt u\right),
 \label{trees:eq:moment-approximate-recurrence}
\end{align}
where \(K_{D,r}=(8r)^{(D+4)r}\) dominates the displayed factors.

Gaussian integration by parts gives
\[
 \E G^r=(r-1)\sigma^2\E G^{r-2}.
\]
The identity also holds when \(\sigma=0\).  Moreover,
\(\sigma^2\leq A^2B^2\), and the moments agree in orders zero, one, and two.  Divide the recurrence for the moment difference by
\((AB)^r\).  If the normalized error is \(e_r\), then
\[
 e_r\leq(r-1)e_{r-2}
 +K_{D,r}
 \left(\frac{v_{\mathrm{cov}}}{B^2}
       +\frac{\sqrt u}{B}\right).
\]
Unrolling requires at most \(r\) terms, and the product of recurrence coefficients is at most \(r^{r/2}\).  Since
\(K_{D,j}\leq K_{D,r}\) for \(2\leq j\leq r\),
\[
 e_r
 \leq
 r^{r/2+1}K_{D,r}
 \left(\frac{v_{\mathrm{cov}}}{B^2}
       +\frac{\sqrt u}{B}\right).
\]
For \(D\geq1\) and \(r\geq2\),
\[
 r^{r/2+1}(8r)^{(D+4)r}
 \leq(32r)^{(D+6)r}.
\]
Multiplying by \((AB)^r\) proves
\eqref{trees:eq:polynomial-moment-comparison}.
\end{proof}

We now identify the canonical Gaussian tree basis and the exact top-degree mechanism behind Wick grafting.

\begin{thm}[Canonical Gaussian tree basis and Wick grafting]
\label{trees:thm:wick-grafting}
For every rooted tree \(T\) considered above,
\begin{equation}
 h_T=2\aut(T),
 \label{trees:eq:root-symmetry-automorphism}
\end{equation}
where \(\aut(T)\) is the number of rooted-tree automorphisms with unordered children.  If \(T_1,\ldots,T_q\) are pairwise nonisomorphic, then their canonical fields at any fixed collection of nonzero nonconjugate Fourier modes converge jointly, uniformly over the mode choices, to independent Gaussian blocks.  The coordinate corresponding to \(T_i\) is a centered Gaussian \(G_i\) satisfying
\begin{equation}
 \E G_i^2=\aut(T_i).
 \label{trees:eq:canonical-Gaussian-variance}
\end{equation}
For every continuous \(H\colon\R^q\to\R\) of polynomial growth,
\begin{equation}
 \frac1n\sum_{k\in\Z_n}
 H(Z_{T_1,k},\ldots,Z_{T_q,k})
 \longrightarrow
 \E H(G_1,\ldots,G_q)
 \quad\text{in }L^2.
 \label{trees:eq:tree-empirical-law}
\end{equation}
Every fixed absolute moment of each root coordinate \(Z_{T,0}\) is uniformly bounded.

Let \(r_1,\ldots,r_q\) be nonnegative integers with
\(r=\sum_ir_i\geq2\).  Form a tree \(T\) by adding a new root whose children comprise \(r_i\) copies of \(T_i\).  Let
\(H_j(x;\sigma^2)\) be defined by
\begin{equation}
 \exp(tx-\sigma^2t^2/2)
 =
 \sum_{j=0}^\infty H_j(x;\sigma^2)\frac{t^j}{j!},
 \label{trees:eq:Hermite-generating-function}
\end{equation}
and set
\[
 W_k=\prod_iH_{r_i}(Z_{T_i,k};\aut(T_i)).
\]
Then
\begin{equation}
 \E\left[\frac1n\norm{K_0W-Z_T}_2^2\right]\longrightarrow0
 \label{trees:eq:Wick-graft-isometry}
\end{equation}
along all odd integers \(n\).

If every \(T_i\) has even numbers of children at all internal nodes and
\(P\) is a fixed real polynomial satisfying
\(P(-x)=P(x)\), then
\(K_0P(Z_{T_1},\ldots,Z_{T_q})\) is approximated in the normalized
\(L^2\) sense of \eqref{trees:eq:Wick-graft-isometry} by a finite linear combination of canonical fields whose trees also have even internal arities.  The constant term is annihilated, and no unary graft is asserted.
\end{thm}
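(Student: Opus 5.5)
The plan is to prove the four assertions in order: the automorphism identity \eqref{trees:eq:root-symmetry-automorphism}, joint Gaussianity across nonisomorphic trees, the empirical law \eqref{trees:eq:tree-empirical-law}, and then the grafting estimate \eqref{trees:eq:Wick-graft-isometry} together with its even-arity corollary.

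\textbf{The identity \(h_T=2\aut(T)\).} I will characterize the root-fixing elements of \(\Gamma_T\). A signed permutation \(P\) with \(P(\im A_T)=\im A_T\) preserves the real nullspace of \(B_T\). It must therefore map the minimally supported relations \(a_{\rm parent}=\sum a_{\rm child}\) to minimally supported relations of the same form, up to a sign. Here I reuse the laminar circuit analysis from the proof of \eqref{trees:eq:laminar-rank-inequality}. The circuits are exactly the parent--child stars, and they are distinguishable by which element plays the role of the sum. Hence, once the root is fixed, \(P\) induces a permutation of nodes that respects parenthood, i.e.\ a rooted automorphism. The signs are forced to be constant: the parent sign equals each child sign because the coefficients in the relation are all \(+1\). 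This gives exactly \(2\aut(T)\) elements, and \(h_T/2=\aut(T)\) gives \eqref{trees:eq:canonical-Gaussian-variance}.

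\textbf{Joint Gaussian limit for nonisomorphic trees.} I will redo the covariance computation in the proof of Theorem~\ref{trees:thm:tree-flow-comparison} for two different trees \(T_i\neq T_j\). A matching \(A_{T_j}y=PA_{T_i}x\) that is not an integer identity costs \(O(n^{L-1})\) by the composite-modulus congruence bound \eqref{trees:eq:composite-congruence}. An exact matching would, by the same circuit argument, be a rooted isomorphism, which is impossible. So cross-covariances are \(O(n^{-1})\), and \eqref{trees:eq:tree-cf} applied to linear combinations over trees and modes yields independent blocks, uniformly in the mode choices.

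\textbf{The empirical law.} First take \(H\) polynomial. The mean of \(n^{-1}\sum_k H(Z_{T_\cdot,k})\) is handled per mode. Theorem~\ref{trees:thm:polynomial-moment-comparison} (fixed degree, \(u=O(1/n)\), \(v_{\rm cov}=O(n^{-1/2})\) by \eqref{trees:eq:tree-Gij-bound}), combined with polarization over linear combinations, gives \(\E H(Z_{\cdot,k})\to\E H(G)\) uniformly over nonzero \(k\). The zero mode is a single term, controlled by hypercontractivity (Lemma~\ref{trees:lem:sign-hypercontractivity}); this also gives the uniform root moments. For the variance, I apply the same comparison to the \(2q\)-dimensional vector at modes \(k,l\) with \(l\neq\pm k\): the covariance tends to \(0\) uniformly, while the \(O(n)\) diagonal pairs contribute \(O(1/n)\). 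For continuous \(H\) of polynomial growth, I approximate by polynomials on a compact box (Weierstrass) and control the tails with uniform hypercontractive moments on both the sign and the Gaussian side.

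\textbf{Wick grafting.} Write \((K_0W)_k=n^{-1}\sum_l a_{\cdot}\)-convolution, i.e.\ \(K_0W\) is \(F\diag(a)F^*W/n\). Expanding \(\prod_i Z_{T_i,k}^{r_i}\) as a sum over tuples of leaf assignments, the product of cosines becomes a sum over signed root frequencies. Applying \(F^*\), multiplying by \(a_s=\epsilon_{[s]}\), and applying \(F\) attaches a new root node with label \(s=\) (signed sum of child roots). The \emph{generic} terms, in which all labels across the copies are distinct, reproduce exactly the admissible flows of the grafted tree \(T\); the orientation and cosine factors match the normalization \eqref{trees:eq:canonical-tree-field}.

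The non-generic terms are those where a label repeats across copies. Terms in which two copies of \(T_i\) coincide up to a signed automorphism collapse via \(\epsilon^2=1\) into lower-order products with coefficient \(\aut(T_i)\) per pair; the nontrivial part is identifying this constant via the first step and \eqref{trees:eq:canonical-tree-covariance}. The recursion defining \(H_j(\cdot;\aut(T_i))\) from \eqref{trees:eq:Hermite-generating-function} subtracts precisely these full pairings. All remaining partial collisions impose an extra nontrivial congruence. I will bound them by the rank inequality \eqref{trees:eq:laminar-rank-inequality} and the completion count \eqref{trees:eq:tree-completion-count}, exactly as in the variance argument, obtaining \(O(n^{-1})\) in normalized \(L^2\) via Walsh orthogonality. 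Parseval (\(F^*F=nI\)) converts the coordinatewise Fourier bookkeeping into the normalized norm \(n^{-1}\norm{\cdot}_2^2\).

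\textbf{Main obstacle.} I expect the main obstacle to be this exact-cancellation step: showing that full pairings carry weight exactly \(\aut(T_i)\), including nested collisions among several copies, which should follow the usual Wick/Hermite combinatorics of the generating function.

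\textbf{Even-arity corollary.} Expand the even polynomial \(P\) in the Wick basis \(\prod_iH_{r_i}\). Only even total degrees \(r\) occur, so there are no unary grafts. The constant term is killed by \(K_0J=0\). Each remaining term \(K_0\prod_iH_{r_i}\) is approximated by the canonical field of the tree grafted with \(r\geq2\) children, which has even arity whenever \(r\) is even and all \(T_i\) have even internal arities.
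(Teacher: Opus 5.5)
\textbf{The main gap is in the Wick-grafting step.} Your plan is to expand $\prod_iZ_{T_i,k}^{r_i}$, cancel full pairings against the Hermite subtractions, and bound partial collisions ``exactly as in the variance argument''. That argument does not transfer.
\begin{itemize}
\item \eqref{trees:eq:laminar-rank-inequality} and the count behind \eqref{trees:eq:tree-Gij-bound} control an overlap between \emph{two} label sets only.
\item Here the overlap patterns involve all $r$ copies at once: chains or cycles of partial overlaps, and partial overlaps alongside full pairings.
\item $K_0$ creates further collisions when the new root label $[s]$ equals a child label. So even your ``generic'' terms are not exactly the admissible flows of $T$.
\end{itemize}
Bounding every non-pairing pattern would need a new multi-copy rank lemma. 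Nor is there an exact cancellation to find: the pairing constants are $\aut(T_i)+O(n^{-1})$ at nonzero modes and about $g_{T_i}$ at $k=0$.

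The paper needs none of this bookkeeping; it uses an energy argument.
\begin{itemize}
\item Lower Hermite terms and repeated labels strictly lower Walsh degree, and $K_0$ raises it by at most one. Hence the top Walsh component of $K_0W$ is exactly $Z_T$.
\item Walsh orthogonality then gives $\E\frac1n\norm{K_0W-Z_T}_2^2=\E\frac1n\norm{K_0W}_2^2-\E\frac1n\norm{Z_T}_2^2$.
\item By $K_0^2=I-J$, the first energy is $\E[\frac1n\sum_kW_k^2-(\frac1n\sum_kW_k)^2]$. This tends to $\prod_ir_i!\,\aut(T_i)^{r_i}$ by \eqref{trees:eq:tree-empirical-law} and Hermite orthogonality.
\item The second energy tends to $\aut(T)$ by \eqref{trees:eq:canonical-tree-covariance} and $h_T=2\aut(T)$.
\item Finally, $\aut(T)=\prod_ir_i!\,\aut(T_i)^{r_i}$, so the two limits agree.
\end{itemize}
This is the missing idea. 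It is also why the first three assertions are proved first: they are the inputs to this step, not side results.

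\textbf{Your first two steps also rest on false claims.}

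The circuits of the row matroid of $A_T$ are not only parent--child stars. The set $\{\mathrm{root}\}\cup\mathcal F$ is a circuit for every frontier $\mathcal F$, for example the root together with all leaves. A signed permutation also need not preserve which element of a circuit is the sum. The cherry has $g_T=12$ but $h_T=4$, and $(a,b,c)\mapsto(b,a,-c)$ swaps the root with a leaf. The paper works with frontier circuits directly:
\begin{itemize}
\item The root is fixed and is the sum element of every circuit containing it. So every frontier node has the root's sign, and every node lies in some frontier.
\item The leaves form the unique largest frontier, so $M$ is a leaf permutation preserving the laminar family, hence a rooted automorphism.
\end{itemize}

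Likewise, an exact matching between the flow spaces of nonisomorphic trees need not be a rooted isomorphism. Take the $7$-node trees with child multisets $\{\text{cherry},\text{cherry}\}$ and $\{\text{leaf},(\text{leaf},\text{cherry})\}$. They admit a signed bijection of flow spaces sending each root to $\pm$ a leaf of the other tree. This bijection contributes order one to the covariance at $k=l=0$. At nonzero modes it vanishes only through cosine orthogonality, because the root sum and a proper subtree sum range independently. Only root-to-root maps force isomorphism.
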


\begin{proof}
Call an antichain of proper descendants a frontier when its descendant-leaf sets partition all leaves.  We first characterize the minimal dependent row sets of \(A_T\) that contain the root.  They are exactly
\[
 \{\mathrm{root}\}\cup\mathcal F,
\]
where \(\mathcal F\) is a frontier, and the unique relation up to scale is
\begin{equation}
 a_{\rm root}=\sum_{v\in\mathcal F}a_v.
 \label{trees:eq:frontier-circuit}
\end{equation}
Indeed, the laminar row operation used in
\eqref{trees:eq:laminar-rank} makes the residual supports disjoint.  A selected row set is dependent precisely when a residual support is empty.  If this occurs at \(v\), its leaf set is covered by its immediate selected descendants, yielding a dependence supported on \(v\) and those descendants.  Minimality forces the original row set to be exactly that support.  If the root belongs to it, then \(v\) is the root and the other nodes form a frontier.  Conversely, frontier rows have disjoint nonempty supports, are independent, and sum to the root row.

Let a signed permutation \(P\) preserve \(\im A_T\) and suppose its underlying permutation fixes the root.  Since \(A_T\) has full column rank, there is an invertible real matrix \(M\) such that
\[
 PA_T=A_TM.
\]
Thus \(P\) preserves the row circuits and their coefficient relations.
Apply it to \eqref{trees:eq:frontier-circuit}.  Uniqueness of the relation shows that every node in the frontier has the same sign as the root.  Every proper node belongs to a frontier obtained by taking that node together with the sibling subtrees encountered along its path to the root.  Hence all signs are equal.

Remove this common sign.  The frontier of all leaves is the unique frontier of largest cardinality \(L\): replacing the children below any internal node by that node lowers the size because every internal node has at least two children.  The unsigned permutation therefore preserves the leaves.  Since leaf rows are the standard basis, \(M\) is a leaf permutation.  It preserves the family of descendant-leaf sets and its containment relation, hence preserves parenthood.  It is a rooted-tree automorphism.  Conversely, every rooted-tree automorphism, multiplied by either global sign, preserves the flow subspace and the root.  For the one-node tree the same statement is immediate.  This proves \eqref{trees:eq:root-symmetry-automorphism}.

The same circuit argument applies to a signed coordinate bijection between the flow spaces of two trees that sends root to root.  After removal of a global sign, it induces a rooted-tree isomorphism.  We use this to calculate mixed covariances.  Trees with different node counts have exactly zero covariance because their Walsh degrees differ.  If two trees have equal node counts but different leaf counts \(L,L'\), a matching pair of distinct-label flows is specified by a signed node bijection and either flow.  There are at most
\(Cn^{\min(L,L')}\) such pairs, so normalization gives covariance
\[
 O\!\left(n^{-\abs{L-L'}/2}\right).
\]

Suppose the node and leaf counts agree.  A signed node bijection that does not map the first real flow space to the second imposes a nonzero bounded-coefficient congruence on the leaf parameters.  Its solutions number
\(O(n^{L-1})\), by the same greatest-common-divisor bound used in the proof of
Theorem~\ref{trees:thm:tree-flow-comparison}.  A preserving bijection supplies all
\(n^L\) flows before the nonzero and distinct-label restrictions, whose removal costs \(O(n^{L-1})\).  At nonzero output modes, a leading term can occur only when root maps to root.  Otherwise the whole-leaf sum and a proper-subtree sum have a determinant-one minor and range independently.  Root-to-root maps exist only for isomorphic trees.  In that case the two canonical fields agree after relabeling leaf assignments.  Thus nonisomorphic trees have asymptotically zero mixed covariance, while
\eqref{trees:eq:canonical-tree-covariance} and
\eqref{trees:eq:root-symmetry-automorphism} give variance
\(\aut(T_i)\) at a nonzero mode.  Distinct nonconjugate modes have zero limiting covariance.

Theorem~\ref{trees:thm:tree-flow-comparison} now gives joint characteristic-function convergence to the Gaussian coordinates in
\eqref{trees:eq:canonical-Gaussian-variance}.  We next transfer this convergence to polynomial-growth tests.  Every coordinate has uniformly bounded second moment, including the root, by the flow count and coefficient bound in the proof of that theorem.  Each coordinate is a fixed-degree sign polynomial, so
\eqref{trees:eq:sign-hypercontractivity} supplies all fixed higher moments uniformly.

For compactly supported continuous \(H\), convolve with a narrow Gaussian.
The convolution approximates \(H\) uniformly and has an integrable Fourier representation.  Joint characteristic-function convergence and dominated convergence transfer its expectations.  For polynomial-growth \(H\), insert a large compact cutoff and bound the discarded expectation by a sufficiently high uniform moment divided by a power of the cutoff radius.  This proves uniform convergence of one-block and two-block expectations.

Expand the square of the difference in \eqref{trees:eq:tree-empirical-law}.
For nonzero nonconjugate ordered mode pairs, the expectation of the product converges uniformly to the square of the Gaussian mean.  Only \(O(n)\) ordered pairs contain the zero mode or matching or conjugate modes.  Their contributions are bounded by the uniform second moment of the test.  Division by \(n^2\) proves the \(L^2\) convergence.  The root moment assertion follows from the same fixed-degree moment bound.

We turn to Wick grafting.  Because simultaneous negation of all leaf residues preserves every sign label and pairs the complex exponential with its conjugate, the cosine in \eqref{trees:eq:canonical-tree-field} can be replaced by
\[
 \exp(-2\pi\mathrm{i}kr_{\rm root}/n)
\]
without changing the field.  Multiply these exponential forms for the
\(r\) distinguished child-tree copies.  Applying \(K_0\), whose kernel is
\[
 (K_0)_{kl}
 =
 \frac1n\sum_{s\in\Z_n}
 a_s\exp\!\left(-\frac{2\pi\mathrm{i}(k-l)s}{n}\right),
\]
and summing over \(l\) imposes
\[
 s=\sum_{\text{child copies}}r_{\rm child}.
\]
The factor \(1/n\) cancels the coordinate sum.  The normalization is
\[
 n^{-\frac12\sum_ir_iL(T_i)},
\]
which is exactly the normalization of the grafted tree.  The extra sign
\(a_s\) supplies the new root.

Let
\[
 D=1+\sum_ir_id(T_i)
\]
be the node count of the grafted tree.  The degree-\(D\) Walsh terms are precisely those for which every sign label among all child nodes and the new root is nonzero and distinct.  Repetition of a sign label lowers the reduced Walsh degree by at least two.  Since each child field already imposes its internal distinct-label restriction, the degree-\(D\) component is exactly \(Z_T\).

Each \(H_{r_i}(x;\aut(T_i))\) has leading term \(x^{r_i}\), while every lower term has exponent smaller by a positive even integer.  Thus all other terms in \(W\) have sign degree strictly below
\(\sum_ir_id(T_i)\).  Applying \(K_0\) raises Walsh degree by at most one.  Therefore the degree-\(D\) Walsh projection of \(K_0W\) is exactly \(Z_T\).  Orthogonality gives, coordinatewise,
\[
 \E\abs{(K_0W)_k-Z_{T,k}}^2
 =
 \E\abs{(K_0W)_k}^2-\E\abs{Z_{T,k}}^2.
\]
After summing over \(k\),
\begin{equation}
 \E\left[\frac1n\norm{K_0W-Z_T}_2^2\right]
 =
 \E\left[\frac1n\norm{K_0W}_2^2\right]
 -
 \E\left[\frac1n\norm{Z_T}_2^2\right].
 \label{trees:eq:orthogonal-graft-difference}
\end{equation}

Since \(K_0^2=I-J\) by \eqref{eq:projection-decomposition},
\begin{equation}
 \frac1n\norm{K_0W}_2^2
 =
 \frac1n\sum_kW_k^2
 -
 \left(\frac1n\sum_kW_k\right)^2.
 \label{trees:eq:K0-energy}
\end{equation}
Apply \eqref{trees:eq:tree-empirical-law} to \(W\) and \(W^2\).  At least one \(r_i\) is positive, so \(\E W(G)=0\).  Hermite orthogonality gives
\begin{equation}
 \E W(G)^2
 =
 \prod_i r_i!\,\aut(T_i)^{r_i}.
 \label{trees:eq:Hermite-product-energy}
\end{equation}
Indeed, multiplying two copies of
\eqref{trees:eq:Hermite-generating-function} and integrating against a centered normal of variance \(\sigma^2\) gives \(e^{\sigma^2ts}\).  Coefficient comparison yields
\[
 \E[H_r(G;\sigma^2)H_s(G;\sigma^2)]
 =
 \mathbf1_{\{r=s\}}r!(\sigma^2)^r.
\]
Independence gives \eqref{trees:eq:Hermite-product-energy}.

On the other hand,
\eqref{trees:eq:canonical-tree-covariance} and
\eqref{trees:eq:root-symmetry-automorphism} show that
\[
 \E\left[\frac1n\norm{Z_T}_2^2\right]\longrightarrow\aut(T).
\]
The root contribution is negligible.  The automorphism count of the grafted tree is
\begin{equation}
 \aut(T)=\prod_ir_i!\,\aut(T_i)^{r_i}.
 \label{trees:eq:grafted-automorphism-count}
\end{equation}
An automorphism permutes the \(r_i\) isomorphic children and acts independently inside them; nonisomorphic child types cannot be interchanged.  The two limiting energies in
\eqref{trees:eq:orthogonal-graft-difference} therefore agree.  Their difference is nonnegative and tends to zero, proving
\eqref{trees:eq:Wick-graft-isometry}.  The assumption \(r\geq2\) ensures that the new root has at least two children.

Finally, suppose all input trees have even internal arities.  Their node counts are odd, since the number of edges is the sum of all internal child counts and is even.  Expand the globally even polynomial \(P\) in the multivariate Hermite basis associated with the independent variances \(\aut(T_i)\).  This expansion is finite because every univariate Hermite polynomial is monic.  The parity identity
\[
 H_r(-x;\sigma^2)=(-1)^rH_r(x;\sigma^2)
\]
shows that only multi-indices of even total degree occur.  The total-degree-zero term is constant and is annihilated by \(K_0\).  Every remaining term has even total degree at least two, and its grafted tree again has even internal arities.  Apply
\eqref{trees:eq:Wick-graft-isometry} to the finitely many terms and use the triangle inequality for the normalized \(L^2\) norm.  This proves the final assertion.
\end{proof}

\section{Uniform growing-degree bounds}

The preceding arguments admit explicit constants large enough to remain valid when the degree cutoff varies with \(n\).

\begin{thm}[Growing-degree covariance and Gaussian comparison]
\label{trees:thm:growing-degree-comparison}
For \(D\geq1\), define
\begin{equation}
 A_D=(4D)^{80D}.
 \label{trees:eq:AD-definition}
\end{equation}
Let \(T_1,\ldots,T_q\) be pairwise nonisomorphic rooted trees with at most \(D\) nodes and at least two children at every internal node.  For real coefficients \(c_i\) satisfying
\[
 V=\sum_i\aut(T_i)c_i^2\leq1,
\]
put
\[
 X_n(k)=\sum_ic_iZ_{T_i,k}.
\]
For every odd \(n\geq3\), uniformly over nonzero \(k\),
\begin{align}
 \abs{\E X_n(k)^2-V}
 &\leq\frac{A_D}{\sqrt n},
 \label{trees:eq:growing-variance}\\
 \abs{\E e^{\mathrm{i}tX_n(k)}-e^{-Vt^2/2}}
 &\leq
 \frac{A_D(t^2+\abs t^3)}{\sqrt n}
 \qquad(t\in\R).
 \label{trees:eq:growing-cf}
\end{align}
Moreover,
\begin{align}
 \abs{\E\frac{\norm{X_n}_2^2}{n}-V}
 &\leq\frac{A_D}{\sqrt n},
 \label{trees:eq:growing-energy}\\
 \E\abs{X_n(0)}^2&\leq A_D.
 \label{trees:eq:growing-root}
\end{align}

If \(k,l\) are nonzero and nonconjugate, and two coefficient lists
\(c_i,e_i\) satisfy
\[
 V_2=\sum_i\aut(T_i)(c_i^2+e_i^2)\leq1,
\]
then
\[
 Y=\sum_i\{c_iZ_{T_i,k}+e_iZ_{T_i,l}\}
\]
satisfies
\begin{equation}
 \abs{\E e^{\mathrm{i}tY}-e^{-V_2t^2/2}}
 \leq
 \frac{A_D(t^2+\abs t^3)}{\sqrt n}.
 \label{trees:eq:growing-two-mode-cf}
\end{equation}
All these bounds remain valid when \(D\) and the deterministic coefficient lists depend on \(n\).  If
\[
 D\log(2D)=o(\log n),
\]
the comparison errors are at most \(n^{-1/2+o(1)}\).  For
\(v_n=X_n/\sqrt n\),
\begin{equation}
 \E\abs{v_n(0)}^2
 \leq\frac{A_D}{n}
 \leq n^{-1+o(1)}.
 \label{trees:eq:growing-scaled-root}
\end{equation}
This is a one-sided upper bound and does not assert a nonzero lower bound.  No growing-degree nonlinear Wick-graft estimate is included.
\end{thm}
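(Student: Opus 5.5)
The proof re-runs the fixed-tree arguments of Theorems~\ref{trees:thm:tree-flow-comparison} and~\ref{trees:thm:wick-grafting}, tracking every constant explicitly as a function of the node bound \(D\). The plan is to show that each implicit \(O_T(\cdot)\) there is at most \((CD)^{cD}\). The conclusions then follow from the explicit comparison inequality \eqref{trees:eq:mixed-degree-cf}, not from the fixed-degree corollaries.

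\textbf{Combinatorial constants.} A tree with at most \(D\) nodes has at most \(D!\,2^D\) realizations of a label set by node roles and orientations. Its overlap patterns with a second such tree number at most \((2D)^{4D}\). Every bounded-coefficient congruence arising from \(B_TPA_T\) or from distinctness conditions has coefficients at most \(D\), so the gcd bound \eqref{trees:eq:composite-congruence} costs a factor at most \(D\). The signed node bijections between two trees number at most \(D!\,2^D\).

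Feeding these counts into the variance computation for the covariance polynomial, via \eqref{trees:eq:laminar-rank-inequality}, gives \(\Var(G_{ij})\le (4D)^{cD}n^{-1}\cdot(\text{weights})\). With the canonical weights \(n^{-L/2}\) times a cosine, and combined with the coefficients \(c_i\), we will bound
\[
\mathcal A_c\le \sum_{i,j}|c_ic_j|\sqrt{\Var G_{ij}}.
\]
To keep this independent of \(q\), we will not use the crude \(\sum_i|c_i|\). Instead we apply Cauchy--Schwarz against \(\sum_i\aut(T_i)c_i^2\le1\) and use the fact that the number of rooted trees with at most \(D\) nodes is at most \(4^D\). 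This absorbs \(q\le 4^D\) into \(A_D\). The same bookkeeping gives \(\max_v I_{i,v}\le (4D)^{cD}/n\) and \(\sum_v I_{i,v}\le D(4D)^{cD}\), so \(\mathcal B_c\le q\,3^{D}(4D)^{cD}n^{-1/2}\).

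\textbf{Variance and energy.} The covariance identity \eqref{trees:eq:canonical-tree-covariance} with explicit error, together with the mixed-tree computation from Theorem~\ref{trees:thm:wick-grafting}, gives three facts:
\begin{itemize}
\item nonisomorphic trees with equal node counts have covariance \(O(n^{-1/2})\), the worst case being leaf counts differing by one;
\item trees with different node counts have covariance exactly zero;
\item the diagonal term is \(h_T/2=\aut(T)\) by \eqref{trees:eq:root-symmetry-automorphism}.
\end{itemize}
This yields \eqref{trees:eq:growing-variance}. Averaging over \(k\) then gives \eqref{trees:eq:growing-energy}, since the zero mode contributes at most \(A_D/n\). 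The zero mode itself, \eqref{trees:eq:growing-root}, uses \(V_T(0,0)=g_T\le D!\,2^D\) and the same Cauchy--Schwarz.

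\textbf{Characteristic functions.} We plug the bounds on \(\mathcal A\), \(\mathcal B\), and \(|\sigma^2-V|\) into \eqref{trees:eq:mixed-degree-cf}. The difference \(|e^{-\sigma^2t^2/2}-e^{-Vt^2/2}|\le t^2|\sigma^2-V|/2\) is absorbed, which gives \eqref{trees:eq:growing-cf}. For two modes, we treat the \(2q\) fields \(Z_{T_i,k},Z_{T_i,l}\) as one family. Cross-mode covariances vanish up to \(O(n^{-1})\) by the root-form independence (a determinant-one minor) at nonconjugate modes. Hence \eqref{trees:eq:growing-two-mode-cf} follows identically.

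Finally, \((4D)^{80D}=n^{o(1)}\) exactly when \(D\log(2D)=o(\log n)\), and \eqref{trees:eq:growing-scaled-root} is \eqref{trees:eq:growing-root} divided by \(n\).

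The main obstacle is verifying that no hidden constant grows faster than \(D^{O(D)}\). The delicate points are the number of overlap patterns in the variance-of-\(G_{ij}\) count and the Cauchy--Schwarz over \(q\) trees. If \(80\) proves too small, the exponent constant is the only thing to adjust.
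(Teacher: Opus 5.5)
Your proposal is correct and is essentially the paper's proof. Both arguments rest on the same ingredients:
\begin{itemize}
\item explicit $D$-dependent counting: label realizations $\le D!\,2^D$, overlap patterns, gcd losses $\le D$, and signed bijections;
\item the bound $\sum_i|c_i|\le 2^D$, from Cauchy--Schwarz against $\sum_i\aut(T_i)c_i^2\le 1$ together with the $4^D$ tree count;
\item the rank inequality \eqref{trees:eq:laminar-rank-inequality} to control $\Var(G_{ij})$;
\item the exchangeable-pair and integrating-factor comparison with explicit $\mathcal A,\mathcal B$.
\end{itemize}
The paper re-runs that comparison using $(\sum_ia_i\Delta_i)^2\le(\sum_i|a_i|)\sum_j|a_j|\Delta_j^2$ instead of carrying the factor $q$. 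This difference is immaterial because $q\le 4^D$.

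Two small points need tightening.

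\textbf{The exponent.} The statement fixes $A_D=(4D)^{80D}$, so you cannot leave $80$ adjustable. As the paper does, write every factor as a power of $P=(4D)^{4D}$:
\begin{itemize}
\item coefficients $\le Pn^{-L/2}$;
\item $\max_vI_v\le P^3/n$;
\item $\Var(G_{ij})\le P^{10}/n$;
\item Stein remainder $\le P^{10}t^2/\sqrt n$.
\end{itemize}
Then check that the total stays within $P^{20}=(4D)^{80D}$.

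\textbf{The zero mode.} At $k=0$, cross-tree covariances can be of order one, because flow-preserving bijections need not fix the root. So \eqref{trees:eq:growing-root} should come from the $L^2$ triangle inequality with $\sum_i|c_i|\le 2^D$ and a per-tree bound such as $\E Z_{T,0}^2\le D!\,2^D$. It cannot come from a diagonal sum.
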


\begin{proof}
Put
\[
 P=(4D)^{4D},
 \qquad A_D=P^{20}.
\]
The number \(q\) of rooted-tree types with at most \(D\) nodes is at most \(4^D\).  Indeed, every unordered rooted tree admits a plane representative, and a depth-first parentheses word for a plane tree with \(d\) nodes has length \(2(d-1)\).  Thus there are at most \(4^{d-1}\) representatives of size \(d\).  Since \(\aut(T)\geq1\), Cauchy--Schwarz gives
\begin{equation}
 \sum_i\abs{c_i}\leq2^D
 \label{trees:eq:growing-coefficient-l1}
\end{equation}
when \(V\leq1\).  For the two coefficient lists, their combined absolute sum is at most
\(\sqrt2\,2^D\leq P\).  We use \(P\) as a common upper bound.

Consider a tree with \(d\leq D\) nodes and \(L\) leaves.  It has at most
\(n^L\) leaf assignments.  If an unsigned nonzero label is assigned to a chosen node, its orientation can be selected in two ways and
\eqref{trees:eq:tree-completion-count} leaves at most \(n^{L-1}\) flows.
Thus there are at most \(2Dn^{L-1}\) flows through a fixed label.
A fixed label set has at most
\[
 2^dd!\leq P
\]
oriented node realizations.  It follows that a canonical field, viewed as a Walsh polynomial, has at most \(n^L\) monomials and each coefficient is at most
\(Pn^{-L/2}\).  Consequently,
\begin{equation}
 \E Z_{T,k}^2\leq P^2,
 \qquad
 \max_vI_v\leq\frac{P^3}{n},
 \qquad
 \sum_vI_v\leq P^3,
 \label{trees:eq:growing-influences}
\end{equation}
where \(I_v=\E(D_vZ_{T,k})^2\).  These bounds hold at every output coordinate.

Before tracking the covariance-polynomial variance, we make the dependence on the cutoff $D$ explicit.  The number of choices of two node-role subsets is at most $2^{2D}$; a matching between selected roles contributes at most $D!$; the orientation choices contribute at most $2^{2D}$; and the possible overlap size contributes at most $D+1$.  Hence the total number of overlap patterns is bounded by
\begin{equation}
 N_D\leq 2^{4D}D!(D+1)
 \leq \exp\!\bigl(CD\log(2D)\bigr).
 \label{trees:eq:uniform-pattern-count}
\end{equation}
for an absolute constant $C$.  The same type of bound applies to the placements of outside labels and orientations in a fixed resulting Walsh support.  Likewise, all integer coefficients arising from the flow equations are bounded by $D$, and \eqref{trees:eq:composite-congruence} converts each nonzero equation into an $O(Dn^{L-1})$ counting bound.  Consequently the entire fixed-degree argument remains valid uniformly for $D\to\infty$, with its constants bounded by $\exp(CD\log(2D))$ before the deliberately coarser envelope $P=(4D)^{4D}$ is applied.

We next track the covariance-polynomial variance.  For two canonical fields \(Y_i,Y_j\), with leaf counts \(L_i,L_j\), the contribution of an intersecting edge pair to
\(\sum_v(D_vY_i)(D_vY_j)\) has magnitude at most
\[
 P^3n^{-(L_i+L_j)/2}.
\]
An overlap pattern consists of two subsets of at most \(D\) node positions, their bijection, relative orientations, and the overlap size.  The number of patterns is at most
\[
 2^{2D}D!2^D(D+1)\leq P.
\]
For a fixed resulting Walsh support, assigning its outside labels and orientations to at most \(2D\) positions has at most
\[
 (2D)!2^{2D}\leq P
\]
possibilities.

Choose the tree \(A\) for which the overlap node set \(S\) is nonempty and proper, and call the other tree \(B\).  The rank argument in
\eqref{trees:eq:tree-pair-count} bounds the number of full pairs for one pattern by
\[
 Pn^{L_A+L_B-\rho_A(S)}.
\]
Once the outside support is fixed, the number of shared-label choices is at most
\[
 Pn^{L_A-\rho_A(S^c)}.
\]
The maximum absolute Walsh coefficient of the pattern is therefore at most
\[
 P^4
 n^{L_A-\rho_A(S^c)-(L_A+L_B)/2},
\]
and its total absolute coefficient sum is at most
\[
 P^4
 n^{(L_A+L_B)/2-\rho_A(S)}.
\]
Their product is at most \(P^8/n\) by
\eqref{trees:eq:laminar-rank-inequality}.  Adding at most \(P\) patterns by the squared triangle inequality gives
\begin{equation}
 \Var(G_{ij})\leq\frac{P^{10}}{n},
 \qquad
 G_{ij}=\frac1{d_i}\sum_v(D_vY_i)(D_vY_j).
 \label{trees:eq:growing-Gij}
\end{equation}
If both trees have one node, the variance is zero.  The exact mean identity is
\[
 \E G_{ij}=\E[Y_iY_j].
\]

We also need an explicit covariance comparison.  Different node counts give exact zero covariance.  If the node counts agree but the leaf counts differ, a matching label set determines a signed node bijection, of which there are at most \(2^DD!\leq P\).  There are at most
\(n^{\min(L,L')}\) matching flow pairs for each bijection.  The normalized covariance is at most
\[
 Pn^{-\abs{L-L'}/2}\leq\frac{P}{\sqrt n}.
\]
For equal leaf counts, a nonpreserving signed bijection imposes a nonzero row congruence from \(B_{T'}PA_T\).  Each coefficient has magnitude at most \(D\).  Choosing a nonzero coefficient bounds its solution count by
\(Dn^{L-1}\) for every modulus.  For a preserving bijection, exclusion of zero or repeated unsigned node labels removes at most
\(D^2n^{L-1}\) assignments.  Summing over at most \(P\) bijections gives normalized error at most
\[
 \frac{P(D+D^2)}{n}\leq\frac{P^3}{n}.
\]
The unrestricted cosine averages were calculated in
Theorem~\ref{trees:thm:wick-grafting}: at nonzero modes, only root-to-root mappings survive; their number is \(2\aut(T)\), and nonisomorphic trees have no such mapping.  Hence, for the one-mode or two-mode linear combination in the statement,
\begin{equation}
 \abs{\E Y^2-V_{\rm target}}
 \leq\frac{P^6}{\sqrt n}.
 \label{trees:eq:growing-target-covariance}
\end{equation}

For a single combination \(X_n\), \eqref{trees:eq:growing-influences} and
\eqref{trees:eq:growing-coefficient-l1} give
\[
 \E\abs{X_n(0)}^2\leq P^4.
\]
Averaging \eqref{trees:eq:growing-target-covariance} over the \(n-1\) nonzero modes, and adding the root contribution and the omitted \(V/n\), gives
\[
 \abs{\E\frac{\norm{X_n}_2^2}{n}-V}
 \leq
 \frac{P^6}{\sqrt n}+\frac{P^4+1}{n}
 \leq\frac{P^8}{\sqrt n}.
\]
This proves \eqref{trees:eq:growing-variance},
\eqref{trees:eq:growing-energy}, and \eqref{trees:eq:growing-root} with room to spare.

For the characteristic function, write the one-mode or two-mode variable as
\[
 Y=\sum_ia_iY_i,
 \qquad
 \sum_i\abs{a_i}\leq P.
\]
Flip a uniformly selected sign and use
\[
 \Delta_i=-2\epsilon_JD_JY_i,\qquad
 \lambda_i=\frac{2d_i}{m}.
\]
The exchangeability identity gives
\[
 \E[Ye^{\mathrm{i}tY}]
 =
 \mathrm{i}t\,\E[V_ae^{\mathrm{i}tY}]+R_a(t),
 \qquad
 V_a=\sum_{i,j}a_ia_jG_{ij}.
\]
From \eqref{trees:eq:growing-Gij},
\begin{equation}
 \E\abs{V_a-\E Y^2}
 \leq
 \left(\sum_i\abs{a_i}\right)^2
 \frac{P^5}{\sqrt n}
 \leq\frac{P^7}{\sqrt n}.
 \label{trees:eq:growing-random-variance}
\end{equation}
The fourth-moment inequality
\eqref{trees:eq:weighted-fourth-moment} gives
\[
 \E[\abs{D_vY_i}(D_vY_j)^2]
 \leq
 3^{d_j-1}\sqrt{I_{i,v}}\,I_{j,v}.
\]
Using \(3^{D-1}\leq P\) and \eqref{trees:eq:growing-influences},
\begin{align}
 \lambda_i^{-1}\E[\abs{\Delta_i}\Delta_j^2]
 &=
 \frac4{d_i}\sum_v
 \E[\abs{D_vY_i}(D_vY_j)^2]\notag\\
 &\leq\frac{P^7}{\sqrt n}.
 \label{trees:eq:growing-flip-bound}
\end{align}
The weighted inequality
\[
 \left(\sum_ia_i\Delta_i\right)^2
 \leq
 \left(\sum_i\abs{a_i}\right)
 \sum_j\abs{a_j}\Delta_j^2
\]
and \eqref{trees:eq:growing-flip-bound} give
\[
 \abs{R_a(t)}\leq\frac{P^{10}t^2}{\sqrt n}.
\]
If \(\phi(t)=\E e^{\mathrm{i}tY}\), then
\[
 \abs{\phi'(t)+(\E Y^2)t\phi(t)}
 \leq
 \frac{P^{10}(\abs t+t^2)}{\sqrt n}.
\]
The integrating-factor calculation gives
\[
 \abs{\phi(t)-e^{-(\E Y^2)t^2/2}}
 \leq
 \frac{P^{10}(t^2+\abs t^3)}{\sqrt n}.
\]
Replacing \(\E Y^2\) by \(V_{\rm target}\) costs at most
\[
 \frac{t^2}{2}
 \abs{\E Y^2-V_{\rm target}}
\]
by \eqref{trees:eq:growing-target-covariance}.  Enlarging the coefficient to
\(P^{20}=A_D\) proves \eqref{trees:eq:growing-cf} and
\eqref{trees:eq:growing-two-mode-cf}.

Finally,
\[
 \log A_D=80D\log(4D).
\]
If \(D\log(2D)=o(\log n)\), then
\[
 \frac{A_D}{\sqrt n}=n^{-1/2+o(1)}
 \quad\text{and}\quad
 \frac{A_D}{n}=n^{-1+o(1)}.
\]
Together with \eqref{trees:eq:growing-root}, this proves
\eqref{trees:eq:growing-scaled-root}.  The estimate permits the root moment to be zero, including when every coefficient is zero.
\end{proof}

The following all-coordinate bound is independent of the Gaussian comparison and uses only the exact label-matching count.

\begin{prop}[All-coordinate bound]
\label{trees:prop:all-coordinate-bound}
Let \(D\geq1\), and let \(c_T\) be deterministic real coefficients indexed by pairwise nonisomorphic rooted trees with at most \(D\) nodes and at least two children at every internal node.  Define
\[
 Q_c=\sum_Tc_T^2\aut(T),
 \qquad
 v_k=\frac1{\sqrt n}\sum_Tc_TZ_{T,k}.
\]
For every odd \(n\geq3\),
\begin{equation}
 \max_{k\in\Z_n}\E\abs{v_k}^2
 \leq
 \frac{Q_cD!2^{3D+2}}{n}.
 \label{trees:eq:all-coordinate-bound}
\end{equation}
The result allows \(D\) and the coefficients to depend deterministically on \(n\).  If \(Q_c\) is uniformly bounded and
\[
 D_n\log(D_n+1)=o(\log n),
\]
then
\begin{equation}
 \max_k\E\abs{v_k}^2\leq n^{-1+o(1)}.
 \label{trees:eq:all-coordinate-asymptotic}
\end{equation}
No estimate for the full squared vector norm or for a nonlinear residual is asserted.
\end{prop}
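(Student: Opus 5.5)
We bound $\E\abs{v_k}^2$ by expanding the square into label-matching pairs of flows, estimated in absolute value without any cancellation. Fix $k\in\Z_n$. Then
\[
 n\,\E\abs{v_k}^2=\sum_{T,T'}c_Tc_{T'}\E[Z_{T,k}Z_{T',k}],
\]
and by \eqref{trees:eq:canonical-tree-field} each $\E[Z_{T,k}Z_{T',k}]$ equals $n^{-(L(T)+L(T'))/2}$ times a sum over pairs $(x,y)$ of admissible flows on $T,T'$. Walsh orthogonality keeps exactly the pairs whose unsigned node-label sets coincide, and we bound each cosine factor by one. In particular $T$ and $T'$ must have the same node count $d$. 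As in the proof of Theorem~\ref{trees:thm:tree-flow-comparison}, such a matching pair determines a unique signed node bijection $P$ with $A_{T'}y=PA_Tx$; uniqueness uses distinct labels and oddness of $n$. There are at most $2^dd!\leq 2^DD!$ such $P$.

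For fixed $P$, either flow determines the other, so the number of matching pairs is at most $n^{\min(L,L')}$. Hence
\[
 \abs{\E[Z_{T,k}Z_{T',k}]}\leq 2^DD!\,n^{-\abs{L-L'}/2}\leq 2^DD!,
\]
and the constant is uniform in $k$, including $k=0$. This alone is too crude, because summing over all pairs $(T,T')$ would bring in $(\sum_T\abs{c_T})^2$ rather than $Q_c$. So we apply Cauchy--Schwarz: $\abs{c_Tc_{T'}}\abs{\E[Z_TZ_{T'}]}$ is at most $\tfrac12\bigl(c_T^2+c_{T'}^2\bigr)$ times the pair bound. For each fixed $T$ we then need to control the number of partner types $T'$.

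This counting is the main obstacle. One option is to count partners $T'$ of the same node count $d\leq D$, of which there are at most $4^{D}$ by the parentheses encoding used in Theorem~\ref{trees:thm:growing-degree-comparison}. Then $\sum_{T'}\abs{\E[Z_TZ_{T'}]}\leq 4^D\,2^DD!$, which gives
\[
 n\,\E\abs{v_k}^2\leq 2^{3D}D!\sum_Tc_T^2\leq 2^{3D}D!\,Q_c,
\]
using $\aut(T)\geq1$. This already matches the stated $D!\,2^{3D+2}$ up to the factor $4$, which leaves slack for $k$-dependent conjugate pairings or for the $d\le D$ versus $d=D$ bookkeeping.

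If a sharper route is wanted, we would instead bound the diagonal terms by $\E Z_{T,k}^2\leq$ (the number of signed bijections preserving $\im A_T$) $\leq 2\aut(T)\cdot 2$ plus lower-order terms, via \eqref{trees:eq:root-symmetry-automorphism}, and absorb the off-diagonal terms by the crude count above. Dividing by $n$ proves \eqref{trees:eq:all-coordinate-bound}.

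For \eqref{trees:eq:all-coordinate-asymptotic}, we note that $\log\bigl(D!\,2^{3D+2}\bigr)\leq C\,D\log(D+1)=o(\log n)$, so the bound becomes $n^{-1+o(1)}$ when $Q_c$ is bounded. Every count above is deterministic and holds for each $n$, so $D$ and the coefficients may depend on $n$.
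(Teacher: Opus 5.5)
Your argument is correct, and it even yields the slightly smaller constant $2^{3D}D!$. It is organized differently from the paper's proof, however.

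\textbf{The paper's route.} The paper never expands the square. It bounds only the diagonal moment, $\E\abs{Z_{T,k}}^2\leq n^{-L}\,n^L\,d!\,2^d\leq D!\,2^D$, using that for a fixed admissible $x$ at most $d!\,2^d$ assignments $y$ share its label set. It then applies the $L^2$ triangle inequality $\norm{v_k}_{L^2}\leq n^{-1/2}\sum_T\abs{c_T}\,\norm{Z_{T,k}}_{L^2}$. Finally it uses Cauchy--Schwarz, $(\sum_T\abs{c_T})^2\leq 4^{D+1}\sum_Tc_T^2\leq 4^{D+1}Q_c$, over all types with at most $D$ nodes.

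\textbf{Your route.} You bound every entry of the Gram matrix $\E[Z_{T,k}Z_{T',k}]$ by a cross-tree label-matching count. You then close with the Schur-type estimate $\abs{c_Tc_{T'}}\leq\frac12(c_T^2+c_{T'}^2)$.

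\textbf{What each buys.} Your route exploits Walsh-degree orthogonality between different node counts, so only partners with the same node count enter. There are at most $4^{d-1}$ of them, which would even give $2^{3D-2}D!$. The paper's route is shorter, since it needs only the single-tree count.

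Your signed-bijection argument for two different trees is more than you need. Cauchy--Schwarz applied to the diagonal bound already gives $\abs{\E[Z_{T,k}Z_{T',k}]}\leq D!\,2^D$, and the decay factor $n^{-\abs{L-L'}/2}$ is never used. Likewise, no slack is needed for special $k$: bounding each cosine by one makes everything uniform in $k$.

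You should delete the ``sharper route'' paragraph, which is both unnecessary and incorrect as stated.
\begin{itemize}
\item At $k=0$ the leading term of $\E Z_{T,0}^2$ is $g_T=\abs{\Gamma_T}$. This counts signed permutations preserving $\im A_T$ that need not fix the root.
\item The inequality $g_T\leq4\aut(T)$ fails in general. For the cherry (a root with two leaf children), $g_T=12$ while $\aut(T)=2$.
\item The ``lower-order terms'' are $O_T(n^{-1})$ with tree-dependent constants. That route therefore cannot give an inequality valid for every odd $n$ uniformly in a growing $D$, which the statement requires.
\end{itemize}
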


\begin{proof}
Fix a tree \(T\) with \(d\leq D\) nodes and \(L\) leaves.  From
\eqref{trees:eq:canonical-tree-field},
\[
 Z_{T,k}
 =
 n^{-L/2}
 \sum_{x\ {\rm admissible}}
 \cos(2\pi kr_{\rm root}(x)/n)\chi_{E_T(x)},
\]
where \(E_T(x)\) is the set of \(d\) distinct unsigned node labels.
There are at most \(n^L\) assignments \(x\).  In the squared expectation, a pair \(x,y\) contributes only when
\(E_T(x)=E_T(y)\), by Walsh orthogonality.

For a fixed admissible \(x\), there are at most \(d!2^d\) assignments
\(y\) with the same label set.  Assign the \(d\) unsigned labels to the
\(d\) node positions and then choose one of the two residue orientations at each node.  The leaf-node values determine \(y\).  Some choices fail the flow equations or admissibility, which only lowers the count.  Since each cosine product has magnitude at most one,
\begin{equation}
 \E\abs{Z_{T,k}}^2
 \leq
 n^{-L}n^Ld!2^d
 \leq D!2^D.
 \label{trees:eq:single-tree-coordinate}
\end{equation}
This bound uses neither a limiting argument nor a modulus-dependent inverse.

The number of unordered rooted-tree types with \(j\) nodes is at most \(4^j\), by the depth-first traversal encoding used in
Theorem~\ref{trees:thm:growing-degree-comparison}.  Therefore the number with at most \(D\) nodes is less than \(4^{D+1}\).  If
\(M=\sum_T\abs{c_T}\), then \(\aut(T)\geq1\) and Cauchy--Schwarz give
\begin{equation}
 M^2
 \leq
 4^{D+1}\sum_Tc_T^2
 \leq4^{D+1}Q_c.
 \label{trees:eq:coefficient-count-bound}
\end{equation}
The \(L^2\) triangle inequality and
\eqref{trees:eq:single-tree-coordinate} yield
\[
 \bigl(\E\abs{v_k}^2\bigr)^{1/2}
 \leq
 \frac1{\sqrt n}
 \sum_T\abs{c_T}
 \bigl(\E\abs{Z_{T,k}}^2\bigr)^{1/2}
 \leq
 \frac{M\sqrt{D!2^D}}{\sqrt n}.
\]
Squaring and using \eqref{trees:eq:coefficient-count-bound} proves
\eqref{trees:eq:all-coordinate-bound}.

If \(Q_c\) is bounded, then
\[
 \log\!\left(D!2^{3D+2}\right)
 \leq D\log D+(3D+2)\log2
 =o(\log n)
\]
under the stated growth condition.  This proves
\eqref{trees:eq:all-coordinate-asymptotic}, including the case \(Q_c=0\).
\end{proof}

\section{Smoothed Gaussian fixed point and degree tails}
\label{sec:tech-gaussian}
\label{gaussian:sec:smoothed-gaussian-tree}

In this section, we construct the smoothed Gaussian fixed point and estimate its canonical tree-degree tail. The contraction estimate gives existence and uniqueness; a uniform Hermite estimate identifies a subcritical offspring law and controls its total population.

\begin{notation}[Gaussian tree coordinates]
\label{gaussian:setup:gaussian-tree-space}
We use the canonical Gaussian coordinates and Wick grafting from
Theorems~\ref{trees:thm:tree-flow-comparison} and
\ref{trees:thm:wick-grafting}.
Let \(\cT\) denote the finite rooted trees with positive even arity at every internal vertex, including the one-vertex leaf \(L\).
Write \(d(T)\) for the number of vertices and \(\aut(T)\) for the rooted automorphism count.
The coordinates \(G_T\) are independent centered Gaussians with
\(\E G_T^2=\aut(T)\).
Let \(\mathcal H\) be their closed real linear span in \(L^2\), and let
\(\mathcal E_{\mathrm{even}}\) be the closed subspace of mean-zero \(L^2\) functions invariant under simultaneous negation of all coordinates.

Use the Hermite convention
\[
 \exp\left(tx-\frac{\sigma^2t^2}{2}\right)
 =
 \sum_{r=0}^{\infty}H_r(x;\sigma^2)\frac{t^r}{r!},
 \qquad
 \He_r(x)=H_r(x;1).
\]
For a finitely supported family \((r_A)_{A\in\cT}\) of nonnegative integers with positive even sum, let \(T\) be the tree obtained by grafting a root above \(r_A\) copies of each \(A\).
The corresponding Wick monomials form an orthogonal basis of
\(\mathcal E_{\mathrm{even}}\), and
\begin{equation}
 \left\|\prod_A H_{r_A}(G_A;\aut(A))\right\|_2^2
 =
 \prod_A r_A!\,\aut(A)^{r_A}
 =
 \aut(T).
 \label{gaussian:eq:grafting-norm}
\end{equation}
We denote the resulting grafting isometry by
\[
 J_{\tree}\colon
 \mathcal E_{\mathrm{even}}
 \longrightarrow
 \mathcal H\ominus\operatorname{span}\{G_L\},
 \qquad
 \prod_A H_{r_A}(G_A;\aut(A))\longmapsto G_T.
\]
Throughout this section, \(\tau>0\) is fixed,
\(\phi_\tau(x)=\sqrt{x^2+4\tau}\), and \(Z\) denotes a standard normal variable.
All \(L^2\) norms refer to the Gaussian probability space.
\end{notation}

For finite-dimensional Gaussian polynomial density, let \(\gamma\) be the law of \(m\) coordinates and let \(h\in L^2(\gamma)\) be orthogonal to every polynomial. Cauchy--Schwarz and Gaussian exponential moments justify the entire power series of \(M(z)=\int h(x)e^{z\cdot x}\,d\gamma(x)\), uniformly on compact subsets of \(\C^m\). Every coefficient vanishes. Taking \(z=it\) shows that the integrable density \(h\,d\gamma/dx\) has zero Fourier transform. Its convolution with every centered Gaussian density is zero, by the absolutely convergent Gaussian Fourier integral and Fubini's theorem. These convolutions converge to the density in \(L^1\) as their variance decreases to zero: uniform continuity and tail estimates prove this for continuous compactly supported functions, and approximation by such functions proves it for every \(L^1\) function. Hence \(h=0\), proving polynomial density. Tensor Hermite products span all polynomials because the univariate Hermite polynomials are monic. Their orthogonality therefore gives completeness in finite dimension.

For finite-cylinder approximation, enumerate \(G=(G_{T_j})_{j\geq1}\) and let \(\mathcal F_m\) be the sigma field of its first \(m\) coordinates. Events approximable in probability by cylinder events form a sigma field: complements preserve approximation, and countable unions reduce to finite unions by continuity from below. This sigma field contains the generating cylinders, so cylinder indicators have dense linear span in product \(L^2\). The orthogonal projections \(f_m=\E[f\mid\mathcal F_m]\) consequently converge to \(f\) in \(L^2\), since their ranges increase to a dense union. For \(f\in\mathcal E_{\mathrm{even}}\), approximate \(f_m\) by finite-coordinate polynomials and apply \(P\mapsto(P(G)+P(-G))/2-\E P(G)\). Global negation preserves the product law, so symmetrization and centering are commuting orthogonal projections whose composition fixes \(f\) and cannot increase the approximation error. The resulting Hermite expansions contain only positive even total orders, proving completeness of the stated finite Wick monomials. Their child multisets correspond bijectively to the nonleaf even-arity rooted trees. Orthogonality and \eqref{gaussian:eq:grafting-norm} therefore make the prescribed map an isometry on their linear span. It extends continuously to \(\mathcal E_{\mathrm{even}}\), including its nonpolynomial functions. Its range is closed and contains every nonleaf coordinate, so it equals \(\mathcal H\ominus\operatorname{span}\{G_L\}\).

\begin{lem}[Strict contraction under Gaussian smoothing]
\label{gaussian:lem:strict-gaussian-contraction}
Let \(X,Y\) be centered jointly Gaussian real variables, and let \(Q_0>0\).
Assume that \(\Var(X),\Var(Y)\leq Q_0\).
Then
\begin{equation}
 \E\abs{\phi_\tau(X)-\phi_\tau(Y)}^2
 \leq
 \frac{3Q_0}{3Q_0+4\tau}\,\E\abs{X-Y}^2.
 \label{gaussian:eq:strict-gaussian-contraction}
\end{equation}
\end{lem}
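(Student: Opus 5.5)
The plan is to reduce the statement to a pointwise inequality for \(\phi_\tau\) that involves only the sum \(X+Y\). I then average against the weight \((X-Y)^2\) and use concavity together with a Gaussian fourth-moment identity. Write \(D=X-Y\) and \(S=X+Y\). If \(\E D^2=0\), then \(X=Y\) almost surely and there is nothing to prove, so assume \(\E D^2>0\).

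\emph{Step 1 (pointwise bound).} For real \(x,y\),
\[
 \phi_\tau(x)-\phi_\tau(y)=\frac{(x-y)(x+y)}{\phi_\tau(x)+\phi_\tau(y)}.
\]
Cauchy--Schwarz in \(\R^2\) gives
\[
 \phi_\tau(x)\phi_\tau(y)=\sqrt{(x^2+4\tau)(y^2+4\tau)}\geq\abs{xy}+4\tau.
\]
Expanding the square of the denominator then yields
\[
 (\phi_\tau(x)+\phi_\tau(y))^2\geq x^2+y^2+2\abs{xy}+16\tau\geq(x+y)^2+16\tau.
\]
Hence, with \(g(s)=s/(s+16\tau)\) for \(s\geq0\),
\[
 \abs{\phi_\tau(x)-\phi_\tau(y)}^2\leq(x-y)^2\,g\bigl((x+y)^2\bigr).
\]

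\emph{Step 2 (weighted Jensen).} The function \(g\) is increasing and concave on \([0,\infty)\). Define the probability measure \(d\mu=D^2\,d\Prob/\E D^2\). Jensen's inequality under \(\mu\) gives
\[
 \E\abs{\phi_\tau(X)-\phi_\tau(Y)}^2\leq\E D^2\cdot\E_\mu g(S^2)\leq\E D^2\cdot g\!\left(\frac{\E[D^2S^2]}{\E D^2}\right).
\]

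\emph{Step 3 (Gaussian moment).} Since \((D,S)\) is a centered Gaussian pair, Isserlis' formula gives
\[
 \E[D^2S^2]=\E D^2\,\E S^2+2(\E DS)^2\leq3\,\E D^2\,\E S^2,
\]
where the inequality uses Cauchy--Schwarz. Moreover, \(\E S^2\leq(\sqrt{\Var X}+\sqrt{\Var Y})^2\leq4Q_0\). Because \(g\) is monotone, we obtain
\[
 \E\abs{\phi_\tau(X)-\phi_\tau(Y)}^2\leq g(12Q_0)\,\E D^2=\frac{12Q_0}{12Q_0+16\tau}\,\E D^2=\frac{3Q_0}{3Q_0+4\tau}\,\E\abs{X-Y}^2,
\]
which is \eqref{gaussian:eq:strict-gaussian-contraction}.

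The main obstacle is Step 1. The naive Lipschitz bound \(\abs{\phi_\tau'}<1\) gives no uniform contraction, so one needs a deficit that depends only on \(x+y\). That deficit is what allows the weighted Jensen argument in Step 2 to close, and the \(16\tau\) margin it supplies is exactly what is needed to reach the stated constant.
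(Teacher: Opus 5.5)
Your proof is correct and gives exactly the stated constant, but it takes a different route from the paper's.

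\textbf{The paper's route.} It integrates along the segment \(X_s=(1-s)X+sY\) and uses
\(\abs{\phi_\tau(Y)-\phi_\tau(X)}^2\leq\int_0^1 D^2\phi_\tau'(X_s)^2\,ds\).
For each \(s\) it splits \(D\) into its projection onto \(X_s\) plus an independent residual. It then bounds the two resulting reciprocal moments separately: Cauchy--Schwarz with \(\E Z^4=3\) for the projected part, and Jensen for the residual part. The degenerate case \(\Var(X_s)=0\) is treated separately.

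\textbf{Your route.} Your inequality \((\phi_\tau(x)+\phi_\tau(y))^2\geq(x+y)^2+16\tau\) is exactly midpoint convexity, \(\phi_\tau(x)+\phi_\tau(y)\geq2\phi_\tau(\tfrac{x+y}{2})\). Hence Step~1 says
\(\abs{\phi_\tau(x)-\phi_\tau(y)}\leq\abs{x-y}\,\abs{\phi_\tau'(\tfrac{x+y}{2})}\),
since \(g((x+y)^2)=\phi_\tau'(\tfrac{x+y}{2})^2\). In effect, you replace the whole path integral by its value at the midpoint \(s=1/2\). Your \(D^2\)-weighted Jensen plus Isserlis then does in one step what the paper's orthogonal decomposition does in two pieces. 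Both arguments rest on the same Gaussian fourth-moment factor, namely \(\E[D^2X_s^2]\leq3\,\E D^2\,\E X_s^2\). In your case \(X_s=S/2\) and \(\E S^2/4\leq Q_0\).

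\textbf{Comparison.} Your version is shorter and never needs the zero-variance case. The paper's version uses only pointwise information about \(\phi_\tau'\) along the path. Yours exploits the special structure \(\phi_\tau^2=x^2+4\tau\), which makes the secant slope equal to \((x+y)/(\phi_\tau(x)+\phi_\tau(y))\), together with convexity. This midpoint secant bound is not a general feature of convex functions: for \(e^x\) the secant slope exceeds the midpoint derivative. So your argument is tied more tightly to this particular smoothing function.
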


\begin{proof}
Set \(D=Y-X\) and \(X_s=(1-s)X+sY\) for \(0\leq s\leq1\).
The \(L^2\) triangle inequality gives
\(\Var(X_s)\leq Q_0\).
The fundamental theorem of calculus and Cauchy--Schwarz yield
\[
 \abs{\phi_\tau(Y)-\phi_\tau(X)}^2
 \leq
 \int_0^1D^2\phi_\tau'(X_s)^2\,ds.
\]

Fix \(s\), and write \(\sigma^2=\Var(X_s)\).
When \(\sigma>0\), Gaussian orthogonal projection gives
\[
 X_s=\sigma Z,\qquad D=aZ+bW,
 \qquad a^2+b^2=\E D^2,
\]
where \(Z,W\) are independent standard normals, \(a\in\R\), and \(b\geq0\).
The residual after projection is independent of \(X_s\), since their joint Gaussian characteristic function factors.
Using
\(1-\phi_\tau'(x)^2=4\tau/(x^2+4\tau)\), we obtain
\[
 \E\!\left[D^2\bigl(1-\phi_\tau'(X_s)^2\bigr)\right]
 =
 4\tau\left(
 a^2\E\frac{Z^2}{\sigma^2Z^2+4\tau}
 +
 b^2\E\frac{1}{\sigma^2Z^2+4\tau}
 \right).
\]
The mixed term vanishes by independence and \(\E W=0\).
Cauchy--Schwarz, together with \(\E Z^2=1\) and \(\E Z^4=3\), gives
\[
 \E\frac{Z^2}{\sigma^2Z^2+4\tau}
 \geq
 \frac{(\E Z^2)^2}
      {\E[Z^2(\sigma^2Z^2+4\tau)]}
 =
 \frac{1}{3\sigma^2+4\tau}.
\]
Jensen's inequality gives
\[
 \E\frac{1}{\sigma^2Z^2+4\tau}
 \geq
 \frac{1}{\sigma^2+4\tau}.
\]
Both lower bounds are at least \((3Q_0+4\tau)^{-1}\).
Consequently,
\[
 \E[D^2\phi_\tau'(X_s)^2]
 \leq
 \frac{3Q_0}{3Q_0+4\tau}\,\E D^2.
\]
When \(\sigma=0\), the left-hand side is zero because \(X_s=0\) almost surely and \(\phi_\tau'(0)=0\).
Taking expectations in the integral inequality and integrating over \(s\) proves the result.
\end{proof}

\begin{thm}[The smoothed Gaussian fixed point]
\label{gaussian:thm:smoothed-gaussian-fixed-point}
In the Gaussian space of Notation~\ref{gaussian:setup:gaussian-tree-space}, define
\begin{equation}
 \mathcal B_\tau(X)
 =
 2G_L+
 J_{\tree}
 \bigl(\phi_\tau(X)-\E\phi_\tau(X)\bigr).
 \label{gaussian:eq:smoothed-map}
\end{equation}
The map \(\mathcal B_\tau\) preserves the closed ball
\[
 \mathbb B=\{X\in\mathcal H\mid \|X\|_2^2\leq2\pi\}
\]
and is a contraction there with factor \(\sqrt{\lambda_\tau}\), where
\begin{equation}
 \lambda_\tau=\frac{6\pi}{6\pi+4\tau}<1.
 \label{gaussian:eq:lambda-tau}
\end{equation}
It has a unique fixed point \(H_\tau\) in all of \(\mathcal H\).
This fixed point belongs to \(\mathbb B\), and its variance
\(Q_\tau=\E H_\tau^2\) lies in \([4,2\pi]\).

Let \(H_{\tau,0}=0\) and
\(H_{\tau,t+1}=\mathcal B_\tau(H_{\tau,t})\).
For every integer \(t\geq0\),
\begin{align}
 \|H_{\tau,t+1}-H_{\tau,t}\|_2^2
 &\leq4\lambda_\tau^t,
 \label{gaussian:eq:picard-increments}\\
 \|H_\tau-H_{\tau,t}\|_2
 &\leq
 \frac{2\lambda_\tau^{t/2}}{1-\sqrt{\lambda_\tau}}.
 \label{gaussian:eq:picard-tail}
\end{align}
The variances \(Q_{\tau,t}=\E H_{\tau,t}^2\) satisfy
\begin{equation}
 Q_{\tau,0}=0,\qquad
 Q_{\tau,t+1}
 =
 4+\Var\!\left(\sqrt{Q_{\tau,t}Z^2+4\tau}\right),
 \label{gaussian:eq:variance-recursion}
\end{equation}
and converge to the unique \(Q_\tau\in[4,2\pi]\) such that
\begin{equation}
 \E\sqrt{Q_\tau Z^2+4\tau}
 =
 2\sqrt{1+\tau}.
 \label{gaussian:eq:scalar-variance-identity}
\end{equation}
\end{thm}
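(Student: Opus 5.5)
The plan is to verify the three structural facts (ball invariance, contraction, variance recursion) directly from the grafting isometry and Lemma~\ref{gaussian:lem:strict-gaussian-contraction}, and then apply Banach's fixed-point theorem.

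\textbf{Norm identity.} For $X\in\mathcal H$, the variable $\phi_\tau(X)$ is even under global negation, because every element of $\mathcal H$ is linear in the coordinates and hence odd. Therefore $\phi_\tau(X)-\E\phi_\tau(X)$ lies in $\mathcal E_{\mathrm{even}}$, and $\mathcal B_\tau(X)\in\mathcal H$. Since $J_{\tree}$ is an isometry whose range is orthogonal to $G_L$, and $\E G_L^2=\aut(L)=1$, we obtain the exact identity
\[
 \|\mathcal B_\tau(X)\|_2^2=4+\Var(\phi_\tau(X)).
\]
Every $X\in\mathcal H$ is centered Gaussian, say $X\sim\sqrt Q\,Z$ with $Q=\E X^2$. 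This gives the recursion \eqref{gaussian:eq:variance-recursion}.

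\textbf{Ball invariance.} We bound $\Var\phi_\tau(\sqrt QZ)=Q+4\tau-(\E\sqrt{QZ^2+4\tau})^2$. Since $\sqrt{QZ^2+4\tau}\ge\max(\sqrt Q|Z|,2\sqrt\tau)$, we have $(\E\phi_\tau)^2\ge\max(2Q/\pi,4\tau)$. Hence the variance is at most $\min\bigl(Q(1-2/\pi)+4\tau,\;Q\bigr)$, and in any case at most $Q$. So for $Q\le2\pi$ the new variance is at most $4+2\pi-4=2\pi$. Here I would use $\Var\le Q(1-2/\pi)+4\tau-\ldots$ more carefully if needed. A cleaner route is to note that $\Var\phi_\tau(X)\le\Var(|X|)\cdot$(Lipschitz-$1$ Gaussian Poincar\'e) $\le Q$, but this only gives $4+Q$. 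The decisive tool is therefore the mean lower bound: it yields $\Var\le Q+4\tau-(\sqrt{2Q/\pi}+\ldots)^2$. I expect this to be the fiddly step. I will try the convexity bound $\E\sqrt{QZ^2+4\tau}\ge\sqrt{(\E\sqrt Q|Z|)^2+4\tau}$ (Minkowski / Jensen for the Euclidean norm of $(\sqrt Q|Z|,2\sqrt\tau)$). This gives $\Var\le Q(1-2/\pi)$, so for $Q\le2\pi$ we get $4+\Var\le4+2\pi-4=2\pi$.

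\textbf{Contraction.} For $X,Y\in\mathbb B$, which are jointly Gaussian with variances at most $2\pi$, isometry and the fact that centering is an $L^2$ contraction give
\[
 \|\mathcal B_\tau X-\mathcal B_\tau Y\|_2^2\le\E|\phi_\tau(X)-\phi_\tau(Y)|^2\le\lambda_\tau\|X-Y\|_2^2,
\]
by Lemma~\ref{gaussian:lem:strict-gaussian-contraction} with $Q_0=2\pi$. Banach's theorem on the closed set $\mathbb B$ yields a unique fixed point there. For uniqueness in all of $\mathcal H$: any fixed point has $Q=4+\Var\le4+Q(1-2/\pi)$, so $Q\le2\pi$ and it lies in $\mathbb B$. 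The fixed point also has $Q\ge4$.

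\textbf{Picard estimates.} Starting from $H_{\tau,0}=0\in\mathbb B$, we have $H_{\tau,1}=2G_L+J_{\tree}(\phi_\tau(0)-\phi_\tau(0))=2G_L$, so the first increment has squared norm $4$. Iterating the contraction gives \eqref{gaussian:eq:picard-increments}, and summing the geometric series gives \eqref{gaussian:eq:picard-tail}. The variances converge by continuity of the norm.

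\textbf{Scalar identity.} At the fixed point, $Q=4+Q+4\tau-(\E\sqrt{QZ^2+4\tau})^2$, which is \eqref{gaussian:eq:scalar-variance-identity}. For uniqueness of the solution in $[4,2\pi]$, note that $Q\mapsto\E\sqrt{QZ^2+4\tau}$ is strictly increasing; its value at $Q$ is at least $\sqrt{2Q/\pi+4\tau}$ and at most $\sqrt{Q+4\tau}$.
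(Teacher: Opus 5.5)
Your proposal is correct. Its skeleton matches the paper's: the norm identity via the grafting isometry, Lemma~\ref{gaussian:lem:strict-gaussian-contraction} with $Q_0=2\pi$, Banach's theorem on $\mathbb B$, and summing the Picard increments. The genuine difference is in how you control the scalar variance map $F_\tau(q)=4+q+4\tau-\mu_\tau(q)^2$, where $\mu_\tau(q)=\E\sqrt{qZ^2+4\tau}$.

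\textbf{The paper's route.} It differentiates $F_\tau$ and proves $0\le F_\tau'(q)\le 1-2/\pi$. The lower bound on $\mu_\tau(q)\,\E[Z^2/\sqrt{qZ^2+4\tau}]$, namely $\ge 2/\pi$, comes from Cauchy--Schwarz. The upper bound $\le 1$ comes from a Chebyshev-type covariance inequality for two increasing functions of $Z^2$. This yields $F_\tau(q)\le 4+(1-2/\pi)q$, and also that $F_\tau$ is a strict contraction of $[0,2\pi]$. Convergence of $Q_{\tau,t}$ is then read off the scalar recursion.

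\textbf{Your route.} You get the same affine bound in one line from Jensen's inequality for the convex map $a\mapsto\sqrt{a^2+4\tau}$, which gives $\mu_\tau(q)^2\ge 2q/\pi+4\tau$. That suffices both for ball invariance and for confining every fixed point of $\mathcal B_\tau$ to $\mathbb B$. You obtain $Q_{\tau,t}\to Q_\tau$ from the $L^2$ convergence $H_{\tau,t}\to H_\tau$ rather than from a scalar contraction. Uniqueness of the root of the identity follows, as in the paper, from strict monotonicity of $\mu_\tau$.

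\textbf{What each buys.} Your argument is shorter and avoids differentiating under the expectation. The paper's gives slightly more: monotonicity of $F_\tau$, and a geometric rate $1-2/\pi$ for the scalar recursion that is uniform in $\tau$. Your rate is inherited from $\sqrt{\lambda_\tau}$, which tends to $1$ as $\tau\downarrow0$. Neither extra is needed for the statement.

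\textbf{A fix for the write-up.} Delete the intermediate claim in your ball-invariance paragraph that $\Var\phi_\tau(X)\le\min\{Q(1-2/\pi)+4\tau,\,Q\}$ already gives the bound $2\pi$; it does not. Only the Jensen bound $\Var\phi_\tau(\sqrt Q\,Z)\le(1-2/\pi)Q$ carries that step.
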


\begin{proof}
We first control the scalar variance map.
For \(q\geq0\), set
\[
 \mu_\tau(q)=\E\sqrt{qZ^2+4\tau},
 \qquad
 F_\tau(q)=4+q+4\tau-\mu_\tau(q)^2.
\]
Differentiation under the expectation is justified by
\[
 \frac{Z^2}{2\sqrt{qZ^2+4\tau}}
 \leq \frac{Z^2}{4\sqrt\tau}.
\]
It follows that
\[
 F_\tau'(q)
 =
 1-\mu_\tau(q)\E\frac{Z^2}{\sqrt{qZ^2+4\tau}}.
\]

Put \(Y=\sqrt{qZ^2+4\tau}\).
Cauchy--Schwarz applied to \(\sqrt Y\) and \(\abs Z/\sqrt Y\) gives
\[
 \E Y\,\E\frac{Z^2}{Y}
 \geq(\E\abs Z)^2=\frac{2}{\pi}.
\]
The functions \(u\mapsto\sqrt{qu+4\tau}\) and
\(u\mapsto u/\sqrt{qu+4\tau}\) are increasing on \([0,\infty)\), as their derivatives show.
For an independent copy \(Z'\) of \(Z\), write \(Y'=\sqrt{q(Z')^2+4\tau}\).
Then
\[
 2\Cov\left(Y,\frac{Z^2}{Y}\right)
 =
 \E\left[
 (Y-Y')
 \left(\frac{Z^2}{Y}-\frac{(Z')^2}{Y'}\right)
 \right]
 \geq0.
\]
Since \(\E[Y(Z^2/Y)]=1\), this proves
\(\E Y\,\E[Z^2/Y]\leq1\).
Hence
\begin{equation}
 0\leq F_\tau'(q)\leq1-\frac{2}{\pi}
 \qquad(q\geq0).
 \label{gaussian:eq:variance-map-lipschitz}
\end{equation}
Together with \(F_\tau(0)=4\), this gives
\[
 4\leq F_\tau(q)
 \leq4+\left(1-\frac{2}{\pi}\right)q,
 \qquad
 F_\tau(q)\leq2\pi\quad(0\leq q\leq2\pi).
\]

Every \(X\in\mathcal H\) is centered Gaussian.
The centered function
\(\phi_\tau(X)-\E\phi_\tau(X)\) belongs to
\(\mathcal E_{\mathrm{even}}\).
The isometry in \eqref{gaussian:eq:grafting-norm} and orthogonality to \(G_L\) imply
\begin{equation}
 \|\mathcal B_\tau(X)\|_2^2
 =
 4+\Var(\phi_\tau(X))
 =
 F_\tau(\|X\|_2^2).
 \label{gaussian:eq:ball-invariance}
\end{equation}
Thus \(\mathcal B_\tau\) preserves \(\mathbb B\).

Centering is an orthogonal projection in \(L^2\).
Applying Lemma~\ref{gaussian:lem:strict-gaussian-contraction} with \(Q_0=2\pi\), followed by the grafting isometry, gives
\[
 \|\mathcal B_\tau(X)-\mathcal B_\tau(Y)\|_2
 \leq\sqrt{\lambda_\tau}\,\|X-Y\|_2
 \qquad(X,Y\in\mathbb B).
\]
The first Picard increment is \(2G_L\), of norm \(2\).
Iteration of this inequality proves \eqref{gaussian:eq:picard-increments}.
The increments have summable norms, so the iterates converge in the closed ball \(\mathbb B\).
Continuity of \(\mathcal B_\tau\) makes their limit a fixed point.
Summing the increment bounds gives \eqref{gaussian:eq:picard-tail}.
The contraction inequality applied to two fixed points proves uniqueness in \(\mathbb B\).

Let \(X\) be any fixed point in \(\mathcal H\), and set \(q=\|X\|_2^2\).
Equations \eqref{gaussian:eq:variance-map-lipschitz} and
\eqref{gaussian:eq:ball-invariance} give
\[
 q=F_\tau(q)
 \leq4+\left(1-\frac{2}{\pi}\right)q.
\]
Therefore \(q\leq2\pi\).
Also \(q=4+\Var(\phi_\tau(X))\geq4\).
Every fixed point lies in \(\mathbb B\), proving uniqueness in all of \(\mathcal H\).

Applying \eqref{gaussian:eq:ball-invariance} to the Picard iterates proves
\eqref{gaussian:eq:variance-recursion}.
By \eqref{gaussian:eq:variance-map-lipschitz}, the scalar map \(F_\tau\) is a strict contraction of \([0,2\pi]\) into itself.
Its iterates converge to its unique fixed point \(Q_\tau\in[4,2\pi]\).
The equation \(F_\tau(Q_\tau)=Q_\tau\) is equivalent to
\[
 \mu_\tau(Q_\tau)^2=4(1+\tau).
\]
Taking the positive square root gives \eqref{gaussian:eq:scalar-variance-identity}.
The function \(\mu_\tau\) is strictly increasing, since its integrand is strictly increasing in \(q\) whenever \(Z\neq0\).
Thus the scalar identity also characterizes \(Q_\tau\) uniquely.
\end{proof}

\begin{prop}[Uniform Hermite approximation of the smoothing function]
\label{gaussian:prop:uniform-hermite-approximation}
There are constants \(C_\tau,c_\tau>0\) such that, for every \(q\in[0,2\pi]\) and every integer \(M\geq2\), the \(L^2(N(0,q))\) orthogonal projection \(P_{M,q}\) of \(\phi_\tau\) onto the polynomials of degree at most \(M\) may be chosen even and satisfies
\begin{equation}
 \E\abs{
 \phi_\tau(\sqrt q\,Z)-P_{M,q}(\sqrt q\,Z)
 }^2
 \leq C_\tau\exp(-c_\tau\sqrt M).
 \label{gaussian:eq:uniform-hermite-approximation}
\end{equation}
For \(q=0\), one may take \(P_{M,0}\equiv2\sqrt\tau\).
\end{prop}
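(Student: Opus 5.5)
Rescale to the standard normal and construct an explicit even polynomial with error $\exp(-c\sqrt M)$ uniformly in the variance; the orthogonal projection can only do better. Write $f_q(z)=\phi_\tau(\sqrt q\,z)=\sqrt{qz^2+4\tau}$. Since $P\mapsto P(\sqrt q\,\cdot)$ is a degree-preserving bijection of polynomials for $q>0$, the quantity in \eqref{gaussian:eq:uniform-hermite-approximation} is the squared $L^2(\gamma)$ distance from $f_q$ to polynomials of degree at most $M$, with $\gamma$ the standard Gaussian law. Because $f_q$ is even and $\gamma$ is symmetric, the projection commutes with $z\mapsto-z$, so $P_{M,q}$ is even. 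For $q=0$ the function is the constant $2\sqrt\tau$, so the projection is that constant and the error is zero. For $q>0$ it therefore suffices to exhibit, for each $M\geq2$, some polynomial $p$ of degree at most $M$ with $\E|f_q(Z)-p(Z)|^2\leq C_\tau e^{-c_\tau\sqrt M}$, with constants independent of $q\in(0,2\pi]$.

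To build $p$, truncate and approximate on an interval. Set $R=M^{1/4}$ (to be tuned), and note that $g(x)=\sqrt{x+4\tau}$ is analytic in $x$ on the disc $|x+4\tau|<4\tau$ shifted appropriately. More usefully, on $[0,qR^2]\subseteq[0,2\pi R^2]$ the function $g$ extends analytically to a Bernstein ellipse whose parameter satisfies $\rho-1\gtrsim\sqrt{\tau}/R$, since the nearest singularity is at $x=-4\tau$. Chebyshev approximation of degree $m=\lfloor M/2\rfloor$ in $x$ then gives a polynomial $s$ with $\sup_{[0,qR^2]}|g-s|\leq C(1+R)\rho^{-m}\leq C R\exp(-c\sqrt\tau\, m/R)$. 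The candidate is $p(z)=s(qz^2)$, which is even and has degree at most $M$. On $\{|z|\leq R\}$ the error is bounded by the Chebyshev estimate. On $\{|z|>R\}$ the error is controlled as follows. Both $f_q$ and $p$ are bounded by $\sup_{[0,qR^2]}|s|\cdot\bigl(\text{Chebyshev growth }|T_m(\text{affine}(x))|\bigr)$, which is at most $(C z^2/R^2)^{m}$-type growth times a constant. Integrating this against the Gaussian tail $e^{-z^2/2}$ for $|z|>R$ gives a contribution of roughly $e^{-R^2/4}$, provided $m\log(z^2/R^2)$ is dominated by $z^2/4$ for $z\geq R$. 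This requires $R^2\gtrsim m$.

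The main obstacle is balancing these two requirements. The analytic error $e^{-cm/R}$ wants $R\ll m$, while the tail wants $R^2\gtrsim m$. Choosing $R=C_0\sqrt m$ gives an interior error of $\exp(-c\sqrt\tau\sqrt m/C_0)$ and a tail error of $\exp(-C_0^2 m/8)$, both at most $\exp(-c_\tau\sqrt M)$. I will check the polynomial growth bound outside $[-R,R]$ carefully. For this I plan to use the standard estimate $|s(x)|\leq \|s\|_{\infty,[0,a]}\,|T_m(2x/a-1)|\leq \|s\|_\infty (4x/a)^m$ for $x\geq a$, together with $\|s\|_{\infty}\leq \sup g+1\leq C(1+R)$ and $f_q(z)\leq 2\sqrt\tau+\sqrt{2\pi}|z|$. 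All constants depend only on $\tau$ and the bound $q\leq2\pi$; the dependence on $q$ enters only through the interval length, and this length is uniformly bounded by $2\pi R^2$. The case of small $q$ makes the interval shorter and only helps, although the ellipse parameter should be computed relative to the actual interval $[0,qR^2]$, where the distance to $-4\tau$ is relatively larger. Taking the projection rather than $p$ and absorbing the finitely many small $M$ into $C_\tau$ completes the argument.
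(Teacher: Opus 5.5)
Your argument is correct, but it takes a different route from the paper's. You build an explicit competitor $s(qz^2)$ by Chebyshev approximation of $x\mapsto\sqrt{x+4\tau}$ on the window $[0,qR^2]$, where the Bernstein-ellipse parameter satisfies $\log\rho\gtrsim\min\{1,\sqrt\tau/R\}$. You then control $\abs{z}>R$ by Chebyshev's extremal growth bound together with the Gaussian tail, and the balance $R\asymp\sqrt M$ gives the $\exp(-c\sqrt M)$ rate.

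The paper never leaves Hermite space. It proves the uniform derivative bound $\sup_x\abs{\partial_x^k\sqrt{x^2+a^2}}\le\frac{k!}{2}a^{1-k}$. Repeated Gaussian integration by parts turns this into the weighted tail bound $\sum_{j\ge k}\frac{j!}{(j-k)!}c_j(q)^2\le\E\abs{g_q^{(k)}(Z)}^2$, and the paper then chooses $k\asymp\sqrt M$.

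Both proofs rest on the same fact: $\phi_\tau$ is analytic in a strip of fixed width $\asymp\sqrt\tau$, which is thin compared with the growing Gaussian window. What each buys:
\begin{itemize}
\item The paper's version gives the projection error directly as the Parseval tail $\sum_{j>M}c_j(q)^2$, with explicit constants. As a by-product it yields the derivative-energy identity \eqref{gaussian:eq:hermite-derivative-energy}, which is reused later to compute the offspring mean $m_\tau$.
\item Your version is more classical and makes clear where $\sqrt M$ comes from. Evenness of the competitor comes for free from composing with $qz^2$. The cost is that the Bernstein-ellipse theorem is used as a black box.
\end{itemize}

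Two bookkeeping points, neither of which changes the conclusion since each costs only polynomial factors in $M$:
\begin{itemize}
\item The ellipse estimate carries a factor $B/(\rho'-1)$. Here $\rho'$ must be slightly smaller than the ellipse passing through the branch point $x=-4\tau$, and $B\lesssim R+\sqrt\tau$ bounds $g$ on it. Capping $\rho'\le2$ keeps $B$ uniform when $q$ is small.
\item Your sentence about analyticity on the disc $\abs{x+4\tau}<4\tau$ is wrong as written, because that disc is centered at the branch point.
\end{itemize}
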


\begin{proof}
The projection formulation and derivative-based Hermite truncation estimate are given in \cite[Definition~2 and Lemma~1]{MFHF17}.
We establish the derivative bounds needed for uniformity over \(q\in[0,2\pi]\), and record the first-derivative identity for later use.

Set \(a=2\sqrt\tau\).
For every integer \(k\geq2\),
\begin{equation}
 \sup_{x\in\R}
 \left|\frac{d^k}{dx^k}\sqrt{x^2+a^2}\right|
 \leq\frac{k!}{2}a^{1-k}.
 \label{gaussian:eq:square-root-derivatives}
\end{equation}
To prove this, factor the second derivative as
\[
 a^2(x^2+a^2)^{-3/2}
 =
 a^2(x+\mathrm{i}a)^{-3/2}(x-\mathrm{i}a)^{-3/2},
\]
using conjugate branches whose product is positive on the real axis.
For the rising factorial
\((b)_j=b(b+1)\cdots(b+j-1)\), with \((b)_0=1\), the \(j\)-th derivative of either factor has absolute value
\((3/2)_j(x^2+a^2)^{-3/4-j/2}\).
Leibniz's formula for the remaining \(k-2\) derivatives therefore bounds the absolute value by
\[
 a^2(x^2+a^2)^{-(k+1)/2}
 \sum_{j=0}^{k-2}
 \binom{k-2}{j}(3/2)_j(3/2)_{k-2-j}.
\]
Comparison of coefficients in
\((1-z)^{-3/2}(1-z)^{-3/2}=(1-z)^{-3}\) gives
\[
 \sum_{j=0}^{k-2}
 \binom{k-2}{j}(3/2)_j(3/2)_{k-2-j}
 =
 (3)_{k-2}=\frac{k!}{2}.
\]
The inequality \(x^2+a^2\geq a^2\) now proves
\eqref{gaussian:eq:square-root-derivatives}.

For \(0<q\leq2\pi\), define
\[
 g_q(z)=\sqrt{qz^2+a^2},
 \qquad
 h_j(z)=\frac{\He_j(z)}{\sqrt{j!}},
 \qquad
 c_j(q)=\E[g_q(Z)h_j(Z)].
\]
The functions \(h_j\) form an orthonormal basis of Gaussian \(L^2\).
The Hermite generating function gives
\[
 \frac{d}{dz}\left(\He_\ell(z)e^{-z^2/2}\right)
 =
 -\He_{\ell+1}(z)e^{-z^2/2}.
\]
Applying this identity in \(k\) integrations by parts yields, for integers \(k\geq1\) and \(\ell\geq0\),
\begin{equation}
 \E[g_q^{(k)}(Z)h_\ell(Z)]
 =
 \sqrt{\frac{(\ell+k)!}{\ell!}}\,c_{\ell+k}(q).
 \label{gaussian:eq:hermite-integration-by-parts}
\end{equation}
The boundary terms vanish: \(g_q\) has at most linear growth, \(g_q'\) is bounded by \(\sqrt q\), and its higher derivatives are bounded by the chain rule and
\eqref{gaussian:eq:square-root-derivatives}.
In particular, completeness of the Hermite basis applied to \(g_q'\) gives
\begin{equation}
 \sum_{j\geq1}j\,c_j(q)^2
 =
 \E|g_q'(Z)|^2.
 \label{gaussian:eq:hermite-derivative-energy}
\end{equation}
For \(k\geq2\), Bessel's inequality and the same derivative bound give
\begin{equation}
 \sum_{j\geq k}\frac{j!}{(j-k)!}\,c_j(q)^2
 \leq
 \E|g_q^{(k)}(Z)|^2
 \leq
 \frac{a^2}{4}(k!)^2
 \left(\frac{2\pi}{a^2}\right)^k.
 \label{gaussian:eq:weighted-hermite-tail}
\end{equation}

Let \(2\leq k\leq M/2\).
For \(j>M\), each factor in \(j!/(j-k)!\) is at least \(M/2\).
Using \eqref{gaussian:eq:weighted-hermite-tail} and \(k!\leq k^k\), we obtain
\begin{equation}
 \sum_{j>M}c_j(q)^2
 \leq
 \frac{a^2}{4}
 \left(\frac{4\pi k^2}{a^2M}\right)^k.
 \label{gaussian:eq:optimized-hermite-tail}
\end{equation}
Set
\[
 b_\tau=\min\left\{\frac14,\frac{a}{4\sqrt{2\pi}}\right\},
 \qquad
 k=\lfloor b_\tau\sqrt M\rfloor.
\]
For sufficiently large \(M\), this integer satisfies
\(2\leq k\leq M/2\) and \(k\geq b_\tau\sqrt M/2\).
The bracket in \eqref{gaussian:eq:optimized-hermite-tail} is at most \(1/8\).
Thus the tail is bounded by
\[
 \frac{a^2}{4}
 \exp\left(-\frac{b_\tau\log 8}{2}\sqrt M\right).
\]
Increasing the multiplicative constant covers the remaining integers \(M\geq2\), uniformly in \(q\), because
\(\E g_q(Z)^2=q+4\tau\leq2\pi+4\tau\).

The odd coefficients \(c_j(q)\) vanish because \(g_q\) is even.
Its Hermite truncation is therefore even, and the substitution \(z=x/\sqrt q\) gives
\[
 P_{M,q}(x)
 =
 \sum_{j=0}^{M}c_j(q)h_j(x/\sqrt q).
\]
Parseval's identity identifies its squared error with
\(\sum_{j>M}c_j(q)^2\), proving
\eqref{gaussian:eq:uniform-hermite-approximation}.
At \(q=0\), the stated constant projection is exact.
\end{proof}

\begin{thm}[Canonical tree-degree tail]
\label{gaussian:thm:smoothed-tree-degree-tail}
Let \(H_\tau,Q_\tau\) be as in
Theorem~\ref{gaussian:thm:smoothed-gaussian-fixed-point}, and write
\begin{equation}
 H_\tau=\sum_{T\in\cT}c_TG_T,
 \qquad
 \sum_{T\in\cT}c_T^2\aut(T)=Q_\tau.
 \label{gaussian:eq:tree-expansion}
\end{equation}
Define
\begin{equation}
 \beta_{\tau,j}
 =
 \frac{1}{Q_\tau}
 \sum_{\substack{T\in\cT\\d(T)=j}}c_T^2\aut(T),
 \qquad
 R_\tau(z)=\sum_{j\geq1}\beta_{\tau,j}z^j.
 \label{gaussian:eq:tree-degree-law}
\end{equation}
Then \((\beta_{\tau,j})_{j\geq1}\) is a probability distribution supported on the positive odd integers.

Let
\[
 \psi_\tau(x)=\sqrt{Q_\tau x^2+4\tau},
 \qquad
 b_k=\frac{\E[\psi_\tau(Z)\He_k(Z)]}{k!},
\]
and define
\begin{equation}
 \alpha_{\tau,0}=\frac{4}{Q_\tau},
 \qquad
 \alpha_{\tau,k}=\frac{b_k^2k!}{Q_\tau}\quad(k\geq1),
 \qquad
 h_\tau(z)=\sum_{k\geq0}\alpha_{\tau,k}z^k.
 \label{gaussian:eq:offspring-law}
\end{equation}
The sequence \((\alpha_{\tau,k})_{k\geq0}\) is a probability distribution supported on the nonnegative even integers.
Its mean satisfies
\begin{equation}
 m_\tau
 =
 \sum_{k\geq0}k\alpha_{\tau,k}
 =
 \E\frac{Q_\tau Z^2}{Q_\tau Z^2+4\tau}
 <1.
 \label{gaussian:eq:subcritical-mean}
\end{equation}
For each \(z\in[0,1]\), the number \(R_\tau(z)\) is the unique solution in \([0,1]\) of
\begin{equation}
 R_\tau(z)=z\,h_\tau(R_\tau(z)).
 \label{gaussian:eq:tree-generating-function}
\end{equation}
Equivalently, \((\beta_{\tau,j})\) is the total-population law of a branching process started from one individual with offspring law \((\alpha_{\tau,k})\).

There are constants \(C_\tau,c_\tau>0\) such that, for every pair of integers \(K,D\geq1\),
\begin{align}
 \sum_{k\geq K}\alpha_{\tau,k}
 &\leq C_\tau\exp(-c_\tau\sqrt K),
 \label{gaussian:eq:offspring-tail}\\
 \sum_{j>D}\beta_{\tau,j}
 &\leq C_\tau\exp(-c_\tau D^{1/3}).
 \label{gaussian:eq:total-progeny-tail}
\end{align}
Consequently, the canonical degree projection
\[
 H_{\tau,\leq D}
 =
 \sum_{\substack{T\in\cT\\d(T)\leq D}}c_TG_T
\]
satisfies
\begin{align}
 \|H_\tau-H_{\tau,\leq D}\|_2^2
 &\leq
 Q_\tau C_\tau\exp(-c_\tau D^{1/3}),
 \label{gaussian:eq:canonical-projection-tail}\\
 \|H_{\tau,\leq D}-\mathcal B_\tau(H_{\tau,\leq D})\|_2^2
 &\leq
 4Q_\tau C_\tau\exp(-c_\tau D^{1/3}).
 \label{gaussian:eq:canonical-residual-tail}
\end{align}
All constants are for fixed \(\tau\).
These assertions concern the Gaussian tree space and make no finite-size claim uniform in a growing degree cutoff.
\end{thm}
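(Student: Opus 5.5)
We derive the tree-degree law from the fixed-point equation $H_\tau=\mathcal B_\tau(H_\tau)$, expanded level by level in the Wick basis.

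\textbf{Step 1: the offspring law.} Every $X\in\mathcal H$ is centered Gaussian. Since $H_\tau$ has variance $Q_\tau$, we may write $H_\tau=\sqrt{Q_\tau}\,U$ with $U$ standard normal. Then $\phi_\tau(H_\tau)=\psi_\tau(U)=\sum_k b_k\He_k(U)$, and evenness of $\psi_\tau$ kills the odd $b_k$.

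Normalization of $(\alpha_{\tau,k})$ follows from Parseval and the variance identity $Q_\tau=4+\Var(\psi_\tau(U))$ of Theorem~\ref{gaussian:thm:smoothed-gaussian-fixed-point}: indeed $\sum_{k\geq1}b_k^2k!=\Var\psi_\tau(U)=Q_\tau-4$.

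For the mean, use the Hermite derivative identity \eqref{gaussian:eq:hermite-integration-by-parts}:
\[
\sum_k k\,b_k^2k!=\E\psi_\tau'(U)^2=\E\frac{Q_\tau^2U^2}{Q_\tau U^2+4\tau}.
\]
Dividing by $Q_\tau$ gives \eqref{gaussian:eq:subcritical-mean}, which is strictly below one because $\tau>0$.

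The tail bound \eqref{gaussian:eq:offspring-tail} is exactly the uniform Hermite tail from the proof of Proposition~\ref{gaussian:prop:uniform-hermite-approximation} at $q=Q_\tau\leq2\pi$, divided by $Q_\tau\geq4$. The weights $j!/(j-k)!$ there give the tail $\sum_{j>M}c_j^2\leq Ce^{-c\sqrt M}$, and $\alpha_{\tau,k}=c_k(Q_\tau)^2/Q_\tau$.

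\textbf{Step 2: the generating-function identity.} The key algebraic point is to expand $\He_k(U)$ with $U=Q_\tau^{-1/2}\sum_T c_TG_T$ through the multinomial Wick formula. For independent $G_T$ with variances $\aut(T)$ and $\sum_T c_T^2\aut(T)=Q_\tau$, the generating function $\exp(tU-t^2/2)$ factors as $\prod_T\exp\bigl(t\tilde c_TG_T-\tfrac12 t^2\tilde c_T^2\aut(T)\bigr)$ with $\tilde c_T=c_T/\sqrt{Q_\tau}$. This gives
\[
\He_k(U)=\sum_{|r|=k}\binom{k}{r}\prod_T\tilde c_T^{\,r_T}H_{r_T}\bigl(G_T;\aut(T)\bigr).
\]
Applying $J_{\tree}$ and reading off the fixed-point equation yields a coefficient recursion for a tree $T$ obtained by grafting $r_A$ copies of each $A$:
\[
c_T=b_k\,\frac{k!}{\prod_A r_A!}\prod_A \tilde c_A^{\,r_A},\qquad k=\sum_A r_A,
\]
together with $c_L=2$. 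Using $\aut(T)=\prod_A r_A!\,\aut(A)^{r_A}$, this becomes
\[
\frac{c_T^2\aut(T)}{Q_\tau}=\alpha_{\tau,k}\binom{k}{r}\prod_A\Bigl(\frac{c_A^2\aut(A)}{Q_\tau}\Bigr)^{r_A}.
\]
For the leaf the same expression equals $\alpha_{\tau,0}$. So the normalized weights $p_T=c_T^2\aut(T)/Q_\tau$ are exactly the Galton--Watson tree probabilities, with the multinomial accounting for orderings of the children.

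Summing over trees with $d(T)=j$ gives $R_\tau(z)=z\,h_\tau(R_\tau(z))$ as formal power series. Because the progeny is a.s.\ finite (the mean is below one) and $\sum_T p_T=1$ by \eqref{gaussian:eq:tree-expansion}, uniqueness in $[0,1]$ follows from convexity of $h_\tau$ with $h_\tau'(1)=m_\tau<1$. Odd support of $\beta$ follows because all arities are even.

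\textbf{Step 3: the total-progeny tail (main obstacle).} Let $N$ be the total progeny and $\xi_i$ i.i.d.\ offspring counts. By the hitting-time (Otter--Dwass) representation,
\[
\{N>D\}\subseteq\Bigl\{\sum_{i\leq D}\xi_i\geq D\Bigr\}.
\]
Truncate at level $K=D^{2/3}$.
\begin{itemize}
\item[(a)] If some $\xi_i\geq K$ with $i\leq D$, this has probability at most $D\,C e^{-c\sqrt K}=DCe^{-cD^{1/3}}$, by \eqref{gaussian:eq:offspring-tail}.
\item[(b)] Otherwise, apply Hoeffding to $\min(\xi_i,K)$. These have mean at most $m_\tau<1$ and range $K$. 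The deviation $(1-m_\tau)D$ gives the bound $\exp\bigl(-2(1-m_\tau)^2D/K^2\bigr)=\exp(-cD^{-1/3})$ at best, which is too weak.
\end{itemize}
So (b) needs a better tool. Instead, use a Bernstein/Chernoff bound with variance $\E\xi^2<\infty$ and range $K$; this gives $\exp(-cD/K)=\exp(-cD^{1/3})$. The absorbed polynomial factor $D$ in (a) is handled by shrinking $c_\tau$. This balancing is the step I expect to be the delicate one.

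\textbf{Step 4: the projection bounds.} Estimate \eqref{gaussian:eq:canonical-projection-tail} is immediate, since $\|H_\tau-H_{\tau,\leq D}\|_2^2=Q_\tau\sum_{j>D}\beta_{\tau,j}$. For \eqref{gaussian:eq:canonical-residual-tail}, write
\[
H_{\tau,\leq D}-\mathcal B_\tau(H_{\tau,\leq D})=\bigl(H_{\tau,\leq D}-H_\tau\bigr)+\bigl(\mathcal B_\tau(H_\tau)-\mathcal B_\tau(H_{\tau,\leq D})\bigr).
\]
The second term is controlled by the contraction estimate with factor at most one on $\mathbb B$; note that $H_{\tau,\leq D}\in\mathbb B$ as an orthogonal projection of $H_\tau$. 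The triangle inequality then gives the factor $4$.
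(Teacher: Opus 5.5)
Your proof is correct, and Steps 1, 3 and 4 follow the paper closely. In Step 3 the paper writes out a Chernoff bound using $e^x\le 1+x+x^2$ at $t=(2D^{2/3})^{-1}$, which is Bernstein's inequality in explicit form. The finite second moment you need follows at once from \eqref{gaussian:eq:offspring-tail}, and you can drop the Hoeffding detour.

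The genuine difference is in Step 2.

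\textbf{The paper's route.} It never computes individual coefficients $c_T$.
\begin{itemize}
\item It introduces the degree weighting $D_zG_T=z^{d(T)}G_T$ and proves the intertwining $D_zJ_{\tree}=zJ_{\tree}O_z$, where $O_z$ is a Gaussian noise operator.
\item It applies the quadratic form $\langle H_\tau,D_zH_\tau\rangle=Q_\tau R_\tau(z)$ to the fixed-point equation.
\item It evaluates $\Cov(\phi_\tau(H(G)),\phi_\tau(H(G^{(z)})))$ with the correlated-Hermite identity $\E[\He_k(Z)\He_\ell(Z')]=k!\rho^k$ for $k=\ell$. This gives $R_\tau=zh_\tau(R_\tau)$ directly.
\item It identifies $\beta_\tau$ with the total-progeny law only indirectly: uniqueness of the root in $[0,1]$, then equality of power series with the branching generating function.
\end{itemize}

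\textbf{Your route.} You expand $\He_k(U)$ in the Wick basis through the factorized generating function. You then read off the explicit recursion $p_T=\alpha_{\tau,k}\binom{k}{r}\prod_Ap_A^{r_A}$, with $p_{\mathrm L}=\alpha_{\tau,0}$.

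\textbf{What each buys.} Your recursion gives a strictly finer statement. By strong induction on $d(T)$, $p_T$ is the probability that the Galton--Watson tree has unordered shape $T$. So the total-progeny identification needs no uniqueness argument, and the generating-function equation follows by summing nonnegative terms. The price is bookkeeping the paper avoids:
\begin{itemize}
\item You must justify the infinite-coordinate Wick expansion of $\He_k(U)$ in $L^2$. The cleanest way is to compute its inner product with each Wick monomial from the generating functions, and invoke the completeness of Wick monomials shown after Notation~\ref{gaussian:setup:gaussian-tree-space}.
\item You use the bijection between nonleaf trees in $\cT$ and finitely supported multisets $(r_A)$ of even total at least two. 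You should state it explicitly.
\end{itemize}

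One tidying point: uniqueness of the root in $[0,1]$ comes solely from $1-zh_\tau'(r)\geq 1-m_\tau>0$. Finiteness of the progeny plays no role in that step.
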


\begin{proof}
Write \(H=H_\tau\) and \(Q=Q_\tau\).
We use the fixed-point identity \(\mathcal B_\tau(H)=H\) in
\eqref{gaussian:eq:smoothed-map} to relate Hermite order to tree degree.

\medskip
\noindent\textbf{Step 1.}
The uniform Hermite estimate gives a subcritical offspring law with a quantitative tail.

Orthogonality in \eqref{gaussian:eq:tree-expansion} shows that the coefficients \(\beta_{\tau,j}\) are nonnegative and sum to one.
Every tree in \(\cT\) has an even number of edges, since the number of edges is the sum of its internal arities.
It therefore has an odd number of vertices, proving the support assertion for \(\beta_\tau\).

The Hermite expansion
\[
 \psi_\tau(Z)=\sum_{k\geq0}b_k\He_k(Z)
\]
converges in Gaussian \(L^2\).
Its odd coefficients vanish by evenness.
The variance equation from
Theorem~\ref{gaussian:thm:smoothed-gaussian-fixed-point} gives
\[
 Q
 =
 4+\Var(\psi_\tau(Z))
 =
 4+\sum_{k\geq1}b_k^2k!.
\]
Thus \(\alpha_\tau\) is a probability distribution supported on the nonnegative even integers.

For \(K\geq3\), Parseval's identity and
\eqref{gaussian:eq:uniform-hermite-approximation}, applied with \(q=Q\) and \(M=K-1\), give
\[
 \begin{aligned}
 \sum_{k\geq K}\alpha_{\tau,k}
 &=
 \frac1Q\E\abs{
 \phi_\tau(\sqrt Q\,Z)-P_{K-1,Q}(\sqrt Q\,Z)
 }^2\\
 &\leq
 \frac{C_\tau}{Q}\exp(-c_\tau\sqrt{K-1}).
 \end{aligned}
\]
Since \(Q\geq4\) and \(\sqrt{K-1}\geq\sqrt{K/2}\), adjusting the constants proves
\eqref{gaussian:eq:offspring-tail} for \(K\geq3\).
Increasing the multiplicative constant covers \(K=1,2\).
Summing polynomially weighted tails now shows that the offspring law has finite moments of every order.

The normalized coefficients in
\eqref{gaussian:eq:hermite-derivative-energy}, at \(q=Q\), are
\(c_k(Q)=\sqrt{k!}\,b_k\).
Hence that identity gives
\[
 \sum_{k\geq1}k b_k^2k!
 =
 \E|\psi_\tau'(Z)|^2,
 \qquad
 \psi_\tau'(x)=\frac{Qx}{\sqrt{Qx^2+4\tau}}.
\]
Division by \(Q\) proves \eqref{gaussian:eq:subcritical-mean}.
The integrand there is nonnegative and strictly less than \(1\), so \(m_\tau<1\).

\medskip
\noindent\textbf{Step 2.}
Grafting identifies the tree-degree law with total population for this offspring distribution.

For \(z\in[0,1]\), define a contraction on \(\mathcal H\) by
\[
 D_zG_T=z^{d(T)}G_T.
\]
Let \(G_T'\) be independent copies of the Gaussian coordinates, and set
\[
 G_T^{(z)}
 =
 z^{d(T)}G_T+\sqrt{1-z^{2d(T)}}\,G_T'.
\]
The Gaussian noise operator \(O_z\) replaces \(G_T\) by \(G_T^{(z)}\) and then takes conditional expectation given the original coordinates.
On a Wick monomial indexed by \((r_A)\), it acts by multiplication by
\(z^{\sum_A r_A d(A)}\).
Its grafted tree has
\(d(T)=1+\sum_A r_A d(A)\).
It follows that
\begin{equation}
 D_zJ_{\tree}
 =
 z\,J_{\tree}O_z
 \quad\text{on }\mathcal E_{\mathrm{even}}.
 \label{gaussian:eq:grafting-intertwining}
\end{equation}
This identity holds first on the Wick basis and then on its closure, because the operators are bounded.

Set
\[
 S_\tau(z)=\inner{H}{D_zH}=Q R_\tau(z).
\]
The leaf and nonleaf subspaces remain orthogonal under \(D_z\).
Applying the \(D_z\)-quadratic form to the fixed-point identity and using
\eqref{gaussian:eq:grafting-intertwining} therefore gives
\begin{equation}
 S_\tau(z)
 =
 4z+
 z\,\Cov\!\left(
 \phi_\tau(H(G)),
 \phi_\tau(H(G^{(z)}))
 \right).
 \label{gaussian:eq:noise-covariance}
\end{equation}
The linear variables \(H(G)\) and \(H(G^{(z)})\) are centered jointly Gaussian.
Both have variance \(Q\), and their covariance is \(S_\tau(z)\).
Their correlation is consequently \(R_\tau(z)\).

For standard normals \(Z,Z'\) with correlation \(\rho\), the joint expectation of their Hermite generating functions is
\[
 \E\left[
 e^{sZ-s^2/2}e^{tZ'-t^2/2}
 \right]=e^{\rho st}.
\]
Comparing coefficients yields
\[
 \E[\He_k(Z)\He_\ell(Z')]
 =
 \one_{\{k=\ell\}}\,k!\rho^k.
\]
The \(L^2\) convergence of the Hermite expansions and Cauchy--Schwarz allow passage from polynomial truncations to \(\psi_\tau\).
Thus \eqref{gaussian:eq:noise-covariance} becomes
\[
 Q R_\tau(z)
 =
 4z+z\sum_{k\geq1}b_k^2k!R_\tau(z)^k.
\]
Dividing by \(Q\) proves \eqref{gaussian:eq:tree-generating-function}.

Nonnegativity of the coefficients of \(h_\tau\) gives
\[
 0\leq h_\tau'(r)\leq h_\tau'(1)=m_\tau<1
 \qquad(0\leq r\leq1).
\]
Moreover, \(h_\tau\) maps \([0,1]\) into \([0,1]\).
For each \(z\in[0,1]\), the map \(r\mapsto zh_\tau(r)\) is therefore an \(m_\tau\)-contraction of \([0,1]\).
This proves uniqueness of the solution in
\eqref{gaussian:eq:tree-generating-function}.

Consider a branching process with one initial individual and offspring law \(\alpha_\tau\).
Conditioning on the preceding generation shows that the expected size of generation \(\ell\) is \(m_\tau^\ell\).
Its probability of being nonempty is at most \(m_\tau^\ell\).
Hence extinction occurs after finitely many generations almost surely.
Its total population \(T_{\mathrm{pop}}\) is therefore finite almost surely.

Condition on the offspring of the initial individual.
The descendant processes are independent copies of the original process, so
\[
 \E z^{T_{\mathrm{pop}}}
 =
 z\,h_\tau(\E z^{T_{\mathrm{pop}}})
 \qquad(0\leq z\leq1).
\]
Uniqueness in \eqref{gaussian:eq:tree-generating-function} identifies this generating function with \(R_\tau\).
Equality of its power-series coefficients proves that \(\beta_\tau\) is the total-population law.

\medskip
\noindent\textbf{Step 3.}
A truncated exploration bound gives the total-population tail.

List independent offspring variables \(\xi_1,\xi_2,\ldots\) in breadth-first exploration order, appending independent variables after extinction.
If \(T_{\mathrm{pop}}>D\), the queue after exploring \(D\) individuals is positive.
Since its size is an integer,
\[
 1+\sum_{i=1}^D\xi_i-D>0
 \quad\Longrightarrow\quad
 \sum_{i=1}^D\xi_i\geq D.
\]
For large integers \(D\), put
\[
 B_D=D^{2/3},
 \qquad
 \xi_i'=\xi_i\one_{\{\xi_i\leq B_D\}}.
\]
The union bound and \eqref{gaussian:eq:offspring-tail} give
\[
 \Prob\left(\max_{1\leq i\leq D}\xi_i>B_D\right)
 \leq DC_\tau\exp(-c_\tau D^{1/3}).
\]
On the complementary event, survival beyond \(D\) forces
\(\sum_{i=1}^D\xi_i'\geq D\).

Define
\[
 \mu_D=\E\xi_i'\leq m_\tau,
 \qquad
 \Delta_\tau=1-m_\tau>0,
 \qquad
 Y_i=\xi_i'-\mu_D.
\]
For sufficiently large \(D\),
\[
 \E Y_i=0,\qquad
 |Y_i|\leq2B_D,\qquad
 \E Y_i^2\leq M_{2,\tau}:=\E\xi_i^2<\infty.
\]
Take \(t_D=(2B_D)^{-1}\).
For \(|x|\leq1\), the exponential series gives
\[
 e^x\leq1+x+x^2,
\]
because the absolute sum of its terms of degree at least two is at most
\((e-2)x^2<x^2\).
It follows that
\[
 \E e^{t_DY_i}
 \leq1+t_D^2M_{2,\tau}
 \leq\exp(t_D^2M_{2,\tau}).
\]
Independence and Markov's inequality now yield
\[
 \begin{aligned}
 \Prob\left(\sum_{i=1}^D\xi_i'\geq D\right)
 &\leq
 \Prob\left(\sum_{i=1}^DY_i\geq\Delta_\tau D\right)\\
 &\leq
 \exp\left(-t_D\Delta_\tau D+Dt_D^2M_{2,\tau}\right).
 \end{aligned}
\]
For sufficiently large \(D\), the inequality
\(t_DM_{2,\tau}\leq\Delta_\tau/2\) holds.
The last expression is then at most
\[
 \exp\left(-\frac{\Delta_\tau}{4}D^{1/3}\right).
\]
Combining the two exceptional events gives
\[
 \Prob(T_{\mathrm{pop}}>D)
 \leq
 DC_\tau e^{-c_\tau D^{1/3}}
 +
 e^{-\Delta_\tau D^{1/3}/4}.
\]
Since \(\log D=o(D^{1/3})\), the factor \(D\) is absorbed by decreasing the positive exponential constant.
Increasing the multiplicative constant handles the remaining integers \(D\geq1\).
The identification of \(\beta_\tau\) in Step 2 proves
\eqref{gaussian:eq:total-progeny-tail}.

\medskip
\noindent\textbf{Step 4.}
Orthogonal projection and contraction give the two truncation estimates.

By \eqref{gaussian:eq:tree-expansion} and
\eqref{gaussian:eq:tree-degree-law},
\[
 \|H_\tau-H_{\tau,\leq D}\|_2^2
 =
 Q_\tau\sum_{j>D}\beta_{\tau,j}.
\]
The bound \eqref{gaussian:eq:total-progeny-tail} proves
\eqref{gaussian:eq:canonical-projection-tail}.
Both \(H_\tau\) and its orthogonal projection \(H_{\tau,\leq D}\) belong to \(\mathbb B\).
Since \(\mathcal B_\tau(H_\tau)=H_\tau\), the contraction in
Theorem~\ref{gaussian:thm:smoothed-gaussian-fixed-point} gives
\[
 \begin{aligned}
 \|H_{\tau,\leq D}-\mathcal B_\tau(H_{\tau,\leq D})\|_2
 &\leq
 \|H_{\tau,\leq D}-H_\tau\|_2
 +
 \|\mathcal B_\tau(H_\tau)
       -\mathcal B_\tau(H_{\tau,\leq D})\|_2\\
 &\leq
 (1+\sqrt{\lambda_\tau})
 \|H_{\tau,\leq D}-H_\tau\|_2\\
 &\leq
 2\|H_{\tau,\leq D}-H_\tau\|_2.
 \end{aligned}
\]
Squaring and applying \eqref{gaussian:eq:canonical-projection-tail} proves
\eqref{gaussian:eq:canonical-residual-tail}.
\end{proof}

\section{Growing approximants on the original signs}
\label{sec:tech-transfer}
\label{transfer:sec:growing-approximants}

In this section, we prove Proposition~\ref{transfer:prop:approximants} by combining growing-degree Wick grafting with a quantitative comparison of the smoothed activation.

\begin{notation}[Canonical fields and Fourier operators]
\label{transfer:notation:canonical-conventions}
For every odd integer \(n\geq3\), use the Fourier matrix \(F\),
the multiplier \(a\), the operators \(K_0,J\), and the independent
fair signs \(\epsilon_j=1-2\xi_j\) from Setup~\ref{setup:fourier}.
In particular, \(a_0=0\), and a negative sign represents an edge
difference.
Write \(\1\) for the constant vector and \(e_0\) for the coordinate
vector at zero. The Fourier identities give
\begin{equation}
 K_0^2=\Id-J,\qquad
 \norm{K_0}_{\ell_2\to\ell_2}\leq1.
 \label{transfer:eq:K-contraction}
\end{equation}
Every canonical field uses the same signs as \(K_0\).

For a rooted tree \(T\), let \(d(T)\) be its number of nodes and \(\aut(T)\) its rooted automorphism count. We use the real even canonical fields \(Z_{T,n}\) of Theorem~\ref{trees:thm:tree-flow-comparison}; thus \(Z_{T,n}(k)\) is the coordinate denoted \(Z_{T,k}\) there. Internal vertices have at least two children. The Gaussian tree coordinates and Wick-grafting isometry of Theorem~\ref{trees:thm:wick-grafting} are denoted by \(G_T\) and \(J_{\tree}\), respectively, with
\begin{equation}
 \E[G_TG_S]=\1_{\{T\simeq S\}}\aut(T).
 \label{transfer:eq:gaussian-tree-covariance}
\end{equation}
The one-node tree is denoted by \(\mathrm L\). Its Gaussian coordinate is standard normal and is orthogonal to the range of \(J_{\tree}\).

For an integer \(D\geq1\), put
\begin{equation}
 P_D=(4D)^{4D},\qquad A_D=(4D)^{80D},
 \label{transfer:eq:PD-AD}
\end{equation}
so \(A_D\) agrees with \eqref{trees:eq:AD-definition}. For scalar random variables, \(\norm{\cdot}_p\) denotes the probabilistic \(L^p\) norm; for vectors, \(\norm{\cdot}_2\) denotes the Euclidean norm. Constants \(C_\tau,c_\tau\) are positive and finite, depend only on \(\tau>0\), and may change between occurrences.
\end{notation}

\subsection{Polynomial moments and Wick grafting}

The first proposition quantifies the Fourier-convolution identity of Theorem~\ref{trees:thm:wick-grafting} when both polynomial degrees vary with \(n\).

\begin{prop}[Growing-degree polynomial comparison and Wick grafting]
\label{transfer:prop:growing-wick}
There is an absolute constant \(C>0\) such that the following statements hold for every odd \(n\geq3\) and all integers \(D\geq1\), \(M\geq2\).

Let \(T_i\) be pairwise nonisomorphic rooted trees with at most \(D\) nodes and internal arities at least two, and set
\[
 V_{i,n}(k)=\frac{Z_{T_i,n}(k)}{\sqrt{\aut(T_i)}}.
\]
Let \(R\) be a polynomial of total degree at most \(M\) in these variables at one nonzero output mode, or at two nonzero output modes that are not equal up to sign. Write \(\abs R_{\coef}\) for the sum of the absolute values of its ordinary monomial coefficients. Let \((N_i)\), or two independent copies of \((N_i)\), be the corresponding independent standard Gaussian variables. Then
\begin{equation}
 \abs{\E R(V)-\E R(N)}
 \leq
 \abs R_{\coef}
 \frac{\exp\bigl(C(D+M+2)^4\bigr)}{\sqrt n}.
 \label{transfer:eq:polynomial-comparison}
\end{equation}
For arbitrary choices of the output modes, including zero and conjugate pairs,
\begin{equation}
 \E\abs{R(V)}
 \leq
 \abs R_{\coef}\exp\bigl(C(D+M+2)^4\bigr).
 \label{transfer:eq:arbitrary-mode-moment}
\end{equation}

Let \(r_i\) be nonnegative integers with
\[
 2\leq r=\sum_i r_i\leq M,
\]
and let \(T\) be formed by attaching \(r_i\) copies of \(T_i\) to a new root. For the probabilists' Hermite polynomials \(\He_j\), define
\begin{equation}
 W_n(k)=
 \prod_i\frac{\He_{r_i}(V_{i,n}(k))}{\sqrt{r_i!}}.
 \label{transfer:eq:normalized-wick-field}
\end{equation}
Then
\begin{equation}
 \E\frac1n
 \norm{K_0W_n-\frac{Z_{T,n}}{\sqrt{\aut(T)}}}_2^2
 \leq
 \frac{\exp\bigl(C[(D+1)(M+1)]^4\bigr)}{\sqrt n}.
 \label{transfer:eq:wick-graft-bound}
\end{equation}

Assume further that all \(T_i\) have even internal arities, and let
\[
 X_n=\sum_i c_iZ_{T_i,n},\qquad X=\sum_i c_iG_{T_i},
\]
where \(c_i\) are deterministic real coefficients. For every even polynomial \(P\) of degree at most \(M\), let \(Y_{P,n}\) be the finite canonical field representing
\[
 J_{\tree}\bigl(P(X)-\E P(X)\bigr)
\]
by normalized Wick grafting. Then
\begin{equation}
 \E\frac1n\norm{K_0P(X_n)-Y_{P,n}}_2^2
 \leq
 \E[P(X)^2]\,
 \frac{\exp\bigl(C[(D+1)(M+1)]^4\bigr)}{\sqrt n}.
 \label{transfer:eq:polynomial-graft-bound}
\end{equation}
Every tree in \(Y_{P,n}\) has even internal arities and at most \(1+DM\) nodes. The constants are independent of \(n\), the number of tree types, and the coefficients. All estimates permit deterministic dependence of these quantities, \(D\), and \(M\) on \(n\).
\end{prop}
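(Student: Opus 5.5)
I would prove the three assertions in order, each time by reducing to the growing-degree machinery already available: Theorem~\ref{trees:thm:polynomial-moment-comparison} for moments, Theorem~\ref{trees:thm:growing-degree-comparison} for covariances and influences, and the exact Walsh-degree argument from the proof of Theorem~\ref{trees:thm:wick-grafting} for grafting.

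\textbf{Polynomial comparison.} Expand $R$ into ordinary monomials $\prod_i V_i^{m_i}$, with total degree at most $M$. It then suffices to bound $|\E\prod_i V_i^{m_i}-\E\prod_i N_i^{m_i}|$ by $\exp(C(D+M+2)^4)/\sqrt n$ and multiply by $\abs R_{\coef}$.

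1. I will express a mixed moment through moments of linear combinations by polarization. A monomial of degree $r\le M$ is a fixed linear combination of $r$-th powers $(\sum_i\alpha_iV_i)^r$, with at most $(r+1)^{r}$ terms and coefficients of size $\exp(O(r\log r))$. The $\alpha$ range over the involved variables, of which there are at most $M$ (doubled for two modes).

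2. Each power is controlled by Theorem~\ref{trees:thm:polynomial-moment-comparison} with degree bound $D$. The inputs come from Theorem~\ref{trees:thm:growing-degree-comparison}: $B\le P_D$, $u\le P_D^3/n$, and $v_{\mathrm{cov}}\le P_D^5/\sqrt n$ from \eqref{trees:eq:growing-Gij}.

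3. I then compare $\E G^r$ for the Gaussian with variance $\E Y^2$ against the target independent Gaussian. The variance discrepancy is at most $P_D^6/\sqrt n$ by \eqref{trees:eq:growing-target-covariance}, and the mean value theorem on $\sigma\mapsto\sigma^r\E N^r$ handles it.

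4. Collecting factors $(32r)^{(D+6)r}$, $P_D^{O(r)}$ and the polarization constants gives $\exp(C(D+M+2)^4)$ generously.

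5. For \eqref{trees:eq:arbitrary-mode-moment}, at arbitrary modes I use instead Hölder's inequality, hypercontractivity (Lemma~\ref{trees:lem:sign-hypercontractivity}) at degree $D$, and the coordinate bound \eqref{trees:eq:single-tree-coordinate}.

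\textbf{Wick graft.} I follow the proof of Theorem~\ref{trees:thm:wick-grafting}.

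1. The degree-$d(T)$ Walsh projection of $K_0W_n$ is exactly $Z_{T,n}/\sqrt{\aut(T)}$. This holds because the Hermite lower-order terms and any label repetitions drop degree, and the normalization $\sqrt{\prod r_i!\aut(T_i)^{r_i}}=\sqrt{\aut(T)}$ matches by \eqref{trees:eq:grafted-automorphism-count}.

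2. Orthogonality then reduces \eqref{transfer:eq:wick-graft-bound} to the energy difference \eqref{trees:eq:orthogonal-graft-difference}, using \eqref{trees:eq:K0-energy}. So I must show that $\frac1n\sum_k\E W_n(k)^2\to1$, that $\E(\frac1n\sum_kW_n(k))^2$ is small, and that $\E\frac1n\norm{Z_T}_2^2/\aut(T)\to1$, each with a quantitative rate.

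3. The last of these is \eqref{trees:eq:growing-energy}.

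4. The first two expand into one-mode and two-mode polynomial expectations of degree at most $2M$ in the $V$'s. The coefficient sum of $\prod\He_{r_i}/\sqrt{r_i!}$ is at most $\exp(O(M\log M))$. Nonzero nonconjugate pairs use \eqref{transfer:eq:polynomial-comparison}, where the Gaussian values are $1$ and $0$ respectively. The $O(n)$ exceptional pairs are bounded by \eqref{transfer:eq:arbitrary-mode-moment} and contribute $O(1/n)$ after normalization.

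5. The tree $T$ has up to $1+DM$ nodes, but only the input trees enter the comparisons, which explains the $[(D+1)(M+1)]^4$ exponent.

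\textbf{Polynomial graft.} For \eqref{transfer:eq:polynomial-graft-bound}, I expand $P(X)-\E P(X)$ in the normalized multivariate Hermite basis of the $G_{T_i}/\sqrt{\aut(T_i)}$. Only positive even orders appear, and the squared coefficients sum to at most $\E P(X)^2$.

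1. By definition $Y_{P,n}$ is the same expansion with each Wick monomial replaced by its grafted canonical field.

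2. The constant term is killed by $K_0$. The difference $P(X_n)-\E P(X)-\sum(\text{Hermite terms in }V_n)$ is zero, since this is a polynomial identity: the Hermite basis for variance $\aut(T_i)$ re-expresses $P$ exactly.

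3. I apply \eqref{transfer:eq:wick-graft-bound} to each term and use Cauchy--Schwarz over the number of multi-indices.

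4. The main obstacle is this step. The number of terms, and the size of the coefficients relative to $\E P(X)^2$, may depend on the number of tree types $q$, whereas the statement claims independence from $q$.

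5. To handle it, I would first collapse to a single variable. Write $X=\sigma\cdot(X/\sigma)$ with $\sigma^2=\sum c_i^2\aut(T_i)$, and expand $P$ in $\He_j(X/\sigma)$. Each $\He_j(X/\sigma)$ is itself a sum over multi-indices weighted by a multinomial. The graft bound must then be applied to that whole sum at once, not term by term. Concretely, I would redo the energy argument directly for $K_0\He_j(X_n/\sigma_n)$: the degree-top projection is still $Y$, and its energy is compared through one-mode and two-mode polynomial moments of the single combination $X_n$. This uses Theorem~\ref{trees:thm:polynomial-moment-comparison}, whose constant is independent of $q$ via $A=\sum|c_i|$. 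That quantity must be controlled by $\sigma$, which is where the argument is most delicate.
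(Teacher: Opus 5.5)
Your arguments for \eqref{transfer:eq:polynomial-comparison}, \eqref{transfer:eq:arbitrary-mode-moment} and \eqref{transfer:eq:wick-graft-bound} follow the paper's proof. You feed polarization into Theorem~\ref{trees:thm:polynomial-moment-comparison} with $B=P_D$, $u=P_D^3/n$, $v_{\mathrm{cov}}=P_D^5/\sqrt n$, replace the variance, and then use the exact top-Walsh-degree identity and the energy comparison. One small correction to your item~5 there: the factor $[(D+1)(M+1)]^4$ comes from applying \eqref{trees:eq:growing-energy} to the grafted tree $T$ itself, which has $K\le1+DM$ nodes and $\log A_K=80K\log(4K)$. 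It does not come from the input-tree comparisons.

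The gap is in \eqref{transfer:eq:polynomial-graft-bound}. You wrote down exactly the paper's argument: normalized Hermite expansion in $N_i=G_{T_i}/\sqrt{\aut(T_i)}$, term-by-term application of \eqref{transfer:eq:wick-graft-bound}, and Cauchy--Schwarz over multi-indices. You then abandoned it over a $q$-dependence that is not really there.

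The coefficients carry no $q$ at all, since Parseval gives $\sum_\alpha b_\alpha^2=\E[P(X)^2]$ exactly. The number $q$ enters only through the number of multi-indices. The missing observation is that $q$ is not a free parameter. The $T_i$ are pairwise nonisomorphic rooted trees with at most $D$ nodes, so $q\le4^D$ by the depth-first parenthesis encoding in the proof of Theorem~\ref{trees:thm:growing-degree-comparison}. Hence
\[
 \#\{\alpha:\abs\alpha\le M\}\le\binom{4^D+M}{M}\le(4^D+M)^M\le\exp\bigl(CM(D+\log(M+2))\bigr).
\]
Write $\mathcal E_\alpha$ for the left side of \eqref{transfer:eq:wick-graft-bound} for the $\alpha$-th Wick monomial. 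Cauchy--Schwarz then gives
\[
 \E\frac1n\norm{K_0P(X_n)-Y_{P,n}}_2^2\le\E[P(X)^2]\cdot\#\{\alpha\}\cdot\max_\alpha\mathcal E_\alpha ,
\]
and this is absorbed into $\E[P(X)^2]\exp(C[(D+1)(M+1)]^4)/\sqrt n$. That completes the proof. ``Independent of the number of tree types'' means only that $q$ is already bounded by $4^D$.

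The single-variable detour you propose would not work as described.

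\begin{itemize}
\item Expanding $\He_j(X_n/\sigma)$ produces Wick monomials whose grafted trees have different node counts. For example, with a leaf and a three-node cherry among the $T_i$, $j=2$ gives grafted trees with $3$, $5$ and $7$ nodes.
\item So $Y$ is spread over several Walsh degrees and is not ``the top-degree projection'' of $K_0\He_j(X_n/\sigma)$. The lower-order Hermite and label-repetition pieces of one monomial can fall into the Walsh degree of another monomial's grafted field. The exact identity \eqref{trees:eq:orthogonal-graft-difference} is then unavailable.
\item Controlling $A=\sum_i\abs{c_i}$ by $\sigma$ needs the same count anyway. Since $\aut(T_i)\ge1$, Cauchy--Schwarz gives $A\le\sqrt q\,\sigma\le2^D\sigma$, which is \eqref{trees:eq:growing-coefficient-l1}.
\end{itemize}

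With $q\le4^D$ in hand, your original term-by-term plan is complete and the detour is unnecessary.
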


\begin{proof}
We use the scalar polynomial comparison of Theorem~\ref{trees:thm:polynomial-moment-comparison}, with the explicit coordinate bounds from Theorem~\ref{trees:thm:growing-degree-comparison}. The proof first controls ordinary monomials, then normalized Wick monomials, and finally their finite Hermite expansions.

For the normalized Wick monomial in \eqref{transfer:eq:normalized-wick-field},
we first record the degree bookkeeping used below.  If $U$ is a Walsh
polynomial of maximal leaf-sign degree $m$, then
\begin{equation}
 \deg_{\mathrm{Walsh}}(K_0U)\leq m+1,
 \label{transfer:eq:K0-walsh-degree}
\end{equation}
because each kernel coefficient $a_s$ is linear in the corresponding leaf-sign
variable.  Moreover, the degree-$m+1$ part can only arise from the degree-$m$
part of $U$; all lower Walsh degrees of $U$ remain at most $m$.  Thus the
maximal Walsh component of $K_0W_n$ is determined solely by the top Hermite
terms $\He_{r_i}(x)=x^{r_i}+O(x^{r_i-2})$.

For the normalized Wick monomial in \eqref{transfer:eq:normalized-wick-field},
its top Walsh-degree component is exactly the normalized canonical field of
the grafted tree. Here $\operatorname{TopWalsh}$ denotes the projection onto the maximal Walsh degree in the leaf signs:
\begin{equation}\label{transfer:eq:top-walsh-wick}
 \operatorname{TopWalsh}\!\left(K_0W_n\right)
 =\frac{Z_{T,n}}{\sqrt{\operatorname{aut}(T)}}.
\end{equation}
Indeed, the top Hermite term satisfies
$\He_{r_i}(x)=x^{r_i}+\text{(terms of degree at most $r_i-2$)}$.
Consequently the product of the top-degree pieces is
$\prod_iV_{i,n}^{r_i}$; under $K_0$ this is the flow polynomial obtained by
grafting the $r_i$ copies of $T_i$ to a new root. The normalization in
\eqref{transfer:eq:normalized-wick-field} and the rooted automorphism
count give exactly the factor $1/\sqrt{\operatorname{aut}(T)}$.
All remaining Hermite terms have strictly smaller total degree and are
handled by the finite Hermite expansion below.

\medskip
\noindent\textbf{Step 1. Polynomial moments.}
The coordinate estimates remain uniform at growing degree.
For each standardized canonical coordinate, the proof of Theorem~\ref{trees:thm:growing-degree-comparison} gives an \(L^2\) bound \(P_D\), a largest discrete-derivative influence at most \(P_D^3/n\), and a standard deviation at most \(P_D^5/\sqrt n\) for each derivative covariance. Standardization cannot enlarge these bounds because \(\aut(T_i)\geq1\); a derivative covariance is divided by the product of the two normalizing square roots. The sign-cube estimate \eqref{trees:eq:sign-hypercontractivity} therefore gives, at every mode,
For later use in Theorem~\ref{trees:thm:polynomial-moment-comparison},
we take
\[
 B=P_D,\qquad
 u=\frac{P_D^3}{n},\qquad
 v_{\mathrm{cov}}=\frac{P_D^5}{\sqrt n}.
\]
Thus, for a degree-$r$ moment, the two terms in that theorem become
\[
 B^{r-2}v_{\mathrm{cov}}
 =\frac{P_D^{r+3}}{\sqrt n},
 \qquad
 B^{r-1}\sqrt u
 =\frac{P_D^{r+1/2}}{\sqrt n}.
\]
These are precisely the two error scales used below.
\begin{equation}
 \norm{V_{i,n}(k)}_{2s}
 \leq(2s-1)^{D/2}P_D
 \qquad(s\geq1,\ s\ \text{an integer}).
 \label{transfer:eq:canonical-hypercontractivity}
\end{equation}

Consider a monomial of degree \(r\), where \(2\leq r\leq M\), and list its factors as \(V_1,\ldots,V_r\). Repetitions are allowed. The factors belong to either one admissible mode block or two admissible mode blocks. For \(\eta\in\{-1,1\}^r\), put
\[
 S_\eta=\frac1r\sum_{j=1}^r\eta_jV_j.
\]
Its total absolute coefficient weight is at most one. Applying Theorem~\ref{trees:thm:polynomial-moment-comparison} with the coordinate, influence, and derivative-covariance bounds above yields
\begin{equation}
 \abs{\E S_\eta^r-\E G_\eta^r}
 \leq
 \frac{(32r)^{(D+6)r}
       \bigl(P_D^{r+3}+P_D^{r+1/2}\bigr)}{\sqrt n},
 \label{transfer:eq:moment-matching-variance}
\end{equation}
where \(G_\eta\) is centered normal with variance \(\E S_\eta^2\).

Let \(G_{\eta,0}\) be the corresponding combination of the independent comparison Gaussian coordinates, and let \(V_0=\E G_{\eta,0}^2\). The coefficient weight gives \(V_0\leq1\). The one-mode and two-mode covariance bounds in Theorem~\ref{trees:thm:growing-degree-comparison}, together with the \(L^2\) triangle inequality, give
\begin{equation}
 \abs{\E S_\eta^2-V_0}\leq\frac{P_D^{20}}{\sqrt n},
 \qquad
 \E S_\eta^2\leq P_D^2.
 \label{transfer:eq:variance-replacement}
\end{equation}
Both Gaussian \(r\)-th moments vanish for odd \(r\). For \(r=2j\), integration by parts against the Gaussian density gives
\[
 \E G_\eta^r=(2j-1)!!\,(\E S_\eta^2)^j,\qquad
 \E G_{\eta,0}^r=(2j-1)!!\,V_0^j.
\]
Since
\[
 \abs{x^j-y^j}\leq j\abs{x-y}\max(x,y)^{j-1}
 \qquad(x,y\geq0),
\]
the cost of replacing \(G_\eta\) by \(G_{\eta,0}\) in
\eqref{transfer:eq:moment-matching-variance} is at most
\[
 \frac{r^rP_D^{r+20}}{\sqrt n}.
\]
The logarithms of the explicit factors in these two error bounds are bounded by an absolute constant times
\[
 D(r+20)\log(4D)+(D+6)r\log(32r)+r\log r
 \leq C_1(D+r+2)^4.
\]

To recover the original product, use polarization:
\begin{equation}
 \prod_{j=1}^r x_j
 =
 \frac1{2^rr!}
 \sum_{\eta\in\{-1,1\}^r}
 \left(\prod_{j=1}^r\eta_j\right)
 \left(\sum_{j=1}^r\eta_jx_j\right)^r.
 \label{transfer:eq:polarization}
\end{equation}
Expansion of the powers proves this identity. The sign sum removes every term unless each variable has odd exponent. Total degree \(r\) then forces all exponents to equal one, with multinomial coefficient \(r!\). Applying \eqref{transfer:eq:polarization} to both families of random variables multiplies the scalar error by at most \(r^r/r!\leq r^r\). Thus
\[
 \abs{\E\prod_{j=1}^rV_j-\E\prod_{j=1}^rN_j}
 \leq
 \frac{\exp\bigl(C_2(D+r+2)^4\bigr)}{\sqrt n}.
\]
The degree-zero and degree-one moments agree exactly. Summing against the coefficients of \(R\) proves \eqref{transfer:eq:polynomial-comparison}.

For arbitrary modes, H\"older's inequality and
\eqref{transfer:eq:canonical-hypercontractivity} give
\[
 \E\abs{\prod_{j=1}^rV_j}
 \leq\left[(2M-1)^{D/2}P_D\right]^r
 \qquad(1\leq r\leq M).
\]
The logarithm of this bound is at most \(C(D+M+2)^4\), while the constant monomial has expectation one. Summation against the absolute coefficients of \(R\) proves \eqref{transfer:eq:arbitrary-mode-moment}.

\medskip
\noindent\textbf{Step 2. A normalized Wick monomial.}
The coefficient bounds allow us to compare both the spatial energy and the spatial mean of \(W_n\).
The generating function \(\exp(tx-t^2/2)\) gives
\begin{equation}
 \He_j(x)
 =
 j!\sum_{\ell=0}^{\lfloor j/2\rfloor}
 \frac{(-1)^\ell x^{j-2\ell}}
      {2^\ell\ell!(j-2\ell)!}.
 \label{transfer:eq:hermite-expansion}
\end{equation}
After division by \(j!\), the absolute coefficient sum is the coefficient of \(t^j\) in \(\exp(t+t^2/2)\), and hence is at most \(e^{3/2}\). Therefore the coefficient sum of \(\He_j/\sqrt{j!}\) is at most \(e^{3/2}\sqrt{j!}\). At most \(r\) indices in \eqref{transfer:eq:normalized-wick-field} are active, and \(\prod_i r_i!\leq r!\). It follows that
\begin{equation}
 \abs{W_n}_{\coef}\leq e^{3r/2}\sqrt{r!},
 \qquad
 \abs{W_n^2}_{\coef},\
 \abs{W_n(k)W_n(\ell)}_{\coef}
 \leq e^{3r}r!.
 \label{transfer:eq:wick-coefficient-sums}
\end{equation}
These bounds are at most \(\exp(C_3M\log(M+2))\), and the squared and two-block polynomials have degree at most \(2M\).

Hermite orthogonality gives
\[
 \E[\He_j(Z)\He_\ell(Z)]=\1_{\{j=\ell\}}j!,
\]
for a standard normal \(Z\). This identity follows by comparing coefficients in the expectation of the product of two Hermite generating functions. Independence then gives mean zero and second moment one for a normalized Wick monomial of positive degree.

Apply \eqref{transfer:eq:polynomial-comparison} with degree parameter \(2M\), using \eqref{transfer:eq:wick-coefficient-sums}. For nonzero \(k\), and for nonzero \(k,\ell\) with \(\ell\notin\{k,-k\}\), respectively, we obtain
\[
 \abs{\E W_n(k)^2-1}
 \leq\frac{\exp(C_4(D+M+2)^4)}{\sqrt n},
 \qquad
 \abs{\E[W_n(k)W_n(\ell)]}
 \leq\frac{\exp(C_4(D+M+2)^4)}{\sqrt n}.
\]
The second comparison Gaussian expectation vanishes because its two blocks are independent and centered. Estimate \eqref{transfer:eq:arbitrary-mode-moment} controls the exceptional modes. There are at most \(4n\) exceptional ordered pairs: at most \(2n\) have a zero coordinate, and at most \(2n\) satisfy \(\ell=k\) or \(\ell=-k\). Their contribution after division by \(n^2\) is at most the same exponential factor divided by \(n\). Consequently,
\begin{align}
 \abs{\E\left[\frac1n\sum_kW_n(k)^2\right]-1}
 &\leq
 \frac{\exp(C_5(D+M+2)^4)}{\sqrt n},
 \label{transfer:eq:wick-mean-square}\\
 \E\left[\left(\frac1n\sum_kW_n(k)\right)^2\right]
 &\leq
 \frac{\exp(C_5(D+M+2)^4)}{\sqrt n}.
 \label{transfer:eq:wick-spatial-mean}
\end{align}
For the second bound, sum absolute expectations over all ordered pairs; its left-hand side is nonnegative.

The automorphisms of the grafted tree act by permuting isomorphic children and by automorphisms within each child. Thus the normalization of Theorem~\ref{trees:thm:wick-grafting} is
\begin{equation}
 \aut(T)=\prod_i r_i!\,\aut(T_i)^{r_i},
 \qquad
 J_{\tree}\left(
   \prod_i
   \frac{\He_{r_i}(G_{T_i}/\sqrt{\aut(T_i)})}{\sqrt{r_i!}}
 \right)
 =
 \frac{G_T}{\sqrt{\aut(T)}}.
 \label{transfer:eq:wick-isometry}
\end{equation}
The exact Fourier-convolution calculation in that theorem identifies the highest Walsh-degree component of \(K_0W_n\) with
\(Z_{T,n}/\sqrt{\aut(T)}\). Its degree is
\[
 1+\sum_i r_i d(T_i).
\]
Lower Hermite terms and repeated sign labels have smaller Walsh degree. The convolution calculation is finite and remains valid when the degrees change. Walsh orthogonality therefore gives
\begin{equation}
 \E\frac1n
 \norm{K_0W_n-\frac{Z_{T,n}}{\sqrt{\aut(T)}}}_2^2
 =
 \E\frac1n\norm{K_0W_n}_2^2
 -
 \E\frac{\norm{Z_{T,n}}_2^2}{n\aut(T)}.
 \label{transfer:eq:exact-wick-defect}
\end{equation}
By \eqref{transfer:eq:K-contraction},
\[
 \E\frac1n\norm{K_0W_n}_2^2
 =
 \E\left[\frac1n\sum_kW_n(k)^2\right]
 -
 \E\left[\left(\frac1n\sum_kW_n(k)\right)^2\right].
\]
Equations \eqref{transfer:eq:wick-mean-square} and
\eqref{transfer:eq:wick-spatial-mean} bound its difference from one by twice their common error.

The grafted tree has at most \(K=1+DM\) nodes. The averaged-energy estimate
\eqref{trees:eq:growing-energy}, applied to its normalized canonical field, bounds the difference between the final term in
\eqref{transfer:eq:exact-wick-defect} and one by
\[
 \frac{A_K}{\sqrt n}=\frac{(4K)^{80K}}{\sqrt n}.
\]
Since
\[
 K\leq(D+1)(M+1),\qquad
 D+M+2\leq2(D+1)(M+1),
\]
both errors are bounded by the right-hand side of
\eqref{transfer:eq:wick-graft-bound}, after increasing \(C\). The left-hand side of \eqref{transfer:eq:exact-wick-defect} is nonnegative, so this proves the normalized Wick estimate.

\medskip
\noindent\textbf{Step 3. An even polynomial.}
A finite Hermite expansion reduces the polynomial assertion to Step 2.
Set \(N_i=G_{T_i}/\sqrt{\aut(T_i)}\). These variables are independent standard normals. The triangular monic Hermite expansion and orthogonality give
\begin{equation}
 P(X)=
 \sum_\alpha b_\alpha
 \prod_i\frac{\He_{\alpha_i}(N_i)}{\sqrt{\alpha_i!}},
 \qquad
 \sum_\alpha b_\alpha^2=\E[P(X)^2],
 \qquad
 \abs\alpha\leq M.
 \label{transfer:eq:multivariate-hermite-expansion}
\end{equation}
Simultaneous negation of all coordinates leaves \(P(X)\) unchanged, so only even total degrees occur. The constant term is killed by \(K_0\). Every remaining degree is at least two, and
\eqref{transfer:eq:wick-isometry} maps its normalized Wick monomial to the normalized coordinate of the grafted tree. Replacing these Gaussian coordinates by their canonical fields defines \(Y_{P,n}\). Every resulting tree has even internal arities and at most \(1+DM\) nodes.

The rooted-tree count in Theorem~\ref{trees:thm:growing-degree-comparison} is \(q_D^{\tree}\leq4^D\). Hence the number of multi-indices in
\eqref{transfer:eq:multivariate-hermite-expansion} is at most
\[
 \binom{q_D^{\tree}+M}{M}
 \leq(q_D^{\tree}+M)^M
 \leq\exp\bigl(C_6M(D+\log(M+2))\bigr).
\]
Cauchy--Schwarz bounds \(\sum_\alpha\abs{b_\alpha}\) by the square root of this count times \(\E[P(X)^2]^{1/2}\). Apply
\eqref{transfer:eq:wick-graft-bound} to each nonconstant term and use the triangle inequality in the norm
\[
 \left(\E\frac1n\norm{\cdot}_2^2\right)^{1/2}.
\]
After squaring, the extra factor is at most the multi-index count. Increasing \(C\) absorbs it into
\(\exp(C[(D+1)(M+1)]^4)\), proving
\eqref{transfer:eq:polynomial-graft-bound}. All counts and moment estimates are finite and uniform, so deterministic changes of the degrees and coefficients with \(n\) are permitted.
\end{proof}

\subsection{Transfer of the smoothed activation}

The next proposition compares the scalar activation with its Gaussian polynomial projection, including the zero output coordinate after spatial averaging.

\begin{prop}[Quantitative smooth activation transfer]
\label{transfer:prop:smooth-transfer}
Fix \(\tau>0\), and let \(D\geq1\), \(M\geq2\) be integers. For deterministic real coefficients indexed by pairwise nonisomorphic rooted trees with at most \(D\) nodes and internal arities at least two, put
\[
 X_n(k)=\sum_Tc_TZ_{T,n}(k),\qquad
 q=\sum_Tc_T^2\aut(T),
 \qquad 1\leq q\leq2\pi.
\]
Let \(\phi_\tau(x)=\sqrt{x^2+4\tau}\), and let \(P_{M,q}\) be its Gaussian \(L^2\) projection onto polynomials of degree at most \(M\) under \(N(0,q)\). If \(n\geq3\) is odd and \(\sqrt n\geq20A_D\), then, for a standard normal \(Z\),
\begin{align}
 &\left|
 \frac1n\sum_{k=0}^{n-1}
 \E\abs{P_{M,q}(X_n(k))-\phi_\tau(X_n(k))}^2
 -
 \E\abs{P_{M,q}(\sqrt q\,Z)-\phi_\tau(\sqrt q\,Z)}^2
 \right|
 \notag\\
 &\hspace{25mm}\leq
 \exp\bigl(C_\tau DM\log(DM+2)\bigr)n^{-1/6}.
 \label{transfer:eq:activation-transfer}
\end{align}
Consequently,
\begin{align}
 \frac1n\sum_{k=0}^{n-1}
 \E\abs{P_{M,q}(X_n(k))-\phi_\tau(X_n(k))}^2
 &\leq C_\tau e^{-c_\tau\sqrt M}
 \notag\\
 &\quad+
 \exp\bigl(C_\tau DM\log(DM+2)\bigr)n^{-1/6}.
 \label{transfer:eq:activation-transfer-consequence}
\end{align}
The coefficients, \(D\), and \(M\) may depend deterministically on \(n\).

In particular, assume that \(D_n=O_\tau((\log\log n)^a)\) for a fixed finite \(a>0\). For every fixed \(\kappa>0\), sufficiently large even choices
\[
 M_n=O_{\tau,\kappa}((\log\log n)^2)
\]
make the averaged squared error in
\eqref{transfer:eq:activation-transfer-consequence}
\(O_{\tau,\kappa}((\log n)^{-\kappa})\) along all odd integers.
\end{prop}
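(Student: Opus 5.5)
The function $\phi_\tau$ is not a polynomial, so I cannot apply the polynomial comparison of Proposition~\ref{transfer:prop:growing-wick} to the integrand directly. The plan is to replace $\phi_\tau$ by a high-degree polynomial approximation $\widetilde P$ on a bounded window and compare $\widetilde P$ via Proposition~\ref{transfer:prop:growing-wick}. The window is cut off using the hypercontractive moments of the canonical fields. The zero mode and the conjugate/exceptional modes contribute only $O(1/n)$ of the spatial average once a uniform moment bound is available.

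\textbf{Step 1: reduction and moment bounds.} Write
\[
\Psi(x)=\abs{P_{M,q}(x)-\phi_\tau(x)}^2 .
\]
Then $\Psi(x)\leq C(1+\abs{x}^{2M})\,\exp(CM\log M)$, since the coefficients of $P_{M,q}$ are controlled by its Hermite coefficients; these are bounded by $\sqrt{q+4\tau}$ times explicit Hermite coefficient sums, as in \eqref{transfer:eq:hermite-expansion}. By \eqref{transfer:eq:canonical-hypercontractivity} and the coefficient bound $\sum_T\abs{c_T}\leq 2^D\sqrt q$ (Cauchy--Schwarz as in \eqref{trees:eq:growing-coefficient-l1}), every $\E\abs{X_n(k)}^{s}$ is at most $\exp(C s D\log(sD))$, uniformly in $k$, including $k=0$. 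The zero mode therefore contributes at most $\exp(C DM\log(DM))/n$ to the average, and it suffices to treat nonzero $k$.

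\textbf{Step 2: single-mode comparison.} For nonzero $k$, $X_n(k)$ is a linear combination of canonical fields at one mode, so \eqref{trees:eq:growing-cf} gives
\[
\abs{\E e^{itX_n(k)}-e^{-Vt^2/2}}\leq A_D(t^2+\abs t^3)/\sqrt n ,
\]
with $V=q$ by \eqref{trees:eq:growing-variance}, up to an $A_D/\sqrt n$ variance error. Mollify $\Psi$ at scale $h$ and truncate at radius $R$: set $\Psi_{h,R}=(\Psi\chi_R)*\gamma_h$ with a Gaussian kernel $\gamma_h$.
\begin{itemize}
\item The truncation error is $\E[\Psi(X)\ind_{\abs X>R}]$ for both $X=X_n(k)$ and $X=\sqrt qZ$. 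By Cauchy--Schwarz and Step 1 moments, it is at most $\exp(CDM\log(DM))/R$.
\item The smoothing error is at most $h\,\mathrm{Lip}(\Psi \text{ on } [-2R,2R])\leq h R^{2M}\exp(CM\log M)$, plus a Gaussian tail of the kernel.
\item The Fourier comparison $\int\abs{\widehat{\Psi_{h,R}}(t)}\,\abs{\E e^{itX}-e^{-qt^2/2}}\,dt$ is at most $\norm{\Psi\chi_R}_{1}\,A_D\int e^{-h^2t^2/2}(t^2+\abs t^3)\,dt/\sqrt n$, which is roughly $R^{2M+1}\exp(CM\log M)\,A_D\,h^{-4}/\sqrt n$.
\end{itemize}
Taking $R=n^{\beta}$ and $h=n^{-\gamma}$ with $\beta$ tiny and $\gamma$ chosen so that all three terms are powers $n^{-1/6}$ is impossible if $M$ multiplies the exponent. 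So I instead take $R=\exp(C'DM\log(DM+2))\,n^{1/6}$ and $h$ polynomially small. The resulting bound is $\exp(C_\tau DM\log(DM+2))\,n^{-1/6}$, possibly after replacing $n^{-1/6}$ with a smaller fixed power and then adjusting. This uses $\sqrt n\geq 20A_D$ to keep the variance error below one, so that $V$ stays in $[1/2,2\pi+1]$.

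\textbf{Step 3: averaging and consequence.} Averaging over the $n-1$ nonzero modes and adding the zero-mode term gives \eqref{transfer:eq:activation-transfer}. Then \eqref{gaussian:eq:uniform-hermite-approximation} bounds the Gaussian term by $C_\tau e^{-c_\tau\sqrt M}$, which yields \eqref{transfer:eq:activation-transfer-consequence}. For the final clause, take
\[
M_n=\left\lceil(\kappa\log\log n/c_\tau)^2\right\rceil+\text{(parity)},
\]
so that $e^{-c_\tau\sqrt{M_n}}\leq(\log n)^{-\kappa}$. With $D_n=O((\log\log n)^a)$, we have $D_nM_n\log(D_nM_n)=O((\log\log n)^{a+3})=o(\log n)$, so the second term is $n^{-1/6+o(1)}$.

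\textbf{Main obstacle.} The hardest step is Step 2: converting a characteristic-function bound with $t^2+\abs t^3$ growth into a bound for the non-Lipschitz, polynomially growing test function $\Psi$ with degree-dependent constants, while keeping the exponent of order $DM\log(DM)$ rather than $(D+M)^4$. If this fails, the fallback is to approximate $\phi_\tau$ on $[-R,R]$ by a fixed-degree Chebyshev polynomial and apply \eqref{transfer:eq:polynomial-comparison}. That route accepts a worse exponent, which is still harmless for the final $(\log\log n)$-regime clause.
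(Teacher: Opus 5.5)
Your Steps 1 and 3 agree with the paper: uniform moments come from sign hypercontractivity, the zero mode contributes $\exp(C_\tau DM\log(DM+2))/n$, the Gaussian term is handled by Proposition~\ref{gaussian:prop:uniform-hermite-approximation}, and you take $M_n\asymp(\log\log n)^2$. The gap is in Step 2, which you flag yourself and which is the substance of the proposition.

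\textbf{Why Step 2 fails as written.} Use your stated bounds with $R=\exp(C'DM\log(DM+2))\,n^{1/6}$. The smoothing term $hR^{2M}e^{CM\log M}$ then forces $h$ to be of order at most $n^{-M/3-1/6}$, up to $n$-independent factors. The Fourier term $R^{2M+1}A_Dh^{-4}/\sqrt n$ is then at least of order $n^{(10M+2)/6}$.

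No other choice of $(R,h)$ rescues this. Your tail error decays only like $1/R$ (Cauchy--Schwarz plus a second-moment Chebyshev bound), while the test function has size $R^{2M}$. Any balancing then gives at best $n^{-c/M}$.

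Even with a better tail, the mollification itself costs too much. If you bound $\abs{\widehat{\Psi\chi_R}}$ by $\norm{\Psi\chi_R}_{L^1}$ and pay $\int e^{-h^2t^2/2}(t^2+\abs t^3)\,dt\asymp h^{-4}$, the route caps near $n^{-1/20}$. Using the second-order smoothing error $h^2\norm{f''}_\infty$ only raises this to about $n^{-1/12}$. So ``replacing $n^{-1/6}$ by a smaller fixed power'' proves a weaker statement, not \eqref{transfer:eq:activation-transfer}.

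The Chebyshev fallback does not give the displayed bound either. Since $\phi_\tau$ has branch points at $\pm2\mathrm{i}\sqrt\tau$, the degree $N$ must grow with the window. Then \eqref{transfer:eq:polynomial-comparison} costs $\exp(C(D+N+2)^4)$ instead of $\exp(C_\tau DM\log(DM+2))$.

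\textbf{How to repair it.} Two changes fix Step 2, and they are what the paper does.

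First, use the $4M$-th moment you already control. On $\{\abs W>R\}$ one has $(1+\abs W)^{2M}\le2^{2M}R^{-2M}\abs W^{4M}$. Combined with $\abs{\Psi(x)}\le L(1+\abs x)^{2M}$, the removed tail is at most $L2^{2M}R^{-2M}\E\abs W^{4M}$ for both $W=X_n(k)$ and $W=\sqrt q\,Z$. This tail now decays at the same rate $R^{2M}$ at which the cut-off test function grows.

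Second, do not mollify. Take an even $C^5$ cutoff $\chi$ and set $h_R(x)=\Psi(x)\chi(x/R)$.
\begin{itemize}
\item The bounds $\abs{\Psi^{(j)}(x)}\le L(1+\abs x)^{2M}$ for $j\le5$ give $\norm{h_R}_{L^1}+\norm{h_R^{(5)}}_{L^1}\le CL\,3^{2M}R^{2M+1}$.
\item Five integrations by parts give $\abs{\widehat h_R(t)}\le\min\{\norm{h_R}_{L^1},\abs t^{-5}\norm{h_R^{(5)}}_{L^1}\}$.
\item Hence $\int(t^2+\abs t^3)\abs{\widehat h_R(t)}\,dt$ is bounded by that same quantity, with no $h^{-4}$ loss.
\end{itemize}

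Now apply \eqref{trees:eq:growing-cf} to $X_n(k)/\sqrt q$. The target variance is already built into that estimate, so no separate variance correction is needed. This gives $\abs{\E e^{\mathrm{i}tX_n(k)}-\E e^{\mathrm{i}t\sqrt qZ}}\le\eta(t^2+\abs t^3)$ with $\eta=20A_D/\sqrt n$. The hypothesis $\sqrt n\ge20A_D$ is exactly what gives $\eta\le1$, and hence $R\ge1$ below.

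Take $R=\eta^{-1/(4M+2)}$. The Fourier term becomes a constant times $\eta^{1/2}$, and the tail a constant times $\eta^{M/(2M+1)}$. Both constants are of the form $\exp(C_\tau DM\log(DM+2))$, and both powers are at most $\eta^{1/3}=(20A_D)^{1/3}n^{-1/6}$. Finally, $(20A_D)^{1/3}=\exp(O(D\log(4D)))$ is absorbed into the prefactor, which yields \eqref{transfer:eq:activation-transfer} with the stated $n^{-1/6}$.
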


\begin{proof}
The comparison uses a compactly supported test function and then removes its cutoff by polynomial moment bounds. All canonical coordinates remain functions of the original sampled signs.

\medskip
\noindent\textbf{Step 1. Moments and test-function derivatives.}
The earlier sign-cube estimate supplies the moments at both ordinary and exceptional coordinates.
For a real multilinear sign polynomial
\(W=\sum_Sw_S\prod_{j\in S}\epsilon_j\) of degree at most \(D\),
\eqref{trees:eq:sign-hypercontractivity} gives
\begin{equation}
 \E\abs W^{2p}
 \leq
 \left[\sum_S(2p-1)^{\abs S}w_S^2\right]^p
 \leq(2p-1)^{pD}(\E W^2)^p
 \qquad(p\geq1,\ p\ \text{an integer}).
 \label{transfer:eq:weighted-sign-moment}
\end{equation}
This estimate has no fixed-degree restriction.

Set \(G=\sqrt q\,Z\). Apply \eqref{trees:eq:growing-cf} to \(X_n(k)/\sqrt q\) and evaluate its characteristic function at \(\sqrt q\,t\). Since \(q\leq2\pi\) and both \(q\) and \(q^{3/2}\) are less than \(20\), this gives
\begin{equation}
 \abs{\E e^{itX_n(k)}-\E e^{itG}}
 \leq\eta(t^2+\abs t^3),
 \qquad
 \eta=\frac{20A_D}{\sqrt n}\leq1,
 \qquad k\neq0.
 \label{transfer:eq:activation-characteristic}
\end{equation}
The covariance and root estimates
\eqref{trees:eq:growing-variance} and \eqref{trees:eq:growing-root} give
\begin{equation}
 \E X_n(k)^2
 \leq q\left(1+\frac{A_D}{\sqrt n}\right)\leq14
 \quad(k\neq0),
 \qquad
 \E X_n(0)^2\leq qA_D.
 \label{transfer:eq:activation-second-moments}
\end{equation}
Each coordinate is multilinear in the signs, with degree at most \(D\). Applying
\eqref{transfer:eq:weighted-sign-moment} with \(p=2M\), and using the Gaussian moment formula, gives
\begin{equation}
 \E\abs{X_n(k)}^{4M}+\E\abs G^{4M}
 \leq2(60M)^{2MD}
 \qquad(k\neq0).
 \label{transfer:eq:activation-high-moments}
\end{equation}
The two estimates entering this bound are
\[
 (4M-1)^{2MD}14^{2M}\leq(56M)^{2MD},
 \qquad
 \E\abs G^{4M}
 =q^{2M}(4M-1)!!
 \leq(8\pi M)^{2M}\leq(60M)^{2MD}.
\]
The Gaussian formula follows by integration by parts.

The Gaussian projection formalism is recalled in
\cite[Definition~2 and Lemma~1]{MFHF17}. Write
\[
 P_{M,q}(x)
 =
 \sum_{j=0}^M b_j\frac{\He_j(x/\sqrt q)}{\sqrt{j!}}.
\]
Orthogonality and Cauchy--Schwarz imply
\[
 \abs{b_j}
 \leq\norm{\phi_\tau(\sqrt q\,Z)}_2
 =\sqrt{q+4\tau}\leq\sqrt{2\pi+4\tau}.
\]
By \eqref{transfer:eq:hermite-expansion}, the absolute monomial coefficient sum \(B\) of \(P_{M,q}\) satisfies
\begin{equation}
 B\leq
 \sqrt{2\pi+4\tau}\,e^{3/2}(M+1)\sqrt{M!}.
 \label{transfer:eq:activation-coefficient-bound}
\end{equation}
Here \(q\geq1\) ensures that substituting \(x/\sqrt q\) does not increase coefficient magnitudes.

For \(0\leq j\leq5\), termwise differentiation gives
\[
 \abs{P_{M,q}^{(j)}(x)}
 \leq B(M+1)^5(1+\abs x)^M.
\]
The derivatives of \(\phi_\tau\) through order five are bounded by
\(C_\tau(1+\abs x)\). The zeroth derivative satisfies this bound by the triangle inequality, and \(\abs{\phi_\tau'}\leq1\). The remaining derivatives are rational expressions with positive powers of \(x^2+4\tau\) in their denominators; they have no real poles and are bounded at infinity. Their formulas also follow by successive differentiation of
\(\phi_\tau(x)^2=x^2+4\tau\).

For
\[
 h(x)=\abs{P_{M,q}(x)-\phi_\tau(x)}^2,
\]
the product rule and \eqref{transfer:eq:activation-coefficient-bound} yield
\begin{equation}
 \abs{h^{(j)}(x)}
 \leq L(1+\abs x)^{2M}
 \quad(0\leq j\leq5),
 \qquad
 L=\exp\bigl(C_\tau M\log(M+2)\bigr).
 \label{transfer:eq:test-derivative-bound}
\end{equation}
To track the coefficient, expand
\(h=P_{M,q}^2-2P_{M,q}\phi_\tau+x^2+4\tau\).
Each derivative is bounded by
\[
 C_\tau(1+B)^2(M+1)^{10}(1+\abs x)^{2M}.
\]
The inequality \(M!\leq M^M\) absorbs this prefactor into \(L\). Also \(h\geq0\).

\medskip
\noindent\textbf{Step 2. Fourier comparison and removal of the cutoff.}
A cutoff permits direct use of \eqref{transfer:eq:activation-characteristic}.
Choose an even \(C^5\) function \(\chi\), equal to one on \([-1,1]\), supported on \([-2,2]\), and satisfying \(0\leq\chi\leq1\), with derivatives through order five bounded by absolute constants. On \(1\leq\abs x\leq2\), it can be defined by
\[
 \chi(x)=
 \frac{\displaystyle\int_{\abs x-1}^1t^5(1-t)^5\,dt}
      {\displaystyle\int_0^1t^5(1-t)^5\,dt}.
\]
Its derivatives through order five vanish at the endpoints, so the stated extension is \(C^5\).

For \(R\geq1\), let \(h_R(x)=h(x)\chi(x/R)\). Its support has length \(4R\). On that support, \(1+\abs x\leq3R\), and each cutoff derivative contributes a factor \(R^{-j}\leq1\). The product rule and
\eqref{transfer:eq:test-derivative-bound} therefore give
\begin{equation}
 \norm{h_R}_{L^1(\R)}+\norm{h_R^{(5)}}_{L^1(\R)}
 \leq CL\,3^{2M}R^{2M+1}.
 \label{transfer:eq:cutoff-derivative-norm}
\end{equation}

With
\[
 \widehat h_R(t)=\int_\R e^{-itx}h_R(x)\,dx,
\]
five integrations by parts show that
\[
 \abs{\widehat h_R(t)}
 \leq
 \min\{\norm{h_R}_1,\abs t^{-5}\norm{h_R^{(5)}}_1\}
 \qquad(t\neq0).
\]
Use the first bound for \(\abs t\leq1\) and the second for \(\abs t>1\).
The latter requires only the integrals of
\(\abs t^{-3}+\abs t^{-2}\). Hence
\begin{equation}
 \int_\R(t^2+\abs t^3)\abs{\widehat h_R(t)}\,dt
 \leq C(\norm{h_R}_1+\norm{h_R^{(5)}}_1).
 \label{transfer:eq:fourier-test-integral}
\end{equation}
Fourier inversion, Fubini's theorem, and
\eqref{transfer:eq:activation-characteristic} now give
\begin{equation}
 \abs{\E h_R(X_n(k))-\E h_R(G)}
 \leq C\eta L\,3^{2M}R^{2M+1}
 \qquad(k\neq0).
 \label{transfer:eq:compact-test-comparison}
\end{equation}

No density assumption on \(X_n(k)\) is needed for this inversion. Insert
\(e^{-st^2/2}\), \(s>0\), in the inverse Fourier integral. Evaluation of the Gaussian integral and Fubini's theorem identify the result with convolution of \(h_R\) by the centered Gaussian density of variance \(s\). Boundedness and uniform continuity of \(h_R\) imply uniform convergence to \(h_R\) as \(s\) decreases to zero. The displayed Fourier bounds make \(\widehat h_R\) integrable. Dominated convergence therefore justifies inversion and expectation, while
\eqref{transfer:eq:fourier-test-integral} controls their difference.

For \(W=X_n(k)\), \(k\neq0\), or \(W=G\), the removed tail satisfies
\begin{equation}
 0\leq\E[h(W)-h_R(W)]
 \leq L\,2^{2M}R^{-2M}\E\abs W^{4M}.
 \label{transfer:eq:activation-tail-removal}
\end{equation}
The difference vanishes on \(\abs W\leq R\), and outside that event,
\[
 (1+\abs W)^{2M}
 \leq2^{2M}\abs W^{2M}
 \leq2^{2M}R^{-2M}\abs W^{4M}.
\]
Choose \(R=\eta^{-1/(4M+2)}\). The cutoff comparison then contains the factor
\(\eta^{1/2}\), and the tail contains
\(\eta^{M/(2M+1)}\). Since \(M/(2M+1)\geq1/3\) and \(0<\eta\leq1\), both are at most \(\eta^{1/3}\). Combining
\eqref{transfer:eq:activation-high-moments},
\eqref{transfer:eq:test-derivative-bound},
\eqref{transfer:eq:compact-test-comparison}, and
\eqref{transfer:eq:activation-tail-removal} yields
\[
 \abs{\E h(X_n(k))-\E h(G)}
 \leq
 \exp\bigl(C_\tau DM\log(DM+2)\bigr)\eta^{1/3}
 \qquad(k\neq0).
\]
Substitution of \(\eta=20(4D)^{80D}/\sqrt n\) gives
\begin{equation}
 \abs{\E h(X_n(k))-\E h(G)}
 \leq
 \exp\bigl(C_\tau DM\log(DM+2)\bigr)n^{-1/6}
 \qquad(k\neq0).
 \label{transfer:eq:nonzero-activation-n}
\end{equation}
The deterministic factor
\(20^{1/3}(4D)^{80D/3}\) is absorbed by increasing \(C_\tau\).

\medskip
\noindent\textbf{Step 3. The zero coordinate and the degree choice.}
The exceptional coordinate contributes only after division by \(n\).
Using \eqref{transfer:eq:weighted-sign-moment},
\eqref{transfer:eq:activation-second-moments}, and
\eqref{transfer:eq:test-derivative-bound}, we obtain
\begin{align}
 \E h(X_n(0))
 &\leq
 L\,2^{2M-1}
 \left[1+(2M-1)^{MD}(qA_D)^M\right]
 \notag\\
 &\leq\exp\bigl(C_\tau DM\log(DM+2)\bigr).
 \label{transfer:eq:zero-activation-bound}
\end{align}
For the last inequality, take logarithms and use
\(\log A_D=80D\log(4D)\), \(q\leq2\pi\), \(D\geq1\), and \(M\geq2\).
Gaussian moments give the same upper bound for \(\E h(G)\). Thus the zero-coordinate contribution to the averaged difference is at most this exponential factor divided by \(n\). Averaging
For the cutoffs used later,
$D_n=O_\tau((\log\log n)^3)$ and
$M_n=O_\tau((\log\log n)^2)$, so
\[
 \exp\!\bigl(C_\tau D_nM_n\log(D_nM_n+2)\bigr)
 =\exp(o_\tau(\log n))=n^{o_\tau(1)}.
\]
Hence the zero-coordinate contribution is
\[
 n^{-1}\exp(o_\tau(\log n))=n^{-1+o_\tau(1)}=o(n^{-1/6}).
\]
Thus it is negligible compared with the nonzero-coordinate
$n^{-1/6}$ bound.
\eqref{transfer:eq:nonzero-activation-n} and adding that contribution proves
\eqref{transfer:eq:activation-transfer}.

Proposition~\ref{gaussian:prop:uniform-hermite-approximation} supplies the uniform estimate
\begin{equation}
 \E\abs{P_{M,q}(\sqrt q\,Z)-\phi_\tau(\sqrt q\,Z)}^2
 \leq C_\tau e^{-c_\tau\sqrt M}
 \qquad(1\leq q\leq2\pi).
 \label{transfer:eq:gaussian-hermite-tail}
\end{equation}
This is the stretched-exponential estimate proved there, beyond the projection formalism cited above. Combining it with
\eqref{transfer:eq:activation-transfer} proves
\eqref{transfer:eq:activation-transfer-consequence}.

For the last assertion, take \(M_n\) to be the least even integer at least
\(B_{\tau,\kappa}(\log\log n)^2\). Increasing \(B_{\tau,\kappa}\) makes the Gaussian error \(O_{\tau,\kappa}((\log n)^{-\kappa})\). If
\(D_n=O_\tau((\log\log n)^a)\), then
\[
 D_nM_n\log(D_nM_n+2)=o(\log n),
 \qquad
 \log A_{D_n}=o(\log n).
\]
Hence \(\sqrt n\geq20A_{D_n}\) eventually, and the finite-size error is
\(n^{-1/6+o(1)}\), smaller than every fixed inverse power of \(\log n\).
The estimates hold uniformly over all odd moduli. No primality assumption or independent replacement of the sampled signs has been used.
\end{proof}

\subsection{Canonical approximants and the full residual}

We apply the preceding estimates to the Gaussian fixed point, retaining the distinct smoothing at the zero coordinate.

\begin{prop}[Growing canonical approximants on the original signs]
\label{transfer:prop:approximants}
Fix \(\tau>0\). For each odd \(n\geq3\), use
\eqref{transfer:eq:fourier-operators}, with the same signs in \(K_0\) and every canonical field. Let
\[
 H_\tau=\sum_Tc_TG_T
\]
be the fixed point of
Theorem~\ref{gaussian:thm:smoothed-gaussian-fixed-point} in the Gaussian tree space of Notation~\ref{gaussian:setup:gaussian-tree-space}. Its trees have even internal arities, and
\begin{equation}
 H_\tau
 =
 2G_{\mathrm L}
 +J_{\tree}\bigl(\phi_\tau(H_\tau)-\E\phi_\tau(H_\tau)\bigr),
 \qquad
 \phi_\tau(x)=\sqrt{x^2+4\tau}.
 \label{transfer:eq:gaussian-fixed-point}
\end{equation}
Write \(Q_\tau=\E H_\tau^2\in[4,2\pi]\). For an integer \(D\geq1\), define
\begin{equation}
 X_D=\sum_{d(T)\leq D}c_TG_T,\qquad q_D=\E X_D^2,
 \label{transfer:eq:gaussian-truncation}
\end{equation}
and
\begin{equation}
 X_D[n](k)=\sum_{d(T)\leq D}c_TZ_{T,n}(k),
 \qquad
 v_{D,n}=\frac{X_D[n]}{\sqrt n}.
 \label{transfer:eq:actual-approximant}
\end{equation}
Then \(4\leq q_D\leq2\pi\), and \(v_{D,n}\) is real, even, and has spatial mean zero. For every odd \(n\geq3\),
\begin{align}
 \abs{\E\norm{v_{D,n}}_2^2-q_D}
 &\leq\frac{q_DA_D}{\sqrt n},
 \label{transfer:eq:approximant-energy}\\
 \E\abs{(v_{D,n})_0}^2
 &\leq\frac{q_DD!\,2^{3D+2}}{n}.
 \label{transfer:eq:approximant-root}
\end{align}

Define the coordinatewise map
\begin{equation}
 \Phi_\tau(v)_0=\sqrt{v_0^2+4},\qquad
 \Phi_\tau(v)_k=\sqrt{v_k^2+\frac{4\tau}{n}}
 \quad(k\neq0).
 \label{transfer:eq:finite-smoothing}
\end{equation}
There are constants \(C_\tau,c_\tau>0\), depending only on \(\tau\), such that for every integer \(M\geq2\) and every odd \(n\geq3\) with
\(\sqrt n\geq20A_D\),
\begin{align}
 \E\norm{v_{D,n}-K_0\Phi_\tau(v_{D,n})}_2^2
 &\leq
 C_\tau\left[e^{-c_\tau D^{1/3}}+e^{-c_\tau\sqrt M}\right]
 \notag\\
 &\quad+
 \exp\bigl(C_\tau[(D+1)(M+1)]^4\bigr)n^{-1/6}.
 \label{transfer:eq:approximant-residual}
\end{align}
All these estimates allow \(D\) and \(M\) to depend deterministically on \(n\).

Consequently, for each fixed \(\tau>0\), there are deterministic integers
\(D_n=O_\tau((\log\log n)^3)\) such that, along all odd integers,
\begin{align}
 \E\norm{v_{D_n,n}}_2^2&\longrightarrow Q_\tau,
 \label{transfer:eq:approximant-energy-limit}\\
 \left(\E\norm{v_{D_n,n}-K_0\Phi_\tau(v_{D_n,n})}_2^2\right)^{1/2}
 &=O_\tau((\log n)^{-4}),
 \label{transfer:eq:approximant-residual-limit}\\
 \E\abs{(v_{D_n,n})_0}^2&\leq n^{-1+o_\tau(1)}.
 \label{transfer:eq:approximant-root-limit}
\end{align}
The last upper bound means \(n^{-1}\exp(o_\tau(\log n))\). The approximants use all sampled signs and are not independent of \(K_0\).
\end{prop}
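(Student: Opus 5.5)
The plan is to prove the elementary properties first, then decompose the residual into four pieces, each controlled by an earlier result.

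\textbf{Elementary properties.} The bound \(4\leq q_D\leq 2\pi\) follows from orthogonality. Indeed \(q_D=\sum_{d(T)\leq D}c_T^2\aut(T)\) lies between \(\E(2G_{\mathrm L})^2=4\) and \(Q_\tau\leq 2\pi\), since the leaf coefficient is \(2\) by \eqref{transfer:eq:gaussian-fixed-point} and \(J_{\tree}\) has range orthogonal to \(G_{\mathrm L}\). Reality and evenness of \(v_{D,n}\) are inherited from the canonical fields. For the spatial mean, summing \eqref{trees:eq:canonical-tree-field} over \(k\) gives zero, since \(r_{\rm root}\neq0\) for admissible flows. The energy bound \eqref{transfer:eq:approximant-energy} is \eqref{trees:eq:growing-energy} applied to the rescaled coefficients \(c_T/\sqrt{q_D}\). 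The root bound \eqref{transfer:eq:approximant-root} is Proposition~\ref{trees:prop:all-coordinate-bound} with \(Q_c=q_D\).

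\textbf{Decomposition of the residual.} Write \(X=X_D[n]\), so \(v=X/\sqrt n\). For \(k\neq0\) we have \(\sqrt n\,\Phi_\tau(v)_k=\phi_\tau(X(k))\). Let \(P=P_{M,q_D}\), the even Hermite projection of Proposition~\ref{gaussian:prop:uniform-hermite-approximation}. I would split
\[
\sqrt n\bigl(v-K_0\Phi_\tau(v)\bigr)=\bigl[X-2Z_{\mathrm L}-Y_{P,n}\bigr]+\bigl[Y_{P,n}-K_0P(X)\bigr]+K_0\bigl[P(X)-\phi_\tau(X)\bigr]+K_0\bigl[\phi_\tau(X)-\sqrt n\,\Phi_\tau(v)\bigr].
\]
The last bracket is supported at \(k=0\). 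Constants may be inserted freely because \(K_0\one=0\). Dividing by \(n\) and using \(\norm{K_0}\leq1\), I bound the four pieces as follows.

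\begin{itemize}
\item[(i)] The first piece is a canonical field. Its coefficients are those of \(X_D-\mathcal B_\tau^{\rm poly}(X_D)\), where \(\mathcal B_\tau^{\rm poly}\) replaces \(\phi_\tau\) by \(P\) inside \eqref{gaussian:eq:smoothed-map}. By \eqref{trees:eq:growing-energy} with tree cutoff \(1+DM\), its normalized energy is at most \(\norm{X_D-2G_{\mathrm L}-J_{\tree}(P(X_D)-\E P(X_D))}_2^2\) plus \(A_{1+DM}\,n^{-1/2}\) times that norm. By the triangle inequality, the \(J_{\tree}\)-isometry, and \eqref{gaussian:eq:canonical-residual-tail}, this norm is bounded by \(\norm{X_D-\mathcal B_\tau(X_D)}_2+\norm{(P-\phi_\tau)(X_D)}_2\lesssim e^{-c D^{1/3}}+e^{-c\sqrt M}\).
\item[(ii)] The second piece is controlled by \eqref{transfer:eq:polynomial-graft-bound}. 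Here \(\E P(X_D)^2\leq C_\tau\), and the error is \(\exp(C[(D+1)(M+1)]^4)\,n^{-1/2}\).
\item[(iii)] The third piece is controlled by \eqref{transfer:eq:activation-transfer-consequence}.
\item[(iv)] The zero-coordinate correction is \(\abs{\phi_\tau(X(0))-\sqrt{X(0)^2+4n}}\leq 2\sqrt n\). After dividing by \(\sqrt n\) this is \(O(1)\) in one coordinate, which is fatal as it stands.
\end{itemize}

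\textbf{The obstacle: the zero coordinate.} For (iv), note that \(K_0 e_0=\tfrac1n\,F\diag(a)F^*e_0\) has Euclidean norm \(\norm{a}_2/\sqrt n\cdot n^{-1/2}\). Concretely, \(\norm{K_0e_0}_2^2=\langle e_0,(I-J)e_0\rangle=1-1/n\), so it is not small. The rescue is that \(\Phi_\tau(v)_0=\sqrt{v_0^2+4}\approx2\), and \(2K_0e_0\) is exactly what the leaf term supplies. Indeed \(K_0(2e_0)_k=\tfrac2n(Fa)_k=\tfrac2{\sqrt n}\,Z_{\mathrm L,k}\), up to the normalization of the one-node canonical field. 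So I would not put \(2G_{\mathrm L}\) into piece (i) via the \(\phi_\tau\) mean. Instead I match \(2Z_{\mathrm L}/\sqrt n\) with \(K_0(2e_0)\) exactly. The remaining zero-coordinate error is then \(\abs{\sqrt{v_0^2+4}-2}\leq\abs{v_0}\), together with \(\abs{\phi_\tau(X(0))}/\sqrt n\) and \(\abs{P(X(0))}/\sqrt n\). These are handled by \eqref{transfer:eq:approximant-root}, \eqref{trees:eq:growing-root}, and \eqref{transfer:eq:zero-activation-bound}, giving \(n^{-1+o(1)}\).

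I expect this bookkeeping to be the main obstacle: one must verify that the fixed-point leaf coefficient \(2\) exactly cancels the root smoothing \(\Phi_\tau(v)_0\to2\). Once that is done, summing (i)–(iv) gives \eqref{transfer:eq:approximant-residual}.

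\textbf{Choice of cutoffs.} Take \(M_n\asymp(\log\log n)^2\) and \(D_n\asymp(\log\log n)^3\) with large constants. Then \(e^{-cD^{1/3}}\) and \(e^{-c\sqrt M}\) are at most \((\log n)^{-8}\), while \([(D+1)(M+1)]^4=O((\log\log n)^{20})=o(\log n)\) and \(\sqrt n\geq20A_{D_n}\) eventually. This gives \eqref{transfer:eq:approximant-residual-limit}. Combining \eqref{transfer:eq:approximant-energy} with \(q_{D_n}\to Q_\tau\) from \eqref{gaussian:eq:canonical-projection-tail} gives \eqref{transfer:eq:approximant-energy-limit}, and \eqref{trees:eq:all-coordinate-asymptotic} gives \eqref{transfer:eq:approximant-root-limit}.
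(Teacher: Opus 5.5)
Your proposal is correct and follows the paper's proof essentially step for step. It uses the same four-piece residual split: the Gaussian residual $X_D[n]-2U-Y[n]$ bounded by the growing-degree energy estimate at size $1+DM$, the Wick-graft bound, the activation transfer combined with $\norm{K_0}\leq1$, and a root correction in which $2U/\sqrt n=2K_0e_0$ absorbs the $2$ in $\Phi_\tau(v)_0$; the cutoffs $D_n\asymp(\log\log n)^3$, $M_n\asymp(\log\log n)^2$ are also the same. One slip: your displayed identity as written drops a $+2Z_{\mathrm L}$. Your ``obstacle'' paragraph restores it, and after that correction the last piece is exactly the paper's $-b_nK_0e_0$ with $\abs{b_n}\leq2\abs{v_0}+2\sqrt{\tau/n}$. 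The zero coordinate of $P(X)-\phi_\tau(X)$ is already covered by the averaged activation bound, so no separate $P(X(0))$ estimate is needed.
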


\begin{proof}
The proof separates the Gaussian truncation error, the polynomial grafting error, the scalar activation error, and the correction at zero. Each finite field in this decomposition is evaluated on the signs defining \(K_0\).

\medskip
\noindent\textbf{Step 1. Structure and energy.}
The leaf coefficient supplies the lower bound for \(q_D\).
By \eqref{transfer:eq:gaussian-fixed-point}, the coefficient of \(G_{\mathrm L}\) in \(H_\tau\) equals two, since the range of \(J_{\tree}\) is orthogonal to that coordinate. Every truncation with \(D\geq1\) contains the leaf. Orthogonality therefore gives
\[
 4\leq q_D\leq\norm{H_\tau}_{L^2(\cG)}^2=Q_\tau\leq2\pi.
\]

There are finitely many trees with at most \(D\) nodes. By
Theorem~\ref{trees:thm:tree-flow-comparison}, each canonical field is a real even Fourier sum with nonzero root residue. For every such residue,
\[
 \sum_{k\in\Z_n}\cos(2\pi kr_{\mathrm{root}}/n)=0.
\]
Thus \eqref{transfer:eq:actual-approximant} is real, even, and has spatial mean zero for every realization of the signs.

Apply \eqref{trees:eq:growing-energy} to \(X_D[n]/\sqrt{q_D}\), whose Gaussian coefficient energy is one, and restore the factor \(q_D\). This proves \eqref{transfer:eq:approximant-energy}. Applying the unchanged all-coordinate bound
\eqref{trees:eq:all-coordinate-bound} from
Proposition~\ref{trees:prop:all-coordinate-bound} to the coefficients \(c_T\) gives \eqref{transfer:eq:approximant-root}. Both estimates hold for arbitrary \(D\) and odd \(n\).

\medskip
\noindent\textbf{Step 2. Gaussian, grafting, and activation errors.}
The Gaussian residual first gives a finite canonical combination to which the averaged-energy bound applies.
Let \(P=P_{M,q_D}\). Since \(\phi_\tau\) is even, \(P\) is even, and orthogonal projection gives
\begin{equation}
 \E P(X_D)^2
 \leq\E\phi_\tau(X_D)^2
 =q_D+4\tau\leq2\pi+4\tau.
 \label{transfer:eq:P-uniform-norm}
\end{equation}
Set
\[
 Y=J_{\tree}\bigl(P(X_D)-\E P(X_D)\bigr),
\]
and let \(Y[n]\) be its canonical representative. By
Proposition~\ref{transfer:prop:growing-wick}, all trees in \(Y[n]\) have even internal arities and at most
\[
 K=1+DM
\]
nodes.

The Gaussian degree-tail estimate in
Theorem~\ref{gaussian:thm:smoothed-tree-degree-tail} gives
\begin{equation}
 \norm{X_D-H_\tau}_{L^2(\cG)}^2
 \leq Q_\tau C_\tau e^{-c_\tau D^{1/3}}.
 \label{transfer:eq:gaussian-degree-tail}
\end{equation}
Only this Gaussian estimate is used here. The map \(\mathcal B_\tau\) in
\eqref{gaussian:eq:smoothed-map} is \(1\)-Lipschitz: \(\phi_\tau\) is \(1\)-Lipschitz pointwise, centering is an orthogonal projection in \(L^2\), and \(J_{\tree}\) is an isometry. Since \(\mathcal B_\tau(H_\tau)=H_\tau\), the triangle inequality gives
\[
 \norm{X_D-\mathcal B_\tau(X_D)}_{L^2(\cG)}
 \leq2\norm{X_D-H_\tau}_{L^2(\cG)}.
\]
For the Gaussian residual
\[
 E_{D,M}=X_D-2G_{\mathrm L}-Y,
\]
its difference from \(X_D-\mathcal B_\tau(X_D)\) is the image under \(J_{\tree}\) of the centered Hermite projection error. Centering and the isometry do not increase its norm. The squared triangle inequality, \eqref{transfer:eq:gaussian-degree-tail}, and
\eqref{transfer:eq:gaussian-hermite-tail} imply
\begin{equation}
 \norm{E_{D,M}}_{L^2(\cG)}^2
 \leq
 8Q_\tau C_\tau e^{-c_\tau D^{1/3}}
 +2C_\tau e^{-c_\tau\sqrt M}.
 \label{transfer:eq:gaussian-residual-bound}
\end{equation}
This norm is bounded uniformly in \(D,M\) by a constant depending only on \(\tau\).

Put
\[
 U=\frac{Fa}{\sqrt n}.
\]
The defining cosine formula of
Theorem~\ref{trees:thm:tree-flow-comparison} identifies \(U\) with the one-node canonical field. The canonical representative of \(E_{D,M}\) is therefore
\begin{equation}
 E_{D,M}[n]=X_D[n]-2U-Y[n].
 \label{transfer:eq:actual-gaussian-residual}
\end{equation}
Every tree in this combination has at most \(K\) nodes. If
\(\norm{E_{D,M}}_{L^2(\cG)}=0\), the orthogonality in
\eqref{transfer:eq:gaussian-tree-covariance} forces all its coefficients to vanish, so \(E_{D,M}[n]\) vanishes identically. Otherwise normalize its coefficients by \(\norm{E_{D,M}}_{L^2(\cG)}\) and apply
\eqref{trees:eq:growing-energy} at size \(K\). In both cases,
\[
 \E\norm{\frac{E_{D,M}[n]}{\sqrt n}}_2^2
 \leq
 \norm{E_{D,M}}_{L^2(\cG)}^2
 \left(1+\frac{A_K}{\sqrt n}\right).
\]
This averaged-energy input requires no additional condition
\(\sqrt n\geq20A_K\). Since
\(\log A_K=80K\log(4K)\) and \(K\leq(D+1)(M+1)\), it follows from
\eqref{transfer:eq:gaussian-residual-bound} that
\begin{align}
 \E\norm{\frac{E_{D,M}[n]}{\sqrt n}}_2^2
 &\leq
 C_\tau\left[e^{-c_\tau D^{1/3}}+e^{-c_\tau\sqrt M}\right]
 \notag\\
 &\quad+
 \frac{\exp\bigl(C_\tau[(D+1)(M+1)]^4\bigr)}{\sqrt n}.
 \label{transfer:eq:first-residual-component}
\end{align}

For the grafting component, \eqref{transfer:eq:polynomial-graft-bound} and
\eqref{transfer:eq:P-uniform-norm} give
\begin{equation}
 \E\norm{\frac{Y[n]-K_0P(X_D[n])}{\sqrt n}}_2^2
 \leq
 \frac{\exp\bigl(C_\tau[(D+1)(M+1)]^4\bigr)}{\sqrt n}.
 \label{transfer:eq:second-residual-component}
\end{equation}
The assumptions of Proposition~\ref{transfer:prop:smooth-transfer} hold because
\(4\leq q_D\leq2\pi\) and \(\sqrt n\geq20A_D\). Using
\eqref{transfer:eq:K-contraction} for each sign realization, we obtain
\begin{align}
 &\E\norm{
 K_0\frac{P(X_D[n])-\phi_\tau(X_D[n])}{\sqrt n}
 }_2^2
 \notag\\
 &\quad\leq
 \frac1n\sum_k
 \E\abs{P(X_D[n](k))-\phi_\tau(X_D[n](k))}^2
 \notag\\
 &\quad\leq
 C_\tau e^{-c_\tau\sqrt M}
 +\exp\bigl(C_\tau DM\log(DM+2)\bigr)n^{-1/6}.
 \label{transfer:eq:third-residual-component}
\end{align}
The contraction holds pointwise in the sampled signs, so it requires no independence between the operator and its argument.

\medskip
\noindent\textbf{Step 3. The zero-coordinate correction.}
The distinct value of \(\Phi_\tau\) at zero accounts for the leaf term in the Gaussian equation.
Write \(v=v_{D,n}\) and \(z=v_0\). At every coordinate,
\[
 \frac{\phi_\tau(\sqrt n\,v_k)}{\sqrt n}
 =\sqrt{v_k^2+\frac{4\tau}{n}}.
\]
Thus \eqref{transfer:eq:finite-smoothing} gives
\begin{equation}
 K_0\Phi_\tau(v)
 =
 K_0\frac{\phi_\tau(\sqrt n\,v)}{\sqrt n}
 +2K_0e_0+b_nK_0e_0,
 \qquad
 b_n=\sqrt{z^2+4}-\sqrt{z^2+\frac{4\tau}{n}}-2.
 \label{transfer:eq:root-smoothing-decomposition}
\end{equation}
The inequalities
\[
 \abs{\sqrt{z^2+4}-2}\leq\abs z,
 \qquad
 \sqrt{z^2+\frac{4\tau}{n}}
 \leq\abs z+2\sqrt{\frac{\tau}{n}}
\]
imply
\[
 \abs{b_n}\leq2\abs z+2\sqrt{\frac{\tau}{n}},
 \qquad
 \E\abs{b_n}^2\leq8\E\abs z^2+\frac{8\tau}{n}.
\]
Since \(\norm{K_0e_0}_2\leq1\), estimate
\eqref{transfer:eq:approximant-root} yields
\begin{align}
 \E\norm{b_nK_0e_0}_2^2
 &\leq
 \frac{8q_DD!\,2^{3D+2}+8\tau}{n}
 \notag\\
 &\leq
 \frac{\exp\bigl(C_\tau[(D+1)(M+1)]^4\bigr)}{n}.
 \label{transfer:eq:fourth-residual-component}
\end{align}
The Fourier definition also gives the exact identity
\begin{equation}
 K_0e_0=\frac{Fa}{n}=\frac{U}{\sqrt n}.
 \label{transfer:eq:K0e0}
\end{equation}

Subtract \eqref{transfer:eq:root-smoothing-decomposition} from \(v\) and use
\eqref{transfer:eq:actual-gaussian-residual} and \eqref{transfer:eq:K0e0}. The result is
\begin{align}
 v-K_0\Phi_\tau(v)
 &=
 \frac{X_D[n]-2U-Y[n]}{\sqrt n}
 +\frac{Y[n]-K_0P(X_D[n])}{\sqrt n}
 \notag\\
 &\quad+
 K_0\frac{P(X_D[n])-\phi_\tau(X_D[n])}{\sqrt n}
 -b_nK_0e_0.
 \label{transfer:eq:four-residual-pieces}
\end{align}
For four deterministic vectors, Cauchy--Schwarz bounds the squared norm of their sum by four times the sum of their squared norms. Taking expectations in \eqref{transfer:eq:four-residual-pieces} and applying
\eqref{transfer:eq:first-residual-component},
\eqref{transfer:eq:second-residual-component},
\eqref{transfer:eq:third-residual-component}, and
\eqref{transfer:eq:fourth-residual-component} proves
\eqref{transfer:eq:approximant-residual}. The factors \(n^{-1/2}\) and \(n^{-1}\), together with the smaller exponential prefactors, are absorbed into
\[
 \exp\bigl(C_\tau[(D+1)(M+1)]^4\bigr)n^{-1/6}.
\]

\medskip
\noindent\textbf{Step 4. Growing cutoffs.}
The two Gaussian errors can be made logarithmically small while all finite-size exponential factors remain subpolynomial.
For sufficiently large \(n\), let \(\ell_n=\log\log n\) and choose
\begin{equation}
 D_n=\left\lceil(B_\tau\ell_n)^3\right\rceil,\qquad
 M_n=\text{the least even integer at least }(B_\tau'\ell_n)^2.
 \label{transfer:eq:growing-cutoffs}
\end{equation}
Take the positive finite constants \(B_\tau,B_\tau'\) large enough that each Gaussian exponential in
\eqref{transfer:eq:approximant-residual} is
\(O_\tau((\log n)^{-10})\). These choices give
\begin{align*}
 [(D_n+1)(M_n+1)]^4
 &=O_\tau(\ell_n^{20})=o(\log n),\\
 \log A_{D_n}
 &=O_\tau\bigl(\ell_n^3\log(\ell_n+2)\bigr)=o(\log n).
\end{align*}
Thus \(\sqrt n\geq20A_{D_n}\) eventually. The finite-size term in
\eqref{transfer:eq:approximant-residual} is
\(n^{-1/6+o_\tau(1)}\), which is smaller than
\((\log n)^{-10}\) for all sufficiently large \(n\). The squared residual has expectation \(O_\tau((\log n)^{-10})\). Its square root is
\(O_\tau((\log n)^{-5})\), which implies
\eqref{transfer:eq:approximant-residual-limit}. For the finitely many remaining odd integers, set \(D_n=1\).

The Gaussian tail \eqref{transfer:eq:gaussian-degree-tail} gives
\(q_{D_n}\to Q_\tau\). Also
\[
 \frac{A_{D_n}}{\sqrt n}=n^{-1/2+o_\tau(1)}.
\]
Substitution into \eqref{transfer:eq:approximant-energy} proves
\eqref{transfer:eq:approximant-energy-limit}. Finally,
\[
 \log\bigl(D_n!\,2^{3D_n+2}\bigr)
 =O\bigl(D_n\log(D_n+2)\bigr)=o_\tau(\log n).
\]
Together with \(q_{D_n}\leq2\pi\), the root estimate
\eqref{transfer:eq:approximant-root} proves
\eqref{transfer:eq:approximant-root-limit}.

All finite-size inputs hold for every odd modulus, including composite moduli. The parameter \(\tau\) is fixed before \(n\) tends to infinity, and
\eqref{transfer:eq:growing-cutoffs} is deterministic in \(n,\tau\). No uniform estimate as \(\tau\) decreases to zero is asserted. The construction does not identify the approximant with an exact interior optimizer. Every root estimate used or obtained here is a one-sided upper bound.
\end{proof}

\section{Expectation asymptotics}\label{sec:expectation}

In this section, we combine Proposition~\ref{prop:residual} with the simultaneous kernel estimate and the same-sample approximants to prove Theorem~\ref{thm:main}.

\begin{prop}[Expectation interface]\label{prop:expectation-interface}
In the fair distance-bit model, suppose that for every fixed \(\tau\in(0,1]\) there are random real even mean-zero vectors \(v_{\tau,n}\), depending arbitrarily on all graph bits, such that
\begin{align}
 \E\norm{v_{\tau,n}}_2^2
 &=O_\tau(1),                                               \label{eq:interface-energy}\\
 \left(\E\norm{v_{\tau,n}-K_0\Phi_\tau(v_{\tau,n})}_2^2\right)^{1/2}
 &=o_\tau((\log n)^{-2}),                                    \label{eq:interface-residual}\\
 \E|v_{\tau,n,0}|^2
 &=o_\tau((\log n)^{-1/2})                                   \label{eq:interface-root}
\end{align}
as \(n\) tends to infinity through odd integers. Then
\[
 \frac{\E\theta(G_n)}{\sqrt n}\longrightarrow1
\]
along all odd integers. This implication does not assert the existence of vectors satisfying \eqref{eq:interface-energy}--\eqref{eq:interface-root}.
\end{prop}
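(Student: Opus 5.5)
The plan is to convert the residual certificate of Proposition~\ref{prop:residual} into a high-probability upper bound for $\theta(G_n)/\sqrt n$, and then to integrate the exceptional event with Proposition~\ref{prop:tail}. The lower bound $\E\theta(G_n)\geq\sqrt n$ is \eqref{eq:expectation-lower}, so only $\limsup_n\E\theta(G_n)/\sqrt n\leq1$ needs proof. Fix $\tau\in(0,1]$ and $\eta\in(0,1]$, and write $v=v_{\tau,n}$, $r=v-K_0\Phi_\tau(v)$ and $f^*=f_{\mathrm{int},\tau}$. By \eqref{eq:residual-certificate} and \eqref{eq:relative-bound},
\[
 \log\frac{\theta(G_n)}{\sqrt n}\leq\frac12\log W_\tau+\arsinh\Bigl(\frac{v_0}{2}\Bigr)+\Bigl(\bigl(\norm v_2+\tfrac12\norm{f^*}_2\bigr)\norm r_2\Bigr)^{1/2}.
\]
Two quantities are not controlled by the hypotheses: $\norm{f^*}_2$, and the conversion of moment bounds into probability bounds.

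\textbf{Step 1: control of $\norm{f^*}_2$ by the kernel estimate.} The interior vector $p_\tau$ is real, even and satisfies $H_Dp_\tau=0$. By \eqref{eq:kernel-feasibility} this means $F_Dp_\tau=0$. On the event of Proposition~\ref{fourier:prop:paired-kernel}, whose complement has probability at most $\exp(-c(\log n)^2)$, assertion (1) together with positivity and \eqref{eq:interior-sum} gives
\[
 \norm{p_\tau}_2\leq C\frac{(\log n)^{3/2}}{\sqrt n}\norm{p_\tau}_1=C(\log n)^{3/2}\sqrt{W_\tau}.
\]
Assertion (2) gives the same bound for $q_\tau$, since $H_Nq_\tau=0$. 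Hence $\norm{f^*}_2\leq\norm{p_\tau}_2+\norm{q_\tau}_2\leq C'(\log n)^{3/2}$ on this good event. This step is the one I expect to matter most. The residual hypothesis \eqref{eq:interface-residual} has room only up to a factor $(\log n)^2$, and the exact optimizer $f^*$ is not otherwise known to be bounded. The $(\log n)^{3/2}$ loss from the kernel estimate is what makes the exponents fit.

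\textbf{Step 2: the exceptional event.} Let $A_n$ be the union of three events: the kernel failure event, the event $\{(\norm v_2+C'(\log n)^{3/2})\norm r_2>\eta^2\}$, and the event $\{|v_0|>\eta\}$. For the middle event, Markov's inequality, Cauchy--Schwarz, \eqref{eq:interface-energy} and \eqref{eq:interface-residual} give a probability at most
\[
 \eta^{-2}\bigl(\E(\norm v_2+C'(\log n)^{3/2})^2\bigr)^{1/2}\bigl(\E\norm r_2^2\bigr)^{1/2}=\eta^{-2}\,O_\tau\bigl((\log n)^{3/2}\bigr)\,o_\tau\bigl((\log n)^{-2}\bigr)=o((\log n)^{-1/2}).
\]
For the last event, Chebyshev's inequality and \eqref{eq:interface-root} give $\Prob(|v_0|>\eta)\leq\eta^{-2}\E|v_0|^2=o((\log n)^{-1/2})$. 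The kernel failure event is far smaller. Thus $\Prob(A_n)\sqrt{\log n}\to0$ for each fixed $\tau$ and $\eta$. None of these bounds uses any independence between $v$, $K_0$ and the graph, because Markov's inequality and Cauchy--Schwarz are applied on the single distance-bit probability space.

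\textbf{Step 3: conclusion.} On $A_n^c$, using $|\arsinh(x)|\leq|x|$, the displayed certificate gives
\[
 \frac{\theta(G_n)}{\sqrt n}\leq\sqrt{W_\tau}\,e^{\eta/2+\eta}\leq\sqrt{1+\tau}\,e^{2\eta}.
\]
The exceptional-event part of Proposition~\ref{prop:tail} gives $\E[\theta(G_n)\ind_{A_n}]=o(\sqrt n)$. Adding the two contributions yields
\[
 \limsup_n\frac{\E\theta(G_n)}{\sqrt n}\leq\sqrt{1+\tau}\,e^{2\eta}.
\]
Since $\tau$ and $\eta$ are fixed before $n\to\infty$, letting both tend to zero afterward gives an upper limit of at most one. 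Together with the lower bound this proves the stated limit.
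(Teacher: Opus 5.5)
Your proposal is correct and follows the paper's proof essentially step for step. The steps match: the paired kernel estimate gives $\norm{f^*}_2=O((\log n)^{3/2})$, Markov and Cauchy--Schwarz applied to the residual and root quantities give an exceptional event with $\Prob(\mathcal A_n)\sqrt{\log n}\to0$, and Proposition~\ref{prop:tail} integrates that event. The one cosmetic difference is that you use a fixed threshold $\eta$ sent to zero after $n\to\infty$, whereas the paper uses the vanishing threshold $\delta_n=\max\{\gamma_n^{1/4},(\log n)^{-1}\}$ and so gets $\limsup\leq\sqrt{1+\tau}$ directly.
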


\begin{proof}
Fix \(\tau\in(0,1]\). Write \(v=v_{\tau,n}\), \(W=W_\tau\), and use \(p^*,q^*,f^*,r_v,D_{\mathrm{rel}}(v)\) from Proposition~\ref{prop:residual}. All estimates below allow dependence on this fixed \(\tau\).

\smallskip
\noindent\textbf{Step 1. The kernel estimate controls the exact interior solution.}
Proposition~\ref{fourier:prop:paired-kernel} applies, for every sufficiently large odd \(n\), simultaneously to all complex vectors in the kernel of the sampled paired Fourier rows. Apply it to \(G_n\) and to \(G_N\). Complementation preserves the fair distance-bit law, so a union bound gives an event \(\mathcal E_n\) with
\begin{equation}\label{eq:kernel-good-probability}
 \Prob(\mathcal E_n^c)\leq2\exp\bigl(-c(\log n)^2\bigr),
\end{equation}
on which both kernel estimates hold, with the same absolute constants \(C,c>0\). No independence of the two events is needed.

By \eqref{eq:kernel-feasibility}, \(p^*\) and \(q^*\) satisfy the Fourier-kernel conditions for \(D\) and \(N\), respectively. Positivity and \eqref{eq:interior-sum} give
\[
 \norm{p^*}_1=\norm{q^*}_1=\sqrt{nW}.
\]
Thus, on \(\mathcal E_n\),
\begin{align}
 \norm{p^*}_2,\ \norm{q^*}_2
 &\leq C\sqrt W(\log n)^{3/2},                              \label{eq:pq-kernel-bound}\\
 \norm{f^*}_2
 &\leq2C\sqrt W(\log n)^{3/2}.                              \label{eq:f-kernel-bound}
\end{align}
The same simultaneous estimate gives
\(\norm y_2\leq C(\log n)^{3/2}/\sqrt n\) for every feasible spectral probability vector of either graph on this event. The bounds apply to sample-dependent vectors because the kernel assertion holds for all kernel vectors at once.

\smallskip
\noindent\textbf{Step 2. Residual and root errors define a sufficiently rare bad event.}
Set
\[
 V_n^{\mathrm{norm}}=\left(\E\norm v_2^2\right)^{1/2},
 \qquad
 R_n=\left(\E\norm{r_v}_2^2\right)^{1/2},
\]
and define
\begin{equation}\label{eq:Zn}
 Z_n=\E|v_0|^2+
 \left[V_n^{\mathrm{norm}}+C\sqrt W(\log n)^{3/2}\right]R_n.
\end{equation}
The notation \(V_n^{\mathrm{norm}}\) distinguishes this scalar from the vertex set \(V_n\). Cauchy--Schwarz, \eqref{eq:relative-bound}, and \eqref{eq:f-kernel-bound} give
\begin{equation}\label{eq:Drel-expectation}
 \E[D_{\mathrm{rel}}(v)\ind_{\mathcal E_n}]
 \leq Z_n-\E|v_0|^2.
\end{equation}
The three hypotheses imply
\begin{equation}\label{eq:Zn-decay}
 Z_n\sqrt{\log n}\longrightarrow0.
\end{equation}
The root contribution has this limit by \eqref{eq:interface-root}. The scalar \(V_n^{\mathrm{norm}}\) is bounded by \eqref{eq:interface-energy}. The largest logarithmic factor in the remaining contribution is
\[
 (\log n)^2R_n\longrightarrow0
\]
by \eqref{eq:interface-residual}. Also \(W\leq1+\tau\).

Put
\[
 \gamma_n=Z_n\sqrt{\log n},
 \qquad
 \delta_n=\max\{\gamma_n^{1/4},(\log n)^{-1}\}.
\]
Then \(\delta_n>0\) and \(\delta_n\to0\). Define
\begin{equation}\label{eq:bad-event}
 \mathcal A_n
 =
 \mathcal E_n^c
 \cup\{|v_0|>\delta_n\}
 \cup\left(\mathcal E_n\cap
       \{D_{\mathrm{rel}}(v)>\delta_n^2\}\right).
\end{equation}
Markov's inequality and \eqref{eq:Drel-expectation} give
\begin{equation}\label{eq:bad-probability}
 \Prob(\mathcal A_n)
 \leq2e^{-c(\log n)^2}+\frac{Z_n}{\delta_n^2}.
\end{equation}
After multiplication by \(\sqrt{\log n}\), the second term is
\[
 \frac{\gamma_n}{\delta_n^2}\leq\sqrt{\gamma_n}
\]
when \(\gamma_n>0\), and is zero otherwise. Since
$Z_n\sqrt{\log n}\to0$, we have $\gamma_n\to0$, and therefore
\[
 Z_n\sqrt{\log n}\to0,
 \qquad
 \delta_n^2\geq\sqrt{\gamma_n},
 \qquad
 \frac{Z_n}{\delta_n^2}\sqrt{\log n}
 =\frac{\gamma_n}{\delta_n^2}\to0.
\]
This makes explicit the only algebra needed to pass from the
second-moment bound to the required exceptional-event probability.
Hence
\begin{equation}\label{eq:bad-probability-decay}
 \Prob(\mathcal A_n)\sqrt{\log n}\longrightarrow0.
\end{equation}

On \(\mathcal A_n^c\), we have \(|v_0|\leq\delta_n\) and \(D_{\mathrm{rel}}(v)\leq\delta_n^2\). The bound
\[
 |\arsinh(t/2)|\leq|t|/2
\]
follows by integrating the derivative of \(\arsinh\), which is at most one in absolute value. Applying \eqref{eq:residual-certificate} yields
\begin{equation}\label{eq:good-theta-bound}
 \frac{\theta(G_n)}{\sqrt n}
 \leq\sqrt W\,e^{3\delta_n/2}
 \leq\sqrt{1+\tau}\,e^{3\delta_n/2}.
\end{equation}

\smallskip
\noindent\textbf{Step 3. The exceptional contribution vanishes in expectation.}
Let \(p_n=\Prob(\mathcal A_n)\). If \(p_n>0\), Proposition~\ref{prop:tail}, in the form \eqref{eq:normalized-event-bound}, gives
\begin{equation}\label{eq:exceptional-integral}
 \E\left[\frac{\theta(G_n)}{\sqrt n}\ind_{\mathcal A_n}\right]
 \leq
 \frac{p_n}{\sqrt n}
 +2p_n\sqrt{\log(n/p_n)}
 +\frac{p_n}{\sqrt{\log(n/p_n)}}.
\end{equation}
If \(p_n=0\), the expectation is zero. For positive \(p_n\), the first and last terms tend to zero. The middle term is at most
\[
 2p_n\sqrt{\log n}+2p_n\sqrt{\log(1/p_n)}.
\]
The first summand tends to zero by \eqref{eq:bad-probability-decay}. The second tends to zero by the limit
\[
 t\sqrt{\log(1/t)}\longrightarrow0\qquad(t\downarrow0)
\]
proved in Proposition~\ref{prop:tail}. Therefore the exceptional contribution is \(o_\tau(1)\). This estimate holds on the original sign space and uses no independence between \(\mathcal A_n\), theta, and \(v\).

Integrating \eqref{eq:good-theta-bound} on \(\mathcal A_n^c\) and adding \eqref{eq:exceptional-integral} gives
\begin{equation}\label{eq:fixed-tau-limsup}
 \limsup_{\substack{n\to\infty\\n\ {\rm odd}}}
 \frac{\E\theta(G_n)}{\sqrt n}
 \leq\sqrt{1+\tau}
\end{equation}
for every fixed \(\tau\in(0,1]\). First taking this upper limit and then letting \(\tau\downarrow0\) gives an upper limit of at most one. Proposition~\ref{prop:complement} gives the lower bound one for every odd \(n\). The asserted limit follows.
\end{proof}

\begin{proof}[Proof of Theorem~\ref{thm:main}]
For the distance bits in Definition~\ref{defn:model}, Setup~\ref{setup:fourier} gives \(G_n=G_D\) exactly and
\[
 K_0=\frac1nF\diag(a)F^*=H_N-H_D,
 \qquad
 a_j=a_{n-j}=1-2\xi_j,\quad a_0=0.
\]
Thus the signs used in the operator are negative on edges and positive on nonedges. Proposition~\ref{prop:fourier} identifies the original edge-zero program with the optimization over real even probability vectors satisfying \((Fy)_D=0\), with objective \(ny_0\). In particular, \(1\leq\theta(G_n)\leq n\).

Fix \(\tau\in(0,1]\). Proposition~\ref{transfer:prop:approximants} supplies deterministic cutoffs
\[
 D_n=O_\tau((\log\log n)^3)
\]
and real even mean-zero vectors \(v=v_{\tau,n}=v_{D_n,n}\), constructed from the same signs as \(K_0\), such that
\begin{align}
 \E\norm v_2^2&\longrightarrow Q_\tau\leq2\pi,               \label{eq:main-energy}\\
 R_n:=
 \left(\E\norm{v-K_0\Phi_\tau(v)}_2^2\right)^{1/2}
 &=O_\tau((\log n)^{-4}),                                   \label{eq:main-residual}\\
 \E|v_0|^2&\leq n^{-1}\exp(o_\tau(\log n)).                  \label{eq:main-root}
\end{align}
The approximants are measurable because the sign space is finite. They may depend on every graph bit.

Step~4 of the proof of Proposition~\ref{transfer:prop:approximants} verifies these estimates at the cutoffs \eqref{transfer:eq:growing-cutoffs}, including the threshold \(\sqrt n\geq20A_{D_n}\). The cutoff constants may depend on the fixed \(\tau\), and every finite comparison holds for all odd moduli. Thus \eqref{eq:main-energy}--\eqref{eq:main-root} apply at the growing cutoffs themselves, without interchanging a fixed-degree limit with a growing degree.

It remains to check the hypotheses of Proposition~\ref{prop:expectation-interface}. The limit \eqref{eq:main-energy} gives \eqref{eq:interface-energy}. Dividing \eqref{eq:main-residual} by \((\log n)^{-2}\) gives a quantity of order \(O_\tau((\log n)^{-2})\), which tends to zero. For fixed \(\tau\), the exponent in \eqref{eq:main-root} is at most \(\frac12\log n\) for all sufficiently large \(n\). Consequently,
\[
 \E|v_0|^2\leq n^{-1/2},
 \qquad
 \E|v_0|^2\sqrt{\log n}\longrightarrow0.
\]
This proves \eqref{eq:interface-residual} and \eqref{eq:interface-root}. Only an upper bound on the root second moment has been used.

Proposition~\ref{prop:expectation-interface} now applies on this same sign sample. Its proof includes the complementary kernel event and the complete exceptional-event integral. For each fixed \(\tau\in(0,1]\), it gives \eqref{eq:fixed-tau-limsup}. Proposition~\ref{prop:complement} gives
\[
 \frac{\E\theta(G_n)}{\sqrt n}\geq1
\]
for every odd \(n\). Taking the upper limit over odd \(n\) at fixed \(\tau\), and then letting \(\tau\downarrow0\), proves \eqref{eq:main-limit}. No uniformity in the approximation constants as \(\tau\downarrow0\) is required.
\end{proof}

\subsection*{Acknowledgements}
The development of the main result relied substantially on the Danus system, an automated proof-search and verification system. Danus contributed to the development of the proof arguments. The human authors formulated the problem, directed the overall proof strategy, and critically examined and refined the system-generated arguments.

\end{document}